\pgfplotsset{compat=newest}
\def\grd@save@target#1{%
	\def\grd@target{#1}}
\def\grd@save@start#1{%
	\def\grd@start{#1}}
\tikzset{
	grid with coordinates/.style={
		to path={%
			\pgfextra{%
				\edef\grd@@target{(\tikztotarget)}%
				\tikz@scan@one@point\grd@save@target\grd@@target\relax
				\edef\grd@@start{(\tikztostart)}%
				\tikz@scan@one@point\grd@save@start\grd@@start\relax
				\draw[minor help lines] (\tikztostart) grid (\tikztotarget);
				\draw[major help lines] (\tikztostart) grid (\tikztotarget);
				\grd@start
				\pgfmathsetmacro{\grd@xa}{\the\pgf@x/1cm}
				\pgfmathsetmacro{\grd@ya}{\the\pgf@y/1cm}
				\grd@target
				\pgfmathsetmacro{\grd@xb}{\the\pgf@x/1cm}
				\pgfmathsetmacro{\grd@yb}{\the\pgf@y/1cm}
				\pgfmathsetmacro{\grd@xc}{\grd@xa + \pgfkeysvalueof{/tikz/grid with coordinates/major step}}
				\pgfmathsetmacro{\grd@yc}{\grd@ya + \pgfkeysvalueof{/tikz/grid with coordinates/major step}}
				\foreach \x in {\grd@xa,\grd@xc,...,\grd@xb}
				\node[anchor=north] at (\x,\grd@ya) {\pgfmathprintnumber{\x}};
				\foreach \y in {\grd@ya,\grd@yc,...,\grd@yb}
				\node[anchor=east] at (\grd@xa,\y) {\pgfmathprintnumber{\y}};
			}
		}
	},
	minor help lines/.style={
		help lines,
		step=\pgfkeysvalueof{/tikz/grid with coordinates/minor step}
	},
	major help lines/.style={
		help lines,
		line width=\pgfkeysvalueof{/tikz/grid with coordinates/major line width},
		step=\pgfkeysvalueof{/tikz/grid with coordinates/major step}
	},
	grid with coordinates/.cd,
	minor step/.initial=.2,
	major step/.initial=1,
	major line width/.initial=0.25mm,
}
\tikzset{
	% style to apply some styles to each segment of a path
	on each segment/.style={
		decorate,
		decoration={
			show path construction,
			moveto code={},
			lineto code={
				\path [#1]
				(\tikzinputsegmentfirst) -- (\tikzinputsegmentlast);
			},
			curveto code={
				\path [#1] (\tikzinputsegmentfirst)
				.. controls
				(\tikzinputsegmentsupporta) and (\tikzinputsegmentsupportb)
				..
				(\tikzinputsegmentlast);
			},
			closepath code={
				\path [#1]
				(\tikzinputsegmentfirst) -- (\tikzinputsegmentlast);
			},
		},
	},
	% style to add an arrow in the middle of a path
	mid arrow/.style={postaction={decorate,decoration={
				markings,
				mark=at position .5 with {\arrow[#1]{stealth}}
	}}},
	rmid arrow/.style={postaction={decorate,decoration={
				markings,
				mark=at position .5 with {\arrowreversed[#1]{stealth}}
	}}},
	end arrow/.style={postaction={decorate,decoration={
				markings,
				mark=at position 1 with {\arrow[#1]{stealth}}
	}}},
	start arrow/.style={postaction={decorate,decoration={
				markings,
				mark=at position 0 with {\arrow[#1]{stealth}}
	}}},
	mid3 arrow/.style={postaction={decorate,decoration={
				markings,
				mark=at position .3 with {\arrow[#1]{stealth}}
	}}},
	rmid3 arrow/.style={postaction={decorate,decoration={
				markings,
				mark=at position .7 with {\arrowreversed[#1]{stealth}}
	}}},
	mid4 arrow/.style={postaction={decorate,decoration={
				markings,
				mark=at position .4 with {\arrow[#1]{stealth}}
	}}},
	rmid4 arrow/.style={postaction={decorate,decoration={
				markings,
				mark=at position .4 with {\arrowreversed[#1]{stealth}}
	}}},
}
\tikzset{every state/.style={minimum size=0pt}}
\tikzset{
	mark position/.style args={#1(#2)}{
		postaction={
			decorate,
			decoration={
				markings,
				mark=at position #1 with \coordinate (#2);
			}
		}
	}
}
\tikzset{middle segment/.style={decoration={middle},decorate, segment length=#1}}
\renewcommand\paragraph{\@startsection{paragraph}{4}{\z@}%
	{-2.5ex\@plus -1ex \@minus -.25ex}%
	{1.25ex \@plus .25ex}%
	{\normalfont\normalsize\bfseries}}
\newtheorem{thm}{Theorem}
\newtheorem{lm}[thm]{Lemma}
\newtheorem{defn}[thm]{Definition}
\newtheorem{prop}[thm]{Proposition}
\newtheorem{rmk}[thm]{Remark}
\newcommand{\Ch}{\mathrm{C}}
\newcommand{\complexC}{\mathbb{C}}
\newcommand{\conf}{\mathcal{X}}
\newcommand{\dd}{{\mathrm d}}
\newcommand{\ddbar}[1]{\frac{{\mathrm d}#1}{2\pi {\mathrm i}#1}}
\newcommand{\ddbarr}[1]{\frac{{\mathrm d}#1}{2\pi {\mathrm i}}}
\newcommand{\disk}{\mathbb{D}}
\newcommand{\GG}{G}
\newcommand{\hh}{h}
\newcommand{\ich}{\chi}
\newcommand{\ii}{\mathrm{i}}
\newcommand{\iich}{\mathrm{ch}}
\newcommand{\inn}{\mathrm{in}}
\newcommand{\intZ}{\mathbb{Z}}
\newcommand{\Kess}{\mathcal{K}^{(\mathrm{ess})}}
\newcommand{\Knull}{\mathcal{K}^{(\mathrm{null})}}
\newcommand{\LL}{\mathrm{L}}
\newcommand{\mm}{{\max{}}}
\newcommand{\out}{\mathrm{out}}
\newcommand{\period}{{(L)}}
\newcommand{\prob}{\mathbb{P}}
\newcommand{\realR}{\mathbb{R}}
\newcommand{\roots}{\mathcal{R}}
\newcommand{\RR}{\mathrm{R}}
\renewcommand{\Re}{\mathrm{Re}}
\numberwithin{equation}{section} 
\numberwithin{thm}{section}
\author{
		Zhipeng Liu\footnote{Department of Mathematics, University of Kansas, Lawrence, KS 66045. Email: \texttt{zhipeng@ku.edu}}}
\date{\today}
\begin{document}
		\title{Multi-point distribution of TASEP}
		
		\date{\today}
		\maketitle
		
		\begin{abstract}
			Recently Johansson and Rahman obtained the limiting multi-time distribution for the discrete polynuclear growth model \cite{Johansson-Rahman19}, which is equivalent to a discrete TASEP model with step initial condition. In this paper, we obtain a finite time multi-point distribution formula of continuous TASEP with general initial conditions in the space-time plane. We evaluate the limit of this distribution function when the times go to infinity at the same speed for both step and flat initial conditions. These limiting distributions are expected to be universal for all the models in the Kardar-Parisi-Zhang universality class.
		\end{abstract}

\section{Introduction}

In the recent twenty years, there has been an explosive development in understanding the universal law behind a family of $2$d random growth models \cite{Baik-Deift-Johansson99,Johansson00,Johansson03,Borodin-Ferrari-Prahofer-Sasamoto07,Tracy-Widom08,Tracy-Widom09,Borodin-Corwin14,Matetski-Quastel-Remenik17,Johansson17,Dauvergne-Ortmannn-Virag18,Johansson-Rahman19}. There is a growing number of models which are either proved or believed to be in the so-called Kardar-Parisi-Zhang (KPZ) universality class. All of these models share the scaling limits $t:t^{2/3}:t^{1/3}$ for the time, spatial correlation length and fluctuation order. Moreover, the scaled limiting space-time field is believed to be universal and independent of the models, but only depends on the initial condition:
\begin{equation}
\label{eq:KPZ_limit}
\lim_{T\to\infty}\frac{H(c_1xT^{2/3},c_2\tau T)-c_3 \tau T}{ c_4T^{1/3}}=\mathrm{H}(x,\tau).
\end{equation}
Here $c_1,c_2,c_3$ and $c_4$ are model-dependent constants, $H(y,t)$ is the height function of the growth model at location $y$ and time $t$, and $\mathrm{H}(x,\tau)$ is  the limiting space-time field depending only on the initial condition. This limiting field $\mathrm{H}(x,\tau)$ is believed to be universal. It was first characterized by Matetski, Quastel and Remenik \cite{Matetski-Quastel-Remenik17} as a Markov process with explicit transition probabilities and variational formulas by analyzing the totally asymmetric simple exclusion process (TASEP). It could also be characterized by the so-called directed landscape which was constructed by Dauvergne, Ortmann and Vir\'ag \cite{Dauvergne-Ortmannn-Virag18}  more recently in the context of Brownian last
passage percolation. We remark that the characterization in \cite{Dauvergne-Ortmannn-Virag18} does not imply explicit formulas (like the ones we consider here) for the distribution functions of $\mathrm{H}(x,\tau)$. Understanding the limiting field $\mathrm{H}(x,\tau)$ is a fundamental problem in the community.

It has been shown that, for a number of models in the KPZ universality class, the one point distributions of $\mathrm{H}(x,\tau)$ are given by the Tracy-Widom distributions and their analogs. See \cite{Baik-Deift-Johansson99,Johansson00,Tracy-Widom09,Amir-Corwin-Quastel11,Borodin-Corwin-Ferrari14,Aggarwal18} for the standard initial conditions and \cite{Corwin-Liu-Wang16,Chhita-Ferrari-Spohn18,Quastel-Remenik19} for general initial conditions. We refer the readers to a review paper \cite{Corwin11}.

The spatial process $\mathrm{H}(x,\tau)$ when $\tau$ is fixed, is only obtained for TASEP and its equivalent models. See \cite{Prahofer-Spohn02,Johansson03,Imamura-Sasamoto04,Borodin-Ferrari-Prahofer-Sasamoto07,Borodin-Ferrari-Prahofer07,Borodin-Ferrari-Sasamoto08a,Baik-Ferrari-Peche10} for the standard initial conditions and \cite{Matetski-Quastel-Remenik17} for general initial condition. We also refer the readers to a review paper \cite{Quastel-Remenik13} for the limiting processes.

Along the time direction, or more generally in the space-time field  $\mathrm{H}(x,\tau)$, much less was known until recently. For a standard initial condition, the so-called step initial condition, the two-point distribution along the time direction was obtained by \cite{Johansson17,Johansson19} for Brownian directed percolation and geometric last-passage percolation, and very recently, the multi-point distribution along the time direction was also found by \cite{Johansson-Rahman19} for the same geometric last-passage percolation model. We remark that  the geometric last-passage percolation model is equivalent to a discrete TASEP. Besides these distribution formulas, there are also some results on the properties of $\mathrm{H}(x,\tau)$ at two different times, see \cite{Ferrari-Spohn16, DeNardis-LeDoussal17,LeDoussal17,DeNardis-LeDoussal18,Ferrari-Occelli19,Johansson20,Corwin-Ghosal-Hammond21}.

\bigskip

 Parallelly, in the line of research \cite{Baik-Liu16, Liu16, Baik-Liu19, Baik-Liu21}, the authors studied the continuous TASEP on a periodic domain (periodic TASEP). They obtained the finite time multi-point distributions of the height function in the space-time plane, and their limits in the so-called relaxation time scale. 
 Since the periodic TASEP becomes the usual TASEP on $\intZ$ when the period tends to infinity, it is expected that their results, after taking the large period limit or equivalently the small time limit, should give the limiting multi-point distributions of $\mathrm{H}(x,\tau)$ for TASEP.  
 However, it seems quite complicated to obtain  the TASEP limits using asymptotic analysis directly from the formulas in \cite{Baik-Liu19,Baik-Liu21}. The multi-point distribution formulas involve contour integrals of a complicated Fredholm determinant which is defined on a discrete space (in terms of the so-called Bethe roots). The classic steepest descent method seems not working well: there are terms of large contributions in the integrand, which combined together are expected to cancel out when evaluated via the outside contour integrals. It is still unclear how to manage these cancellations.

 \bigskip
 
 This paper can be viewed as an extension of the work \cite{Baik-Liu19,Baik-Liu21}. Instead of performing asymptotic analysis, we rewrite the algebraic structure of their finite time multi-point distribution formulas when the period is finite but larger than a fixed number. This rewriting is constructive: We construct a new formula for contour integrals whose integrand is a type of summation over nested roots of functions satisfying certain conditions, and prove the new formula by induction.
 
 \bigskip

 The main results of this paper are as follows. 
 \begin{enumerate}[1)]
 	\item We obtain the finite time multi-point distribution of TASEP in the space-time plane. See Theorem~\ref{thm:main1}. This result generalizes the well-known multi-point distribution of TASEP along the space direction \cite{Borodin-Ferrari-Prahofer-Sasamoto07}.
 	\item For two specific initial conditions, the step and flat initial conditions, we evaluate the limit of the above multi-point distributions when the times go to infinity proportionally. See Theorem~\ref{thm:limit_step} and~\ref{thm:limit_flat}. These formulas are expected to be the multi-point distributions of the universal field $\mathrm{H}(x,\tau)$ in~\eqref{eq:KPZ_limit} for the step and flat initial conditions.
 \end{enumerate}

We remark that our formula of the multi-point distribution of TASEP for the step initial condition is different from that in~\cite{Johansson-Rahman19} of geometric last-passage percolation. We expect that, when the times are different, our formula matches theirs. But we do not have a rigorous proof at the moment due to the complexity of both formulas. 
\bigskip

Below is the organization of this paper.
 
  In Section~\ref{sec:main_results}, we present the multi-point distribution formula of TASEP in Theorem~\ref{thm:main1}, and the limiting multi-point distributions for step and flat initial conditions in Theorems~\ref{thm:limit_step} and~\ref{thm:limit_flat}. We also discuss some properties of the finite time distribution formula in Section~\ref{sec:properties_D}.
 
 In Section~\ref{sec:pTASEP}, we introduce the periodic TASEP model. We claim that the multi-point distributions for periodic TASEP, when the period is larger than a finite number, can be expressed as the same formula for TASEP in Theorem~\ref{thm:main1}. See Theorem~\ref{thm:periodic_TASEP_large_period}. Therefore Theorem~\ref{thm:main1} follows. 
 
 In Section~\ref{sec:Cauchy_sum}, we extract the key part in the proof of Theorem~\ref{thm:periodic_TASEP_large_period}. We investigate a type of summation, which we call Cauchy-type summation, over a set of nested roots of certain functions. The main result of this section is given in Proposition~\ref{prop:Cauchy_sum_over_roots}, which is also the main technical part of the paper.
 
 The remaining sections are the proofs. Section~\ref{sec:proof_Theorem} is the proof of Theorem~\ref{thm:periodic_TASEP_large_period} by using the results of Section~\ref{sec:Cauchy_sum}. Section~\ref{sec:proof_Cauchy_sum} is the proof of Proposition~\ref{prop:Cauchy_sum_over_roots}. Section~\ref{sec:asymptotics} is the only section involving the asymptotic analysis. It includes the proof of Theorems~\ref{thm:limit_step} and~\ref{thm:limit_flat}. Finally in Section~\ref{sec:proofs} we prove some properties of the finite time multi-point distributions discussed in Section~\ref{sec:main_results}.

\section*{Acknowledgement}

The author would like to thank Jinho Baik for the communications, and hosting the author's recent visit to the University of Michigan where they had many useful discussions on this paper. The author also would like to thank the Integrable
Probability Focused Research Group, funded by NSF grants DMS-1664531, 1664617, 1664619,
1664650, for their organizations on various research activities in integral probability, and their support for the author's participation in these activities. The author was supported by the University of Kansas Start Up Grant, the University of Kansas New Faculty General Research Fund, Simons Collaboration Grant No. 637861, and NSF grant DMS-1953687. 
\section{Main results}
\label{sec:main_results}

We consider the totally asymmetric simple exclusion process (TASEP) on the infinite lattice $\intZ$. Each site on $\intZ$ allows at most one particle. The evolution of the system is as follows. Each particle is assigned an independent clock which rings after an exponential waiting time with parameter $1$. Once its assigned clock rings, the particle either moves to its  right neighboring site if that site is unoccupied, or stays on its current site if its right neighboring site is occupied. Meanwhile the clock is reset.

We assume that initially there are $N$ particles and they are labeled from right to left. The location of the $i$-th particle at time $t$ is denoted by $x_i(t)$. We denote $X(t):=(x_1(t),\cdots,x_N(t))$ the configuration of particle locations at time $t$ for any $t\ge 0$. We also denote $\conf_N$ the set of all possible configurations
\begin{equation*}
\conf_N := \left\{ (x_1,\cdots,x_N) \in \intZ^N:  x_1>\cdots>x_N \right\}.
\end{equation*}
Then $X(t) \in \conf_N$ for all $t\ge 0$. We also denote $Y=(y_1,\cdots,y_N)$ the initial configuration
\begin{equation*}
y_i = x_i(0),\qquad i=1,\cdots,N.
\end{equation*}

\subsection{Multi-point distribution of TASEP with general initial configuration}
The main result in this paper is about the multi-point distribution of TASEP.
\begin{thm}
	\label{thm:main1}
	Assume $Y=(y_1,\cdots,y_N)\in\conf_N$. Consider TASEP with initial particle locations $x_i(0)=y_i$ for $1\le i\le N$. Let $m\ge 1$ be a positive integer and $(k_1,t_1),\cdots, (k_m,t_m)$ be $m$ distinct points in $\{1,\cdots,N\}\times [0,\infty)$. Assume that $0\le t_1\le\cdots\le t_m$. Then, for any integers $a_1,\cdots,a_m$,
	\begin{equation}
	\label{eq:thm1}
	\prob_Y \left( 
	\bigcap_{\ell=1}^m \left\{ x_{k_\ell} (t_\ell) \ge a_\ell 
	\right\}
	\right) = \oint\cdots\oint \left[\prod_{\ell=1}^{m-1}\frac{1}{1-z_\ell}\right] \mathcal{D}_Y(z_1,\cdots,z_{m-1}) \ddbar{z_1}\cdots\ddbar{z_{m-1}},
	\end{equation}
	where $\prob_Y$ denotes the probability given $X(0)=Y$, the contours of integration are counterclockwise circles centered at the origin and of radii less than $1$. The function $\mathcal{D}_Y(z_1,\cdots,z_{m-1})$ is defined in terms of a Fredholm determinant in Definition~\ref{def:operators_K1Y}, or equivalently in terms of series expansion in Definition~\ref{def:D_Y_series}.
\end{thm}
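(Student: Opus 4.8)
The plan is to obtain Theorem~\ref{thm:main1} from the corresponding statement for \emph{periodic} TASEP, following the algebraic route indicated in the introduction rather than doing any asymptotics directly. Write $\prob_Y^{\period}$ for the law of continuous TASEP on the ring $\intZ/L\intZ$ with $N$ tracked particles started from $Y$ (lifted to $\intZ$ in the usual way). First I would recall the elementary fact that periodic TASEP converges to TASEP on $\intZ$ as $L\to\infty$ for finite-time, finitely-many-particle observables: couple the two systems through their common exponential clocks; they agree on the tracked particles until the first time a particle would be blocked by a periodic image, and over the fixed horizon $[0,t_m]$ the probability of that discrepancy is at most the (super-exponentially small) probability that some particle makes order-$L$ jumps by time $t_m$. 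Hence $\prob_Y^{\period}\big(\bigcap_\ell\{x_{k_\ell}(t_\ell)\ge a_\ell\}\big)\to\prob_Y\big(\bigcap_\ell\{x_{k_\ell}(t_\ell)\ge a_\ell\}\big)$. Since the right side of \eqref{eq:thm1} does not involve $L$, it therefore suffices to prove Theorem~\ref{thm:periodic_TASEP_large_period}: once $L$ exceeds a threshold depending on $N$, the $k_\ell$ and the $a_\ell$, the periodic multi-point distribution is given by the right side of \eqref{eq:thm1}.

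The input for Theorem~\ref{thm:periodic_TASEP_large_period} is the multi-point formula of Baik and Liu \cite{Baik-Liu19,Baik-Liu19b}: for periodic TASEP the probability on the left is an $(m-1)$-fold contour integral in $z_1,\dots,z_{m-1}$ whose integrand is a Fredholm determinant built from a kernel that is an explicit sum over the \emph{Bethe roots}, i.e.\ over the finitely many solutions $w$ of equations of the form $w^{L}=(\text{explicit rational function of the }z_\ell)$. The task is to evaluate these discrete root-sums in closed form for $L$ large. I would do this by means of Proposition~\ref{prop:Cauchy_sum_over_roots}, the technical heart of the paper, which re-expresses a ``Cauchy-type summation'' --- a sum of Cauchy-determinant-weighted products --- over \emph{nested} families of roots of functions obeying suitable analyticity and growth hypotheses as a single contour integral. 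Applied layer by layer to the $m$ nested sets of Bethe roots appearing in the Baik--Liu kernel, it collapses the discrete sums and produces precisely the Fredholm determinant $\mathcal D_Y$ of Definition~\ref{def:operators_K1Y}, equivalently the series of Definition~\ref{def:D_Y_series}; this assembly is carried out in Section~\ref{sec:proof_Theorem}.

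Finally I would prove Proposition~\ref{prop:Cauchy_sum_over_roots} itself by induction on the depth of nesting (roughly, on $m$), peeling off one layer of roots at a time: each root-sum is rewritten as a sum of residues, the corresponding contour is deformed across the remaining singularities, and the Cauchy-determinant structure is what makes the residues at the newly created poles telescope into the ``Cauchy-type summation'' of one lower level, closing the induction. The hard part will be exactly the cancellation phenomenon flagged in the introduction: the terms indexed by individual Bethe roots are large --- they do not decay as $L\to\infty$ --- and only after the contour deformations do they combine into a bounded, $L$-stable expression. Choosing a strong enough inductive statement, i.e.\ the right intermediate Cauchy-type objects together with contours on which the residue bookkeeping closes and these large terms demonstrably cancel, is the crux; this is the content of Section~\ref{sec:proof_Cauchy_sum}. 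Once it is established, Theorem~\ref{thm:periodic_TASEP_large_period} --- and hence Theorem~\ref{thm:main1} --- follows. As a consistency check, specializing all the times $t_\ell$ to a common value should reduce \eqref{eq:thm1} to the classical spatial multi-point distribution of \cite{Borodin-Ferrari-Prahofer-Sasamoto07}.
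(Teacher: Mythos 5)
Your proposal is correct and follows essentially the same route as the paper: Theorem~\ref{thm:main1} is deduced by identifying the TASEP probability with that of periodic TASEP for large period and then proving Theorem~\ref{thm:periodic_TASEP_large_period} by collapsing the Bethe-root sums in the Baik--Liu formula via Proposition~\ref{prop:Cauchy_sum_over_roots} (whose proof is indeed an induction, though on the total degree of the Cauchy denominators rather than on $m$). The one minor deviation is that you replace the paper's exact finite-$L$ identity (Theorem~\ref{thm:comparison}, valid once $L\ge\max_\ell\{a_\ell+k_\ell\}-y_N$) with a soft coupling argument giving convergence as $L\to\infty$; this still suffices because the right-hand side of~\eqref{eq:thm1} is $L$-independent.
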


\begin{rmk}
	\label{rmk:01}
	We expect that when all $t_\ell$'s are equal, the above formula  
	matches the joint distribution formula in \cite{Borodin-Ferrari-Prahofer-Sasamoto07}. For $m=1$, we are able to confirm it in a formal way. See the discussions in Section~\ref{sec:one_point}. We leave the general case for a possible future project.
\end{rmk}

The proof follows directly from Theorem~\ref{thm:comparison} and Theorem~\ref{thm:periodic_TASEP_large_period}.

\vspace{0.3cm}

It turns out that the right hand side of~\eqref{eq:thm1} is still a probability distribution function up to a sign, if we assume some $z_\ell$ circles are of radii greater than $1$. More precisely, we have

\begin{prop}
	\label{prop:consistency}
	Assume the same setting with Theorem~\ref{thm:main1}. Let  $I$ be any subset of $\{1,\cdots,m-1\}$, and $J=\{1,\cdots,m\}\setminus I$. Then, for any integers $a_1,\cdots,a_m$,
	\begin{equation}
	\label{eq:thm2}
	\begin{split}
	&\prob_Y \left( 
	\left(\bigcap_{j\in J} \left\{ x_{k_j} (t_j) \ge a_j 
	\right\}
	\right)\bigcap
	\left(\bigcap_{i\in I} \left\{ x_{k_i} (t_i) < a_i 
	\right\}\right)
	\right) \\
	&= (-1)^{|I|}\oint\cdots\oint \left[\prod_{\ell=1}^{m-1}\frac{1}{1-z_\ell}\right] \mathcal{D}_Y(z_1,\cdots,z_{m-1}) \ddbar{z_1}\cdots\ddbar{z_{m-1}},
	\end{split}
	\end{equation}
	where the contours of integration are counterclockwise circles centered at the origin. The radius of $z_\ell$ contour is smaller than $1$ if $\ell\in J$, and greater than $1$ if $\ell\in I$. The function $\mathcal{D}_Y(z_1,\cdots,z_{m-1})$ is defined in terms of a Fredholm determinant in Definition~\ref{def:operators_K1Y}, or equivalently in terms of series expansion in Definition~\ref{def:D_Y_series}.
\end{prop}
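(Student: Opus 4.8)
The proof of Proposition~\ref{prop:consistency} is an exercise in contour deformation starting from Theorem~\ref{thm:main1}. The key observation is that the integrand in~\eqref{eq:thm1}, viewed as a function of a single variable $z_\ell$, has a simple pole at $z_\ell=1$ coming from the explicit factor $1/(1-z_\ell)$, and that $\mathcal{D}_Y$ is holomorphic in a neighbourhood of the unit circle in each variable (this should follow from the Fredholm determinant / series definitions in Definition~\ref{def:operators_K1Y} and Definition~\ref{def:D_Y_series}, where $z_\ell$ enters analytically away from $z_\ell=0$, and from the fact that, by Theorem~\ref{thm:main1}, moving $z_\ell$ from inside to outside the unit circle while keeping the other contours fixed only picks up the residue at $z_\ell=1$). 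First I would record this analyticity as a lemma: for each $\ell$, the function
\[
z_\ell \longmapsto \left[\prod_{j\neq \ell}\frac{1}{1-z_j}\right]\mathcal{D}_Y(z_1,\dots,z_{m-1})
\]
is analytic in an annulus containing $|z_\ell|=1$, uniformly for the other variables on their (fixed) contours.

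**Main argument.** With that in hand I would argue by induction on $|I|$, the base case $|I|=0$ being exactly Theorem~\ref{thm:main1}. For the inductive step, pick $i_0\in I$ and write $I'=I\setminus\{i_0\}$, $J'=J\cup\{i_0\}$. By the inductive hypothesis, the right-hand side of~\eqref{eq:thm2} with $I$ replaced by $I'$ (all $z_j$, $j\neq i_0$, on the appropriate contours, and $z_{i_0}$ on a small circle) equals $(-1)^{|I'|}$ times the probability of the event indexed by $(I',J')$. Now deform only the $z_{i_0}$ contour from radius $<1$ to radius $>1$. By the analyticity lemma the difference is exactly $2\pi\ii$ times the residue at $z_{i_0}=1$; since $\mathcal{D}_Y$ is regular there and the residue of $1/(1-z_{i_0})$ is $-1$, this difference equals $-(-1)^{|I'|}$ times the integral over the remaining $m-2$ variables of $\big[\prod_{j\neq i_0}\frac{1}{1-z_j}\big]\mathcal{D}_Y\big|_{z_{i_0}=1}$. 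The final step is to identify $\mathcal{D}_Y\big|_{z_{i_0}=1}$ with the corresponding quantity for the $(m-1)$-point problem obtained by \emph{dropping} the constraint at index $i_0$ — equivalently, to check that specializing $z_{i_0}=1$ collapses the operator/series in Definition~\ref{def:operators_K1Y} to the one with the point $(k_{i_0},t_{i_0})$ removed. Granting that, the residue term is $(-1)^{|I'|}$ times $\prob_Y$ of the $(I',J'\setminus\{i_0\})$ event, and subtracting gives
\[
(-1)^{|I'|}\,\prob_Y(E_{I',J'}) \;-\; (-1)^{|I'|}\,\prob_Y\big(E_{I',J'}\cap\{x_{k_{i_0}}(t_{i_0})\ge a_{i_0}\}^{c}\text{-complement bookkeeping}\big),
\]
which after the elementary set-theoretic identity $\prob(A\cap B)=\prob(A)-\prob(A\cap B^c)$ with $B=\{x_{k_{i_0}}(t_{i_0})\ge a_{i_0}\}$ becomes $(-1)^{|I'|+1}\prob_Y(E_{I',J'}\cap\{x_{k_{i_0}}(t_{i_0})<a_{i_0}\}) = (-1)^{|I|}\prob_Y(E_{I,J})$, completing the induction.

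**The obstacle.** The routine parts are the set-theoretic identity and the residue computation. The genuine content is the analyticity lemma together with the $z_{i_0}=1$ specialization: I need that $\mathcal{D}_Y$ has no singularity at any $z_\ell=1$ other than what is already exhibited by the prefactor, and that its value there reduces cleanly to the lower-dimensional object. Depending on how Definition~\ref{def:operators_K1Y} is set up, the factor $z_\ell$ may appear inside the kernel in a way that makes $z_\ell=1$ a \emph{removable} but not obviously regular point (e.g. through ratios like $z_\ell/(z_\ell-1)$ that also sit inside the determinant), so the careful step is to track every occurrence of $z_{i_0}$ in the Fredholm determinant and confirm the limit exists and has the claimed collapsed form. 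I would carry this out directly from the series expansion in Definition~\ref{def:D_Y_series}, where the dependence on each $z_\ell$ is most transparent, and only then translate back to the Fredholm determinant form.
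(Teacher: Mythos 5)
Your overall skeleton is the same as the paper's: induction on $|I|$, the set-theoretic identity $\prob(A)=\prob(A\cap B)+\prob(A\cap B^c)$ applied at one index $i_0\in I$, and a contour-deformation identity for the $z_{i_0}$ variable that converts the difference of the small-circle and large-circle integrals into the $(m-1)$-point quantity with the index $i_0$ dropped. That last identity is exactly the paper's Proposition~\ref{lm:lemma4}, which is stated and proved separately.

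The genuine gap is in your justification of that identity. Your ``analyticity lemma'' asserts that $\mathcal{D}_Y$ is regular near $z_{i_0}=1$ so that the deformation picks up only the simple pole of the explicit prefactor $1/(1-z_{i_0})$. This premise is false at the level of the series expansion: in $\mathcal{D}_{\boldsymbol{n},Y}$ the $2n_{s+1}$ integrals over $\Sigma_{s+1,\LL}$ and $\Sigma_{s+1,\RR}$ each carry a weight $\frac{1}{1-z_s}$ or $\frac{-z_s}{1-z_s}$, against which the explicit factor $(1-z_s)^{n_s}(1-1/z_s)^{n_{s+1}}$ only partially cancels, leaving a net factor $(1-z_s)^{n_s-n_{s+1}}$ (times something regular). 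So for $n_{s+1}>n_s$ the term $\frac{1}{1-z_s}\mathcal{D}_{\boldsymbol{n},Y}$ has a pole of order $n_{s+1}-n_s+1$ at $z_s=1$, and the residue is a derivative of order $n_{s+1}-n_s$, not an evaluation. The paper's proof of Proposition~\ref{lm:lemma4} must therefore (i) show these unbalanced terms contribute zero, via a counting argument: after differentiating, at least $n_s+n_{s+1}>2\min\{n_s,n_{s+1}\}$ combinations $\int_{\Sigma^{\out}}-\int_{\Sigma^{\inn}}$ survive, but the Cauchy factors $\Ch(U^{(s)};U^{(s+1)})\Ch(V^{(s)};V^{(s+1)})$ can supply at most $2\min\{n_s,n_{s+1}\}$ residues, so some difference-of-contours integral has no pole and vanishes; and (ii) in the balanced case $n_s=n_{s+1}$, compute the simple-pole residue as a sum over pairs of permutations $\sigma,\sigma'$ matching $U^{(s+1)}$ to $U^{(s)}$ and $V^{(s+1)}$ to $V^{(s)}$, using antisymmetry to collapse the $((n_{s+1})!)^2$ identical contributions into the reduced integrand. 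Relatedly, the ``specialization $\mathcal{D}_Y|_{z_{i_0}=1}$'' you invoke does not literally exist term by term; what collapses is the residue of the full combination, not a value of $\mathcal{D}_Y$ at $z_{i_0}=1$. You did flag this region as ``the obstacle,'' but characterizing it as a removable-singularity bookkeeping issue understates it: it is the main content of the proposition, and without the two arguments above the induction does not close.
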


The proof of Proposition~\ref{prop:consistency} is given in Section~\ref{sec:proof_consistency}.

\vspace{0.3cm}

Below we first introduce the Fredholm determinant representation of $\mathcal{D}_Y$ in Section~\ref{sec:Fredholm_representation}. In Section~\ref{sec:series_expansion} we will give an alternate formula of $\mathcal{D}_Y$ in terms of a series expansion. Finally, in Section~\ref{sec:properties_D} we will discuss some further properties of the function $\mathcal{D}_Y$.

\subsubsection{Fredholm determinant representation of $\mathcal{D}_Y(z_1,\cdots,z_{m-1})$}
\label{sec:Fredholm_representation}

We will define  $\mathcal{D}_Y(z_1,\cdots,z_{m-1})$ as a Fredholm determinant $\det(I-\mathcal{K}_1\mathcal{K}_Y)$. Such Fredholm determinant representation is not unique. There are different choices of the spaces, measures, and kernels. We will see this fact later in Section~\ref{sec:properties_D}. At this moment, we choose a specific choice of spaces, measures and kernels for the Fredholm determinant representation.

\paragraph{Spaces of the operators}
\label{sec:spaces_operators}

We will define the operators on two specific spaces of nested contours with complex measures depending on $\boldsymbol{z}=(z_1,\cdots,z_{m-1})$, where $z_\ell\ne 1$ for each $1\le \ell\le m-1$.

Suppose $\Omega_\LL$ and $\Omega_\RR$ are two simply connected regions on the complex plane such that (1) $\Omega_\LL$ contains the point $-1$, (2) $\Omega_\RR$ contains the point $0$, and (3) $\Omega_\LL$ and $\Omega_\RR$ do not intersect.

Suppose $\Sigma_{m,\LL}^{\out},\cdots,\Sigma_{2,\LL}^{\out}$, $\Sigma_{1,\LL}$, $\Sigma_{2,\LL}^{\inn},\cdots,\Sigma_{m,\LL}^\inn$ are $2m-1$ nested simple closed contours, from outside to inside, in $\Omega_\LL$ enclosing the point $-1$. Similarly,
$\Sigma_{m,\RR}^{\out},\cdots,\Sigma_{2,\RR}^{\out}$, $\Sigma_{1,\RR}$, $\Sigma_{2,\RR}^{\inn},\cdots,\Sigma_{m,\RR}^\inn$ are $2m-1$ nested simple closed contours, from outside to inside, in $\Omega_\RR$ enclosing the point $0$.  See Figure~\ref{fig:contours_finite_time} for an illustration of the contours. These contours are all counterclockwise oriented. In fact, throughout this paper, all closed contours will be counterclockwise oriented and we will not emphasize the orientations later. However, we will clearly state the orientations for the infinite contours.

	\begin{figure}[t]
	\centering
	\begin{tikzpicture}[scale=1]
	\draw [line width=0.4mm,lightgray] (-4.5,0)--(1.5,0) node [pos=1,right,black] {$\realR$};
	\draw [line width=0.4mm,lightgray] (0,-1.5)--(0,1.5) node [pos=1,above,black] {$i\realR$};
	\fill (0,0) circle[radius=2.5pt] node [below,shift={(0pt,-3pt)}] {$0$};
	\fill (-3,0) circle[radius=2.5pt] node [below,shift={(0pt,-3pt)}] {$-1$};
	
	\draw [draw=black] (-1.3,-1.3) rectangle (1.4,1.3);
		\draw [draw=black] (-4.4,-1.5) rectangle (-1.5,1.5);
    \draw[thick, dashed] (0,0) circle (18pt);
    \draw[thick] (0,0) circle (25pt);
    \draw[thick, dashed] (0,0) circle (30pt);

	\draw[thick] (-3,0) circle (20pt);
	\draw[thick, dashed] (-3,0) circle (27pt);
	\draw[thick] (-3,0) circle (34pt);
	\end{tikzpicture}
	\caption{Illustration of the contours for $m=2$: The two rectangles from left to right are $\Omega_\LL$ and $\Omega_\RR$, the three contours around $-1$ from outside to inside are $\Sigma_{2,\LL}^{\out},\Sigma_{1,\LL},\Sigma_{2,\LL}^{\inn}$ respectively, the three contours around $0$ from outside to inside are $\Sigma_{2,\RR}^{\out},\Sigma_{1,\RR},\Sigma_{2,\RR}^{\inn}$ respectively. $\mathcal{S}_1$ is the union of the three dashed contours, and $\mathcal{S}_2$ is the union of the three solid contours.}\label{fig:contours_finite_time}
\end{figure}

We define
\begin{equation}
	\label{eq:Sigma_contours}
\Sigma_{\ell,\LL}:=\Sigma_{\ell,\LL}^\out\cup \Sigma_{\ell,\LL}^\inn, \qquad \Sigma_{\ell,\RR}:=\Sigma_{\ell,\RR}^\out\cup \Sigma_{\ell,\RR}^\inn,\qquad \ell=2,\cdots,m,
\end{equation}
and
\begin{equation*}
\mathcal{S}_1:= \Sigma_{1,\LL} \cup \Sigma_{2,\RR} \cup \cdots \cup \begin{dcases}
\Sigma_{m,\LL}, & \text{ if $m$ is odd},\\
\Sigma_{m,\RR}, & \text{ if $m$ is even},
\end{dcases}
\end{equation*}
and 
\begin{equation*}
\mathcal{S}_2:= \Sigma_{1,\RR} \cup \Sigma_{2,\LL} \cup \cdots \cup \begin{dcases}
\Sigma_{m,\RR}, & \text{ if $m$ is odd},\\
\Sigma_{m,\LL}, & \text{ if $m$ is even}.
\end{dcases}
\end{equation*}

We also introduce a measure on these contours. Let
\begin{equation}
\label{eq:def_dmu}
\dd \mu(w) = \dd \mu_{\boldsymbol{z}} (w) :=
 \begin{dcases}
\frac{-z_{\ell-1}}{1-z_{\ell-1}} \ddbarr{w}, & w\in \Sigma_{\ell,\LL}^\out \cup \Sigma_{\ell,\RR}^\out, \quad \ell=2,\cdots,m,\\
\frac{1}{1-z_{\ell-1}} \ddbarr{w}, & w\in \Sigma_{\ell,\LL}^\inn \cup \Sigma_{\ell,\RR}^\inn, \quad \ell=2,\cdots,m,\\
\ddbarr{w}, & w\in \Sigma_{1,\LL} \cup \Sigma_{1,\RR}.
\end{dcases}
\end{equation}

\paragraph{Operators $\mathcal{K}_1$ and $\mathcal{K}_Y$}
\label{sec:def_operators}

Now we introduce the operators $\mathcal{K}_1$ and $\mathcal{K}_Y$ to define $\mathcal{D}_Y(z_1,\cdots,z_{m-1})$ in Theorem~\ref{thm:main1}. We assume that $Y=(y_1,\cdots,y_N)\in\conf_N$ and  $\boldsymbol{z}=(z_1,\cdots,z_{m-1})$ is the same as in Section~\ref{sec:spaces_operators}. Let
\begin{equation}
\label{eq:Q}
Q_1(j) :=\begin{dcases}
1-z_{j}, & \text{ if $j$ is odd and $j<m$},\\
1-\frac{1}{z_{j-1}}, & \text{ if $j$ is even},\\
1,& \text{if $j=m$ is odd},
\end{dcases}
\qquad 
Q_2(j) :=\begin{dcases}
1-z_{j}, & \text{ if $j$ is even and $j<m$},\\
1-\frac{1}{z_{j-1}}, & \text{ if $j$ is odd and $j>1$},\\
1,& \text{if $j=m$ is even, or $j=1$}.
\end{dcases}
\end{equation}

\begin{defn}
	\label{def:operators_K1Y}
	We define
	\begin{equation*}
	\mathcal{D}_Y(z_1,\cdots,z_{m-1})=\det\left( I - \mathcal{K}_1  \mathcal{K}_Y \right),
	\end{equation*}
	where two operators
	\begin{equation*}
	\mathcal{K}_1: L^2(\mathcal{S}_2,\dd\mu) \to L^2(\mathcal{S}_1,\dd\mu),\qquad \mathcal{K}_Y: L^2(\mathcal{S}_1,\dd\mu)\to L^2(\mathcal{S}_2,\dd\mu)
	\end{equation*}
	are defined by their kernels
	\begin{equation}
	\label{eq:K1}
	\mathcal{K}_1(w,w'):= \left(\delta_i(j) + \delta_i( j+ (-1)^i)\right) \frac{ f_i(w) }{w-w'} Q_1(j),
	\end{equation}
	and
	\begin{equation}
	\label{eq:KY}
	\mathcal{K}_Y(w',w):= \begin{dcases}
	\left(\delta_j (i) + \delta_j(i - (-1)^j)\right) \frac{ f_j(w') }{w'-w} Q_2(i), & i\ge 2,\\
	\delta_j(1)f_j(w')\Kess_Y(w';w), & i=1,
	\end{dcases}
	\end{equation}
	for any $w\in (\Sigma_{i,\LL}\cup \Sigma_{i,\RR}) \cap \mathcal{S}_1$ and $w'\in (\Sigma_{j,\LL}\cup \Sigma_{j,\RR}) \cap \mathcal{S}_2$ with $1\le i,j\le m$. Here $\Kess_Y$ is a kernel defined in Definition~\ref{def:KY_ess}. The function
	\begin{equation}
	\label{eq:fi}
	f_i(w):=\begin{dcases}
	\frac{F_i(w)}{ F_{i-1}(w)}, & w\in \Omega_\LL\setminus\{-1\},\\
	\frac{F_{i-1}(w)}{ F_i(w)}, & w\in \Omega_\RR\setminus\{0\},
	\end{dcases}
	\end{equation}
	with
	\begin{equation*}
	F_i(w) := \begin{dcases}
	 w^{k_i} (w+1)^{-a_i-k_i} e^{t_i w}, & i=1,\cdots,m,\\
	 1, & i=0,
	\end{dcases}
	\end{equation*}
	for all $w\in(\Omega_\LL\setminus\{-1\})\cup (\Omega_\RR\setminus\{0\})$.
\end{defn}

\paragraph{Kernel $\Kess_Y$}

For any fixed ${\boldsymbol\lambda}=(\lambda_1,\cdots,\lambda_N)\in\intZ^N$ with $\lambda_1\ge \cdots\ge \lambda_N\ge 0$, we define
\begin{equation}
\label{eq:def_G_ftn}
\mathcal{G}_{\boldsymbol\lambda}(W)
:=\frac 			
{ \det \left[ w_i^{-j} (w_i+1)^{\lambda_j} \right]_{i,j=1}^M}
{ \det \left[ w_i^{-j} \right]_{i,j=1}^M},
\end{equation}
where $W=\{w_1,\cdots,w_M\}$ is a set of size $M\ge N$. We also set $\lambda_i=0$ if $i> N$. It is easy to see that $\mathcal{G}_{\boldsymbol\lambda}(W)$ is a symmetric polynomial of $w_1,\cdots,w_M$. In fact, this symmetric function is closely related to the Grothendieck polynomial \cite{Motegi-Sakai13} and inhomogeneous Schur polynomials \cite{Borodin17}. It also appears naturally in the periodic TASEP \cite{Baik-Liu21}. See \cite{Motegi-Sakai13,Borodin17,Baik-Liu21} for more discussions on this symmetric function.

Suppose the number of variables $M$ is greater than  the degree of the polynomial $|{\boldsymbol\lambda}|:=\sum_{j} \lambda_j$, then $\mathcal{G}_{\boldsymbol{\lambda}} (W)$ can be uniquely expressed in terms of power sum symmetric polynomials
\begin{equation}
\label{eq:expan_G}
\mathcal{G}_{\boldsymbol\lambda} (W) =1+  \sum_{\boldsymbol\mu=(\mu_1,\cdots)} c_{\boldsymbol\lambda,\boldsymbol\mu} p_{\boldsymbol \mu}(W),
\end{equation}
where the $\boldsymbol\mu$ sum is over all possible vector $\boldsymbol\mu=(\mu_1,\cdots)$ with positive and weakly decreasing coordinates $\mu_k$ such that $|\boldsymbol\mu|\le |{\boldsymbol\lambda}|$, and the polynomial $p_{\boldsymbol\mu}(W):= \prod_{k}  \left( \sum_{i=1}^M w_i^{\mu_k} \right)$. The constant $1$ comes from evaluating $\mathcal{G}_{\boldsymbol{\lambda}}(W)$ at $w_1=\cdots=w_M=0$. It is also easy to see that the coefficients $c_{{\boldsymbol\lambda},\boldsymbol\mu}$ only depend on ${\boldsymbol\lambda}$ and $\boldsymbol\mu$ but not $M$.

\begin{defn} 
	\label{def:ich}
	We define $\ich_{\boldsymbol\lambda}(v,u)$ by the following explicit formula
	\begin{equation}
	\label{eq:def_ich}
	\ich_{\boldsymbol\lambda}(v,u) = 1 +  \sum_{\boldsymbol\mu=(\mu_1,\cdots)} c_{{\boldsymbol\lambda},\boldsymbol\mu} \hat p_{\boldsymbol \mu}(v,u),
	\end{equation}
	where
	\begin{equation*}
	\hat p_{\boldsymbol\mu}(v,u):= \prod_k \left(u^{\mu_k} -v^{\mu_k}\right).
	\end{equation*}
\end{defn}

An alternate definition of $\ich_{\boldsymbol\lambda}(v,u)$ is as follows, with $\xi=e^{\frac{2\pi\ii} M}$ defined as the $M$-th root of unity, 
\begin{equation}
\label{eq:alter_ich}
\ich_{\boldsymbol\lambda}(v,u) = \mathcal{G}_{\boldsymbol{\lambda}} (u, v\xi, v\xi^2, \cdots, v\xi^{M-1})
\end{equation}
provided $M> |\boldsymbol{\lambda}|$. The equivalence of~\eqref{eq:alter_ich} and~\eqref{eq:def_ich} follows from a direct evaluation of~\eqref{eq:expan_G} when $W=\{u, v\xi, v\xi^2, \cdots, v\xi^{M-1}\}$, by using the simple fact that $u^{\mu_k}+\sum_{j=1}^{M-1} (v\xi^{j})^{\mu_k} = u^{\mu_k} - v^{\mu_k}$ since $\mu_k \le |\boldsymbol{\mu}|\le |\boldsymbol{\lambda}| <M$.

A similar calculation when $M \le |\boldsymbol{\lambda}|$ gives
\begin{equation}
\label{eq:alter_ich_ext}
\ich_{\boldsymbol{\lambda}}(v,u) = \mathcal{G}_{\boldsymbol{\lambda}} (u, v\xi, v\xi^2, \cdots, v\xi^{M-1}) + v^M\cdot r(v,u),
\end{equation}
where $r(v,u)$ is some polynomial of $v$ and $u$ with degree no more than $|\boldsymbol{\lambda}|-M$. This formula will be used later in  Lemma~\ref{lm:iich_reformulation} in Section~\ref{sec:preliminaries} to analytically extend an analogous function for periodic TASEP, and in Section~\ref{sec:special_IC} to evaluate the kernels for flat initial condition.

\begin{defn}
	\label{def:KY_ess}
	We define
	\begin{equation*}
	\Kess_Y(v,u) = \frac{1}{v-u} \cdot \left(\frac{u+1}{v+1}\right)^{y_N+N}\cdot \ich_{\boldsymbol{\lambda}(Y)}(v,u),
	\end{equation*}
	where $\boldsymbol{\lambda}(Y) =(\lambda_1,\cdots,\lambda_N)$ with $\lambda_i= (y_i+i) - (y_N+N)$.
\end{defn}

It is obvious that $\Kess_Y(v,u)$ is a kernel analytic for $v\in\Omega_\RR$ and for $u\in\Omega_\LL\setminus\{-1\}$. It is possible that $\Kess_Y(v,u)$ has a pole at $u=-1$ if $y_N+N<0$. We use the superscript  to emphasize that $\Kess_Y$ is the essential part containing the information of the initial condition $Y$ in the bigger kernel $\mathcal{K}_Y$. See the equation~\eqref{eq:KY}.

\subsubsection{Series expansion formula for $\mathcal{D}_Y(z_1,\cdots,z_{m-1})$}
\label{sec:series_expansion}

We introduce an alternate definition of $\mathcal{D}_Y(z_1,\cdots,z_{m-1})$ in terms of series expansion. 

We assume the contours $\Sigma_{\ell,\LL}^\out,\Sigma_{\ell,\LL}^\inn,\Sigma_{\ell,\RR}^\out,\Sigma_{\ell,\RR}^\inn,$ for $2\le \ell\le m$ and $\Sigma_{1,\LL}$, $\Sigma_{1,\RR}$ are the same as in Section~\ref{sec:spaces_operators}, $\Kess_Y(v,u)$ is the same as in Definition~\ref{def:KY_ess}. We also introduce some notations:
\begin{equation*}
\Delta(W):=\prod_{i<j}(w_j-w_i)
\end{equation*}
for any vector $W=(w_1,w_2,\cdots,w_n)$. For two vectors $W=(w_1,\cdots,w_n)$ and $W'=(w'_1,\cdots,w'_{n'})$, or sets $W=\{w_1,\cdots,w_n\}$ and $W'=\{w'_1,\cdots,w'_{n'}\}$, we define
\begin{equation*}
\Delta(W;W')=\prod_{i=1}^n\prod_{i'=1}^{n'}(w_i-w'_{i'}).
\end{equation*}
Moreover, if a function $f$ is well defined on each component of a vector $W=(w_1,\cdots,w_n)$, or each element of a set $W=\{w_1,\cdots,w_n\}$, we define
\begin{equation*}
f(W)=\prod_{i=1}^n f(w_i).
\end{equation*}
We comment that in the above notations, we allow the empty product and set an empty product to be $1$.

Finally, we recall that $\Sigma_{\ell,\LL}=\Sigma_{\ell,\LL}^\out\cup\Sigma_{\ell,\LL}^\inn$, $\Sigma_{\ell,\RR}=\Sigma_{\ell,\RR}^\out\cup\Sigma_{\ell,\RR}^\inn$ for $2\le \ell\le m$ as in~\eqref{eq:Sigma_contours}, and the measure $\mathrm{d}\mu(w)=\mathrm{d}\mu_{\boldsymbol{z}}(w)$ as in~\eqref{eq:def_dmu}.

\begin{defn}[Alternate definition of $\mathcal{D}_Y$]
	\label{def:D_Y_series}
	We have an alternate definition of $\mathcal{D}_Y$ below
	\begin{equation}
	\label{eq:series_expansion}
	\mathcal{D}_Y(z_1,\cdots,z_{m-1}):=\sum_{\boldsymbol{n}\in(\intZ_{\ge 0})^m}\frac{1}{(\boldsymbol{n}!)^2}\mathcal{D}_{\boldsymbol{n},Y}(z_1,\cdots,z_{m-1})
	\end{equation}
	with $\boldsymbol{n}!=n_1!\cdots n_m!$ for $\boldsymbol{n}=(n_1,\cdots,n_m)$. Here	
	\begin{equation}
	\label{eq:D_nY}
	\begin{split}
	&\mathcal{D}_{\boldsymbol{n},Y}(z_1,\cdots,z_{m-1})\\
	&=\prod_{\ell=1}^m\prod_{i_\ell=1}^{n_\ell} \int_{\Sigma_{\ell,\LL}}\mathrm{d}\mu_{\boldsymbol{z}}(u_{i_\ell}^{(\ell)})\int_{\Sigma_{\ell,\RR}}\mathrm{d}\mu_{\boldsymbol{z}}(v_{i_\ell}^{(\ell)}) \left[(-1)^{n_1(n_1+1)/2}\frac{\Delta(U^{(1)};V^{(1)})}{\Delta(U^{(1)})\Delta(V^{(1)})} \det\left[\Kess_Y(v_i^{(1)},u_j^{(1)})\right]_{i,j=1}^{n_1}\right]\\
	&\quad\cdot \left[\prod_{\ell=1}^m\frac{(\Delta(U^{(\ell)}))^2(\Delta(V^{(\ell)}))^2}{(\Delta(U^{(\ell)};V^{(\ell)}))^2}f_\ell(U^{(\ell)}) f_\ell(V^{(\ell)})\right]\\
	&\quad\cdot \left[\prod_{\ell=1}^{m-1} \frac{\Delta(U^{(\ell)};V^{(\ell+1)})\Delta(V^{(\ell)};U^{(\ell+1)})}{\Delta(U^{(\ell)};U^{(\ell+1)})\Delta(V^{(\ell)};V^{(\ell+1)})}\left(1-z_\ell\right)^{n_\ell}\left(1-\frac{1}{z_\ell}\right)^{n_{\ell+1}}\right]
	\end{split}
	\end{equation}
	and the functions $f_\ell$ are defined in~\eqref{eq:fi}. The vectors $U^{(\ell)}$ and $V^{(\ell)}$ are given by $U^{(\ell)}=(u_1^{(\ell)},\cdots,u_{n_\ell}^{(\ell)})$, $V^{(\ell)}=(v_1^{(\ell)},\cdots,v_{n_\ell}^{(\ell)})$ for $\ell=1,\cdots,m$.
\end{defn}

\begin{rmk}
	The above formula of $\mathcal{D}_Y(z_1,\cdots,z_{m-1})$ is in terms of an infinite sum. However, it is not hard to prove that when any $n_\ell>N$, the integral on the right hand side of~\eqref{eq:D_nY} is zero. Thus the summation actually only runs for finitely many terms. Here is the reason in brief: Any term in the expansion of $\Delta(V^{(\ell)})=\det\left[(v_i^{(\ell)})^{j-1}	\right]_{i,j=1}^{n_\ell}$ will give some $(v_i^{(\ell)})^{n_{\ell}-1}$ factor. The order $n_\ell-1\ge N$ is greater than or equal to the order of poles from any consecutive $f_i$ factors at $0$ (there might be poles from $v_i^{(\ell)}=v_{i'}^{(\ell+1)}=v_{i''}^{(\ell+2)}=\cdots$ or $v_i^{(\ell)}=v_{i'}^{(\ell-1)}=v_{i''}^{(\ell-2)}=\cdots$). Thus the multiple integral around $0$ will be zero. This proof is similar to that of Proposition~\ref{prop:invariance_distribution} so we omit the details.
\end{rmk}

The equivalence of the two definitions of $\mathcal{D}_Y(z_1,\cdots,z_{m-1})$ in Definition~\ref{def:operators_K1Y} and Definition~\ref{def:D_Y_series} follows from a general statement below.

\begin{prop}
	\label{prop:equivalence_Fredholm_series}
	Let $\Sigma_1,\cdots,\Sigma_m$ be disjoint sets in $\complexC$ and let $\mathcal{H}=L^2(\Sigma_1\cup\cdots\cup\Sigma_m,\mu)$ for some measure $\mu$. Let $\widehat\Sigma_1,\cdots,\widehat\Sigma_m$ be disjoint sets in $\complexC$ and let $\widehat{\mathcal{H}}=L^2(\widehat\Sigma_1\cup\cdots\cup\widehat\Sigma_m,\widehat\mu)$ for some measure $\widehat\mu$. Let $A$ be an operator from $\widehat{\mathcal{H}}$ to $\mathcal{H}$ and $B$ an operator from $\mathcal{H}$ to $\widehat{\mathcal{H}}$, both of which are defined by kernels. Suppose $A$ and $B$ have the following block structures:
	\begin{itemize}
		\item For any $(w,\widehat w)\in \Sigma_i\times\widehat\Sigma_j$
		\begin{equation*}
		A(w,\widehat w)=\begin{dcases}
		\frac{f_i(w)\widehat f_j(\widehat w)}{w-\widehat w},& \text{if } 2s-1\le i,j\le 2s \text{ for some integer } s\ge 1,\\
		0,&\text{otherwise}.
		\end{dcases}
		\end{equation*} 
		\item For any $(\widehat w,w)\in \widehat\Sigma_j\times\Sigma_i$
		\begin{equation*}
		B(\widehat w,w)=\begin{dcases}
		\frac{\widehat g_j(\widehat w)g_i(w)}{\widehat w-w},& \text{if } 2s\le i,j\le 2s+1 \text{ for some integer } s\ge 1,\\
		\widehat g_1(\widehat w)g_1(w) H(\widehat w,w),&\text{if } i=j=1,\\
		0,&\text{otherwise}.
		\end{dcases} 
		\end{equation*}
	\end{itemize}
	Assume that the Fredholm determinant $\det(I-AB)$ is well-defined and is equal to the usual Fredholm determinant series expansion. Then
	\begin{equation*}
	\begin{split}
	\det(I-AB) & =\sum_{\boldsymbol{n}\in (\intZ_{\ge 0})^m} \frac{1}{(\boldsymbol{n}!)^2}	
	\prod_{\ell=1}^{m}  \prod_{{i_\ell}=1}^{n_{\ell}} \int_{\Sigma_\ell}\dd\mu(w_{i_\ell}^{(\ell)})
	\prod_{\ell=1}^{m}\prod_{{i_\ell}=1}^{n_{\ell}} \int_{\widehat\Sigma_\ell}\dd\widehat \mu(\widehat w_{i_\ell}^{(\ell)}) \\
	&\quad\,\,\, \left[(-1)^{n_1(n_1+1)/2}\frac{\Delta(W^{(1)};\widehat  W^{(1)})}{\Delta(W^{(1)})\Delta(\widehat W^{(1)})} \det\left[H(\widehat w_i^{(1)},w_j^{(1)})\right]_{i,j=1}^{n_1}\right]\\
	&\quad\cdot \left[\prod_{\ell=1}^m\frac{(\Delta(W^{(\ell)}))^2(\Delta(\widehat W^{(\ell)}))^2}{(\Delta(W^{(\ell)};\widehat W^{(\ell)}))^2}f_\ell(W^{(\ell)}) g_\ell(W^{(\ell)})\widehat f_\ell(\widehat W^{(\ell)}) \widehat g_\ell(\widehat W^{(\ell)})\right]\\
	&\quad\cdot \left[\prod_{\ell=1}^{m-1} \frac{\Delta(W^{(\ell)}; W^{(\ell+1)})\Delta(\widehat W^{(\ell)};\widehat W^{(\ell+1)})}{\Delta(W^{(\ell)};\widehat W^{(\ell+1)})\Delta(\widehat W^{(\ell)};W^{(\ell+1)})}\right],
	\end{split}
	\end{equation*}
	where $\boldsymbol{n}=(n_1,\cdots,n_m)$. The notations $|\boldsymbol{n}|:=n_1+\cdots+n_m$ and $\boldsymbol{n}!:=n_1!\cdots n_m!$.  The vectors $W^{(\ell)}=(w_1^{(\ell)},\cdots,w_{n_\ell}^{(\ell)})$, $\widehat W^{(\ell)}=(\widehat w_1^{(\ell)},\cdots,\widehat w_{n_\ell}^{(\ell)})$ for $\ell=1,\cdots,m$.
\end{prop}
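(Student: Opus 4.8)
The plan is to expand the Fredholm determinant $\det(I-AB)$ using the standard series expansion, then regroup the resulting multiple integrals block by block so that the indices naturally split into the $m$ groups $n_1,\dots,n_m$ prescribed by the block structure of $A$ and $B$. First I would write $\det(I-AB)=\sum_{n\ge 0}\frac{(-1)^n}{n!}\int\cdots\int\det\bigl[(AB)(x_i,x_j)\bigr]_{i,j=1}^n\,\dd\mu(x_1)\cdots\dd\mu(x_n)$, where $(AB)(x,x')=\int A(x,\widehat x)B(\widehat x,x')\,\dd\widehat\mu(\widehat x)$ acts on $\mathcal{H}=L^2(\Sigma_1\cup\cdots\cup\Sigma_m,\mu)$. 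Because $\Sigma_1,\dots,\Sigma_m$ are disjoint, each integration variable $x_i$ lies on exactly one $\Sigma_\ell$; summing over which block each variable belongs to introduces the sum over $\boldsymbol n=(n_1,\dots,n_m)$ together with the multinomial factor $n!/(n_1!\cdots n_m!)$, which explains one factor of $1/\boldsymbol n!$. The second factor of $1/\boldsymbol n!$ will come from the $\widehat w$-variables introduced when we resolve the composition $AB$ — after inserting one $\widehat w_i$ for each $w_i$ and symmetrizing, an identical multinomial bookkeeping over the $\widehat\Sigma$-blocks appears.

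Next I would carry out the Cauchy-determinant evaluations. The key observation is that the block structure of $A$ forces $\widehat w_i$ to lie in a block $\widehat\Sigma_j$ "compatible" (in the sense $2s-1\le i,j\le 2s$) with the block of $w_i$, and similarly the structure of $B$ links consecutive blocks; concretely, after the dust settles, the matrix $\bigl[(AB)(w_i,w_j)\bigr]$ with the $w$'s distributed into blocks according to $\boldsymbol n$ factors, via the Cauchy determinant identity $\det\bigl[\tfrac{1}{w_i-\widehat w_j}\bigr]=\tfrac{\Delta(W)\Delta(\widehat W)}{\Delta(W;\widehat W)}\cdot(-1)^{\binom{n}{2}}$ applied block-by-block, into the product of $\Delta$-ratios displayed in the statement. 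The diagonal block $i=j=1$ is special: there $B$ carries the extra kernel $H(\widehat w,w)$ instead of a pure Cauchy kernel, so that block contributes the determinant $\det[H(\widehat w_i^{(1)},w_j^{(1)})]$ together with a single Cauchy determinant $\det[\tfrac{1}{w_i^{(1)}-\widehat w_j^{(1)}}]$ coming from the $A$-factor, which is exactly the $(-1)^{n_1(n_1+1)/2}\tfrac{\Delta(W^{(1)};\widehat W^{(1)})}{\Delta(W^{(1)})\Delta(\widehat W^{(1)})}$ prefactor. The scalar functions $f_i,\widehat f_j,g_i,\widehat g_i$ pull out of every Cauchy bracket as products $f_\ell(W^{(\ell)})$ etc.; they contribute precisely the middle bracket. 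Finally the cross terms $\Delta(W^{(\ell)};W^{(\ell+1)})$ in the last bracket arise because the only nonzero off-diagonal entries of $AB$ between distinct blocks connect block $\ell$ to block $\ell+1$ (odd-to-even by $A$, even-to-odd by $B$), producing Cauchy factors that mix two adjacent groups of variables.

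The cleanest way to organize all of this is by induction on $m$, or alternatively by a direct combinatorial argument that the nonzero entries of $AB$ form a block-tridiagonal pattern and then expanding the determinant $\det[I-AB]$ as a sum over permutations that respect this pattern; each such permutation decomposes into cycles each of which stays within a block or hops between two adjacent blocks, and the contribution of each cycle type is evaluated by a Cauchy-type identity. I would follow the structure already used for the analogous statement in \cite{Baik-Liu19b}, adapting the bookkeeping to the present normalization and to the presence of the special kernel $H$ in the first block.

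\textbf{Main obstacle.} The hard part will be the careful combinatorial bookkeeping: tracking exactly how the two $1/\boldsymbol n!$ factors arise (one from distributing the $w$-variables over $\Sigma_\ell$ and symmetrizing, one from the $\widehat w$-variables over $\widehat\Sigma_\ell$), getting the sign $(-1)^{n_1(n_1+1)/2}$ in the first block right, and verifying that the off-diagonal Cauchy factors assemble into exactly the ratio $\prod_{\ell=1}^{m-1}\tfrac{\Delta(W^{(\ell)};W^{(\ell+1)})\Delta(\widehat W^{(\ell)};\widehat W^{(\ell+1)})}{\Delta(W^{(\ell)};\widehat W^{(\ell+1)})\Delta(\widehat W^{(\ell)};W^{(\ell+1)})}$ with no leftover signs or cross-block factors between non-adjacent blocks. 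Checking convergence and that term-by-term manipulation of the Fredholm series is legitimate is assumed in the hypothesis ("$\det(I-AB)$ is well-defined and equal to the usual series expansion"), so that is not an issue here.
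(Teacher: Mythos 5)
Your sketch is essentially the paper's approach, in the sense that the paper does not prove this proposition at all: it simply cites Section 4.3 of \cite{Baik-Liu19} (for $H(\widehat w,w)=\frac{1}{\widehat w-w}$) and \cite{Baik-Liu19b} (for general $H$), and your outline reproduces the strategy of those references — expand the Fredholm series, introduce the $\widehat w$-variables, and factor the resulting determinants block by block via the Cauchy determinant identity. One point worth tightening: the cleanest way to introduce the hat variables is not to pair one $\widehat w_i$ with each $w_i$ inside $(AB)(w_i,w_{\sigma(i)})$, but to apply the Andreief (Cauchy--Binet) identity once, $\det\left[(AB)(w_i,w_j)\right]_{i,j=1}^n=\frac{1}{n!}\int\cdots\int\det\left[A(w_i,\widehat w_k)\right]_{i,k=1}^n\det\left[B(\widehat w_k,w_j)\right]_{k,j=1}^n\,\dd\widehat\mu(\widehat w_1)\cdots\dd\widehat\mu(\widehat w_n)$; the two $1/\boldsymbol{n}!$ factors then come from symmetrizing the $w$'s over the $\Sigma_\ell$ and the $\widehat w$'s over the $\widehat\Sigma_\ell$ separately, and the fact that the number of $\widehat w$'s in $\widehat\Sigma_\ell$ must equal the number of $w$'s in $\Sigma_\ell$ is forced by the two \emph{interlocking} block-diagonal structures of these two determinants ($A$ couples the pair $\{2s-1,2s\}$, $B$ couples $\{2s,2s+1\}$ and $\{1\}$), not by any term-by-term pairing. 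With that adjustment the remaining work is exactly the bookkeeping you identify, and it is carried out in the cited references.
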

\begin{proof}
	The proof when $H(\widehat w,w)=\frac{1}{\widehat w-w}$ was proved in \cite{Baik-Liu19}, and the general $H$ case was proved in \cite{Baik-Liu21}. See Section 4.3 of \cite{Baik-Liu19} for the proof with this special $H$. Although their proof was presented for specific choices of contours $\widehat\Sigma_i$, $\Sigma_i$, measures $\dd\mu, \dd\widehat\mu$ and functions $f_i,g_i,\widehat f_i,\widehat g_i$, it holds for this proposition by replacing their specific choices to the general settings. Hence we do not provide details here.
\end{proof}

\subsubsection{Further discussion on $\mathcal{D}_Y$}
\label{sec:properties_D}

In this section, we mainly discuss the function $\mathcal{D}_Y$. 
We will show that there are various formulas for $\mathcal{D}_Y$. In the definition~\ref{def:D_Y_series} of $\mathcal{D}_Y$, we may use a different nesting order of the contours, or modify the kernels in the Fredholm determinant representation. Especially we could replace $\Kess_Y$, which contains the information of the initial condition, by a more general form $\Kess_Y(v,u) + \Knull(v,u)$ as long as $\Knull(v,u)$ satisfies certain conditions.  
These are discussed in Propositions~\ref{prop:invariance_spaces},~\ref{prop:invariance1} and~\ref{prop:invariance_distribution}. We will also discuss one identity which $\Kess_Y$ satisfies, see Proposition~\ref{prop:orthogonality}. 

In Section~\ref{sec:special_IC}, we  write down the explicit formulas of $\mathcal{D}_Y$ when $Y$ is either the step or the flat initial condition. These formulas will be used later to evaluate the limiting multi-time distributions for these two initial conditions.

Then we  verify, in a formal way, that the function $\mathcal{D}_Y$ for $m=1$ matches the known result of the one point distribution formula. This will be given in Section~\ref{sec:one_point}.

Finally we prove two identities about $\mathcal{D}_Y$ which will be used in our proofs later.

We remark that throughout this section, the propositions are proved by only using the definition of $\mathcal{D}_Y$. We will use these propositions in the proof of other statements in the paper.

\paragraph{About the formula of $\mathcal{D}_Y$}
\label{sec:def_mathcalD_Y}

As we mentioned before (see the first paragraph of Section~\ref{sec:Fredholm_representation}), there are different Fredholm determinant representations (and the corresponding series expansions) for  $\mathcal{D}_Y$.

We first show that the spaces of the Fredholm operators could be different. More explicitly, the nesting order of the contours, if we adjust the measure appropriately, does not affect  $\mathcal{D}_Y$ in the definition.

\begin{prop}
	\label{prop:invariance_spaces}
	Let $\tilde \Sigma_{1,\LL}^\out,\cdots,\tilde \Sigma_{m-1,\LL}^\out,\tilde\Sigma_{m,\LL},\tilde\Sigma_{m-1,\LL}^\inn,\cdots,\tilde\Sigma_{1,\LL}^\inn$ be $2m-1$ nested simple closed contours, from outside to inside, in $\Omega_\LL$ enclosing the point $-1$. Let $\tilde\Sigma_{\ell,\LL}:=\tilde\Sigma_{\ell,\LL}^\out\cup\tilde\Sigma_{\ell,\LL}^\inn$ for $1\le\ell\le m-1$. We define the measure $\dd\tilde\mu(w)$ on $\tilde\Sigma_{\ell,\LL}$ in the following way
	\begin{equation*}
	\dd\tilde\mu(w)=\dd\tilde\mu_{\boldsymbol{z}}(w) :=\begin{dcases}
	\frac{1}{1-z_\ell}\ddbarr{w},& w\in \tilde\Sigma_{\ell,\LL}^\out,\quad \ell=1,\cdots,m-1,\\
	\frac{-z_\ell}{1-z_\ell}\ddbarr{w},& w\in \tilde\Sigma_{\ell,\LL}^\inn,\quad \ell=1,\cdots,m-1,\\
	\ddbarr{w},& w\in\tilde\Sigma_{m,\LL}.
	\end{dcases}
	\end{equation*}
	Then $\mathcal{D}_Y(z_1,\cdots,z_{m-1})$ is invariant if we replace all the $\Sigma_{\ell,\LL}$ contours and the associated measure $\dd\mu(w)$ to $\tilde\Sigma_{\ell,\LL}$ and $\dd\tilde\mu(w)$.  We define the $\tilde\Sigma_{\ell,\RR}$ contours in $\Omega_\RR$ enclosing $0$ and $\dd\tilde\mu(w)$ on $\tilde\Sigma_{\ell,\RR}$ in a similar way. Then $\mathcal{D}_Y(z_1,\cdots,z_{m-1})$ is also invariant if we replace all the $\Sigma_{\ell,\RR}$ contours and the associated measure $\dd\mu(w)$ to $\tilde\Sigma_{\ell,\RR}$ and $\dd\tilde\mu(w)$. 
\end{prop}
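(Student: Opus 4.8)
The plan is to prove the $\Omega_\LL$ statement (the $\Omega_\RR$ case being entirely parallel), and to reduce it to a single elementary move: swapping the nesting order of one pair of adjacent contours $\Sigma_{\ell,\LL}^\out$ and $\Sigma_{\ell',\LL}^\inn$ (equivalently, deforming one contour across another), while simultaneously adjusting the weights in $\dd\mu$. Since any two admissible nestings of the $2m-1$ left contours are related by a finite sequence of such adjacent transpositions that never passes a contour through the point $-1$, it suffices to check that $\mathcal{D}_Y$ is unchanged under one such move. I would work with the series-expansion form of $\mathcal{D}_Y$ from Definition~\ref{def:D_Y_series}, since there the dependence on the left contours is completely explicit: each $u_{i_\ell}^{(\ell)}$ is integrated against $\frac{1}{1-z_{\ell-1}}\int_{\Sigma_{\ell,\LL}^\inn}-\frac{z_{\ell-1}}{1-z_{\ell-1}}\int_{\Sigma_{\ell,\LL}^\out}$ (or $\int_{\Sigma_{1,\LL}}$ for $\ell=1$), and the integrand is a rational function of the $u$'s times the $f_\ell$'s.

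The key computation is then a residue bookkeeping. When I deform $\Sigma_{\ell,\LL}^\out$ inward across some inner contour, the integrand of the relevant $u$-variable — call it $u$, a variable ``of level $\ell$'' — may pick up residues at coincidences $u = u'$ where $u'$ is another left variable whose contour $u$ is crossing. The first point to verify is that no residue is actually collected: the only factors producing poles in $u$ as a function of the other $u$-variables are the cross-ratio factors $\tfrac{(\Delta(U^{(\ell)}))^2}{(\Delta(U^{(\ell)};V^{(\ell)}))^2}$ and the $\tfrac{\Delta(U^{(\ell)};V^{(\ell\pm1)})}{\Delta(U^{(\ell)};U^{(\ell\pm1)})}$ factors — i.e. poles only at $u = v_j^{(\cdot)}$, which live in $\Omega_\RR$, not in $\Omega_\LL$. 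The factors $f_\ell(u)$ only have singularities at $-1$, which we never cross. Hence the left-contour integrals are genuinely contour-deformation invariant \emph{as long as the weight on each contour is held fixed}; the content of the proposition is purely that one can relabel which nested contour plays the role of ``$\Sigma_{\ell,\LL}^\out$'' versus ``$\Sigma_{\ell',\LL}^\inn$'' provided the $\dd\mu$ weights are reassigned according to the stated formula, and the claim is that the new assignment reproduces exactly the same linear combination of iterated integrals. So the real task is a combinatorial identity: expanding the bracketed operators $\bigl[\tfrac{1}{1-z_{\ell-1}}\int_{\inn}-\tfrac{z_{\ell-1}}{1-z_{\ell-1}}\int_{\out}\bigr]$ over all variables and all levels, and checking that after the contour relabeling the coefficient of each resulting monomial term (a product of plain integrals over specified circles) is unchanged.

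Concretely I would proceed as follows. First, record that $\mathcal{D}_Y$ depends on the left contours only through the family of ``selection operators'' $T_\ell := \tfrac{1}{1-z_{\ell-1}}\int_{\Sigma_{\ell,\LL}^\inn}-\tfrac{z_{\ell-1}}{1-z_{\ell-1}}\int_{\Sigma_{\ell,\LL}^\out}$ for $\ell\ge 2$ and $T_1 := \int_{\Sigma_{1,\LL}}$, applied to an integrand holomorphic on $\Omega_\LL\setminus\{-1\}$ in each left variable. Second, observe that for a function $g$ holomorphic on $\Omega_\LL\setminus\{-1\}$ and any circle $C$ around $-1$, $\int_C \ddbarr{w} g(w)$ depends on $C$ only through which side of $-1$ it is on — so $T_\ell g = \bigl(\tfrac{1}{1-z_{\ell-1}}-\tfrac{z_{\ell-1}}{1-z_{\ell-1}}\bigr)\int\ddbarr{w}g(w) = \int\ddbarr{w}g(w)$ is independent of the radius, and likewise $T_1 g = \int\ddbarr{w}g(w)$. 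In other words each $T_\ell$, applied to a holomorphic integrand, collapses to the single residue at $-1$, independent of nesting. Third, note the new measure $\dd\tilde\mu$ is rigged so that the corresponding new selection operators $\tilde T_\ell$ collapse to the \emph{same} residue at $-1$: on $\tilde\Sigma_{\ell,\LL}^\out$ the weight is $\tfrac{1}{1-z_\ell}$ and on $\tilde\Sigma_{\ell,\LL}^\inn$ it is $\tfrac{-z_\ell}{1-z_\ell}$, summing again to $1$ (with the index shift matching the level convention), and on $\tilde\Sigma_{m,\LL}$ the weight is $1$. Hence both the old and new prescriptions give $\mathcal{D}_Y$ as the same iterated residue at $-1$ (times the $\Omega_\RR$ contributions, untouched), so they agree.

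The main obstacle is a bookkeeping subtlety rather than a conceptual one: one must make sure that ``collapsing each $T_\ell$ to the residue at $-1$'' is legitimate even though the various left variables are not independent — the integrand is a single rational expression in all of $U^{(1)},\dots,U^{(m)}$, and the order in which the iterated residues are taken could in principle matter if spurious poles $u_i^{(\ell)} = u_j^{(\ell')}$ were present. The step to nail down carefully is therefore exactly the one flagged above: that in Definition~\ref{def:D_Y_series} all poles among the left variables sit at points of $\Omega_\RR$ (the $v$'s) or at $-1$ (through the $f_\ell$'s), never at another left variable, so the iterated left integrals commute and each can be evaluated independently as a residue at $-1$. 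Once that is established, the identity of the two measures' total weights (old: $\tfrac{1}{1-z_{\ell-1}} - \tfrac{z_{\ell-1}}{1-z_{\ell-1}} = 1$; new: $\tfrac{1}{1-z_\ell} - \tfrac{z_\ell}{1-z_\ell} = 1$) finishes the argument. The argument for the $\Sigma_{\ell,\RR}$ contours and the point $0$ is identical, exchanging the roles of $U$ and $V$ and using that the $f_\ell$'s are holomorphic on $\Omega_\RR\setminus\{0\}$.
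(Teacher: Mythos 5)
Your argument has a genuine gap, and it sits exactly at the step you flagged as the one to ``nail down carefully.'' You claim that in the series expansion of Definition~\ref{def:D_Y_series} the only poles in a left variable $u_i^{(\ell)}$ are at the $v$'s (in $\Omega_\RR$) and at $-1$ (through $f_\ell$), ``never at another left variable.'' This is false: the last bracket of~\eqref{eq:D_nY} contains the factor
\begin{equation*}
\prod_{\ell=1}^{m-1}\frac{\Delta(U^{(\ell)};V^{(\ell+1)})\,\Delta(V^{(\ell)};U^{(\ell+1)})}{\Delta(U^{(\ell)};U^{(\ell+1)})\,\Delta(V^{(\ell)};V^{(\ell+1)})},
\end{equation*}
whose denominator $\Delta(U^{(\ell)};U^{(\ell+1)})=\prod_{i,j}(u_i^{(\ell)}-u_j^{(\ell+1)})$ produces poles at every coincidence $u_i^{(\ell)}=u_j^{(\ell+1)}$ of left variables of adjacent levels (and likewise $\Delta(V^{(\ell)};V^{(\ell+1)})$ for the right variables). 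These are precisely the Cauchy-type factors $\Ch(U^{(\ell)};U^{(\ell+1)})$ around which the whole paper is organized. Because of them, the iterated left integrals do \emph{not} commute, the operators $T_\ell$ do \emph{not} collapse to a single residue at $-1$, and the nesting order genuinely matters. If your no-pole claim were true, the proposition would be trivial (any contours, any weights summing to $1$ would do), and there would be no reason for the statement to prescribe the reversed nesting order and the shifted weights ($z_{\ell-1}$ on the old level-$\ell$ contours versus $z_\ell$ on the new ones) so precisely.

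The paper's proof confronts these poles head on. It isolates a single swapping identity (Lemma~\ref{lm:switching_contours}): for three nested contours $\Sigma^\out,\Sigma,\Sigma^\inn$ and an integrand containing the factor $\Ch(U^{(1)};U^{(2)})$, putting $U^{(1)}$ on $\Sigma^\out,\Sigma^\inn$ with weights $\frac{1}{1-z},\frac{-z}{1-z}$ and $U^{(2)}$ on $\Sigma$ equals putting $U^{(2)}$ on $\Sigma^\inn,\Sigma^\out$ with those weights and $U^{(1)}$ on $\Sigma$. That lemma is proved by induction on $n_1$, with the residues at $u_{n_1}^{(1)}=u_j^{(2)}$ explicitly matched between the two sides; the proposition then follows by a second induction on $m$. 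To repair your approach you would need to replace the ``collapse to the residue at $-1$'' step by some equivalent of this lemma, i.e., an explicit accounting of the residues generated at $u_i^{(\ell)}=u_j^{(\ell+1)}$ when a contour is deformed past an adjacent one, and a verification that the reassigned weights reproduce exactly those residue contributions.
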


The above proposition indicates that we could flip the order of the nested contours and the associated measure accordingly without changing the value of $\mathcal{D}_Y(z_1,\cdots,z_{m-1})$. We remark that we only considered the case when the contour with the smallest or largest label lies in the middle of the contours and the remaining contours are nested in the order of their labels, but it is possible to put any contour $\Sigma_{\ell,\LL}$ or $\Sigma_{\ell,\RR}$ at the center or consider nested contours in arbitrary order. But the associated measures are not as neat as $\dd\mu$ or $\dd\tilde\mu$. It is not clear how these other different orders benefit the evaluation of $\mathcal{D}_Y(z_1,\cdots,z_{m-1})$ either. Hence we do not discuss it in details.

The proof of Proposition~\ref{prop:invariance_spaces} is provided in Section~\ref{sec:proof_invariance_spaces}.

Now we consider the Fredholm determinant kernels in $\mathcal{D}_Y$. Obviously the Fredholm determinant is invariant if we apply a conjugation to the kernels. Furthermore, we can modify the functions $F_i$'s (hence the functions $f_i$'s accordingly) as well.

\begin{prop}
	\label{prop:invariance1}
	$\mathcal{D}_Y(z_1,\cdots,z_{m-1})$ is invariant if we replace the function $F_i(w)$ by $\tilde F_i(w)=c_i F_i(w) $ for any nonzero numbers $c_1,\cdots,c_{m}$. It is also invariant if we shift all the $y_i$'s and $a_i$'s by the same integer constant $c$.
\end{prop}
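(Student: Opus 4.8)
The plan is to verify both invariance statements directly from the series-expansion definition of $\mathcal{D}_Y$ in Definition~\ref{def:D_Y_series}, since that formula exhibits the dependence on $F_i$, the $y_i$'s, and the $a_i$'s most transparently. For the first claim, suppose we replace $F_i$ by $\tilde F_i = c_i F_i$ for nonzero constants $c_1,\dots,c_m$, and set $c_0 = 1$. By~\eqref{eq:fi}, the function $f_i$ changes to $\tilde f_i(w) = (c_i/c_{i-1}) f_i(w)$ for $w\in\Omega_\LL\setminus\{-1\}$ and $\tilde f_i(w) = (c_{i-1}/c_i) f_i(w)$ for $w\in\Omega_\RR\setminus\{0\}$. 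First I would track where the products $f_\ell(U^{(\ell)}) f_\ell(V^{(\ell)})$ appear in~\eqref{eq:D_nY}: the only place is the factor $\prod_{\ell=1}^m (\cdots) f_\ell(U^{(\ell)}) f_\ell(V^{(\ell)})$. Replacing $f_\ell$ by $\tilde f_\ell$ multiplies the integrand by
\begin{equation*}
\prod_{\ell=1}^m \left(\frac{c_\ell}{c_{\ell-1}}\right)^{n_\ell} \left(\frac{c_{\ell-1}}{c_\ell}\right)^{n_\ell} = 1,
\end{equation*}
because $U^{(\ell)}$ runs over contours in $\Omega_\LL$ (giving the factor $(c_\ell/c_{\ell-1})^{n_\ell}$) while $V^{(\ell)}$ runs over contours in $\Omega_\RR$ (giving the reciprocal factor $(c_{\ell-1}/c_\ell)^{n_\ell}$). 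Hence every $\mathcal{D}_{\boldsymbol{n},Y}$ is unchanged, and so is $\mathcal{D}_Y$. I should note in passing that $\Kess_Y$ does not involve the $F_i$'s, so it is untouched; and the no-$\Omega$-dependence of $\Kess_Y$ means the $i=1$ block of $\mathcal{K}_Y$ (the Fredholm-determinant version) is likewise unaffected, which gives a parallel one-line argument on the Fredholm side if one prefers.

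For the second claim, suppose we shift $y_i \mapsto y_i + c$ and $a_i \mapsto a_i + c$ for a common integer $c$. On the left-hand side of~\eqref{eq:thm1} this shift is trivially invariant: shifting the initial configuration $Y$ by $c$ shifts the whole TASEP trajectory by $c$, so $x_{k_\ell}(t_\ell) \ge a_\ell$ becomes $x_{k_\ell}(t_\ell) + c \ge a_\ell + c$, the same event. But since we want a statement about $\mathcal{D}_Y$ itself (valid before integrating), I would instead check it directly in the formula. The $a_i$-dependence enters only through $F_i(w) = w^{k_i}(w+1)^{-a_i-k_i}e^{t_i w}$, so shifting $a_i \mapsto a_i + c$ multiplies $F_i(w)$ by $(w+1)^{-c}$ for every $i = 1,\dots,m$ simultaneously — that is, it is the special case of the first claim with $c_i = (w+1)^{-c}$. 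This is not literally a constant, but the cancellation in the displayed product above is pointwise in the integration variables, so the same telescoping argument applies verbatim: in the $\prod_\ell f_\ell(U^{(\ell)})f_\ell(V^{(\ell)})$ factor, each $u\in\Omega_\LL$ contributes $(u+1)^{-c}$ from $F_\ell$ and $(u+1)^{+c}$ from $1/F_{\ell-1}$ — wait, one must be careful: the telescoping is across consecutive indices $\ell-1,\ell$, not within a single $\ell$. Let me restate this cleanly: writing $f_\ell(w)$ for $w\in\Omega_\LL$ as $F_\ell(w)/F_{\ell-1}(w)$, the $a$-shift sends this to $(w+1)^{-c}F_\ell(w) \big/ (w+1)^{-c}F_{\ell-1}(w) = f_\ell(w)$ for $\ell \ge 2$ (the $(w+1)^{-c}$ factors cancel between numerator and denominator), and for $\ell = 1$ it sends $f_1(w) = F_1(w)/F_0(w) = F_1(w)$ to $(w+1)^{-c}F_1(w)$; similarly on $\Omega_\RR$. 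So actually only $f_1$ changes, and only by the factor $(w+1)^{\mp c}$.

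The remaining work is to absorb this single modified factor $f_1$ together with the $y_i$-shift, and this is where the real bookkeeping lies. Shifting $y_i \mapsto y_i + c$ changes $\Kess_Y$ via Definition~\ref{def:KY_ess}: the exponents $\lambda_i = (y_i + i) - (y_N + N)$ are unchanged (the shift cancels), but $y_N + N$ becomes $y_N + N + c$, so $\Kess_Y(v,u)$ picks up an extra factor $\big((u+1)/(v+1)\big)^{c}$. In the series expansion this multiplies the $\det[\Kess_Y(v_i^{(1)},u_j^{(1)})]$ block by $\prod_{i=1}^{n_1} \big((u_i^{(1)}+1)/(v_i^{(1)}+1)\big)^{c}$ — here I am using that the extra factor is of the rank-one form $\phi(u)/\phi(v)$, so it pulls out of the determinant as a product over the diagonal. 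Meanwhile the modified $f_1$ multiplies the integrand by $\prod_{i=1}^{n_1}(u_i^{(1)}+1)^{-c}$ from $f_1(U^{(1)})$ (on $\Omega_\LL$) and $\prod_{i=1}^{n_1}(v_i^{(1)}+1)^{+c}$ from $f_1(V^{(1)})$ (on $\Omega_\RR$, where $f_1(w) = F_0/F_1$ acquires $(w+1)^{+c}$). These two contributions exactly cancel the factor coming from $\Kess_Y$:
\begin{equation*}
\prod_{i=1}^{n_1}\left(\frac{u_i^{(1)}+1}{v_i^{(1)}+1}\right)^{c} \cdot \prod_{i=1}^{n_1}(u_i^{(1)}+1)^{-c}(v_i^{(1)}+1)^{c} = 1.
\end{equation*}
Hence $\mathcal{D}_{\boldsymbol{n},Y}$ is unchanged term by term, proving the second invariance. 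The main obstacle, such as it is, is purely organizational: one must confirm that $(w+1)^{\pm c}$ is the \emph{only} place $c$ enters each of $F_i$, $f_1$, and $\Kess_Y$ (in particular that $\boldsymbol{\lambda}(Y)$ is genuinely shift-invariant and that $k_i$ does not interact with the shift), and that the extra $\Kess_Y$ factor really is multiplicative-rank-one so it exits the determinant cleanly — after which the cancellation is a one-line computation. I would present the argument on the series side as above, and remark that the Fredholm-side argument is the same conjugation $\mathcal{K}_Y \mapsto D\mathcal{K}_Y D^{-1}$, $\mathcal{K}_1 \mapsto D^{-1}\mathcal{K}_1 D$ with $D$ multiplication by $(w+1)^{c}$ on the relevant contours, which leaves $\det(I - \mathcal{K}_1\mathcal{K}_Y)$ invariant.
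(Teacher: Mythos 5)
Your proof is correct and follows essentially the same route as the paper's: for the first claim, the telescoping cancellation of $(c_\ell/c_{\ell-1})^{n_\ell}(c_{\ell-1}/c_\ell)^{n_\ell}=1$ in each $\mathcal{D}_{\boldsymbol{n},Y}$; for the second, the observation that only $f_1$ and $\Kess_Y$ change, with $f_1(U^{(1)})f_1(V^{(1)})$ acquiring $\prod_i (u_i^{(1)}+1)^{-c}(v_i^{(1)}+1)^{c}$ and the determinant of $\Kess_Y$ acquiring the reciprocal rank-one factor. The extra Fredholm-side conjugation remark is a harmless addition not present in the paper.
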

\begin{proof}
	We first consider the change $F_i(w)\to c_i F_i(w)$. This will change $f_i(u)\to \frac{c_i}{c_{i-1}}f_i(u)$ for $u\in\Omega_\LL\setminus\{-1\}$ and $f_i(v)\to \frac{c_{i-1}}{c_i}f_i(v)$ for $v\in\Omega_\RR\setminus\{0\}$ by the definition of $f_i$ in~\eqref{eq:fi}. Here we set $c_0=1$. Now we consider the series expansion formula~\eqref{eq:series_expansion} of $\mathcal{D}_Y$. The $\boldsymbol{n}$-th term $\mathcal{D}_{\boldsymbol{n},Y}$ is invariant under the above changes since $f_\ell(U^{(\ell)})f_\ell(V^{(\ell)})$ has the same number of factors $\frac{c_\ell}{c_{\ell-1}}$ and $\frac{c_{\ell-1}}{c_{\ell}}$ whose product is $1$.
	
	Now we consider the case when we shift all $y_i$ and $a_i$ by the same constant $c$. This change does not affect the functions $f_\ell$ for $\ell>1$, and $f_1(u)\to f_1(u)\cdot (u+1)^{-c}$, $f_1(v)\to f_1(v)\cdot (v+1)^c$ for $u\in\Omega_\LL\setminus\{-1\}$ and $v\in\Omega_\RR\setminus\{0\}$. On the other hand, by the definition of $\Kess_Y(v,u)$ in~\eqref{def:KY_ess} we know that $\Kess_Y(v,u)\to \Kess_Y(v,u)\left(\frac{1+u}{1+v}\right)^{c}$. Thus $f_1(U^{(1)})f_1(V^{(1)})\det\left[\Kess_Y(v_i^{(1)},u_j^{(1)})\right]_{i,j=1}^{n_1}$ is unchanged. 
\end{proof}

It is more challenging to understand $\Kess_Y(v,u)$, which encodes the initial condition $Y$ in $\mathcal{D}_Y$. 
It is possible to show that $\mathcal{D}_Y$ does not depend on the  explicit formula of $\Kess_Y(v,u)$, but only depends on the value of
\begin{equation*}
\langle f,g\rangle_{Y}:=\oint_0\ddbarr{v}\oint_{-1}\ddbarr{u} f(v)\Kess_Y(v,u) g(u)
\end{equation*}
for functions $f$ and $g$ satisfying $v^{\max\{k_\ell:\ell=1,\cdots,m\}}f(v)$ and $(u+1)^{\max\{k_\ell+a_\ell:\ell=1,\cdots,m\}}g(u)$ are analytic at $0$ and $-1$ respectively, where the above contours of integration are sufficiently small. 
In other words, $f$ ($g$, respectively) is meromorphic in a neighborhood of $0$ ($-1$, respectively) with a possible pole at $0$ ($-1$, respectively) and its order is at most $\max\{k_\ell:\ell=1,\cdots,m\}$ ($\max\{k_\ell+a_\ell:\ell=1,\cdots,m\}$, respectively).
Hence the true role of $\Kess_Y(v,u)$ is to determine the above bi-linear form. We do not want to fully explain it here in details since it involves the orthogonalization of eigenfunctions and convergence of formal expansions in terms of orthogonal basis. Instead, we provide a lighter version below.

\begin{prop}
	\label{prop:invariance_distribution}
	$\mathcal{D}_Y(z_1,\cdots,z_{m-1})$ is invariant if we replace the kernel $\Kess_Y(v,u)$ by $\Kess_Y(v,u) + \Knull(v,u)$ provided $\Knull$ satisfies either conditions (1) or (2).
	\begin{enumerate}
		\item[(1)] For each fixed $u\in\cup_{\ell=1}^m\Sigma_{\ell,\LL}$, $\Knull(v,u)$ is analytic for $v\in \Omega_\RR\setminus\{0\}$. Moreover, for all $i\le \max\{k_\ell:\ell=1,\cdots,m\}$ and all $j$,
		\begin{equation*}
		\oint_0\ddbarr{v}\int_{\Sigma_{\ell,\LL}^\star}\ddbarr{u} v^{-i}\Knull(v,u)(u+1)^{-j} =0
		\end{equation*}
		for each $1\le \ell\le m$, and $\star$ is any of $\{\out,\inn\}$ if $\ell\ge 2$, or  empty if $\ell=1$.
		\item[(2)]  
		For each fixed $v\in\cup_{\ell=1}^m\Sigma_{\ell,\RR}$, $\Knull(v,u)$ is analytic for $u\in \Omega_\LL\setminus\{-1\}$. Moreover, for all all $j\le \max\{a_\ell+k_\ell:\ell=1,\cdots,m\}$ and all $i$,
		\begin{equation*}
		\int_{\Sigma_{\ell,\RR}^\star}\ddbarr{v}\oint_{-1}\ddbarr{u} v^{-i}\Knull(v,u)(u+1)^{-j} =0
		\end{equation*}
		for each $1\le \ell\le m$, and $\star$ is any of $\{\out,\inn\}$ if $\ell\ge 2$, or  empty if $\ell=1$.
	\end{enumerate}
\end{prop}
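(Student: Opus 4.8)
I would work from the series-expansion form of $\mathcal{D}_Y$ (Definition~\ref{def:D_Y_series}) and exploit that $\Kess_Y$ enters every term $\mathcal{D}_{\boldsymbol n,Y}$, hence $\mathcal{D}_Y$ itself, only through the single factor $\det\left[\Kess_Y(v^{(1)}_i,u^{(1)}_j)\right]_{i,j=1}^{n_1}$; all contours, measures, the functions $f_\ell$, and the $\Delta$-factors are untouched by the substitution $\Kess_Y\mapsto\Kess_Y+\Knull$. First I would expand this determinant by multilinearity in its rows (for case~(1); rows are indexed by the $v$-variable, matching the hypothesis on analyticity in $v$) or in its columns (for case~(2)), writing $\det\left[(\Kess_Y+\Knull)(v^{(1)}_i,u^{(1)}_j)\right]=\sum_{S\subseteq\{1,\dots,n_1\}}\det M_S$, where in $M_S$ the rows (resp.\ columns) indexed by $S$ are taken from $\Kess_Y$ and the rest from $\Knull$. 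Since the term $S=\{1,\dots,n_1\}$ reproduces the original $\mathcal{D}_{\boldsymbol n,Y}$, it suffices to show that every term with at least one $\Knull$-row (resp.\ $\Knull$-column) integrates to zero.

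Consider case~(1). Fix such an $S$, pick a row $i_0\notin S$, and Laplace-expand $\det M_S$ along it: $\det M_S=\sum_{j_0}(-1)^{i_0+j_0}\Knull(v^{(1)}_{i_0},u^{(1)}_{j_0})\,\det M_S^{(i_0,j_0)}$, the matrix $M_S^{(i_0,j_0)}$ obtained by deleting row $i_0$ and column $j_0$ involving neither $v^{(1)}_{i_0}$ nor $u^{(1)}_{j_0}$. For fixed $j_0$ the variable $v^{(1)}_{i_0}$ now occurs in the whole integrand only in this $\Knull$-factor and in the elementary factors $f_1(v^{(1)}_{i_0})$, $\Delta(V^{(1)})$, $\Delta(U^{(1)};V^{(1)})^{-1}$, $\Delta(V^{(1)};U^{(2)})$, $\Delta(V^{(1)};V^{(2)})^{-1}$ (and, symmetrically, $u^{(1)}_{j_0}$ in the analogous $U$-factors), all of which are analytic on $\Omega_\RR\setminus\{0\}$ except for the coincidence poles $v^{(1)}_{i_0}=v^{(2)}_{i'}$ coming from $\Delta(V^{(1)};V^{(2)})^{-1}$; by hypothesis~(1), $\Knull(\cdot,u)$ is analytic on $\Omega_\RR\setminus\{0\}$ as well. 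I would carry out the integrations over the $\Sigma_{\cdot,\RR}$-variables from the innermost contour outward, applying the residue theorem at each step and using where possible the identity $\frac{1}{1-z_{\ell-1}}-\frac{z_{\ell-1}}{1-z_{\ell-1}}=1$ to collapse a pair $\Sigma^{\inn}_{\ell,\RR},\Sigma^{\out}_{\ell,\RR}$ enclosing no extra pole, and integrating $v^{(1)}_{i_0}$ last among the $v$'s; at that stage its integrand is analytic on $\Omega_\RR\setminus\{0\}$, so $\Sigma_{1,\RR}$ can be shrunk to a small circle $\oint_0$. Similarly I would integrate $u^{(1)}_{j_0}$ over $\Sigma_{1,\LL}$ after the remaining $u$-variables. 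What is left is a finite linear combination of integrals
\begin{equation*}
\oint_0\ddbarr{v}\,\int_{\Sigma_{1,\LL}}\ddbarr{u}\;\Knull(v,u)\,G(v,u),
\end{equation*}
where $G$ is meromorphic near $(0,-1)$, with a pole in $v$ at $0$ of order at most $\max_\ell k_\ell$ and analytic in $u$ in a neighborhood of $\Sigma_{1,\LL}$ apart from a pole of finite order at $-1$. Expanding $G(v,u)=\sum_{i'\ge-\max_\ell k_\ell,\,j}d_{i',j}\,v^{i'}(u+1)^{-j}$ and integrating termwise reduces each such integral to $\sum_{i',j}d_{i',j}\oint_0\ddbarr{v}\,v^{i'}\int_{\Sigma_{1,\LL}}\ddbarr{u}\,\Knull(v,u)(u+1)^{-j}$, and every summand vanishes by hypothesis~(1) (with $i=-i'\le\max_\ell k_\ell$, $\ell=1$, and $\star$ empty). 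Summing over $j_0$ and over $S$ yields the invariance in case~(1).

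The claim that $G$ has a pole in $v$ at $0$ of order at most $\max_\ell k_\ell$ --- rather than the sum of several $k_\ell$'s that a careless residue count would suggest --- is the technical core, and is where I expect the bulk of the work. It relies on the telescoping identity $\prod_{\ell=1}^{L}f_\ell(w)=F_0(w)/F_L(w)=w^{-k_L}\cdot(\text{analytic and nonzero at }0)$ valid on $\Omega_\RR$: the only mechanism that can raise the pole of $v^{(1)}_{i_0}$ at $0$ above the order-$k_1$ pole of $f_1$ is a chain of coincidences $v^{(1)}_{i_0}=v^{(2)}_{i_2}=\cdots=v^{(L)}_{i_L}$ carried to the origin by successive residues, and along such a chain the accumulated $f_\ell$-factors combine into a pole of order only $k_L\le\max_\ell k_\ell$; configurations in which $v^{(1)}_{i_0}$ lies on several such chains are suppressed by the squared Vandermonde factors $(\Delta(V^{(\ell)}))^2$. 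Making this precise --- bookkeeping exactly which residues appear as the inner $\RR$-contours are integrated and how the $\inn/\out$ weights recombine --- is a direct variant of the argument sketched in the Remark following Definition~\ref{def:D_Y_series} (which proves $\mathcal{D}_{\boldsymbol n,Y}=0$ when some $n_\ell>N$), and I would model it on that.

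Case~(2) is handled by the mirror of this argument: expand the $n_1\times n_1$ determinant by columns, Laplace-expand along a $\Knull$-column, integrate the $\Sigma_{\cdot,\LL}$-variables from the innermost contour outward, shrink the relevant $\LL$-contour to a small circle $\oint_{-1}$, and use the telescoping identity $\prod_{\ell=1}^{L}f_\ell(u)=F_L(u)/F_0(u)=(u+1)^{-(a_L+k_L)}\cdot(\text{analytic and nonzero at }-1)$ on $\Omega_\LL$ to bound the pole at $-1$ by $\max_\ell(a_\ell+k_\ell)$; the resulting integrals vanish by hypothesis~(2). No new idea is required beyond the dictionary $v\leftrightarrow u$, $0\leftrightarrow-1$, $\Omega_\RR\leftrightarrow\Omega_\LL$, $k_\ell\leftrightarrow a_\ell+k_\ell$.
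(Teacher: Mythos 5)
Your proposal is correct and follows essentially the same route as the paper's proof (Section~8.3): the paper likewise reduces, by multilinearity of the determinant, to showing that the multiple integral containing a single factor $\Knull(v_i^{(1)},u_j^{(1)})$ (times a remainder analytic in each $v$-variable on $\Omega_\RR$) vanishes, and kills it by exactly the Cauchy-chain/telescoping mechanism you identify, invoking condition~(1) with the telescoped pole order $k_\ell\le\max_\ell k_\ell$. The only organizational difference is that the paper shrinks the contour of the chain variable toward $0$ \emph{first} at each level---applying condition~(1) immediately and passing the surviving residues $v_i^{(1)}=v_{i'}^{(2)}=\cdots$ to the next level---which sidesteps the ``integrate everything else first, then shrink $\Sigma_{1,\RR}$'' step whose bookkeeping you flag as the technical core.
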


The proof of Proposition~\ref{prop:invariance_distribution} is given in Section~\ref{sec:proof_invariance_distribution}.

We could understand Proposition~\ref{prop:consistency} in the following probabilistic way. Note the fact that the distribution function itself only depends on part of the initial data. More explicitly, this distribution function is independent of $y_i$'s with $i> \max\{k_\ell:\ell=1,\cdots,m\}$ since these particles do not affect the particles ahead of them. Similarly the distribution function is independent of  $y_i$'s with $y_i+i> \max\{a_\ell+k_\ell:\ell=1,\cdots,m\}$ by using the duality of particles and empty sites. The conditions (1) and (2) above precisely indicate these independence.

By the proposition above, we know that there are many choices of choosing a kernel to replace $\Kess_Y(v,u)$ in the definition of $\mathcal{D}_Y$. It may happen that one needs to pick the appropriate kernel to obtain the asymptotics of $\mathcal{D}_Y$. We will see this fact for the flat initial condition. Nevertheless, the kernel $\Kess_Y(v,u)$ defined in Definition~\ref{def:KY_ess} has the following property. 

\begin{prop}
	\label{prop:orthogonality}
$\Kess_Y(v,u)$ is a kernel satisfying
\begin{equation}
\label{eq:orthogonality}
\oint_0 v^{-i}(v+1)^{y_i+i}\cdot \Kess_Y (v,u) \ddbarr{v} = -u^{-i}(u+1)^{y_i+i}
\end{equation}
for all $i=1,\cdots,N$.
\end{prop}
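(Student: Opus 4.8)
The plan is to verify \eqref{eq:orthogonality} directly from the explicit formula for $\Kess_Y(v,u)$ in Definition~\ref{def:KY_ess}, reducing everything to the combinatorial identity \eqref{eq:alter_ich}--\eqref{eq:def_ich} for $\ich_{\boldsymbol\lambda}$ and then to a residue computation. First I would substitute $\Kess_Y(v,u) = \frac{1}{v-u}\left(\frac{u+1}{v+1}\right)^{y_N+N}\ich_{\boldsymbol{\lambda}(Y)}(v,u)$ into the left-hand side, so that the integral becomes
\begin{equation*}
\oint_0 \frac{v^{-i}(v+1)^{y_i+i}}{v-u}\left(\frac{u+1}{v+1}\right)^{y_N+N}\ich_{\boldsymbol{\lambda}(Y)}(v,u)\,\ddbarr{v}.
\end{equation*}
Recalling $\lambda_j(Y) = (y_j+j)-(y_N+N)$, the prefactor $(v+1)^{y_i+i}\cdot (v+1)^{-(y_N+N)}$ equals $(v+1)^{\lambda_i(Y)}$, so up to the factor $(u+1)^{y_N+N}$ the integrand is $\frac{v^{-i}(v+1)^{\lambda_i}}{v-u}\ich_{\boldsymbol{\lambda}}(v,u)$. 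The contour around $0$ is small, so $u$ is outside it and the pole at $v=u$ does not contribute; the only contribution is the residue at $v=0$, i.e. I need the coefficient of $v^{i-1}$ in the Taylor expansion of $\frac{(v+1)^{\lambda_i}}{v-u}\ich_{\boldsymbol{\lambda}}(v,u)$ around $v=0$.

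The key step is to exploit the alternate representation \eqref{eq:alter_ich}: for $M > |\boldsymbol\lambda|$ we have $\ich_{\boldsymbol\lambda}(v,u) = \mathcal{G}_{\boldsymbol\lambda}(u, v\xi, \dots, v\xi^{M-1})$ with $\xi = e^{2\pi\ii/M}$, and by the determinantal formula \eqref{eq:def_G_ftn} this equals a ratio of determinants in the variables $\{u, v\xi, \dots, v\xi^{M-1}\}$. The plan is to identify the relevant residue at $v=0$ with an evaluation of $\mathcal{G}_{\boldsymbol\lambda}$ at a configuration where several of the $w$'s collide at $0$ — equivalently, to use that for $M \le |\boldsymbol\lambda|$ one has \eqref{eq:alter_ich_ext}. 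Concretely, I would try to recognize $\oint_0 v^{-i}(v+1)^{\lambda_i}\ich_{\boldsymbol\lambda}(v,u)\frac{\ddbarr{v}}{v-u}$ as extracting, via Cauchy's theorem, the coefficient picking out the $i$-th row of the numerator determinant in $\mathcal{G}$: indeed $v^{-j}(v+1)^{\lambda_j}$ is precisely the $j$-th entry building $\mathcal{G}_{\boldsymbol\lambda}$, so integrating $v^{-i}(v+1)^{\lambda_i}$ against $\ich_{\boldsymbol\lambda}(v,u)$ should reproduce $-u^{-i}(u+1)^{\lambda_i}$ by an orthogonality/reproducing property of the rows of $\det[w_s^{-j}(w_s+1)^{\lambda_j}]$ against $\det[w_s^{-j}]^{-1}$. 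After restoring the $(u+1)^{y_N+N}$ factor and using $(u+1)^{\lambda_i}(u+1)^{y_N+N} = (u+1)^{y_i+i}$, this yields exactly the right-hand side $-u^{-i}(u+1)^{y_i+i}$.

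An alternative, perhaps cleaner, route is to avoid the determinant limit entirely and work from the power-sum expansion \eqref{eq:def_ich}: write $\ich_{\boldsymbol\lambda}(v,u) = 1 + \sum_{\boldsymbol\mu} c_{\boldsymbol\lambda,\boldsymbol\mu}\prod_k(u^{\mu_k}-v^{\mu_k})$, plug into $\oint_0 \frac{v^{-i}(v+1)^{\lambda_i}}{v-u}\ich_{\boldsymbol\lambda}(v,u)\,\ddbarr v$, expand $\frac{1}{v-u} = -\sum_{r\ge 0} v^r u^{-r-1}$ (valid on the small $v$-contour), and collect the coefficient of $v^{-1}$; then match the resulting finite sum against $-u^{-i}(u+1)^{\lambda_i}$ by induction on $i$ using the degree constraint $|\boldsymbol\mu| \le |\boldsymbol\lambda|$ together with the fact that $\lambda_i \le |\boldsymbol\lambda|$, which bounds the order of the pole of $v^{-i}(v+1)^{\lambda_i}$. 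The main obstacle I anticipate is the bookkeeping in the first approach — justifying the passage from $\ich_{\boldsymbol\lambda}$ back to the $M$-variable determinant $\mathcal{G}_{\boldsymbol\lambda}$ and correctly identifying which residue/cofactor of that determinant the contour integral extracts, since one must track the error term $v^M r(v,u)$ in \eqref{eq:alter_ich_ext} and confirm it contributes nothing to the coefficient of $v^{-1}$ when $M$ is chosen large enough (say $M > |\boldsymbol\lambda| + i$). Once that identification is pinned down, the identity \eqref{eq:orthogonality} is immediate from the structure of $\mathcal{G}_{\boldsymbol\lambda}$ as a ratio of determinants.
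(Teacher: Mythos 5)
Your reduction to the equivalent identity
\begin{equation*}
\oint_0 v^{-i}(v+1)^{\lambda_i}\,\frac{\ich_{\boldsymbol{\lambda}}(v,u)}{v-u}\,\ddbarr{v} = -u^{-i}(u+1)^{\lambda_i}
\end{equation*}
is correct and is exactly the paper's first step, and you have named the right two ingredients: the representation~\eqref{eq:alter_ich} of $\ich_{\boldsymbol{\lambda}}$ as $\mathcal{G}_{\boldsymbol{\lambda}}$ on the variable set $\{u,v\xi,\dots,v\xi^{M-1}\}$, and a Cramer's-rule (``reproducing row'') property of the determinant ratio defining $\mathcal{G}_{\boldsymbol{\lambda}}$. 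But the step you explicitly defer --- ``correctly identifying which residue/cofactor of that determinant the contour integral extracts'' --- is not bookkeeping; it is the whole proof, and neither of your routes supplies it. The residue at $v=0$ is not an evaluation of $\mathcal{G}_{\boldsymbol{\lambda}}$ at colliding points, and~\eqref{eq:alter_ich_ext} plays no role here. The missing mechanism is a discretization of the contour integral as a Riemann sum over scaled roots of unity: for $|v|<|u|$ small, $\oint_0 f(v)\,\ddbarr{v}=\lim_{M\to\infty}\frac{1}{M}\sum_{j=1}^{M}v\xi^{j}f(v\xi^{j})$ with $\xi=e^{2\pi\ii/M}$. Sampling at precisely these points makes $\ich_{\boldsymbol{\lambda}}(v\xi^{j},u)=\mathcal{G}_{\boldsymbol{\lambda}}(\{v\xi^{\alpha}\}_{\alpha\ne j}\cup\{u\})$, i.e.\ the determinant ratio with the $j$-th variable swapped for $u$, while the Cauchy kernel itself becomes a ratio of Vandermonde-type determinants,
\begin{equation*}
\frac{1}{v\xi^{j}-u}=-\frac{M}{v\xi^{j}}\cdot\frac{u^{M}}{u^{M}-v^{M}}\cdot\frac{\det\bigl[(v\xi^{\alpha})^{-\beta}1_{\alpha\ne j}+u^{-\beta}1_{\alpha=j}\bigr]_{\alpha,\beta=1}^{M}}{\det\bigl[(v\xi^{\alpha})^{-\beta}\bigr]_{\alpha,\beta=1}^{M}}.
\end{equation*}
Multiplying the two and summing over $j$ is then literally Cramer's rule for the system with coefficient matrix $[(v\xi^{\alpha})^{-\beta}(v\xi^{\alpha}+1)^{\lambda_\beta}]$, which collapses the sum to $-u^{-i}(u+1)^{\lambda_i}\cdot\frac{Mu^{M}}{u^{M}-v^{M}}$ (using that the untouched ratio equals $\ich_{\boldsymbol{\lambda}}(v,v)=1$); dividing by $M$ and letting $M\to\infty$ with $|v|<|u|$ finishes the argument. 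Without this discretization there is no bridge from the residue at $v=0$ to the rows of the determinant.

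Your fallback route (power-sum expansion of $\ich_{\boldsymbol{\lambda}}$, geometric expansion of $\frac{1}{v-u}$, then ``induction on $i$'') does not close the gap either: after extracting the coefficient of $v^{-1}$ you are left with an identity among the coefficients $c_{\boldsymbol{\lambda},\boldsymbol{\mu}}$ that is essentially a restatement of the proposition, and the degree bounds $|\boldsymbol{\mu}|\le|\boldsymbol{\lambda}|$ only locate both sides, they do not prove they agree; no inductive step is actually proposed. Note also that the identity cannot be obtained by a cheap residue-at-infinity argument (residues at $0$ and $u$ summing to zero), since $v^{-i-1}(v+1)^{\lambda_i}\ich_{\boldsymbol{\lambda}}(v,u)$ in general grows at $v=\infty$; the determinantal structure really is needed.
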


The proof of Proposition~\ref{prop:orthogonality} is given in Section~\ref{sec:proof_orthogonality}.

Note that~\eqref{eq:orthogonality} has infinitely many solutions. Formally, for each fixed $u$,~\eqref{eq:orthogonality} is a system of $N$ linear equations of infinitely many variables $v$. However, each solution $\mathcal{K}_Y(v,u)$, if it is analytic in $\Omega_\RR \times (\Omega_\LL\setminus\{-1\})$, can be expressed as
\begin{equation*}
\mathcal{K}_Y(v,u)=\Kess_Y(v,u) + \Knull(v,u)
\end{equation*}
where $\Knull(v,u):=\mathcal{K}_Y(v,u)-\Kess_Y(v,u)$ satisfies
\begin{equation}
	\label{eq:aux_2021_9_25_01}
\oint_0 v^{-i} \cdot \Knull(v,u) \ddbarr{v}=0
\end{equation}
for all integers $i$ satisfying $i\le N$. The reason of~\eqref{eq:aux_2021_9_25_01} is as follows. We first write $v^{-i}=\sum_{j=1}^i c_{i,j}v^{-j}(v+1)^{y_j+j}+P_j(v)$, where $c_{i,j}$ are constants determined by comparing the coefficients of $v^{-j}$ in both sides, and $P_j$ is a polynomial. Then~\eqref{eq:aux_2021_9_25_01} follows from the following facts
\begin{equation*}
	\oint_0v^{-j}(v+1)^{y_j+j} \cdot \mathcal{K}_Y(v,u)\ddbarr{v}=\oint_0v^{-j}(v+1)^{y_j+j} \cdot\Kess_Y(v,u)\ddbarr{v} =-u^{-j}(u+1)^{y_j+j},\quad 1\le j\le i\le N,
\end{equation*} 
and
\begin{equation*}
	\oint_0P_j(v) \cdot \mathcal{K}_Y(v,u)\ddbarr{v}=\oint_0P_j(v) \cdot\Kess_Y(v,u)\ddbarr{v}=0
\end{equation*}
due to the analyticity of $\mathcal{K}_Y(v,u)$ and $\Kess_Y(v,u)$ at $v=0$. 

Now by applying Proposition~\ref{prop:invariance_distribution} and the equation~\eqref{eq:aux_2021_9_25_01}, we know that $\mathcal{D}_Y$ is invariant if we replace $\Kess_Y(v,u)$ by any kernel which is analytic in $\Omega_\RR\times(\Omega_\LL\setminus\{-1\})$ and satisfies~\eqref{eq:orthogonality}.

\paragraph{$\mathcal{D}_Y$ for step and flat initial conditions}
\label{sec:special_IC}
We consider two special initial conditions and write down their formulas of $\mathcal{D}_Y$ explicitly. These formulas  are suitable for asymptotic analysis and will be used in Section~\ref{sec:asymptotics}.
\vspace{0.3cm}

The first initial condition we consider is the so-called \emph{step} initial condition. It is defined to be
\begin{equation*}
Y_{\mathrm{step}}=(y_1,\cdots,y_N)=(-1,\cdots,-N).
\end{equation*}
In this case $\boldsymbol{\lambda}(Y_{\mathrm{step}})=(0,\cdots,0)$ since $\lambda_i=(y_i+i)-(y_N+N)=0$. By~\eqref{eq:def_G_ftn} we have $\mathcal{G}_{\boldsymbol{\lambda}(Y_{\mathrm{step}})}(W)=1$. Now using Definitions~\ref{def:ich} and~\ref{def:KY_ess}, we know $\ich_{\boldsymbol{\lambda}(Y_{\mathrm{step}})}(v,u)=1$ and $\Kess_{Y_{\mathrm{step}}}(v,u)=\frac{1}{v-u}$. Therefore
\begin{equation*}
\mathcal{D}_{Y_{\mathrm{step}}}(z_1,\cdots,z_{m-1})=\det(I-\mathcal{K}_1\mathcal{K}_{{Y_{\mathrm{step}}}})
\end{equation*}
with	
\begin{equation*}
\mathcal{K}_1(w,w'):= \left(\delta_i(j) + \delta_i( j+ (-1)^i)\right) \frac{ f_i(w) }{w-w'} Q_1(j)
\end{equation*}
and
\begin{equation*}
\mathcal{K}_{Y_{\mathrm{step}}}(w',w):= 
\left(\delta_j (i) + \delta_j(i - (-1)^j)\right) \frac{ f_j(w') }{w'-w} Q_2(i)
\end{equation*}
for any $w\in (\Sigma_{i,\LL}\cup \Sigma_{i,\RR}) \cap \mathcal{S}_1$ and $w'\in (\Sigma_{j,\LL}\cup \Sigma_{j,\RR}) \cap \mathcal{S}_2$ with $1\le i,j\le m$. Here the spaces $\Sigma_{i,\LL},\Sigma_{i,\RR},\mathcal{S}_1,\mathcal{S}_2$ and functions $f_i,Q_1,Q_2$ are the same as in Definition~\ref{def:operators_K1Y}. One could similarly write down the series expansion of $\mathcal{D}_{Y_{\mathrm{step}}}(z_1,\cdots,z_{m-1})$. It is given by
\begin{equation*}
\mathcal{D}_{Y_{\mathrm{step}}}(z_1,\cdots,z_{m-1}):=\sum_{\boldsymbol{n}\in(\intZ_{\ge 0})^m}\frac{1}{(\boldsymbol{n}!)^2}\mathcal{D}_{\boldsymbol{n},{Y_{\mathrm{step}}}}(z_1,\cdots,z_{m-1})
\end{equation*}
with 
\begin{equation*}
\begin{split}
\mathcal{D}_{\boldsymbol{n},{Y_{\mathrm{step}}}}(z_1,\cdots,z_{m-1})
&=\prod_{\ell=1}^m\prod_{i_\ell=1}^{n_\ell} \int_{\Sigma_{\ell,\LL}}\mathrm{d}\mu_{\boldsymbol{z}}(u_{i_\ell}^{(\ell)})\int_{\Sigma_{\ell,\RR}}\mathrm{d}\mu_{\boldsymbol{z}}(v_{i_\ell}^{(\ell)})\\
&\quad\cdot \left[\prod_{\ell=1}^m\frac{(\Delta(U^{(\ell)}))^2(\Delta(V^{(\ell)}))^2}{(\Delta(U^{(\ell)};V^{(\ell)}))^2}f_\ell(U^{(\ell)}) f_\ell(V^{(\ell)})\right]\\
&\quad\cdot \left[\prod_{\ell=1}^{m-1} \frac{\Delta(U^{(\ell)};V^{(\ell+1)})\Delta(V^{(\ell)};U^{(\ell+1)})}{\Delta(U^{(\ell)};U^{(\ell+1)})\Delta(V^{(\ell)};V^{(\ell+1)})}\left(1-z_\ell\right)^{n_\ell}\left(1-\frac{1}{z_\ell}\right)^{n_{\ell+1}}\right].
\end{split}
\end{equation*}

\vspace{0.3cm}
The second initial condition we consider here is the so-called \emph{pseudo-flat} initial condition. It is defined to be
\begin{equation*}
{Y_{\mathrm{pf}}}=(y_1,\cdots,y_N)=(-2,\cdots,-2N).
\end{equation*}
In other words, $y_i=-2i$ for all $1\le i\le N$. For this pseudo-flat initial condition, we have the following result for $\Kess_{Y_{\mathrm{pf}}}$.
\begin{prop}
	\label{prop:Kess_flat}
	If $|v|<1/2$ and $|v|<|u+1|$, we have
	\begin{equation*}
	\Kess_{Y_{\mathrm{pf}}}(v,u) = \frac{2v+1}{(v-u)(u+v+1)}+ v^N p(v,u)
	\end{equation*}
	for some function $p(v,u)$ which is analytic for $(v,u)$ when $|v|<\min\{1/2, |u+1|\}$.
\end{prop}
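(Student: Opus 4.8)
The plan is to compute $\ich_{\boldsymbol{\lambda}(Y_{\mathrm{flat}})}(v,u)$ explicitly and feed it into the definition of $\Kess_{Y_{\mathrm{flat}}}$. For the flat initial condition $y_i=-2i$, we have $\lambda_i = (y_i+i)-(y_N+N) = -i+N = N-i$, so $\boldsymbol{\lambda}(Y_{\mathrm{flat}}) = (N-1, N-2, \ldots, 1, 0)$, a staircase partition with $|\boldsymbol{\lambda}| = \binom{N}{2}$. The key observation is that the generating function $\mathcal{G}_{\boldsymbol{\lambda}}(W)$ for a staircase partition admits a product formula: from~\eqref{eq:def_G_ftn}, $\mathcal{G}_{\boldsymbol{\lambda}}(W) = \det\left[ w_i^{-j}(w_i+1)^{\lambda_j} \right] / \det\left[ w_i^{-j} \right]$, and when $\lambda_j = M-j$ (taking $W$ of size $M$, with $\lambda_j=0$ for $j>N$ handled by the zero-padding convention) the numerator determinant factors nicely. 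First I would verify, using~\eqref{eq:alter_ich_ext}, that
\begin{equation*}
\ich_{\boldsymbol{\lambda}(Y_{\mathrm{flat}})}(v,u) = \mathcal{G}_{\boldsymbol{\lambda}(Y_{\mathrm{flat}})}(u, v\xi, v\xi^2, \cdots, v\xi^{M-1}) + v^M r(v,u)
\end{equation*}
for $M$ chosen large (say $M\ge N$), where $\xi = e^{2\pi\ii/M}$, and then evaluate the right-hand side of~\eqref{eq:def_G_ftn} at $W = \{u, v\xi, \ldots, v\xi^{M-1}\}$.

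The main computational step is the evaluation of $\mathcal{G}_{\boldsymbol{\lambda}(Y_{\mathrm{flat}})}$ at this special set of points. With $W = \{u, v\xi, \ldots, v\xi^{M-1}\}$ one uses the fact that $\sum_{j=0}^{M-1}(v\xi^j)^k = 0$ unless $M \mid k$, together with the product formula for the Grothendieck/staircase symmetric polynomial. I expect that, after choosing $M$ slightly larger than $N$, the quantity $\mathcal{G}_{\boldsymbol{\lambda}(Y_{\mathrm{flat}})}(u, v\xi, \ldots, v\xi^{M-1})$ collapses to a rational function of $v$ and $u$ of the form $\dfrac{(v-u)(2v+1)}{(v-u)(u+v+1)}\cdot\dfrac{1}{v-u}\cdot(\text{something})$ — more precisely, inserting the result into Definition~\ref{def:KY_ess},
\begin{equation*}
\Kess_{Y_{\mathrm{flat}}}(v,u) = \frac{1}{v-u}\left(\frac{u+1}{v+1}\right)^{y_N+N}\ich_{\boldsymbol{\lambda}(Y_{\mathrm{flat}})}(v,u),
\end{equation*}
with $y_N+N = -2N+N = -N$, so the prefactor is $(v+1)^N/(u+1)^N$ times $1/(v-u)$; and one checks that the product of this prefactor with the closed-form value of $\ich$ equals $\dfrac{2v+1}{(v-u)(u+v+1)}$ exactly in a region where the geometric-series identity $\sum_j (v\xi^j)^k = 0$ is exact, namely where the "extra" polynomial $v^M r(v,u)$ (and all higher error terms coming from $|\boldsymbol{\lambda}|\ge M$) is controlled. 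Collecting those error terms into a single $v^N p(v,u)$ with $p$ analytic for $|v| < \min\{1/2, |u+1|\}$ gives the claim; the constraint $|v|<1/2$ arises because the pole structure of $\mathcal{G}$ at points where two arguments $v\xi^i, v\xi^j$ or $u, v\xi^j$ collide is avoided precisely on $|u+1|>|v|$, and the factor $2v+1$ in the denominator of the target expression forces $|v|<1/2$ to keep $u+v+1\ne 0$ compatible with small contours; I would make these domains precise by tracking where the determinant ratio in~\eqref{eq:def_G_ftn} is a genuine (pole-free) power series in $v$.

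The step I expect to be the main obstacle is the explicit closed-form evaluation of $\mathcal{G}_{\boldsymbol{\lambda}(Y_{\mathrm{flat}})}$ at the roots-of-unity locus and isolating exactly which terms survive versus which get absorbed into $v^N p(v,u)$: the zero-padding convention ($\lambda_i = 0$ for $i>N$) interacts subtly with the choice of $M$, and one must be careful that the "$+v^M r(v,u)$" correction in~\eqref{eq:alter_ich_ext} together with the genuine $v$-expansion of the surviving rational part can be combined into a single analytic remainder of order $v^N$ (not merely $v^M$). An alternative, possibly cleaner, route is to bypass~\eqref{eq:alter_ich} entirely and instead use Proposition~\ref{prop:invariance_distribution} / Proposition~\ref{prop:orthogonality}: one guesses the candidate kernel $\widetilde{\mathcal{K}}(v,u) = \dfrac{2v+1}{(v-u)(u+v+1)}$, verifies directly that it satisfies the orthogonality relations~\eqref{eq:orthogonality} modulo a kernel annihilated by the relevant contour integrals (i.e.\ $\oint_0 v^{-i}(v+1)^{y_i+i}\widetilde{\mathcal{K}}(v,u)\,\ddbarr{v} = -u^{-i}(u+1)^{y_i+i}$ for $i=1,\ldots,N$, using $y_i+i=-i$), and then concludes that $\Kess_{Y_{\mathrm{flat}}} - \widetilde{\mathcal{K}}$ is a $\Knull$-type kernel, which by analyticity must be of the form $v^N p(v,u)$ on the stated domain. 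I would pursue whichever of these two computations is shorter; the orthogonality route likely avoids the bookkeeping with $M$ and is my preferred plan.
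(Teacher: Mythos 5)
Your proposal contains two workable arguments, and your first route is essentially the paper's proof. To make it go through you should take $M=N$ exactly rather than ``slightly larger'': for $M>N$ the zero-padded partition is no longer a pure staircase and the determinant ratio loses its product structure, whereas for $M=N$ one is in the regime of~\eqref{eq:alter_ich_ext} and the correction is exactly $v^N r(v,u)$. The crucial computational identity you left implicit is that for $\lambda_j=N-j$ the numerator of~\eqref{eq:def_G_ftn} is a Vandermonde determinant in the variables $w_i(w_i+1)$, so that
\begin{equation*}
\mathcal{G}_{\boldsymbol{\lambda}(Y_{\mathrm{flat}})}(w_1,\dots,w_N)=\prod_{i<j}\frac{w_j(w_j+1)-w_i(w_i+1)}{w_j-w_i}=\prod_{i<j}(w_i+w_j+1);
\end{equation*}
evaluating at $\{u,v\xi,\dots,v\xi^{N-1}\}$, the pair-products among the $v\xi^j$ are invariant under $v\mapsto v\xi$ and hence equal $1+v^N(\cdots)$, while the mixed products collapse via $\prod_{j=0}^{N-1}(x+v\xi^j)=x^N-(-v)^N$ to $\frac{2v+1}{u+v+1}\bigl(\frac{u+1}{v+1}\bigr)^N\bigl(1+v^N(\cdots)\bigr)$. (The role of $|v|<1/2$ is to keep $v+1+v\xi^j$ and $(v+1)^N-(-v)^N$ nonzero, not the factor $2v+1$, which sits in the numerator.) Your preferred second route is genuinely different from the paper and does work: with $y_i+i=-i$ the weight in~\eqref{eq:orthogonality} is $(v(v+1))^{-i}$ and the candidate kernel is $\frac{(v(v+1))'}{v(v+1)-u(u+1)}$, so the substitution $w=v(v+1)$ reduces the check to $\oint_0 w^{-i}(w-u(u+1))^{-1}\frac{\dd w}{2\pi\ii}=-(u(u+1))^{-i}$; then the $N$ relations $\oint_0 v^{-i}(v+1)^{-i}f(v)\ddbarr{v}=0$, $i=1,\dots,N$, form a unitriangular system in the Taylor coefficients $c_0,\dots,c_{N-1}$ of $f$ at $v=0$ and force $f=O(v^N)$. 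Applied to $f=\Kess_{Y_{\mathrm{flat}}}-\widetilde{\mathcal{K}}$ this gives the claim, provided you also observe that $f$ is analytic at $v=u$ (a point of the stated domain): both kernels have a simple pole there with residue $1$, since $\ich_{\boldsymbol{\lambda}}(u,u)=1$ and $\frac{2u+1}{2u+1}=1$. The orthogonality route buys you freedom from all root-of-unity bookkeeping, at the price of having to guess the kernel in advance and of supplying the ``uniqueness modulo $O(v^N)$'' direction, which the paper only sketches in the discussion following Proposition~\ref{prop:orthogonality}.
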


The proof of Proposition~\ref{prop:Kess_flat} is given in Section~\ref{sec:proof_Kess_flat}.

By applying Propositions~\ref{prop:invariance_distribution} and~\ref{prop:Kess_flat}, we could replace $\Kess_{Y_{\mathrm{pf}}}$ by the kernel $\frac{2v+1}{(v-u)(u+v+1)}$ if we choose the contours appropriately such that $\Sigma_{1,\RR}$ is within the disk $\disk(1/2)=\{v:|v|<1/2\}$ and $\Sigma_{1,\LL}$ is outside of $-1-\Sigma_{1,\RR}:=\{-1-v: v\in\Sigma_{1,\RR}\}$. However, we could further reduce it to a delta kernel which makes the formula of $\mathcal{D}_{Y_{\mathrm{pf}}}(z_1,\cdots,z_{m-1})$ even simpler.

In order to introduce the new formula, we need to slightly modify the contours. Let  $\Sigma_{m,\LL}^{\out},\cdots,\Sigma_{2,\LL}^{\out}$, $\Sigma_{1,\LL}$, $\Sigma_{2,\LL}^{\inn},\cdots,\Sigma_{m,\LL}^\inn$ are $2m-1$ nested simple closed contours, from outside to inside, in $\Omega_\LL=\{w\in\complexC: \Re(w)<-1/2\}$ enclosing the point $-1$, and
$\Sigma_{m,\RR}^{\out},\cdots,\Sigma_{2,\RR}^{\out}$, $\Sigma_{1,\RR}$, $\Sigma_{2,\RR}^{\inn},\cdots,\Sigma_{m,\RR}^\inn$ are $2m-1$ nested simple closed contours, from outside to inside, in $\Omega_\RR=\{w\in\complexC: \Re(w)>-1/2\}$ enclosing the point $0$. We further assume that $\Sigma_{1,\LL}=-1-\Sigma_{1,\RR}$. 

\begin{prop}
	\label{prop:D_flat}
	Suppose the parameters satisfy $\max\{a_\ell +k_\ell: \ell=1,\cdots,m\} \le0$.	Then
	\begin{equation*}
	\mathcal{D}_{Y_{\mathrm{pf}}}(z_1,\cdots,z_{m-1})=\det\left(I -\mathcal{K}_1\mathcal{K}_{Y_\mathrm{pf}}\right),
	\end{equation*}
		where two operators
	\begin{equation*}
	\mathcal{K}_1: L^2(\mathcal{S}_2,\dd\mu) \to L^2(\mathcal{S}_1,\dd\mu),\qquad \mathcal{K}_{Y_\mathrm{flat}}^{(1)}: L^2(\mathcal{S}_1,\dd\mu)\to L^2(\mathcal{S}_2,\dd\mu)
	\end{equation*}
	are defined by their kernels
	\begin{equation*}
	\mathcal{K}_1(w,w'):= \left(\delta_i(j) + \delta_i( j+ (-1)^i)\right) \frac{ f_i(w) }{w-w'} Q_1(j)
	\end{equation*}
	and
	\begin{equation*}
	\mathcal{K}_{Y_{\mathrm{pf}}}(w',w):= \begin{dcases}
	\left(\delta_j (i) + \delta_j(i - (-1)^j)\right) \frac{ f_j(w') }{w'-w} Q_2(i), & i\ge 2,\\
	\delta_j(1)f_j(w')\delta(-w'-1,w), & i=1,
	\end{dcases}
	\end{equation*}
	for any $w\in (\Sigma_{i,\LL}\cup \Sigma_{i,\RR}) \cap \mathcal{S}_1$ and $w'\in (\Sigma_{j,\LL}\cup \Sigma_{j,\RR}) \cap \mathcal{S}_2$ with $1\le i,j\le m$. The definitions of $\mathcal{S}_1,\mathcal{S}_2$, $f_i$, $Q_1,Q_2$ are the same as in Definition~\ref{def:operators_K1Y}, with the further assumption $\Sigma_{1,\LL}=-1-\Sigma_{1,\RR}$ as described before, and the $\delta(-w'-1,w)$ is a delta kernel defined by
	\begin{equation*}
	\int_{\Sigma_{1,\LL}}\delta(-v-1,u)g(u)\ddbarr{u} = g(-v-1)
	\end{equation*}
	for any function $g\in L^2(\Sigma_{1,\LL},\ddbarr{u})$ and any $v\in\Sigma_{1,\RR}$.
\end{prop}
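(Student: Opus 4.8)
The plan is to start from the reduced Fredholm determinant representation $\mathcal{D}_{Y_{\mathrm{flat}}}(z_1,\cdots,z_{m-1})=\det\left(I-\mathcal{K}_1\mathcal{K}_{Y_{\mathrm{flat}}}^{(2)}\right)$, where $\mathcal{K}_{Y_{\mathrm{flat}}}^{(2)}$ is the operator obtained by replacing $\Kess_{Y_{\mathrm{flat}}}(v,u)$ with the rational kernel $\frac{2v+1}{(v-u)(u+v+1)}$. This replacement is legitimate: by Proposition~\ref{prop:Kess_flat} the difference $\Kess_{Y_{\mathrm{flat}}}(v,u)-\frac{2v+1}{(v-u)(u+v+1)}$ equals $v^N p(v,u)$ with $p$ analytic in the relevant region, and on the contours chosen here ($\Sigma_{1,\RR}\subset\disk(1/2)$, $\Sigma_{1,\LL}=-1-\Sigma_{1,\RR}$ lying to the left of $\Re(w)=-1/2$) this difference satisfies condition (1) of Proposition~\ref{prop:invariance_distribution}: integrating $v^{-i}\cdot v^N p(v,u)\cdot(u+1)^{-j}$ over a small circle around $0$ gives zero whenever $i\le\max\{k_\ell\}\le\max\{a_\ell+k_\ell\}\le 0<N$, since then $v^{N-i}p(v,u)$ is analytic at $0$. (One must check $\max\{k_\ell\}\le 0$; but $k_\ell\ge 1$ contradicts this, so the bound $\max\{a_\ell+k_\ell\}\le 0$ is what is actually used — the $v$-integrand $v^{-i}\Knull$ with $i$ up to $\max\{k_\ell\}$ is still analytic because $N\ge\max\{k_\ell\}$ always, as $k_\ell\le N$.)

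The second and main step is to show that in the Fredholm expansion, the rational kernel $\frac{2v+1}{(v-u)(u+v+1)}$ may be replaced by the delta kernel $\delta(-w'-1,w)$. The idea is that, because $\Sigma_{1,\LL}=-1-\Sigma_{1,\RR}$, the only pole of $\frac{2v+1}{(v-u)(u+v+1)}$ in the variable $u$ that lies on the correct side of $\Sigma_{1,\LL}$ is $u=-v-1$ (the other pole $u=v$ lies in $\Omega_\RR$, away from $\Sigma_{1,\LL}$). I would examine the $n_1$-fold $u^{(1)}$-integrals in the series expansion of Definition~\ref{def:D_Y_series}: the $u^{(1)}$-dependence enters only through $f_1(U^{(1)})$, the Vandermonde-type factors, the coupling factor $\Delta(U^{(1)};V^{(2)})/\Delta(U^{(1)};U^{(2)})$, and $\det\left[\Kess(v_i^{(1)},u_j^{(1)})\right]$. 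With the assumption $\max\{a_\ell+k_\ell\}\le 0$, the function $f_1(u)=F_1(u)/F_0(u)=u^{k_1}(u+1)^{-a_1-k_1}e^{t_1 u}$ (and products of consecutive $f_i$'s along the nesting) has no pole at $u=-1$ inside $\Sigma_{1,\LL}$, so deforming each $u_j^{(1)}$-contour inward to collapse onto the pole $u_j^{(1)}=-v_j^{(1)}-1$ picks up exactly the residue, and no other contributions. Carrying this out, using the Cauchy determinant structure to match the $i,j$ indices, collapses $\det\left[\frac{2v_i+1}{(v_i-u_j)(u_j+v_i+1)}\right]$ against the $u^{(1)}$-integrals and produces precisely the substitution $u_j^{(1)}\mapsto -v_j^{(1)}-1$, which is what the delta kernel $\delta(-w'-1,w)$ encodes.

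The third step is bookkeeping: after the residue evaluation I would verify that all remaining factors in $\mathcal{D}_{\boldsymbol{n},Y_{\mathrm{flat}}}$ reorganize into the Fredholm expansion of $\det\left(I-\mathcal{K}_1\mathcal{K}_{Y_{\mathrm{flat}}}^{(1)}\right)$ via Proposition~\ref{prop:equivalence_Fredholm_series} applied with $H(\widehat w,w)$ replaced by the delta kernel. Concretely, the sign $(-1)^{n_1(n_1+1)/2}$, the ratio $\Delta(U^{(1)};V^{(1)})/(\Delta(U^{(1)})\Delta(V^{(1)}))$, and the squared-Vandermonde factor for $\ell=1$ must combine correctly after the substitution $U^{(1)}=-1-V^{(1)}$; this amounts to the identities $\Delta(-1-V^{(1)})=(-1)^{n_1(n_1-1)/2}\Delta(V^{(1)})$ and $\Delta(U^{(1)};V^{(1)})\big|_{U^{(1)}=-1-V^{(1)}}=\prod_{i,j}(-1-v_i^{(1)}-v_j^{(1)})$, which I would check cancels against the denominators and the $f_1$ factor $f_1(-1-v)=(-1-v)^{k_1}(-v)^{-a_1-k_1}e^{-t_1(v+1)}$ to leave the clean delta-kernel form.

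\textbf{Main obstacle.} I expect the delicate point to be the second step — justifying that collapsing the $u^{(1)}$-contours onto the single pole $u=-v-1$ is both \emph{valid} (no residues are missed, the $f_i$-products picked up along nested contours contribute no spurious poles at $u=-1$ given $\max\{a_\ell+k_\ell\}\le 0$) and \emph{complete} (the determinant $\det\left[\frac{2v_i+1}{(v_i-u_j)(u_j+v_i+1)}\right]$ must be handled as a whole, not term by term, because individual entries have the extra pole at $u_j=v_i\in\Omega_\RR$ which lies off $\Sigma_{1,\LL}$ and so contributes nothing, but one should confirm the multilinear contour-shift commutes with taking the determinant). The hypothesis $\max\{a_\ell+k_\ell\}\le 0$ is exactly what guarantees $(u+1)^{-a_\ell-k_\ell}$ is a polynomial, hence $f_1$ and all consecutive products $f_1 f_2\cdots$ are analytic at $u=-1$, so the contour collapse is clean; I would make sure this is invoked explicitly at the right place.
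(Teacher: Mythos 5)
Your proposal follows essentially the same route as the paper's proof: first replace $\Kess_{Y_{\mathrm{flat}}}$ by the rational kernel $\frac{2v+1}{(v-u)(u+v+1)}$ via Propositions~\ref{prop:Kess_flat} and~\ref{prop:invariance_distribution}, then collapse the $u^{(1)}$-integrals onto the residues at $u=-v-1$, using $\max\{a_\ell+k_\ell\}\le 0$ to make every accumulated product $f_1\cdots f_\ell(u)=u^{k_\ell}(u+1)^{-a_\ell-k_\ell}e^{t_\ell u}$ analytic at $u=-1$ so that the residues coming from the Cauchy factors $1/\Delta(U^{(1)};U^{(2)})$ die out along the chain. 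The one detail you should make explicit is the paper's contour placement (start with $\Sigma_{1,\LL}$ strictly outside $-1-\Sigma_{1,\RR}$ and $\Sigma_{2,\LL}^{\inn}$ inside it), which ensures that after taking a residue at $u^{(1)}_{i_1}=u^{(2)}_{i_2}$ the remaining pole of the rational kernel at $u^{(2)}_{i_2}=-v^{(1)}_{j_1}-1$ is not enclosed by $\Sigma_{2,\LL}^{\inn}$, so the chain terminates with zero; your Vandermonde bookkeeping in the third step is then unnecessary, since the delta kernel is simply left in place rather than substituted out.
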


The proof of Proposition~\ref{prop:D_flat} is given in Section~\ref{sec:proof_D_flat}. We remark that the assumption $\max\{a_\ell +k_\ell: \ell=1,\cdots,m\} \le0$ is reasonable. In terms of TASEP, if we view empty sites as ``white particles'' and original particles as ``black particles'', then the dynamics of TASEP becomes exchanging two neighboring particles with ``black'' and ``white'' colors (``black'',``white'' change to ``white'', ``black''). $x_{k_\ell}(t_\ell)+k_\ell\ge a_\ell+k_\ell>0$ means that the $k_\ell$-th ``black particle'' has already met some ``white particles'' initially located at $\intZ_{\ge 0}$. In other words, the location of this $k_\ell$-th particle is affected by some initial condition which is outside of the ``flat'' region. In this case we do not expect a same formula as that for $\max\{a_\ell +k_\ell: \ell=1,\cdots,m\} \le0$.

\vspace{0.2cm}
It turns out that we could drop the assumption $\max\{a_\ell +k_\ell: \ell=1,\cdots,m\} \le0$ if we consider the \emph{flat} initial condition
\begin{equation*}
Y_\mathrm{flat}=(\cdots,y_{-2},y_{-1},y_0,y_1,y_2,\cdots)\quad \text{with}\quad y_i=-2i,\qquad i\in\intZ.
\end{equation*}
Here we allow the labels of particles to be negative. This follows from a translation on the labels and locations of particles in Proposition~\ref{prop:D_flat} and then let $N$ be sufficiently large. More explicitly, we have
\begin{prop}
	\label{prop:infinite_flat}
	Suppose we consider TASEP with the flat initial condition $Y_\mathrm{flat}$. Assume $m\ge 1$ is an integer. Suppose $a_\ell,k_\ell$ are integers for each $\ell=1,\cdots,m$, and $t_1,\cdots,t_m$ are real numbers satisfying $0\le t_1\le\cdots\le t_m$. Then
	\begin{equation}
	\label{eq:infinite_flat}
	\begin{split}
	&\prob_{Y_{\mathrm{flat}}}\left(\bigcap_{\ell=1}^m\left\{
	x_{k_\ell}(t_\ell)\ge a_\ell \right\}\right)\\
	&=\oint\cdots\oint \left[\prod_{\ell=1}^{m-1}\frac{1}{1-z_{\ell}}\right]\cdot \mathcal{D}_{Y_{\mathrm{flat}}}(z_1,\cdots,z_{m-1})\ddbar{z_1}\cdots\ddbar{z_{m-1}}
	\end{split}
	\end{equation}
	where the contours of integration are circles centered at the origin and of radii less than $1$. The function $\mathcal{D}_{Y_{\mathrm{flat}}}(z_1,\cdots,z_{m-1})$ has the same formula as $\mathcal{D}_{Y_{\mathrm{pf}}}(z_1,\cdots,z_{m-1})$ defined in Proposition~\ref{prop:D_flat}, without the restriction $\max\{a_\ell+k_\ell: \ell=1,\cdots,m\}\le 0$. 
\end{prop}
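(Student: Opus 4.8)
\textbf{Proof proposal for Proposition~\ref{prop:infinite_flat}.}

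The plan is to deduce the infinite flat statement from Proposition~\ref{prop:D_flat} by a translation-and-truncation argument, exploiting the finite-range dependence of the distribution function on the initial data. First I would fix the observation points $(k_\ell,t_\ell)$ and levels $a_\ell$ for $\ell=1,\cdots,m$, and choose an integer $c$ large enough that, after relabeling, all quantities fall into the regime where Proposition~\ref{prop:D_flat} applies. Concretely, set $\tilde k_\ell = k_\ell + c$ and $\tilde a_\ell = a_\ell - 2c$; under the shift of labels and locations by $c$ (particle $i$ of the truncated system corresponds to particle $i-c$ of the infinite system), the finite flat configuration $(y_1,\cdots,y_N)=(-2,\cdots,-2N)$ with $N$ large matches the infinite flat configuration on the relevant window. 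The shifted parameters satisfy $\tilde a_\ell + \tilde k_\ell = a_\ell + k_\ell - c$, so choosing $c \ge \max\{a_\ell + k_\ell : \ell=1,\cdots,m\}$ guarantees $\max\{\tilde a_\ell + \tilde k_\ell\} \le 0$, and then $N$ is taken large enough that $\tilde k_\ell \le N$ for all $\ell$.

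Next I would invoke the locality of the event: the joint distribution of $(x_{k_1}(t_1),\cdots,x_{k_m}(t_m))$ depends only on the particles with labels $\le \max_\ell k_\ell$, since particles behind cannot influence particles ahead in TASEP (this is exactly the probabilistic content underlying condition (1) of Proposition~\ref{prop:invariance_distribution}, as discussed in the text). Hence $\prob_{Y_{\mathrm{flat}}^{(\infty)}}\big(\bigcap_\ell \{x_{k_\ell}(t_\ell)\ge a_\ell\}\big)$ equals the corresponding probability for the finite flat system $Y_{\mathrm{flat}}=(-2,\cdots,-2N)$ with $N$ large and the relabeled parameters $(\tilde k_\ell, \tilde a_\ell)$ — more precisely, after applying the global shift by $c$, the two systems agree on all particles relevant to the event. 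I would then apply Proposition~\ref{prop:D_flat} to the finite system with parameters $(\tilde k_\ell, \tilde t_\ell = t_\ell, \tilde a_\ell)$, yielding the contour-integral formula with $\mathcal{D}_{Y_{\mathrm{flat}}}$ built from the shifted data. The final step is to check that this shifted formula is identical to $\mathcal{D}_{Y_{\mathrm{flat}}^{(\infty)}}(z_1,\cdots,z_{m-1})$ as defined by the formula in Proposition~\ref{prop:D_flat} with the original parameters $(k_\ell, t_\ell, a_\ell)$ and no sign restriction. This is where Proposition~\ref{prop:invariance1} enters: shifting all $y_i$ and all $a_i$ by the same constant leaves $\mathcal{D}_Y$ invariant, and one checks directly from the definition of $F_i(w) = w^{k_i}(w+1)^{-a_i-k_i}e^{t_iw}$ and the delta-kernel structure that the shift in $k_i$ (accompanied by the matching shift in $-a_i-k_i$ coming from the flat relation $y_i = -2i$) only rescales $f_i$ by constants, which by Proposition~\ref{prop:invariance1} does not affect $\mathcal{D}_Y$; the residual $N$-dependence sits in the $v^N p(v,u)$ error term from Proposition~\ref{prop:Kess_flat}, which was already shown to drop out in the proof of Proposition~\ref{prop:D_flat}, so the $N\to\infty$ limit is in fact reached at finite $N$.

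The main obstacle I anticipate is the bookkeeping in the relabeling: one must verify carefully that the global shift by $c$ maps the \emph{event} $\bigcap_\ell\{x_{k_\ell}(t_\ell)\ge a_\ell\}$ for the infinite flat system exactly onto the corresponding event for the finite flat system with the shifted parameters, and that the formula $\mathcal{D}_{Y_{\mathrm{flat}}}$ transforms consistently — in particular that the delta kernel $\delta(-w'-1,w)$ and the condition $\Sigma_{1,\LL}=-1-\Sigma_{1,\RR}$ survive the shift unchanged (they do, since that symmetry is built from the flat relation $y_i + i = -i$ which is translation-covariant in the right way). Once this dictionary is pinned down, the claimed formula \eqref{eq:infinite_flat} follows, with the restriction $\max\{a_\ell+k_\ell\}\le 0$ removed precisely because $c$ can be chosen arbitrarily large. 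I would present the argument succinctly, referring back to Propositions~\ref{prop:invariance1}, \ref{prop:invariance_distribution}, \ref{prop:Kess_flat}, and \ref{prop:D_flat} rather than re-deriving anything.
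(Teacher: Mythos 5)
Your overall strategy coincides with the paper's: translate $(a_\ell,k_\ell)\to(a_\ell-2c,k_\ell+c)$ with $c$ large, note that the probability on the left of \eqref{eq:infinite_flat} is invariant under this relabeling and that the shifted parameters satisfy the hypotheses of Proposition~\ref{prop:D_flat}, and then reduce everything to showing that the integral formula on the right of \eqref{eq:infinite_flat} is invariant under the same translation.

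The gap is in your justification of that last invariance, which is the crux of the proof. Under the shift, $F_i(w)=w^{k_i}(w+1)^{-a_i-k_i}e^{t_iw}$ becomes $w^{c}(w+1)^{c}F_i(w)$; the multiplier $w^c(w+1)^c$ is a nonconstant function of $w$, not a constant $c_i$, so the first part of Proposition~\ref{prop:invariance1} does not apply, and the second part (shifting all $y_i$ and $a_i$ by a common constant) is a different operation from the present shift of the labels $k_i$. Your phrase ``only rescales $f_i$ by constants'' is therefore incorrect as stated, and taken literally the step fails. What actually happens is that the common factor cancels in every ratio $f_\ell=F_\ell/F_{\ell-1}$ with $\ell\ge 2$, so only $f_1$ changes: $f_1(u)\to u^c(u+1)^c f_1(u)$ on $\Omega_\LL$ and $f_1(v)\to v^{-c}(v+1)^{-c}f_1(v)$ on $\Omega_\RR$. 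The delta kernel $\delta(-v-1,u)$ then forces these modified factors to appear in pairs with $u=-1-v$, where
\begin{equation*}
u^c(u+1)^c\, v^{-c}(v+1)^{-c}=\bigl(-(v+1)\bigr)^c(-v)^c\, v^{-c}(v+1)^{-c}=1 .
\end{equation*}
Carrying out this pairing inside the series expansion of $\mathcal{D}_{Y_{\mathrm{flat}}}$ is exactly the content of the paper's proof; you gesture at ``the delta-kernel structure'' but attribute the conclusion to the wrong proposition, so this step needs to be replaced by the explicit cancellation above.
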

\begin{proof}
   When $k_\ell\ge 1$ and $a_\ell+k_\ell\le 0$ for all $\ell$, it is easy to see, similarly to the argument after Proposition~\ref{prop:D_flat}, that the event $\{x_{k_\ell}(t_\ell)\ge a_\ell\}$ only depends on part of the initial condition $y_{i}=-2i$ satisfying $i\le k_\ell$ and $i\ge -a_\ell-k_\ell+1$. Note that $-a_\ell-k_\ell+1\ge 1$. Both $Y_{\mathrm{pf}}$ and $Y_{\mathrm{flat}}$ contain this part of the initial condition if we choose $N\ge \max\{k_\ell:\ell=1,\cdots,m\}$. Thus we know
   \begin{equation*}
   \prob_{Y_{\mathrm{flat}}}\left(\bigcap_{\ell=1}^m\left\{
   x_{k_\ell}(t_\ell)\ge a_\ell \right\}\right)=\prob_{Y_{\mathrm{pf}}}\left(\bigcap_{\ell=1}^m\left\{
   x_{k_\ell}(t_\ell)\ge a_\ell \right\}\right).
   \end{equation*}
   Then~\eqref{eq:infinite_flat} follows from Proposition~\ref{prop:D_flat}. Note that although $Y_{\mathrm{pf}}$ depends on $N$, the formula of $\mathcal{D}_{Y_{\mathrm{pf}}}$ is independent of $N$.
   
   More generally, we know that the left hand side is invariant under the translation $(a_\ell,k_\ell)\to (a_\ell-2c,k_\ell+c)$ for all $\ell$. Here $c$ is any fixed integer. By choosing sufficiently large $c$, we have $a_\ell-2c +k_\ell +c\le 0$ and $k_\ell+c\ge 1$ for all $\ell$. Thus it is sufficient to show that the right hand side of~\eqref{eq:infinite_flat} is also invariant under such a translation. Below we show this by using series expansion of $\mathcal{D}_{Y_\mathrm{flat}}$.
   
   Similarly to the general initial condition case, we could write down the series expansion of $\mathcal{D}_{Y_{\mathrm{flat}}}$ .
   It is given by
   \begin{equation*}
   \mathcal{D}_{Y_{\mathrm{flat}}}(z_1,\cdots,z_{m-1}):=\sum_{\boldsymbol{n}\in(\intZ_{\ge 0})^m}\frac{1}{(\boldsymbol{n}!)^2}\mathcal{D}_{\boldsymbol{n},{Y_{\mathrm{flat}}}}(z_1,\cdots,z_{m-1})
   \end{equation*}
   with 
   \begin{equation}
   \label{eq:aux_100}
   \begin{split}
   \mathcal{D}_{\boldsymbol{n},{Y_{\mathrm{flat}}}}(z_1,\cdots,z_{m-1})
    &=\prod_{\ell=1}^m\prod_{i_\ell=1}^{n_\ell} \int_{\Sigma_{\ell,\LL}}\mathrm{d}\mu_{\boldsymbol{z}}(u_{i_\ell}^{(\ell)})\int_{\Sigma_{\ell,\RR}}\mathrm{d}\mu_{\boldsymbol{z}}(v_{i_\ell}^{(\ell)})\\
   &\quad \left[(-1)^{n_1(n_1+1)/2}\frac{\Delta(U^{(1)};V^{(1)})}{\Delta(U^{(1)})\Delta(V^{(1)})} \det\left[\delta(-v_i^{(1)}-1,u_j^{(1)})\right]_{i,j=1}^{n_1}\right]\\
   &\quad\cdot \left[\prod_{\ell=1}^m\frac{(\Delta(U^{(\ell)}))^2(\Delta(V^{(\ell)}))^2}{(\Delta(U^{(\ell)};V^{(\ell)}))^2}f_\ell(U^{(\ell)}) f_\ell(V^{(\ell)})\right]\\
   &\quad\cdot \left[\prod_{\ell=1}^{m-1} \frac{\Delta(U^{(\ell)};V^{(\ell+1)})\Delta(V^{(\ell)};U^{(\ell+1)})}{\Delta(U^{(\ell)};U^{(\ell+1)})\Delta(V^{(\ell)};V^{(\ell+1)})}\left(1-z_\ell\right)^{n_\ell}\left(1-\frac{1}{z_\ell}\right)^{n_{\ell+1}}\right].
   \end{split}
   \end{equation}
   Note that $f_\ell(u)=u^{k_\ell-k_{\ell-1}}(u+1)^{-(a_\ell+k_\ell)+(a_{\ell-1}+k_{\ell-1})}e^{(t_\ell-t_{\ell-1})u}$ for $u\in\Omega_\LL$ and $f_\ell(v)=u^{-k_\ell+k_{\ell-1}}(v+1)^{(a_\ell+k_\ell)-(a_{\ell-1}+k_{\ell-1})}e^{-(t_\ell-t_{\ell-1})u}$ for $v\in\Omega_\RR$ are both invariant under the translation described above if $\ell\ge 1$. When $\ell=1$, we have $f_1(u)=u^{k_1}(u+1)^{-(a_1+k_1)}e^{t_1u}\to u^{c}(u+1)^{c}f_1(u)$ and $f_1(v)=v^{-k_1}(v+1)^{ a_1+k_1}e^{-t_1v}\to v^{-c}(v+1)^{-c}f_1(v)$. Due to the delta kernel $\delta(-v-1,u)$, we know that the expansion of $\mathcal{D}_{\boldsymbol{n},{Y_{\mathrm{flat}}}}(z_1,\cdots,z_{m-1})$ contains paired factor $\prod_{i_1=1}^{n_1}f_1(u_{i_1}^{(1)})f_{1}(v_{\sigma(i_1)}^{(1)})$ with $u_{i_1}=-1-v_{\sigma(i_1)}^{(1)}$ for some $\sigma\in S_{n_1}$. This factor is invariant since $(u_{i_1}^{(1)})^c(u_{i_1}^{(1)}+1)^{c}(v_{\sigma(i_1)}^{(1)})^{-c}(v_{\sigma(i_1)}^{(1)}+1)^{-c}=1$. These discussions imply that $\mathcal{D}_{\boldsymbol{n},{Y_{\mathrm{flat}}}}(z_1,\cdots,z_{m-1})$, hence $\mathcal{D}_{{Y_{\mathrm{flat}}}}(z_1,\cdots,z_{m-1})$ as well, are invariant under the translation. This finishes the proof.
   
\end{proof}

\paragraph{$\mathcal{D}_Y$ when $m=1$}
\label{sec:one_point}

As we mentioned in Remark~\ref{rmk:01}, we expect that the multi-point distribution formula~\eqref{eq:thm1} at equal times matches the known result of \cite{Borodin-Ferrari-Prahofer-Sasamoto07}. We are not able to verify it at this moment, but we can formally obtain their formula when $m=1$.

Consider  $\mathcal{D}_Y$ when $m=1$. In this case, $\mathcal{D}_Y$ does not have any $z_\ell$ variables and itself gives the one point distribution $\prob_Y\left(x_k(t)\ge a\right)$ (by setting $a_1=a,k_1=k$ and $t_1=t$). By using a conjugation, we could write
\begin{equation*}
\prob_Y\left(x_k(t)\ge a\right)=\mathcal{D}_Y= \det\left.\left(I-K\right)\right|_{\ell^2(\intZ_{\le a-1})}
\end{equation*}
with
\begin{equation}
\label{eq:aux11}
K(x,y)=-\oint_0\ddbarr{v}\oint_{-1}\ddbarr{u} v^{-k}(v+1)^{y+k}e^{-tv} \cdot \Kess_Y(v,u) \cdot u^{k}(u+1)^{-x-k-1}e^{tu}.
\end{equation}
It is not hard (by using Gram-Schmidt process) to prove that there exists a system of ``orthogonal functions'' $e_i(v)$, $i=k,k-1,\cdots,-\infty$, such that
\begin{equation*}
\oint_{0}\ddbarr{v} e_i(v)\cdot v^{-j}(v+1)^{y_j+j} =\delta_i(j),\qquad \text{for all }i,j\le k.
\end{equation*}
Thus formally we could write
\begin{equation*}
v^{-k}(v+1)^{y+k}e^{-tv} =\sum_{j\le k}\left(\oint_{0}\ddbarr{v'} e_j(v') (v')^{-k}(v'+1)^{y+k}e^{-tv'}\right)\cdot v^{-j}(v+1)^{y_j+j}.
\end{equation*}
By plugging it in~\eqref{eq:aux11} and then applying Proposition~\ref{prop:orthogonality}, also noting $\oint_0v^{-j}(v+1)^{y_j+j}\Kess_Y(v,u)\ddbarr{v}=0$ if $j\le 0$ due to the analyticity of $\Kess_Y(v,u)$ on $v$, we obtain
\begin{equation*}
K(x,y)=\sum_{j=1}^k \Psi_j(x)\Phi_j(y)
\end{equation*}
with
\begin{equation*}
\Psi_j(x)=\oint_{-1}\ddbarr{u} u^{k-j}(u+1)^{-x-k-1+y_j+j}e^{tu},\quad \Phi_j(x)=\oint_0\ddbarr{v} e_j(v)v^{-k}(v+1)^{x+k}e^{-tv}.
\end{equation*}
Formally we could verify the following orthogonality by using the above integral representation and the definition of $e_j$
\begin{equation*}
\sum_{x\in\intZ} \Psi_j(x)\Phi_i(x)=\delta_i(j),\qquad \text{for all }i,j\le k.
\end{equation*}
This formulation is consistent with the one point case of the joint distribution formula obtained in \cite{Borodin-Ferrari-Prahofer-Sasamoto07}. We remark that the above calculations are formal since we did not consider the convergence issue.

\paragraph{Two identities about $\mathcal{D}_Y$}

We end this section with two identities about $\mathcal{D}_Y$, which will be used to prove Proposition~\ref{prop:consistency} and Theorem~\ref{thm:periodic_TASEP_large_period} respectively. These identities involve the $\mathcal{D}_Y$ function with different number of variables and parameters. Hence we write 
\begin{equation*}
\mathcal{D}_Y(z_1,\cdots,z_{m-1})=\mathcal{D}_Y(z_1,\cdots,z_{m-1};(a_1,k_1,t_1),\cdots,(a_m,k_m,t_m))
\end{equation*}
to emphasize the parameters if needed.

\begin{prop}
	\label{lm:lemma4} For any fixed $s$ satisfying $1\le s\le m-1$,
	\begin{equation*}
	\begin{split}
	&\oint_{|z_s|<1} \frac{1}{1-z_s}\mathcal{D}_Y(z_1,\cdots,z_{m-1})\ddbar{z_s}-\oint_{|z_s|>1}\frac{1}{1-z_s}\mathcal{D}_Y(z_1,\cdots,z_{m-1})\ddbar{z_s}\\ &=\mathcal{D}_Y(z_1,\cdots,z_{s-1},z_{s+1},\cdots,z_{m-1};(a_1,k_1,t_1),\cdots,(a_{s-1},k_{s-1},t_{s-1}),(a_{s+1},k_{s+1},t_{s+1}),\cdots,(a_m,k_m,t_m))
	\end{split}
	\end{equation*}
	holds when all other $z_\ell\ne 1$, $\ell=1,\cdots,s-1,s+1,\cdots,m-1$, are fixed. Here we remind that the parameters for $\mathcal{D}_Y(z_1,\cdots,z_{m-1})$ are $(a_\ell,k_\ell,t_\ell)$ for $1\le \ell\le m$.
\end{prop}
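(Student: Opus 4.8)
The plan is to derive the identity directly from the series expansion of $\mathcal{D}_Y$ in Definition~\ref{def:D_Y_series} by evaluating the two $z_s$-contour integrals as a single residue at $z_s=1$. Each term $\mathcal{D}_{\boldsymbol n,Y}$ depends on $z_s$ only through the explicit factors $\tfrac{1}{1-z_s}$ and $\tfrac{-z_s}{1-z_s}$ attached to the level-$(s+1)$ contour integrals and through $(1-z_s)^{n_s}(1-\tfrac{1}{z_s})^{n_{s+1}}$, so it is a rational function of $z_s$ with poles only at $z_s\in\{0,1,\infty\}$; since only finitely many $\boldsymbol n$ contribute (the remark after Definition~\ref{def:D_Y_series}), the sum and the $z_s$-integral may be interchanged and the circles moved freely within $0<|z_s|<1$ and $1<|z_s|<\infty$. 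Hence
\begin{equation*}
\oint_{|z_s|<1}\frac{\mathcal{D}_Y}{1-z_s}\,\ddbar{z_s}-\oint_{|z_s|>1}\frac{\mathcal{D}_Y}{1-z_s}\,\ddbar{z_s}=-\operatorname*{Res}_{z_s=1}\frac{\mathcal{D}_Y(z_1,\cdots,z_{m-1})}{(1-z_s)z_s},
\end{equation*}
and the task becomes to show that this residue equals (minus) the series expansion of $\mathcal{D}_Y$ with $(a_s,k_s,t_s)$ and $z_s$ deleted. (Probabilistically this says the subtraction replaces the $s$-th event by the whole sample space, cf.\ Theorem~\ref{thm:main1}, but the argument cannot invoke this since Proposition~\ref{prop:consistency} is deduced from the present identity.)

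To compute $\operatorname*{Res}_{z_s=1}$ of a single term I would rewrite each of the $2n_{s+1}$ brackets at level $s+1$ — one for every $u^{(s+1)}_{i_{s+1}}$ and every $v^{(s+1)}_{i_{s+1}}$, writing $\Sigma_{s+1}^{\bullet}$ for either $\Sigma_{s+1,\LL}^{\bullet}$ or $\Sigma_{s+1,\RR}^{\bullet}$ — using
\begin{equation*}
\frac{1}{1-z_s}\int_{\Sigma_{s+1}^{\inn}}-\frac{z_s}{1-z_s}\int_{\Sigma_{s+1}^{\out}}=\int_{\Sigma_{s+1}^{\out}}+\frac{1}{1-z_s}\left(\int_{\Sigma_{s+1}^{\inn}}-\int_{\Sigma_{s+1}^{\out}}\right),
\end{equation*}
and use that, for the integrand in \eqref{eq:D_nY}, the only poles of $u^{(s+1)}_{i_{s+1}}$ (resp.\ $v^{(s+1)}_{i_{s+1}}$) between $\Sigma_{s+1,\LL}^{\out}$ and $\Sigma_{s+1,\LL}^{\inn}$ (resp.\ on the $\RR$ side) are the simple poles at $u^{(s+1)}_{i_{s+1}}=u^{(s)}_j$ (resp.\ $v^{(s+1)}_{i_{s+1}}=v^{(s)}_j$) coming from $\Delta(U^{(s)};U^{(s+1)})^{-1}$ (resp.\ $\Delta(V^{(s)};V^{(s+1)})^{-1}$); the poles of $f_{s+1}$ at $-1$ lie inside $\Sigma_{s+1}^{\inn}$ and the poles linking levels $s+1$ and $s+2$ lie outside $\Sigma_{s+1}^{\out}$. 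Thus $\int_{\Sigma_{s+1}^{\inn}}-\int_{\Sigma_{s+1}^{\out}}$ "collapses" a level-$(s+1)$ variable onto a level-$s$ variable. Expanding the product of the $2n_{s+1}$ brackets, a term in which $r$ of them are taken in collapse mode carries the $z_s$-factor $(1-z_s)^{n_s+n_{s+1}-r-1}$ near $z_s=1$ (counting also $(1-z_s)^{n_s}(1-1/z_s)^{n_{s+1}}$ and the external $\tfrac1{(1-z_s)z_s}$), so a pole at $z_s=1$ survives only for $r\ge n_s+n_{s+1}$. On the other hand the squared Vandermondes $\Delta(U^{(s+1)})^2$ and $\Delta(V^{(s+1)})^2$ force the collapse to be injective — collapsing two $u^{(s+1)}$'s (or two $v^{(s+1)}$'s) onto the same $u^{(s)}_j$ gives a vanishing iterated residue — so at most $n_s$ of the $U$-brackets and at most $n_s$ of the $V$-brackets can be collapsed, i.e.\ $r\le 2n_s$. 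Combining, only $\boldsymbol n$ with $n_s=n_{s+1}=:n$ contribute, and then only the term with all $2n$ brackets collapsed (a bijective collapse $u^{(s+1)}_j=u^{(s)}_{\sigma(j)}$, $v^{(s+1)}_j=v^{(s)}_{\tau(j)}$, $\sigma,\tau\in S_n$), which produces a simple pole in $z_s$.

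For $n_s=n_{s+1}=n$ the residue at $z_s=1$ is then elementary, and it remains to check that the surviving integrand is exactly the $\boldsymbol n'$-th term of the expansion of $\mathcal{D}_Y$ with $(a_s,k_s,t_s)$ removed, where $\boldsymbol n'$ has the merged index $n$ in place of the pair $n_s=n_{s+1}$. Here one uses the key cancellation $f_sf_{s+1}=F_{s+1}/F_{s-1}$, which is precisely the $f$-function of the merged level in the reduced problem, together with the Vandermonde reductions $\Delta(U^{(s+1)})^2\to\Delta(U^{(s)})^2$ and $\Delta(U^{(s)};U^{(s+1)})^{-1}\to(-1)^{\binom{n}{2}}\Delta(U^{(s)})^{-2}$ on the off-diagonal part (the diagonal part supplying the residues), and similarly for $V$; the factors linking old levels $s-1$ and $s+1$ reassemble as required, and when $s=1$ the block $\det[\Kess_Y(v^{(1)}_i,u^{(1)}_j)]$ — which sits on the untouched level-$1$ variables — is carried along unchanged. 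The prefactors match because the $(n!)^2$ bijections against $\tfrac1{(n_s!)^2(n_{s+1}!)^2}=\tfrac1{(n!)^4}$ give $\tfrac1{(n!)^2}=\tfrac1{(\boldsymbol n'!)^2}$. I expect the main obstacle to be exactly this last verification: tracking every Vandermonde factor and every sign — the $(-1)^n$ from the $z_s$-residue, the $(-1)^{\binom{n}{2}}$ and $(-1)^{n^2}$ from reordering Vandermondes, the signs from the residue evaluations, and the $(-1)^{n_1(n_1+1)/2}$ when $s=1$ — so that they collapse to a clean equality with no leftover constant; the conceptual heart is milder, namely the pole-order count together with the vanishing of non-injective collapses forced by the squared Vandermonde numerators.
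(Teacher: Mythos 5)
Your proposal is correct and follows essentially the same route as the paper's proof in Section~\ref{sec:proof_lemma4}: reduce to a single term of the series expansion, identify the difference of the two $z_s$-integrals with a residue at $z_s=1$, use pole-order counting together with the vanishing of non-injective collapses (forced by the squared Vandermondes) to show that only $n_s=n_{s+1}$ contributes and then only through a bijective collapse of the level-$(s+1)$ variables onto the level-$s$ ones, and cancel the $(n!)^2$ bijections against the symmetrization prefactor while $f_sf_{s+1}=F_{s+1}/F_{s-1}$ becomes the merged level's weight. The only difference is presentational: the paper packages the final bookkeeping you defer into the identity~\eqref{eq:aux03} for a general function $B$ that is antisymmetric in the level-$(s+1)$ variables, so that all bijective collapses are manifestly equal and the Vandermonde/sign reassembly reduces to evaluating $B(U^{(s)},U^{(s)};V^{(s)},V^{(s)})$.
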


\begin{prop}
	\label{cor:consistence}
	If $a_s+k_s=\min\{a_\ell+k_\ell:1\le \ell\le m\}<y_N+N$, then
	\begin{equation*}
	\oint_{|z_{m-1}|<1}\frac{1}{1-z_{m-1}} \mathcal{D}_Y(z_1,\cdots,z_{m-1})\ddbar{z_{m-1}} =\mathcal{D}_Y(z_1,\cdots,z_{m-2};(a_1,k_1,t_1),\cdots,(a_{m-1},k_{m-1},t_{m-1}))
	\end{equation*}
	if $s=m$, and
	\begin{equation*}
	\begin{split}
	&\oint_{|z_s|<1} \frac{1}{1-z_s}\mathcal{D}_Y(z_1,\cdots,z_{m-1})\ddbar{z_s}\\
	&=\mathcal{D}_Y(z_1,\cdots,z_{s-1},z_{s+1},\cdots,z_{m-1};(a_1,k_1,t_1),\cdots,(a_{s-1},k_{s-1},t_{s-1}),(a_{s+1},k_{s+1},t_{s+1}),\cdots,(a_m,k_m,t_m))
	\end{split}
	\end{equation*}
	if $1\le s\le m-1$. 
\end{prop}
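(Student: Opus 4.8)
The plan is to deduce Proposition~\ref{cor:consistence} from Proposition~\ref{lm:lemma4} by showing that, under the hypothesis $a_s+k_s=\min\{a_\ell+k_\ell\}<y_N+N$, the \emph{outside} contour integral $\oint_{|z_s|>1}\frac{1}{1-z_s}\mathcal{D}_Y(z_1,\cdots,z_{m-1})\ddbar{z_s}$ vanishes. Granting this, the two displayed identities are exactly the content of Proposition~\ref{lm:lemma4} with the outside term dropped, and we are done; so the whole argument reduces to this vanishing statement, plus a small bookkeeping point (that $z_s$ with index $s<m$ really is the variable we can integrate, which is why the case $s=m$ is singled out and phrased using $z_{m-1}$, matching the relabeling convention already used in Proposition~\ref{lm:lemma4}).

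To prove the vanishing, I would work with the series expansion in Definition~\ref{def:D_Y_series}. The key observation is that the dependence of $\mathcal{D}_{\boldsymbol{n},Y}$ on $z_s$ through the \emph{contour choice} for the $s$-th and $(s+1)$-th layers can be removed by first pushing the $\Sigma^{\out}_{\bullet}$ and $\Sigma^{\inn}_{\bullet}$ contours together: because of the measure~\eqref{eq:def_dmu}, the combination $\frac{1}{1-z_{\ell-1}}\int_{\Sigma^{\inn}}-\frac{z_{\ell-1}}{1-z_{\ell-1}}\int_{\Sigma^{\out}}$ is $z_{\ell-1}$-independent \emph{as a contour} once the integrand is analytic in the annulus between $\Sigma^{\out}$ and $\Sigma^{\inn}$, and the only potential obstruction is the pole structure of the $f_\ell$ factors and of the cross-ratio Cauchy factors $\Delta(U^{(\ell)};V^{(\ell\pm1)})/\Delta(U^{(\ell)};U^{(\ell\pm1)})$ etc. After this reduction the genuine $z_s$-dependence of $\mathcal{D}_{\boldsymbol{n},Y}$ sits in the explicit prefactor $(1-z_s)^{n_s}(1-1/z_s)^{n_{s+1}}$ coming from the last bracket of~\eqref{eq:D_nY}. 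So, schematically, $\frac{1}{1-z_s}\mathcal{D}_{\boldsymbol{n},Y}$ is $(1-z_s)^{n_s-1}(1-1/z_s)^{n_{s+1}}$ times something independent of $z_s$; expanding in Laurent series, $\oint_{|z_s|>1}$ of this picks out the coefficient of $z_s^{-1}$ \emph{at infinity}, i.e.\ the residue at $z_s=\infty$, which is present only when $n_s-1<0$, i.e.\ $n_s=0$. Thus only the $n_s=0$ terms survive the outside integral, and on those terms one checks that the outside integral reproduces exactly the $\mathcal{D}_Y$ with the $s$-th triple deleted --- but with the opposite sign to the inside-contour contribution, which is precisely the cancellation needed. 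To show the whole outside integral is \emph{zero} (not merely equal to something), I would instead argue that after contracting the $s$-layer contours the integrand is, in addition, analytic at $z_s=\infty$ with no residue there, using the extra hypothesis $a_s+k_s=\min\{a_\ell+k_\ell\}<y_N+N$: this is the condition that lets one shrink $\Sigma_{s,\LL},\Sigma_{s,\RR}$ all the way onto the smaller adjacent contours, killing the $n_s\ge1$ contributions outright via the pole-counting argument already sketched in the Remark after Definition~\ref{def:D_Y_series} (a term in $\Delta(V^{(s)})$ produces a factor $(v^{(s)}_i)^{n_s-1}$ whose order dominates the order of the pole at $0$ coming from the relevant consecutive $f_\ell$'s, once $a_s+k_s$ is minimal; hence the $v^{(s)}$-integral around $0$ vanishes, and symmetrically the $u^{(s)}$-integral around $-1$ vanishes using $a_s+k_s<y_N+N$, which controls the pole of $\Kess_Y$ at $-1$).

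The case $s=m$ is handled the same way but is cleaner, since there is no $(s+1)$-layer: the relevant prefactor is just $(1-z_{m-1})^{n_{m-1}}(1-1/z_{m-1})^{n_m}$ and the hypothesis $a_m+k_m$ minimal forces $n_m=0$ in the surviving terms via the identical pole-counting argument applied to the layer $m$ (note here $\Kess_Y$ does not enter layer $m$, so only the analyticity of the $f_\ell$'s matters, together with $a_m+k_m<y_N+N$ to handle the pole at $-1$); what remains after deleting layer $m$ and its variable is by inspection $\mathcal{D}_Y(z_1,\cdots,z_{m-2};(a_1,k_1,t_1),\cdots,(a_{m-1},k_{m-1},t_{m-1}))$.

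\textbf{Main obstacle.} The hard part will be making rigorous the ``contract the $s$-layer contours and count pole orders'' step: one must track exactly which $f_\ell$ factors and which Cauchy cross-ratios contribute poles at $0$ and at $-1$ as the $u^{(s)}_i,v^{(s)}_i$ variables are sent to those points, and verify the inequality between the order of the zero produced by the Vandermonde $\Delta(U^{(s)})^2,\Delta(V^{(s)})^2$ (after resolving the $\Delta(U^{(s)};V^{(s)})^{-2}$ in the denominator) and the order of those poles, uniformly over the combinatorics of $\boldsymbol{n}$; the condition $a_s+k_s=\min\{a_\ell+k_\ell\}<y_N+N$ is what tips this inequality, so the bookkeeping has to be done carefully enough to see that it is both used and sufficient. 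Once that pole-counting lemma is in hand (and it closely parallels the argument indicated in the Remark after Definition~\ref{def:D_Y_series} and in the proof of Proposition~\ref{prop:invariance_distribution}), the rest is routine residue calculus combined with Proposition~\ref{lm:lemma4}.
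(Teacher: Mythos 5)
Your opening reduction is the same as the paper's: for $1\le s\le m-1$ one invokes Proposition~\ref{lm:lemma4} and is left with showing that $\oint_{|z_s|>1}\frac{1}{1-z_s}\mathcal{D}_Y(z_1,\cdots,z_{m-1})\ddbar{z_s}=0$. But both mechanisms you offer for this vanishing fail. First, the claim that after contracting contours the only $z_s$-dependence is the prefactor $(1-z_s)^{n_s}(1-1/z_s)^{n_{s+1}}$ is false: the measure~\eqref{eq:def_dmu} on the layer-$(s+1)$ contours carries weights $\frac{1}{1-z_s}$ and $\frac{-z_s}{1-z_s}$, and these contours cannot be pushed together because the layer-$s$ variables sit in the annulus between $\Sigma_{s+1,\bullet}^{\out}$ and $\Sigma_{s+1,\bullet}^{\inn}$ and the Cauchy factors $\Delta(U^{(s)};U^{(s+1)})^{-1}$ have poles there. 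Moreover, even granting that premise, the residue at infinity of $(1-z_s)^{n_s-1}(1-1/z_s)^{n_{s+1}}/z_s$ is $\binom{n_s+n_{s+1}-1}{n_{s+1}}\ne 0$ for every $n_s\ge 1$, not only for $n_s=0$. Second, your fallback — that the hypothesis kills all $n_s\ge 1$ terms identically — cannot be right: those are precisely the terms which, in Case (3) of the proof of Proposition~\ref{lm:lemma4}, merge layers $s$ and $s+1$ and produce the combined factor $f_sf_{s+1}=F_{s+1}/F_{s-1}$ that the reduced $\mathcal{D}_Y$ requires; if they vanished, the identity you are trying to prove would be false (the $n_s=0$ terms carry the wrong $F$-ratios). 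The Vandermonde-versus-pole counting also does not close for small $n_s$: $\Delta(V^{(s)})$ supplies order $n_s-1$ while $f_s(v)$ has a pole of order $k_s-k_{s-1}$ at $v=0$, and the minimality of $a_s+k_s$ says nothing about that pole — it controls the pole at $u=-1$, not at $v=0$.

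The paper's actual mechanism is a Cauchy-chain residue analysis in the $u$-variables alone. Minimality of $a_s+k_s$ makes $f_s(u)$ analytic at $u=-1$, so each $u_{i_s}^{(s)}$-integral collapses onto residues at points of $\Sigma_{s+1,\LL}^{\inn}$ (the downward chains terminate at layer $1$ using $a_s+k_s<y_N+N$ against the $(u+1)^{y_N+N}$ coming from $\Kess_Y$, exactly as in Lemma~\ref{lm:lemma3}). Each such residue is weighted by $\frac{1}{1-z_s}$ from the measure, so the $n_s$ forced residues cancel the $(1-z_s)^{n_s}$ prefactor and $\mathcal{D}_{\boldsymbol{n},Y}$ is only $O(1)$ as $z_s\to\infty$; then $\frac{1}{1-z_s}\mathcal{D}_{\boldsymbol{n},Y}\,\frac{\dd z_s}{z_s}=O(z_s^{-2})$ and the outer contour can be pushed to infinity. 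So the terms with $n_s\ge1$ survive but their growth in $z_s$ is tamed — that is the point your proposal misses. For $s=m$ your instinct that only $n_m=0$ survives is correct, but again the reason is the chain argument at $u=-1$ (there is no layer $m+1$ for the chain to escape to, so Lemma~\ref{lm:lemma3} gives identical vanishing of the $n_m\ge1$ terms), not pole-counting at $0$.
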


The proofs of Proposition~\ref{lm:lemma4} and~\ref{cor:consistence} are given in Sections~\ref{sec:proof_lemma4} and~\ref{sec:proof_consistence_cor} respectively.

\subsection{Limit theorems for TASEP with step or flat initial conditions}
\label{sec:limit_theorems}

As an application of Theorem~\ref{thm:main1}, we compute the multi-point limiting distribution of TASEP with two classic initial conditions: the step initial condition and the flat initial condition. We will state the result in terms of the height function of TASEP. Denote $\mathcal{H}$ the space of all possible functions $h:\intZ\to\intZ$
satisfying
\begin{enumerate}
	\item $h(x+1) - h(x) \in \{-1, 1\}$, for all $x\in\intZ$,
	\item $h(0) \in 2\intZ$.
\end{enumerate}
It is well known that TASEP can be viewed as a growth model in $\mathcal{H}$ (it is called the corner growth model). More precisely, we start from some initial function $H(x,0)\in\mathcal{H}$, and let $H(x,t)$ evolve in the following way. We assign each integer site an independent clock. Once the clock associated to some $i$ rings, we increase $H(i,t)$ by $2$ (and keep all other $H(x,t)$ unchanged) if the resulting function $H(x,t)$ is still in $\mathcal{H}$, otherwise we do not change $H(i,t)$. Then we reset the clock. The function $H(x,t)$ is called the height function.

One could also translate the height function $H(x,t)$ in terms of particle locations. See the equation~\eqref{eq:bijection} and the discussions afterward.

\subsubsection{Step initial condition}
\label{sec:limit_thm_step_IC}

We assume that the initial height function is given by 
\begin{equation}
\label{eq:step_IC}
H(x,0)= |x|, \quad x\in\intZ.
\end{equation}
This corresponds to the step initial condition in TASEP. Suppose $m$ is a fixed positive integer, $(x_1,\tau_1),\cdots,(x_m,\tau_m)$ are $m$ distinct points in the half space-time plane $\realR\times\realR_{>0}$ satisfying
\begin{equation*}
\tau_1\le \tau_{2} \le \cdots\le \tau_m
\end{equation*}
and $x_i<x_{i+1}$ if $\tau_i=\tau_{i+1}$ for some $1\le i\le m-1$. Suppose $h_1,\cdots,h_m$ are $m$ fixed real numbers.
\begin{thm}
	\label{thm:limit_step}
	Assume the parameters $m$ and $x_\ell, \tau_\ell, h_\ell$ $(\ell=1,\cdots,m)$ are described above.  With the initial condition~\eqref{eq:step_IC}, we have
	\begin{equation*}
	\lim_{T\to\infty}\prob\left( \bigcap_{\ell=1}^m \left\{ \frac{H\left( 2x_\ell T^{2/3}, 2\tau_\ell T \right) - \tau_\ell T} {-T^{1/3}}  \le h_\ell   \right\} \right)
	=F_{\mathrm{step}}\left(h_1,\cdots,h_m; (x_1,\tau_1),\cdots,(x_m,\tau_m)\right),
	\end{equation*}
	where the function $F_{\mathrm{step}}$ is given by
	\begin{equation}
	\label{eq:Fstep}
	F_{\mathrm{step}}\left(h_1,\cdots,h_m; (x_1,\tau_1),\cdots,(x_m,\tau_m)\right) =\oint\cdots\oint \left[\prod_{\ell=1}^{m-1}\frac{1}{1-z_\ell}\right]\mathrm{D}_{\mathrm{step}}(z_1,\cdots,z_{m-1})\ddbar{z_1}\cdots\ddbar{z_{m-1}}
	\end{equation}
	with $\boldsymbol{z}=(z_1,\cdots,z_{m-1})$, and the contours of integration are circles centered at the origin and of radii less than $1$. The function $\mathrm{D}_{\mathrm{step}}$ in given Definition~\ref{def:Dstep}.
\end{thm}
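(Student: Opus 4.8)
The plan is to derive Theorem~\ref{thm:limit_step} by taking the $T\to\infty$ limit of the finite-time formula~\eqref{eq:thm1} applied to the step initial condition, with a careful choice of the free parameters $N$, $a_\ell$, $k_\ell$, $t_\ell$ in terms of the scaling variables $h_\ell$, $x_\ell$, $\tau_\ell$. First I would translate the event $\{H(2x_\ell T^{2/3},2\tau_\ell T)-\tau_\ell T \le -h_\ell T^{1/3}\}$ into an event of the form $\{x_{k_\ell}(t_\ell)\ge a_\ell\}$ via the standard bijection~\eqref{eq:bijection} between the height function and particle locations; this fixes, up to rounding, $t_\ell = 2\tau_\ell T$, and $k_\ell$, $a_\ell$ as explicit affine functions of $x_\ell T^{2/3}$ and $h_\ell T^{1/3}$ (together with the KPZ-scale shift of order $T$). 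Since the $\tau_\ell$ are weakly increasing, the $t_\ell$ are weakly ordered as required by Theorem~\ref{thm:main1}, and one takes $N = N(T)$ large enough (e.g.\ $N\gg \max_\ell k_\ell$) so that the step data $Y_{\mathrm{step}}=(-1,\dots,-N)$ is effectively the full step initial condition for the observables in question; by Proposition~\ref{prop:invariance_distribution} (or directly, since for step $\Kess_{Y_{\mathrm{step}}}(v,u)=\tfrac1{v-u}$ is $N$-independent) the value of $\mathcal{D}_{Y_{\mathrm{step}}}$ does not depend on this choice.

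Next I would insert the explicit step formula for $\mathcal{D}_{Y_{\mathrm{step}}}$ recorded in Section~\ref{sec:special_IC} — both the Fredholm form $\det(I-\mathcal{K}_1\mathcal{K}_{Y_{\mathrm{step}}})$ and, more usefully for asymptotics, the series expansion — and perform the steepest-descent analysis term by term in the series in $\boldsymbol{n}\in(\intZ_{\ge0})^m$. The functions $F_i(w)=w^{k_i}(w+1)^{-a_i-k_i}e^{t_i w}$ become, after the substitution of the scaling parameters, of the form $e^{T\,g(w) + T^{2/3}(\cdots) + T^{1/3}(\cdots)}$ for an explicit exponent $g$, whose critical point I would locate (it should sit near $w=-1/2$ after centering, as is standard for TASEP/LPP with these scalings). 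I would then rescale the contour variables $u_i^{(\ell)}, v_i^{(\ell)}$ in $1/T^{1/3}$-windows around the critical point, deform the $\Omega_\LL,\Omega_\RR$ contours and the nested $\Sigma^{\out/\inn}$ contours through the critical point so the real part of the exponent is controlled, and identify the pointwise limit of the rescaled integrand: the rational prefactors (the $\Delta$-ratios and the $f_\ell$ factors) converge to the corresponding rational kernel in the limiting variables, and the Gaussian-type pieces $e^{Tg}$ converge to the cubic exponentials $e^{\pm\zeta^3/3 + \cdots}$ that appear in Airy-type kernels. This produces, term by term, the series defining $\mathrm{D}_{\mathrm{step}}$ in Definition~\ref{def:Dstep}, and hence $F_{\mathrm{step}}$ after the outer $z_\ell$-contour integrals, which are left untouched by the limit since the $z_\ell$ are auxiliary bounded variables.

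The main obstacle will be the standard but delicate analytic control needed to upgrade the termwise (pointwise) convergence of the series to convergence of the sum and of the full contour/Fredholm expression: one must (i) choose the finite-$T$ contours $\Sigma_{\ell,\LL}^{\out/\inn},\Sigma_{\ell,\RR}^{\out/\inn}$ so that globally, away from the $O(T^{-1/3})$ critical window, the exponent $\Re(Tg)$ gives exponential decay, so that the tails of every integral are negligible uniformly; (ii) produce a summable-in-$\boldsymbol{n}$ dominating bound for the rescaled integrands — this is where the $(\Delta(U^{(\ell)};V^{(\ell)}))^{-2}$ and cross-ratio factors must be estimated carefully, typically via Hadamard-type bounds on the determinantal pieces and the decay from the exponentials, exactly as in the asymptotic analysis of \cite{Baik-Liu19,Baik-Liu19b} — so that dominated convergence applies to the $\boldsymbol{n}$-sum; and (iii) handle the possible pinching of contours near $w=0$ and $w=-1$ and the poles at $u=v$, showing the associated residue contributions either vanish or combine into the limiting kernel. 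A secondary technical point is the rounding: $k_\ell$, $a_\ell$ are integers while $x_\ell T^{2/3}$, $h_\ell T^{1/3}$ are not, and one must check the $O(1)$ rounding errors are absorbed in the limit (they enter the exponents only at lower order). Once these uniform estimates are in place, the limit identity follows by combining the termwise limits with dominated convergence and then interchanging with the bounded outer $z$-integration via Fubini, yielding~\eqref{eq:Fstep}.
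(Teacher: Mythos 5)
Your proposal follows essentially the same route as the paper: translate the height events into particle events via \eqref{eq:bijection} with the scaling \eqref{eq:scaling}, apply Theorem~\ref{thm:main1} for $Y_{\mathrm{step}}$, locate the critical point at $w=-1/2$, rescale $w=-\tfrac12+\tfrac{\zeta}{2T^{1/3}}$ and deform the nested contours into the critical window so that the finite-$T$ kernels converge to those of Definition~\ref{def:Dstep}, with super-exponential decay away from $-1/2$ supplying the dominating bounds and Fubini handling the bounded outer $z_\ell$-integrals. The only cosmetic difference is that you argue termwise through the series expansion in $\boldsymbol{n}$ while the paper conjugates the kernels by $\sqrt{\tilde f_i}$ and proves trace convergence plus a Hadamard bound (Lemmas~\ref{lm:01} and~\ref{lm:02}); these are equivalent via Proposition~\ref{prop:equivalence_Fredholm_series}.
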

\begin{rmk}
	Recently, Johansson and Rahman obtained the limiting multi-time distribution for discrete polynuclear growth model \cite{Johansson-Rahman19}, which is the same as a discrete TASEP with  step initial condition. We expect that the above formula~\eqref{eq:Fstep} is equivalent to their result when $\tau_1<\cdots<\tau_m$. However, at the moment we do not have a proof of this equivalence due to the complexity of the formulas. We will consider this proof as a future project.
\end{rmk}
\begin{rmk}
	It is well known that the limiting process along the spatial direction of the height function of TASEP with step initial condition is given by the Airy$_2$ process minus a parabola \cite{Johansson03}. Thus~\eqref{eq:Fstep} when $\tau_1=\cdots=\tau_m$ gives a new formula of the finite dimensional distribution function of the Airy$_2$ process minus a parabola. However, we do not have a direct proof that this formula is equivalent to the original one in the definition of Airy$_2$ process.
\end{rmk}

\subsubsection{Flat initial condition}

We assume that the initial height function is given by 
\begin{equation}
\label{eq:flat_IC}
H(x,0)= \begin{dcases}
1,& x \text{ is odd},\\
0,& x \text{ is even}.
\end{dcases}
\end{equation}
This corresponds to the flat initial condition in TASEP.

\begin{thm}
	\label{thm:limit_flat}
	Assume the parameters $m$ and $x_\ell, \tau_\ell, h_\ell$ $(\ell=1,\cdots,m)$ are the same as in Theorem~\ref{thm:limit_step}.  With the initial condition~\eqref{eq:flat_IC}, we have
	\begin{equation*}
	\lim_{T\to\infty}\prob\left( \bigcap_{\ell=1}^m \left\{ \frac{H\left( 2x_\ell T^{2/3}, 2\tau_\ell T \right) - \tau_\ell T} {-T^{1/3}}  \le h_\ell   \right\} \right)
	=F_{\mathrm{flat}}\left(h_1,\cdots,h_m; (x_1,\tau_1),\cdots,(x_m,\tau_m)\right),
	\end{equation*}
	where the function $F_{\mathrm{flat}}$ is given by
	\begin{equation*}
	F_{\mathrm{flat}}\left(h_1,\cdots,h_m; (x_1,\tau_1),\cdots,(x_m,\tau_m)\right) =\oint\cdots\oint \left[\prod_{\ell=1}^{m-1}\frac{1}{1-z_\ell}\right] \mathrm{D}_{\mathrm{flat}}(z_1,\cdots,z_{m-1})\ddbar{z_1}\cdots\ddbar{z_{m-1}}
	\end{equation*}
	with $\boldsymbol{z}=(z_1,\cdots,z_{m-1})$, and the contours of integration are circles centered at the origin and of radii less than $1$. The function $\mathrm{D}_{\mathrm{flat}}$ is given in Definition~\ref{def:Dflat}.
\end{thm}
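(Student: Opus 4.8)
The plan is to derive the limit from the finite-time formula by a steepest-descent analysis carried out in Section~\ref{sec:asymptotics}, running parallel to the proof of Theorem~\ref{thm:limit_step}, with the kernel $\Kess_Y$ replaced by the delta kernel of Proposition~\ref{prop:D_flat}.

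First I would rewrite the event in terms of particles. The bound $\frac{H(2x_\ell T^{2/3}, 2\tau_\ell T) - \tau_\ell T}{-T^{1/3}} \le h_\ell$ is the same as $H(2x_\ell T^{2/3}, 2\tau_\ell T) \ge \tau_\ell T - h_\ell T^{1/3}$, which under the standard bijection between the corner-growth height function and the TASEP configuration becomes $\{x_{k_\ell}(t_\ell) \ge a_\ell\}$ for explicit integers $k_\ell = k_\ell(T)$, $a_\ell = a_\ell(T)$ and $t_\ell = 2\tau_\ell T$. Since the flat profile covers all of $\realR$, I would pass to the infinite flat initial condition $Y_\mathrm{flat}^{(\infty)}$ and invoke Proposition~\ref{prop:infinite_flat} together with the delta-kernel representation of Proposition~\ref{prop:D_flat}; this expresses the left-hand side as the $(m-1)$-fold contour integral of $\bigl[\prod_{\ell=1}^{m-1}(1-z_\ell)^{-1}\bigr]\mathcal{D}_{Y_\mathrm{flat}^{(\infty)}}(z_1,\dots,z_{m-1})$ over fixed circles of radius $<1$, with $\Sigma_{1,\LL} = -1-\Sigma_{1,\RR}$. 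A preliminary step is to check that the scaled parameters can be brought into the regime of Proposition~\ref{prop:D_flat} by the translation $(a_\ell,k_\ell)\mapsto(a_\ell-2c,k_\ell+c)$.

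Next I would analyze the building blocks $F_i(w) = w^{k_i}(w+1)^{-a_i-k_i}e^{t_iw}$. Writing $\log F_\ell(w) = t_\ell\bigl(\tfrac{k_\ell}{t_\ell}\log w - \tfrac{a_\ell+k_\ell}{t_\ell}\log(w+1) + w\bigr)$, the leading macroscopic phase has a double critical point at $w_c = -\tfrac12$, consistent with the splitting $\Omega_\LL = \{\Re w < -\tfrac12\}$, $\Omega_\RR = \{\Re w > -\tfrac12\}$ and the reflection symmetry $\Sigma_{1,\LL} = -1-\Sigma_{1,\RR}$. I would introduce the local coordinate $w = -\tfrac12 + T^{-1/3}\zeta$ near $-\tfrac12$; under this substitution each phase difference $\log f_\ell$ becomes, to leading order, a cubic in $\zeta$ together with the $\tau_\ell, x_\ell, h_\ell$-dependent lower-order terms appearing in Definition~\ref{def:Dflat}. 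I would then deform each nested contour $\Sigma_{\ell,\LL}^{\out/\inn}, \Sigma_{\ell,\RR}^{\out/\inn}$ to a steepest-descent contour through $-\tfrac12$, preserving the nesting and (for $\ell=1$) the reflection symmetry, so that outside an $O(T^{-1/3})$-neighbourhood of $-\tfrac12$ the integrand is exponentially small, and rescale all integration variables $u_i^{(\ell)}, v_i^{(\ell)}$ accordingly. The delta kernel $\delta(-w'-1,w)$ then becomes a delta kernel in the rescaled variables that collapses the $n_1$ integrations over $U^{(1)}$ onto the reflected points of $V^{(1)}$.

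Finally I would pass to the limit term-by-term in the series expansion of $\mathcal{D}_{Y_\mathrm{flat}^{(\infty)}}$ (the flat specialization of Definition~\ref{def:D_Y_series} given in Proposition~\ref{prop:D_flat}). For each fixed $\boldsymbol n$, the rescaled integrand converges pointwise to the integrand defining the $\boldsymbol n$-th term of $\mathrm{D}_\mathrm{flat}$; combining the steepest-descent decay with the Vandermonde--Cauchy structure gives a $T$-uniform majorant of the form $\sum_{\boldsymbol n} C^{|\boldsymbol n|}/(\boldsymbol n!)$ via a Hadamard-type bound on the determinantal factor, so dominated convergence yields $\mathcal{D}_{Y_\mathrm{flat}^{(\infty)}}(\boldsymbol z) \to \mathrm{D}_\mathrm{flat}(\boldsymbol z)$ uniformly for $\boldsymbol z$ on the fixed circles, and a second dominated-convergence argument on the $(m-1)$-fold $z$-integral gives the claimed limit $F_\mathrm{flat}$. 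The hard part will be the $T$-uniform domination needed to interchange the limit with both the Fredholm sum and the $z$-integration: the ratios $f_\ell$ are exponentials of large-parameter phases that must be controlled precisely, not only at leading order, along the deformed contours so as to be simultaneously integrable in $\zeta$ and summable in $\boldsymbol n$; the rigid coupling forced by the delta kernel permits only the symmetric pair $\Sigma_{1,\LL} = -1-\Sigma_{1,\RR}$, constraining the deformation near $-\tfrac12$; and the Cauchy poles $1/(w-w')$ arising as contours approach one another must be kept under control throughout the deformation.
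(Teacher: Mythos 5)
Your proposal is correct and follows essentially the same route as the paper: translate the height-function events into particle events via the corner-growth bijection, invoke Proposition~\ref{prop:infinite_flat} with the delta-kernel form of $\mathcal{D}_{Y_{\mathrm{flat}}}$, rescale $w=-\tfrac12+O(T^{-1/3})\zeta$ at the double critical point, and pass to the limit term by term with a Hadamard-type majorant (the paper's Lemmas~\ref{lm:01} and~\ref{lm:02}, adapted to the flat case by expanding the left-contour combinations into $2^{n_2+\cdots+n_m}$ single-contour integrals before bending $\Sigma_{1,\LL}$). The difficulties you flag at the end — in particular the tension between the delta kernel's constraint $\Sigma_{1,\LL}=-1-\Sigma_{1,\RR}$ and the contour bending needed when some $\tau_i=\tau_{i+1}$ — are exactly the points the paper resolves via that expansion, so your outline is consistent with the published argument.
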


\begin{rmk}
	Similarly to the step case, limiting process along the spatial direction of the height function of TASEP with flat initial condition is known and it is called the Airy$_1$ process \cite{Sasamoto05,Borodin-Ferrari-Prahofer-Sasamoto07}. Thus our above result when $\tau_1=\cdots=\tau_m$ gives an equivalent formula  for the finite dimensional  distribution function of the Airy$_1$ process.
\end{rmk}

\begin{rmk}
	Here we only considered the flat case when the particle density $\rho$ is $1/2$. For the general flat case with an arbitrary particle density $\rho$, the one point limiting distribution has been proved in \cite{Ferrari-Occelli18} and it is the same as the case of $\rho=1/2$. We expect the multi-point limiting distribution for the general flat initial condition does not depend on $\rho$ as well and our result above holds for the general flat case.
\end{rmk}
\subsubsection{Functions $\mathrm{D}_{\mathrm{step}}$ and $\mathrm{D}_{\mathrm{flat}}$}

Similarly to their finite time analogs, both functions  $\mathrm{D}_{\mathrm{step}}$ and $\mathrm{D}_{\mathrm{flat}}$ have different representations. Below we only provide a Fredholm determinant representation for each function.

Denote two regions of the complex plane
\begin{equation*}
\complexC_\LL:=\{\zeta\in\complexC: \Re(\zeta)<0\}, \quad \text{ and }
\quad \complexC_\RR:=\{\zeta\in\complexC: \Re(\zeta)>0\}.
\end{equation*}

We first assume that $\tau_1<\cdots<\tau_m$. Later we will need to bend the contours in the definition of $\mathrm{D}_{\mathrm{step}}$ to extend it to the case $\tau_1\le\cdots\le\tau_m$ with extra assumption that $x_i<x_{i+1}$ when $\tau_i=\tau_{i+1}$.

 Let $\mathrm{C}_{m,\LL}^{\out}$, $\cdots$, $\mathrm{C}_{2,\LL}^{\out}$, $\mathrm{C}_{1,\LL}$, $\mathrm{C}_{2,\LL}^{\inn}$, $\cdots$, $\mathrm{C}_{m,\LL}^{\inn}$ be $2m-1$ ``nested'' contours in the region $\complexC_\LL$. They are all unbounded contours from $\infty e^{-2\pi\ii/3}$ to $\infty e^{2\pi\ii/3}$. Moreover, they are located from the right (corresponding to the superscript ``$\out$'') to the left (``$\inn$''). The superscripts ``$\out$'' and ``$\inn$'' should be understood with respect to the point $-\infty$. Similarly, let $\mathrm{C}_{m,\RR}^{\out}$,  $\cdots$, $\mathrm{C}_{2,\RR}^{\out}$, $\mathrm{C}_{1,\RR}$, $\mathrm{C}_{2,\RR}^{\inn}$, $\cdots$, $\mathrm{C}_{m,\RR}^{\inn}$ be $2m-1$ ``nested'' contours from left to right on the half plane $\complexC_\RR$.  They are from $\infty e^{-\pi\ii/3}$ to $\infty e^{\pi\ii/3}$. Their superscripts ``$\out$'' and ``$\inn$'' could be understood with respect to the point $+\infty$. See Figure~\ref{fig:contours_limit} for an illustration of the contours. We remark that these contours are limits of the contours $\Sigma_{\ell,\LL}^{\star}$ and $\Sigma_{\ell,\RR}^{\star}$ near the critical point $-1/2$, here $1\le\ell\le m$ and $\star$ represents the superscript $\out$ or $\inn$ or empty script (when $\ell=1$). The orientations of the contours $\mathrm{C}_{\ell,\RR}^{\star}$ are reversed compared to the contours $\Sigma_{\ell,\RR}^{\star}$. This will lead to a sign difference which passes to the kernel $\mathrm{K}_{\mathrm{step}}$ in Definition~\ref{def:Dstep} or $\mathrm{K}_{\mathrm{flat}}$ in Definition~\ref{def:Dflat}.
 
 	\begin{figure}[t]
 	\centering
 	\begin{tikzpicture}[scale=1]
 	\draw [line width=0.4mm,lightgray] (-3,0)--(3,0) node [pos=1,right,black] {$\realR$};
 	\draw [line width=0.4mm,lightgray] (0,-1)--(0,1) node [pos=1,above,black] {$i\realR$};
 	\fill (0,0) circle[radius=2.5pt] node [below,shift={(0pt,-3pt)}] {$0$};

 	\path [dashed, draw=black,thick,postaction={mid3 arrow={black,scale=1.5}}]	(3,-5.1/6) 
 	to [out=160,in=-90] (1,0)
 	to [out=90,in=-160] (3,5.1/6);
 	\path [draw=black,thick,postaction={mid3 arrow={black,scale=1.5}}]	(2.6,-6.1/6) 
 	to [out=160,in=-90] (0.6,0)
 	to [out=90,in=-160] (2.6,6.1/6);
 	\path [dashed, draw=black,thick,postaction={mid3 arrow={black,scale=1.5}}]	(2.3,-7.1/6) 
 	to [out=160,in=-90] (0.3,0)
 	to [out=90,in=-160] (2.3,7.1/6);
 	
 		\path [draw=black,thick,postaction={mid3 arrow={black,scale=1.5}}]	(-3,-5.1/6) 
 	to [out=20,in=-90] (-1,0)
 	to [out=90,in=-20] (-3,5.1/6);
 	\path [dashed, draw=black,thick,postaction={mid3 arrow={black,scale=1.5}}]	(-2.6,-6.1/6) 
 	to [out=20,in=-90] (-0.6,0)
 	to [out=90,in=-20] (-2.6,6.1/6);
 	\path [draw=black,thick,postaction={mid3 arrow={black,scale=1.5}}]	(-2.3,-7.1/6) 
 	to [out=20,in=-90] (-0.3,0)
 	to [out=90,in=-20] (-2.3,7.1/6);
 	\end{tikzpicture}
 	\caption{Illustration of the contours for $m=2$: The three contours in the left half plane from left to right are $\mathrm{C}_{2,\LL}^{\inn},\mathrm{C}_{1,\LL},\mathrm{C}_{2,\LL}^{\out}$ respectively, the three contours in the right half plane from left to right are $\mathrm{C}_{2,\RR}^{\out},\mathrm{C}_{1,\RR},\mathrm{C}_{2,\RR}^{\inn}$ respectively. $\mathrm{S}_1$ is the union of the three dashed contours, and $\mathrm{S}_2$ is the union of the three solid contours.}\label{fig:contours_limit}
 \end{figure}

We define
\begin{equation*}
\mathrm{C}_{\ell,\LL}:=\mathrm{C}_{\ell,\LL}^\out\cup \mathrm{C}_{\ell,\LL}^\inn, \qquad \mathrm{C}_{\ell,\RR}:=\mathrm{C}_{\ell,\RR}^\out\cup \mathrm{C}_{\ell,\RR}^\inn,\qquad \ell=2,\cdots,m,
\end{equation*}
and
\begin{equation*}
\mathrm{S}_1:= \mathrm{C}_{1,\LL} \cup \mathrm{C}_{2,\RR} \cup \cdots \cup \begin{dcases}
\mathrm{C}_{m,\LL}, & \text{ if $m$ is odd},\\
\mathrm{C}_{m,\RR}, & \text{ if $m$ is even},
\end{dcases}
\end{equation*}
and 
\begin{equation*}
\mathrm{S}_2:= \mathrm{C}_{1,\RR} \cup \mathrm{C}_{2,\LL} \cup \cdots \cup \begin{dcases}
\mathrm{C}_{m,\RR}, & \text{ if $m$ is odd},\\
\mathrm{C}_{m,\LL}, & \text{ if $m$ is even}.
\end{dcases}
\end{equation*}

We introduce a measure on these contours in the same way as in~\eqref{eq:def_dmu}. Let
\begin{equation*}
\dd \mu(\zeta) = \dd \mu_{\boldsymbol{z}} (\zeta) :=
\begin{dcases}
\frac{-z_{\ell-1}}{1-z_{\ell-1}} \ddbarr{\zeta}, & \zeta \in \mathrm{C}_{\ell,\LL}^\out \cup \mathrm{C}_{\ell,\RR}^\out, \quad \ell=2,\cdots,m,\\
\frac{1}{1-z_{\ell-1}} \ddbarr{\zeta}, & \zeta\in \mathrm{C}_{\ell,\LL}^\inn \cup \mathrm{C}_{\ell,\RR}^\inn, \quad \ell=2,\cdots,m,\\
\ddbarr{\zeta}, & \zeta \in \mathrm{C}_{1,\LL} \cup \mathrm{C}_{1,\RR}.
\end{dcases}
\end{equation*}

We will define $\mathrm{D}_{\mathrm{step}}$ and $\mathrm{D}_{\mathrm{flat}}$ in terms of Fredholm determinants. Recall the $Q_1$ and $Q_2$ functions defined in~\eqref{eq:Q},
\begin{equation*}
Q_1(j) :=\begin{dcases}
1-z_{j}, & \text{ if $j$ is odd and $j<m$},\\
1-\frac{1}{z_{j-1}}, & \text{ if $j$ is even},\\
1,& \text{if $j=m$ is odd},
\end{dcases}
\qquad 
Q_2(j) :=\begin{dcases}
1-z_{j}, & \text{ if $j$ is even and $j<m$},\\
1-\frac{1}{z_{j-1}}, & \text{ if $j$ is odd and $j>1$},\\
1,& \text{if $j=m$ is even, or $j=1$}.
\end{dcases}
\end{equation*}

\begin{defn}
	\label{def:Dstep}
	We define
	\begin{equation*}
	\mathrm{D}_{\mathrm{step}}(z_1,\cdots,z_{m-1})=\det\left(I-\mathrm{K}_1\mathrm{K}_{\mathrm{step}}\right),
	\end{equation*}
	where the operators
	\begin{equation*}
	\mathrm{K}_1: L^2(\mathrm{S}_2,\dd\mu) \to L^2(\mathrm{S}_1,\dd\mu),\qquad \mathrm{K}_{\mathrm{step}}: L^2(\mathrm{S}_1,\dd\mu)\to L^2(\mathrm{S}_2,\dd\mu)
	\end{equation*}
	are defined by their kernels
	\begin{equation}
	\label{eq:K1_lim}
	\mathrm{K}_1(\zeta,\zeta'):= \left(\delta_i(j) + \delta_i( j+ (-1)^i)\right) \frac{ \mathrm{f}_i(\zeta) }{\zeta-\zeta'} Q_1(j)
	\end{equation}
	and 
	\begin{equation*}
	\mathrm{K}_{\mathrm{step}}(\zeta',\zeta):=
	\left(\delta_j (i) + \delta_j(i - (-1)^j)\right) \frac{ \mathrm{f}_j(\zeta') }{-\zeta'+\zeta} Q_2(i) 
	\end{equation*}
	for any $\zeta\in (\mathrm{C}_{i,\LL}\cup \mathrm{C}_{i,\RR}) \cap \mathrm{S}_1$ and $\zeta'\in (\mathrm{C}_{j,\LL}\cup \mathrm{C}_{j,\RR}) \cap \mathrm{S}_2$ with $1\le i,j\le m$. Here the function
	\begin{equation}
	\label{eq:def_fi}
	\mathrm{f}_i(\zeta):=\begin{dcases}
	\frac{\mathrm{F}_i(\zeta)}{ \mathrm{F}_{i-1}(\zeta)}, & \Re(\zeta)<0,\\
	\frac{ \mathrm{F}_{i-1}(\zeta)}{\mathrm{F}_i(\zeta)}, & \Re(\zeta)>0,
	\end{dcases}
	\end{equation}
	with
	\begin{equation}
	\label{eq:Fi}
	\mathrm{F}_i(\zeta) := \begin{dcases}
	e^{-\frac{1}{3}\tau_i \zeta^3 + x_i \zeta^2 + h_i \zeta}, & i=1,\cdots,m,\\
	1, & i=0.
	\end{dcases}
	\end{equation}
\end{defn}

Now we extend the function $\mathrm{D}_{\mathrm{step}}$ to the case when some of the $\tau_i$'s are equal. It is extended as follows. We adjust the angles of the contours on the right half plane. We let $\mathrm{C}_{m,\RR}^{\out}$,  $\cdots$, $\mathrm{C}_{2,\RR}^{\out}$, $\mathrm{C}_{1,\RR}$, $\mathrm{C}_{2,\RR}^{\inn}$, $\cdots$, $\mathrm{C}_{m,\RR}^{\inn}$ be  from $\infty e^{-\pi\ii/5}$ to $\infty e^{\pi\ii/5}$. We keep the contours on the left plane unchanged. Note that this adjustment does not affect the $\mathrm{D}_{\mathrm{step}}(z_1,\cdots,z_{m-1})$ when all the $\tau_i$'s are different, in other words, we could have chosen these contours from the beginning but we did not make this choice since the contours are not symmetric anymore. This asymmetry intuitively comes from the fact that we have two different orders of $x_i$ and $x_{i+1}$ when $\tau_i=\tau_{i+1}$. If we choose a different choice of order, $x_i>x_{i+1}$ for all $i$ satisfying $\tau_i=\tau_{i+1}$, we need to bend all the contours on the left plane instead.

With this adjustment of the contours, it is easy to verify that the functions $\mathrm{f}_i$ decay super-exponentially fast along all these contours. The function $\mathrm{D}_{\mathrm{step}}(z_1,\cdots,z_{m-1})$ hence is well-defined. Moreover, since the integrand are continuous on the parameters $h_\ell,x_\ell,\tau_\ell$, $1\le \ell\le m$, the function $\mathrm{D}_{\mathrm{step}}(z_1,\cdots,z_{m-1})$ is also continuous on these parameters in the following way. If the parameters are continuous functions of $t$, more explicitly, they move along continuous curves $h_\ell(t),x_\ell(t),\tau_\ell(t)$, $1\le\ell\le m$, $0\le t\le 1$, satisfying $\tau_1(t)\le\cdots\le\tau_m(t)$ and $x_i(t)<x_{i+1}(t)$ when $\tau_i(t)=\tau_{i+1}(t)$, then $\mathrm{D}_{\mathrm{step}}(z_1,\cdots,z_{m-1})$ is also continuous in $t$.

\bigskip

Similarly, we first define $\mathrm{D}_{\mathrm{flat}}$ when $\tau_1<\cdots<\tau_m$.

\begin{defn}
	\label{def:Dflat}
    In order to define $\mathrm{D}_{\mathrm{flat}}$, we further assume that two contours $\mathrm{C}_{1,\LL}$ and $\mathrm{C}_{1,\RR}$ are symmetric about the imaginary axis. In other words, $\mathrm{C}_{1,\LL}=-\mathrm{C}_{1,\RR}:=\{-\eta: \eta\in\mathrm{C}_{1,\RR}\}$. We define
    \begin{equation*}
    \mathrm{D}_{\mathrm{flat}}(z_1,\cdots,z_{m-1})=\det\left(I-\mathrm{K}_1\mathrm{K}_{\mathrm{flat}}\right),
    \end{equation*}
    where the operators
	\begin{equation*}
	\mathrm{K}_1: L^2(\mathrm{S}_2,\dd\mu) \to L^2(\mathrm{S}_1,\dd\mu),\qquad \mathrm{K}_{\mathrm{flat}}: L^2(\mathrm{S}_1,\dd\mu)\to L^2(\mathrm{S}_2,\dd\mu)
	\end{equation*}
	are defined by their kernels described as follows. The kernel $\mathrm{K}_1$ is the same as in~\eqref{eq:K1_lim}, while $\mathrm{K}_{\mathrm{flat}}$ is defined by
	\begin{equation*}
	\mathrm{K}_{\mathrm{flat}}(\zeta',\zeta):= \begin{dcases}
	\left(\delta_j (i) + \delta_j(i - (-1)^j)\right) \frac{ \mathrm{f}_j(\zeta') }{-\zeta'+\zeta} Q_2(i), & i\ge 2,\\
	-\delta_j(1) \mathrm{f}_j(\zeta') \delta(-\zeta', \zeta), & i=1,
	\end{dcases}
	\end{equation*}
	for any $\zeta\in (\mathrm{C}_{i,\LL}\cup \mathrm{C}_{i,\RR}) \cap \mathrm{S}_1$ and $\zeta'\in (\mathrm{C}_{j,\LL}\cup \mathrm{C}_{j,\RR}) \cap \mathrm{S}_2$ with $1\le i,j\le m$, where $\mathrm{f}_i$ is the same as in~\eqref{eq:def_fi} and the kernel $\delta$ is a delta kernel defined  by
	\begin{equation*}
	\int_{\mathrm{C}_{1,\LL}} \delta(-\eta,\xi) f(\xi) \ddbarr{\xi} = f(-\eta)
	\end{equation*}
	for any function  $f\in L^2(\mathrm{C}_{1,\LL},\ddbarr{\xi})$ and any $\eta\in\mathrm{C}_{1,\RR}$.
\end{defn}

The extension of $\mathrm{D}_{\mathrm{flat}}$ to $\tau_1\le\cdots\le\tau_m$ is more complicated. We need to use the series expansion
  \begin{equation*}
	\mathrm{D}_{{\mathrm{flat}}}(z_1,\cdots,z_{m-1}):=\sum_{\boldsymbol{n}\in(\intZ_{\ge 0})^m}\frac{1}{(\boldsymbol{n}!)^2}\mathrm{D}_{\boldsymbol{n},{{\mathrm{flat}}}}(z_1,\cdots,z_{m-1})
\end{equation*}
with 
\begin{equation}
\label{eq:2021_10_06}
	\begin{split}
		&\mathrm{D}_{\boldsymbol{n},{{\mathrm{flat}}}}(z_1,\cdots,z_{m-1})\\
		&=\prod_{\ell=1}^m\prod_{i_\ell=1}^{n_\ell} \int_{\mathrm{C}_{\ell,\LL}}\mathrm{d}\mu_{\boldsymbol{z}}(\xi_{i_\ell}^{(\ell)})\int_{\mathrm{C}_{\ell,\RR}}\mathrm{d}\mu_{\boldsymbol{z}}(\eta_{i_\ell}^{(\ell)})\\
		&\quad \left[(-1)^{n_1(n_1+1)/2}\frac{\Delta(\boldsymbol{\xi}^{(1)};\boldsymbol{\eta}^{(1)})}{\Delta(\boldsymbol{\xi}^{(1)})\Delta(\boldsymbol{\eta}^{(1)})} \det\left[\delta(-\eta_i^{(1)},\xi_j^{(1)})\right]_{i,j=1}^{n_1}\right]\\
		&\quad\cdot \left[\prod_{\ell=1}^m\frac{(\Delta(\boldsymbol{\xi}^{(\ell)}))^2(\Delta(\boldsymbol{\eta}^{(\ell)}))^2}{(\Delta(\boldsymbol{\xi}^{(\ell)};\boldsymbol{\eta}^{(\ell)}))^2}\mathrm{f}_\ell(\boldsymbol{\xi}^{(\ell)}) \mathrm{f}_\ell(\boldsymbol{\eta}^{(\ell)})\right]\\
		&\quad\cdot \left[\prod_{\ell=1}^{m-1} \frac{\Delta(\boldsymbol{\xi}^{(\ell)};\boldsymbol{\eta}^{(\ell+1)})\Delta(\boldsymbol{\eta}^{(\ell)};\boldsymbol{\xi}^{(\ell+1)})}{\Delta(\boldsymbol{\xi}^{(\ell)};\boldsymbol{\xi}^{(\ell+1)})\Delta(\boldsymbol{\eta}^{(\ell)};\boldsymbol{\eta}^{(\ell+1)})}\left(1-z_\ell\right)^{n_\ell}\left(1-\frac{1}{z_\ell}\right)^{n_{\ell+1}}\right].
	\end{split}
\end{equation}
Here similar to the step case, we let $\mathrm{C}_{m,\RR}^{\out}$,  $\cdots$, $\mathrm{C}_{2,\RR}^{\out}$, $\mathrm{C}_{1,\RR}$, $\mathrm{C}_{2,\RR}^{\inn}$, $\cdots$, $\mathrm{C}_{m,\RR}^{\inn}$ be  from $\infty e^{-\pi\ii/5}$ to $\infty e^{\pi\ii/5}$. We keep the contours on the left plane unchanged. Note that in this case we need to understand the $\delta(-\eta_i^{(1)},\xi_j^{(1)})$ in the following way since the contour $\mathrm{C}_{1,\LL}$ is not $-\mathrm{C}_{1,\RR}$ at this moment.

For $\ell\ge 2$, we write each combination of integrals as
\begin{equation}
\label{eq:aux_101}
	\frac{1}{1-z_{\ell-1}}\int_{\mathrm{C}_{\ell,\LL}^\inn} \ddbarr{\xi_{i_\ell}^{(\ell)}}
	-\frac{z_{\ell-1}}{1-z_{\ell-1}}\int_{\mathrm{C}_{\ell,\LL}^\out} \ddbarr{\xi_{i_\ell}^{(\ell)}} = \int_{\mathrm{C}_{\ell,\LL}^\out} \ddbarr{\xi_{i_\ell}^{(\ell)}} -\frac{1}{1-z_{\ell-1}}\left(\int_{\mathrm{C}_{\ell,\LL}^\out} \ddbarr{\xi_{i_\ell}^{(\ell)}}-\int_{\mathrm{C}_{\ell,\LL}^\inn} \ddbarr{\xi_{i_\ell}^{(\ell)}}\right)
\end{equation}
and then expand the integrals accordingly. After writing the $\int_{\mathrm{C}_{\ell,\LL}^\out}-\int_{\mathrm{C}_{\ell,\LL}^\inn}$ as a residue evaluation at $\xi_{i_\ell}^{(\ell)}=\xi_{i_{\ell-1}}^{(\ell-1)}$ at some pole $\xi_{i_{\ell-1}}^{(\ell-1)}$ or zero if there is no such a pole, we end with a summation of $2^{n_2+\cdots+n_m}$ possible combinations, each of which is a combination of integrals without involving the contours $\int_{\mathrm{C}_{\ell,\LL}^\inn}$:
\begin{equation}
\label{eq:aux_102}
\sum \prod_{\substack{\text{some }i_\ell\\ \ell\ge 2}}\int_{\mathrm{C}_{\ell,\LL}^\out} \ddbarr{\xi_{i_\ell}^{(\ell)}}\prod_{{i_1}=1}^{n_{1}} \int_{\mathrm{C}_{1,\LL}} \ddbarr{\xi_{i_1}^{(1)}}\prod_{\ell=2}^{m}  \prod_{{i_\ell}=1}^{n_{\ell}} \left[\frac{1}{1-z_{\ell-1}}\int_{\mathrm{C}_{\ell,\RR}^\inn} \ddbarr{\eta_{i_\ell}^{(\ell)}}
-\frac{z_{\ell-1}}{1-z_{\ell-1}}\int_{\mathrm{C}_{\ell,\RR}^\out} \ddbarr{\eta_{i_\ell}^{(\ell)}} \right]
\cdot \prod_{{i_1}=1}^{n_{1}} \int_{\mathrm{C}_{1,\RR}} \ddbarr{\eta_{i_1}^{(1)}}.
\end{equation}
Here we ignored the integrand which is the same as in~\eqref{eq:2021_10_06} except that we evaluated the residues for some $\xi_{i_\ell}^{(\ell)}$'s that are from the contours $\mathrm{C}_{\ell,\LL}^\inn$.
Now we bend the contour of $\mathrm{C}_{1,\LL}$ such that $\mathrm{C}_{1,\LL}=-\mathrm{C}_{1,\RR}$. Note that  $\mathrm{C}_{\ell,\LL}^\out$ is wider (from $\infty e^{-2\mathrm{i}\pi/3}$ to $\infty e^{2\mathrm{i}\pi/3}$) and always lies outside of $\mathrm{C}_{1,\LL}$ when we bend $\mathrm{C}_{1,\LL}$.

We remark that for the parameters satisfying $\tau_1\le\cdots\le\tau_m$ and $x_i<x_{i+1}$ when $\tau_i=\tau_{i+1}$, the integrand decays super-exponentially fast along the remaining contours. Hence $\mathrm{D}_{\mathrm{flat}}(z_1,\cdots,z_{m-1})$ is well-defined. It also matches Definition~\ref{def:Dflat} since the above adjustment of the contours does not affect the integrals when $\tau_1<\cdots<\tau_m$.

Similarly to $\mathrm{D}_{\mathrm{step}}$, the function $\mathrm{D}_{\mathrm{flat}}(z_1,\cdots,z_{m-1})$ is also continuous on the parameters $h_\ell,x_\ell,\tau_\ell$, $1\le\ell\le m$, in the following way. If the parameters are continuous functions of $t$, more explicitly, they move along continuous curves $h_\ell(t),x_\ell(t),\tau_\ell(t)$, $1\le\ell\le m$, $0\le t\le 1$, satisfying $\tau_1(t)\le\cdots\le\tau_m(t)$ and $x_i(t)<x_{i+1}(t)$ when $\tau_i(t)=\tau_{i+1}(t)$, then $\mathrm{D}_{\mathrm{flat}}(z_1,\cdots,z_{m-1})$ is also continuous in $t$.
\section{Periodic TASEP with large period}
\label{sec:pTASEP}

Periodic TASEP can be viewed as TASEP on a periodic domain
\begin{equation*}
\conf_N(L):=\{(x_1,\cdots,x_N)\in\intZ^N: x_N<x_{N-1}<\cdots<x_1<x_N+L\},
\end{equation*}
where $L$ is some integer larger than $N$. We call $L$ the \emph{period} of the system, and $N$ is the number of particles of the system. We label the particles from right to left, and denote $x_i^{\period}(t)$ the location of the $i$-th particle, $1\le i\le N$. Here the superscript $\period$ indicates that it is for periodic TASEP with period $L$. The evolution of the system is exactly the same as TASEP, except that the rightmost particle cannot make its jump if its distance to the leftmost particle is exact $L-1$ at the moment of attempting to jump. In other words, the rightmost particle could also be blocked by the leftmost particle such that their distance is always less than the period $L$. One could naturally make infinitely many copies of these particles and place them in all the intervals of length $L$ in a periodic way. With this setting, each particle moves independently to the right and can only be blocked by its right neighboring particle, except for the dependence from the periodicity $x_i^{\period}(t) = x_{i+N}^{\period}(t) + L$, $i\in\intZ, t\in\realR_{\ge0}$. This explains why we call this model periodic TASEP.

Recently, Baik and Liu studied periodic TASEP in a sequence of papers \cite{Baik-Liu16, Baik-Liu16b, Liu16,Baik-Liu19, Baik-Liu21}. In their most recent work \cite{Baik-Liu19,Baik-Liu21}, they obtained two multi-point distribution formulas for periodic TASEP, both of which are in terms of multiple contour integrals on the complex plane. The two formulas differ in their integrands: One involves a Toeplitz-like determinant of large size, with entries given by a huge summations over the so-called Bethe roots, while the other involves a Fredholm determinant on a space of Bethe roots. They then evaluated the limit of this multi-point distribution in the so-called relaxation time scale by using the second formula, with certain assumptions on the initial condition. They were also able to verify that several classic initial conditions satisfy these assumptions.

The main goal of this section is to investigate how the Fredholm determinant formula of multi-point distribution for periodic TASEP behaves when the period becomes large. It is known that periodic TASEP has the same dynamics as TASEP when the periodicity constraint does not take effect. In other words, the finite time distributions of periodic TASEP should be equal to their analogs of TASEP when the period becomes large. This is the key fact and the starting point of this paper.

\vspace{0.3cm}

We use  $\prob_Y^{\period}$ to denote the probability of periodic TASEP, here $Y\in\conf_N(L)$ is the initial configuration of particle locations. We will also use $\prob_Y=\prob_Y^{(\infty)}$ to denote the probability of TASEP with initial configuration $Y\in\conf_N=\conf_N(\infty)$.

\begin{thm}\cite{Baik-Liu19}
	\label{thm:comparison}
	Suppose $Y=(y_1,\cdots,y_N)\in\conf_N$. Let $L>N$ such that $Y\in\conf_N(L)$. In other words, $L\ge y_1-y_N+1$. Consider periodic TASEP with period $L$ and initial configuration $Y$, and an independent TASEP with the same initial configuration. We use  $x_i^{\period}(t)$ and $x_i(t)$ to denote the particle locations in the two models respectively. Suppose $m$ is a positive integer, $k_1,\cdots,k_m$ are $m$ integers in $\{1,\cdots,N\}$, and $t_1,\cdots,t_m$ are $m$ non-negative real numbers. Then for any integers $a_1,\cdots,a_m$ we have
	\begin{equation*}
	\prob_Y^{\period}\left(\bigcap_{\ell=1}^m\left\{ x_{k_\ell}^{\period}(t_\ell) \ge a_\ell\right\}\right)=	\prob_Y \left(\bigcap_{\ell=1}^m\left\{ x_{k_\ell} (t_\ell) \ge a_\ell\right\}\right)
	\end{equation*}
	provided
	\begin{equation}
	\label{eq:condition_L}
	L \ge \max\{a_1+k_1,\cdots,a_m+k_m\} - y_N.
	\end{equation}
\end{thm}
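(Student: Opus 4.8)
The plan is to prove the equality of the two probabilities by building an explicit coupling of periodic and infinite TASEP and showing that, under \eqref{eq:condition_L}, the two events coincide on a set of probability one; since both sides of the asserted identity depend only on the marginal laws, replacing the ``independent'' TASEP by a coupled one is harmless. Concretely I would use the graphical (basic) coupling: attach to each particle label $i\in\{1,\dots,N\}$ one rate-one Poisson clock $\mathcal{C}_i$, shared by all periodic images of particle $i$ in the periodic model, start both models from $Y$, and let each evolve by its own jump rule. On $[0,T]$ with $T=\max_\ell t_\ell$ there are a.s. only finitely many rings, so all trajectories are defined pathwise and every statement below can be verified by induction over the finite ordered set of ring times in $[0,T]$.

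\emph{Step 1 (the easy inclusion).} I would first show the monotone domination $x_i^{\period}(t)\le x_i(t)$ for all $i$ and all $t$. Writing $D_i:=x_i-x_i^{\period}$, one has $D_i(0)=0$, and at a ring of $\mathcal{C}_i$ the only way $D_i$ could become negative is that the periodic particle jumps while the infinite one does not; for $i\ge 2$ this forces $x_{i-1}^{\period}(t^-)>x_{i-1}(t^-)$, i.e. $D_{i-1}(t^-)<0$, contradicting the inductive hypothesis, and for $i=1$ the infinite particle always jumps. Hence $\bigcap_\ell\{x_{k_\ell}^{\period}(t_\ell)\ge a_\ell\}\subseteq\bigcap_\ell\{x_{k_\ell}(t_\ell)\ge a_\ell\}$, which gives ``$\le$'' in the theorem.

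\emph{Step 2 (the reverse inclusion, where \eqref{eq:condition_L} enters).} Put $C:=\max_\ell(a_\ell+k_\ell)-1$, so that by hypothesis $C\le y_N+L-1$. Introduce an auxiliary process $\hat x$: infinite TASEP driven by the same clocks in which, additionally, particle $1$ is forbidden to move past site $C$. I would establish two facts. (a) $\hat x_j(t)=\min\{x_j(t),\,C-j+1\}$ for all $j,t$; this is proved by induction on $j$ and over ring times, the point being that if $\hat x_j$ fails to jump while $\min\{x_j,C-j+1\}$ increases, then $\hat x_{j-1}(t^-)=\hat x_j(t^-)+1$, which (splitting on whether $x_{j-1}(t^-)\le C-j+2$) contradicts the inductive bound $\hat x_{j-1}\ge\min\{x_{j-1},C-j+2\}$ together with $\hat x_{j-1}\le C-j+1$. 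Since $C-k_\ell+1=\max_i(a_i+k_i)-k_\ell\ge a_\ell$, fact (a) yields $\bigcap_\ell\{x_{k_\ell}(t_\ell)\ge a_\ell\}\subseteq\bigcap_\ell\{\hat x_{k_\ell}(t_\ell)\ge a_\ell\}$. (b) $\hat x_j(t)\le x_j^{\period}(t)$ for all $j,t$. For $j=1$: before the periodic particle $1$ is ever blocked it moves exactly as the infinite particle $1$, and it is blocked only when $x_1^{\period}(t^-)-x_N^{\period}(t^-)=L-1$, which (using $x_N^{\period}(t^-)\ge y_N$) forces $x_1^{\period}(t^-)\ge y_N+L-1\ge C$; since $x_1^{\period}$ is nondecreasing this gives $x_1^{\period}(t)\ge\min\{x_1(t),C\}=\hat x_1(t)$. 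For $j\ge 2$ it follows from the same ``$D$-type'' induction as in Step 1, because particles $2,\dots,N$ obey identical local rules in $\hat x$ and in the periodic model. Facts (a)--(b) give $\bigcap_\ell\{x_{k_\ell}(t_\ell)\ge a_\ell\}\subseteq\bigcap_\ell\{\hat x_{k_\ell}(t_\ell)\ge a_\ell\}\subseteq\bigcap_\ell\{x_{k_\ell}^{\period}(t_\ell)\ge a_\ell\}$, i.e. ``$\ge$''. Combining with Step 1, the two events are a.s. equal and the identity follows.

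The only genuinely delicate point is fact (a) of Step 2 — that capping the rightmost particle at level $C=\max_\ell(a_\ell+k_\ell)-1$ does not change whether $x_{k_\ell}(t_\ell)\ge a_\ell$; everything else is an instance of the standard attractivity of TASEP (monotonicity of a particle's position in its right-neighbour's trajectory and in the set of active blocking constraints), which I would isolate as a short lemma and check by induction over ring times. A fully equivalent route, which some readers may prefer, is to phrase the whole argument in terms of the bond current $J_a(t)$, using $\{x_k(t)\ge a\}=\{J_a(t)\ge k-\#\{i:y_i\ge a\}\}$ and the last-passage/variational representation of the current, in which the cap at $C$ and the periodicity both appear as boundary data that is ``too far'' to affect the relevant current once \eqref{eq:condition_L} holds; I would mention this alternative but carry out the coupling argument in detail.
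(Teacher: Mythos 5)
The paper does not actually prove Theorem~\ref{thm:comparison}: it imports it verbatim from \cite{Baik-Liu19} (Lemma~8.1 there and the discussion around its equations (8.5)--(8.6), after relabelling particles). Your graphical-coupling argument is a legitimate self-contained proof and is of the same general flavour as the one in the cited reference (pathwise comparison under common clocks); Step~1 and fact~(b) are correct as written, and the observation that periodic blocking of particle $1$ can only occur at positions $\ge y_N+L-1\ge C$ is exactly where \eqref{eq:condition_L} enters.

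One point needs repair, though it does not sink the argument. Fact~(a) is false as an \emph{equality} when the initial data already violates the cap: if $y_1>C$ (which the theorem permits, e.g.\ all $a_\ell$ very negative), your capped process freezes particle $1$ at $y_1$, so $\hat x_1(t)=y_1>C=\min\{x_1(t),C\}$, and the inductive input ``$\hat x_{j-1}\le C-j+2$'' (note: you wrote $C-j+1$, which is off by one even in the good case) fails. What your induction actually proves, and the only thing the downstream inclusions use, is the one-sided bound $\hat x_j(t)\ge\min\{x_j(t),\,C-j+1\}$, together with $\hat x_j\le x_j$ from attractivity; that inequality does hold in all cases, because in the base case and in the ``$\hat x_j$ does not jump'' case one either has $\hat x_j(t^-)\ge C-j+1$ (and is done immediately) or can run your contradiction. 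Restating (a) as this inequality, the chain $\{x_{k_\ell}(t_\ell)\ge a_\ell\}\subseteq\{\hat x_{k_\ell}(t_\ell)\ge a_\ell\}\subseteq\{x^{\period}_{k_\ell}(t_\ell)\ge a_\ell\}$ goes through verbatim, since $C-k_\ell+1\ge a_\ell$, and the theorem follows.
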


Intuitively, this theorem means that if the considered particles $x_\ell^{(L)}(t_\ell)$ have not been delayed by the leftmost particle of the previous period $y_N+L$, then the dynamics of these particles are the same as if the previous periods do not exist. An equivalent theorem which considers the probability of events $\{x_{k_\ell}^{\period}(t_\ell) \le a_\ell\}$ was given in Lemma 8.1 of \cite{Baik-Liu19}. The statement we present above was also discussed there, see the equations (8.5) and (8.6) after Lemma 8.1 in \cite{Baik-Liu19}.   We remark that the particle labels in \cite{Baik-Liu19} are from left to right, which is different from this paper. Thus one needs to change the particle labels accordingly in (8.5) and (8.6) of that paper to match Theorem~\ref{thm:comparison}.

\vspace{0.3cm}
The above theorem implies that the formula of multi-point distribution in periodic TASEP should be independent of the parameter $L$ when $L$ satisfies~\eqref{eq:condition_L}. However, the existing formulas in \cite{Baik-Liu19,Baik-Liu21} all have a discrete feature and contain the parameter $L$. Below we provide a new multi-point distribution formula for periodic TASEP when~\eqref{eq:condition_L} holds. This formula is independent of the parameter $L$ and does not have a discrete structure involving the so-called Bethe roots.

\begin{thm}[Multi-point distribution of periodic TASEP with large period]
	\label{thm:periodic_TASEP_large_period}
	With the same setting as Theorem~\ref{thm:comparison}. Suppose the period $L$ satisfies~\eqref{eq:condition_L}.
	Then
	\begin{equation}
	\label{eq:multi_TASEP}
	\prob_Y^{\period}\left(\bigcap_{\ell=1}^m\left\{ x_{k_\ell}^{\period}(t_\ell) \ge a_\ell\right\}\right)= \oint\cdots\oint \left[\prod_{\ell=1}^{m-1}\frac{1}{1-z_\ell}\right] \mathcal{D}_Y(z_1,\cdots,z_{m-1}) \ddbar{z_1}\cdots\ddbar{z_{m-1}},
	\end{equation}
	where the contours of integration are circles centered at the origin and of radii less than $1$. The function $\mathcal{D}_Y(z_1,\cdots,z_{m-1})$ is defined in terms of a Fredholm determinant in Definition~\ref{def:operators_K1Y}, or equivalently in terms of series expansion in Definition~\ref{def:D_Y_series}.
\end{thm}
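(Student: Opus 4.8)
The plan is to avoid asymptotics entirely and instead establish, as an exact algebraic identity valid for every period $L$ in the range~\eqref{eq:condition_L}, that the Baik--Liu finite-period formula equals the right-hand side of~\eqref{eq:multi_TASEP}. I would start from the Fredholm determinant version of the multi-point distribution for periodic TASEP from \cite{Baik-Liu19b}: it writes the left-hand side of~\eqref{eq:multi_TASEP} as an $(m-1)$-fold contour integral over $z_1,\dots,z_{m-1}$ on small circles of $\left[\prod_{\ell}\frac{1}{1-z_\ell}\right]$ times a function which, expanded as a series, is a multiple summation over subsets of the Bethe roots attached to the parameters $z_1,\dots,z_{m-1}$ --- the roots of a degree-$L$ polynomial $q_{z_\ell}$ (of the shape $w^N(w+1)^{L-N}-z_\ell^{L}$ up to normalization) whose roots, for $|z_\ell|$ small, cluster into $N$ points near $0$ and $L-N$ points near $-1$ --- with summand built from Cauchy-type Vandermonde ratios $\frac{(\Delta(\cdot))^2(\Delta(\cdot))^2}{(\Delta(\cdot;\cdot))^2}$, products of the periodic analogues of the energy functions $F_i$, and the periodic analogue of the initial-data symmetric function $\mathcal{G}_{\boldsymbol\lambda(Y)}$.

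The central step is that the summand over Bethe roots, once the roots are grouped into a ``left'' family near $-1$ and a ``right'' family near $0$, has precisely the structure of the Cauchy-type summation over nested roots treated in Section~\ref{sec:Cauchy_sum}. I would therefore invoke Proposition~\ref{prop:Cauchy_sum_over_roots} to replace each such sum over Bethe roots by a nested family of contour integrals --- one set of contours enclosing $0$, one enclosing $-1$ --- carrying the measure $\dd\mu$ of~\eqref{eq:def_dmu}. The hypothesis~\eqref{eq:condition_L} is exactly what makes this conversion exact rather than asymptotic: it forces the periodic analogue of $\mathcal{G}_{\boldsymbol\lambda(Y)}$ to reduce to the genuine $\mathcal{G}_{\boldsymbol\lambda(Y)}$ with no residual $v^{M}$-correction term (compare~\eqref{eq:alter_ich_ext} and the reformulation Lemma~\ref{lm:iich_reformulation}), and it makes the relevant polynomial degrees small enough relative to $L$ that the residue calculus produced by the Proposition captures the entire sum, with no leakage through the ``large-degree'' terms --- the same power-counting phenomenon sketched in the remark after Definition~\ref{def:D_Y_series}.

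What remains after the conversion is bookkeeping. The $L$-dependent prefactors of the Baik--Liu formula, together with the products of periodic energy functions, collapse onto the $L$-independent quantities $f_i$, $Q_1$, $Q_2$, $\mathcal{K}_1$ and $\mathcal{K}_Y$ of Definition~\ref{def:operators_K1Y}; in particular the initial-data block becomes exactly $\Kess_Y$ of Definition~\ref{def:KY_ess}. One checks that the contours produced by Proposition~\ref{prop:Cauchy_sum_over_roots} can be deformed without crossing singularities to the configuration of Section~\ref{sec:spaces_operators} (Proposition~\ref{prop:invariance_spaces} shows the nesting order is immaterial), so that the integrand equals $\left[\prod_\ell\frac{1}{1-z_\ell}\right]\mathcal{D}_Y(z_1,\dots,z_{m-1})$ in its series form, hence in its Fredholm form by Proposition~\ref{prop:equivalence_Fredholm_series}. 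Together with Theorem~\ref{thm:comparison}, which identifies the left-hand side with the TASEP probability, this simultaneously delivers Theorem~\ref{thm:main1}.

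The main obstacle is Proposition~\ref{prop:Cauchy_sum_over_roots} itself, which is the technical heart of the paper and is proved separately in Section~\ref{sec:proof_Cauchy_sum} by induction on the number of nested root-families; the statement has to be general enough to absorb the entire nested structure (all $m-1$ parameters $z_\ell$ plus the initial-condition kernel block at level $1$), and the inductive step requires carefully tracking how residues at coincidences of roots of consecutive families propagate through the Cauchy determinant. A secondary, purely organizational difficulty is verifying that every $L$-dependent constant and every periodic special function in the Baik--Liu formula maps correctly, under that identity, onto its TASEP counterpart.
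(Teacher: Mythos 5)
Your proposal correctly identifies Proposition~\ref{prop:Cauchy_sum_over_roots} (and its two-region version) as the technical heart, and correctly senses that condition~\eqref{eq:condition_L} enters through a power-counting argument near $w=-1$ and $w=0$. But the mechanism by which the $L$-dependence disappears is missing, and the step you describe in its place would fail.

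First, the starting point is misstated: the Baik--Liu formula is an \emph{$m$-fold} integral $\oint\cdots\oint \mathscr{C}_Y(\hat z_1,\dots,\hat z_m)\,\mathscr{D}_Y(\hat z_1,\dots,\hat z_m)\,\ddbar{\hat z_1}\cdots\ddbar{\hat z_m}$ over nested circles $0<|\hat z_m|<\cdots<|\hat z_1|<r_\mm$, with the prefactor $\mathscr{C}_Y$ rather than $\prod_\ell\frac{1}{1-z_\ell}$; there are $m$ families of Bethe roots, one for each $\hat z_\ell$, not $m-1$. Second, and more seriously, Proposition~\ref{prop:Cauchy_sum_over_roots} does \emph{not} convert the Bethe-root sum into nested contour integrals at fixed nonzero parameters. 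For fixed nonzero $\hat z_\ell$ the sum over $\roots_{\hat z_\ell,\LL}\times\roots_{\hat z_\ell,\RR}$ genuinely depends on $L$ (the roots themselves do), so no exact identity of integrands with the $L$-free kernel of Definition~\ref{def:operators_K1Y} can hold pointwise. What the proposition actually provides is (i) analyticity of the root-sum $\GG(z_0,\dots,z_{m-1})$ in the extra variable $z_0$ up to $z_0=0$ (this is where the dominating condition, hence~\eqref{eq:condition_L}, is used), and (ii) an explicit, $q$-independent contour-integral formula for the boundary value $\GG(0,z_1,\dots,z_{m-1})$.

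The missing idea is therefore the change of variables $\hat z_\ell=z_0z_1\cdots z_{\ell-1}$, which trades the $m$ nested radii for one small variable $z_0$ and $m-1$ ratios $z_1,\dots,z_{m-1}$. After this substitution the $z_0$-integral $\oint\ddbar{z_0}$ of an analytic integrand is a pure residue at $z_0=0$; at $z_0=0$ all $\hat z_\ell$ vanish, all Bethe roots collapse to $-1$ and $0$, the prefactor $\tilde{\mathscr{C}}_Y$ degenerates to $\prod_\ell\frac{1}{1-z_\ell}$ (Lemma~\ref{lm:lemma1}), and $\tilde{\mathscr{D}}_Y$ degenerates to $\mathcal{D}_Y$ (Lemma~\ref{lm:lemma2}). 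It is this residue evaluation -- not an integrand-level identity -- that collapses the $m$-fold integral to the $(m-1)$-fold one and erases $L$. You also omit the preliminary reduction (via Proposition~\ref{cor:consistence}) to the case $a_\ell+k_\ell\ge y_N+N$, without which the dominating condition at $u=-1$ need not hold. Finally, the role you assign to~\eqref{eq:condition_L} (killing the $v^M$-correction in $\mathcal{G}_{\boldsymbol\lambda}$) is misplaced: that correction is absorbed into $\hh_Y(v,u;z)$ with $\hh_Y(v,u;0)=0$ by Lemma~\ref{lm:iich_reformulation}, while~\eqref{eq:condition_L} is used to bound the order of the pole of the $f_\ell$-products at $u=-1$ along Cauchy chains by the order $L-N$ of the zero of $q$ there.
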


We remark that although Theorem~\ref{thm:main1} is the main result of the paper, technically Theorem~\ref{thm:periodic_TASEP_large_period} is the key result. The main challenging part to obtain such a theorem is (1) to understand why the discrete structure does not play a role in the formulas obtained in \cite{Baik-Liu19,Baik-Liu21} when~\eqref{eq:condition_L} holds, and (2) to find an alternate formula which preserves all other features except for the discreteness structure. This formula is exactly the right hand side of~\eqref{eq:multi_TASEP}. Finding this formula is constructive: It is not obtained by taking the large $L$ limit of periodic formulas\footnote{It might be able to take a large $L$ limit and find the limit of periodic TASEP formulas. However, in our opinion, it is the algebraic structure instead of asymptotic behavior that allows us to remove the $L$ parameter. The condition~\eqref{eq:condition_L} indeed provides a hint: The lower bound of $L$ to remove the discreteness is a finite number instead of going to infinity.}. Instead, it is obtained by construction and then proved by induction.

The proof of Theorem~\ref{thm:periodic_TASEP_large_period} is given in Section~\ref{sec:proof_Theorem}.

\section{Cauchy-type summation over nested roots}
\label{sec:Cauchy_sum}

This is a key portion of the proof of Theorem~\ref{thm:periodic_TASEP_large_period}. More explicitly, the main results, Propositions~\ref{prop:Cauchy_sum_over_roots} and~\ref{prop:Cauchy_sum_over_roots_ext} in this section will be used in the proof of Lemma~\ref{lm:lemma2}, which is the most technical part in the proof of Theorem~\ref{thm:periodic_TASEP_large_period}. Since it is independent of the TASEP model, and it might be applicable to other problems (see \cite{Liu21} for an application to find the distribution of the geodesic in the directed last passage percolation), we put it in this separate section.

In this section, we will study a multiple sum over roots of $q(w)=\hat z_i$ for some $\hat z_i$'s with decreasing magnitudes, where $q(w)$ is an analytic function in the considered domain with some assumptions around its zero. Examples of such $q(w)$ functions are $q(w)=w^N$ and $q(w)=(w+1)^{L-N}$ for one region case, or $q(w)=w^N(w+1)^{L-N}$ for the two-region case which we will consider later. The summand involves factors
\begin{equation}
\label{eq:def_cauchy}
\Ch(W;W'):= \frac{\Delta(W) \Delta(W')}{\Delta(W;W')}
\end{equation}
for some vectors $W$ and $W'$, whose coordinates will be chosen from the roots of $q(w)=\hat z$ and $q(w)=\hat z'$ respectively. The notations $\Delta(W)$ and $\Delta(W;W')$ are introduced at the beginning of Section~\ref{sec:series_expansion}. We remind that
\begin{equation*}
\Delta(W)= \prod_{1\le i<j\le n}(w_j-w_i),\qquad \Delta(W;W')=\prod_{i=1}^n \prod_{i'=1}^{n'} (w_i -w'_{i'}),
\end{equation*}
where $n$ and $n'$ are the sizes of the vectors $W$ and $W'$ respectively, and $w_i (1\le i\le n)$, $w'_{i'} (1\le i'\le n')$ are the coordinates of $W$ and $W'$ respectively. Here we allow $n=0$ or $n'=0$ by defining the empty product to be $1$.

Especially, when $W$ and $W'$ have the same size, $\Ch(W;W')$ is the Cauchy determinant up to the sign
\begin{equation*}
\Ch(W;W')=(-1)^{n(n-1)/2}\det\left[\frac{1}{w_i-w'_{i'}}\right]_{\substack{1\le i\le n\\ 1\le i'\le n'=n}}.
\end{equation*}
Hence we call~\eqref{eq:def_cauchy} the \emph{Cauchy-type factor}, and the summation involving these factors \emph{Cauchy-type summation}. 

To explicitly state the Cauchy-type summation to be considered, we introduce some notations.

Let $m\ge 1$ be a fixed integer. Suppose $n_1,\cdots,n_m$ are non-negative integers. We also suppose $W^{(\ell)} = (w_1^{(\ell)},\cdots, w_{n_\ell}^{(\ell)})$ be a vector of $n_\ell$ variables, $1\le \ell \le m$. 

Assume $I^{(1)}\cdots,I^{(m-1)}$ and $J^{(2)},\cdots,J^{(m)}$ are $2m-2$ sets satisfying
\begin{equation}
\label{eq:IJ_ell}
I^{(\ell)} \subseteq \{1,\cdots,n_\ell\}, \quad J^{(\ell+1)}\subseteq \{1,\cdots,n_{\ell+1}\}
\end{equation}
for each $\ell=1,\cdots,m-1$. We also introduce a convention that $W_I$ is a vector obtained by keeping all the coordinates of $W$ whose indices are in the set $I$ and removing all other variables. For example, if $W=(w_1,\cdots,w_{10})$, then $W_{\{2,3\}} = (w_2,w_3)$. Thus by using this convention, $W^{(\ell)}_{I^{(\ell)}}$ is a vector with coordinates in $W^{(\ell)}$ whose subscripts are in $I^{(\ell)}$, and $W^{(\ell+1)}_{J^{(\ell+1)}}$ is similarly a vector with coordinates in $W^{(\ell+1)}$ whose subscripts are in $J^{(\ell+1)}$. 

We will consider the following summand
\begin{equation}
\label{eq:H}
H(W^{(1)},\cdots,W^{(m)};z_0,\cdots,z_{m-1}):= \left[ \prod_{\ell=1}^{m-1} \Ch\left(W^{(\ell)}_{I^{(\ell)}}; W^{(\ell+1)}_{J^{(\ell+1)}}\right) \right] \cdot A(W^{(1)},\cdots,W^{(m)};z_0,\cdots,z_{m-1}),
\end{equation}
where $A$ is a function satisfying certain analyticity on its variables (the coordinates of all $W^{(\ell)}$ vectors and complex numbers $z_\ell$'s). Note that $H$ defined above  is dependent on the sets $I^{(\ell)}, J^{(\ell+1)}$, $1\le \ell \le m-1$, and the function $A$.

\vspace{0.2cm}

Now we introduce the space where the above summand is defined.

Let $r_\mm\in (0,1)$ be a fixed number. We assume that $z_0\in \disk(r_\mm)$ and $z_\ell\in \disk=\disk(1)$. Here the notation 
\begin{equation}
\label{eq:diskr}
\disk(r):=\{z\in\complexC: |z|<r\}.
\end{equation}
We also denote
\begin{equation}
\label{eq:disk0r}
\disk_0(r):=\{z\in\complexC: 0<|z|<r\}
\end{equation}
the punctured open disk with radius $r$ and centered at the origin.

Suppose $\Omega$ is a simply connected region in the complex plane which contains $0$. Let
\begin{equation*}
\Omega_0:=\Omega\setminus\{0\}.
\end{equation*}

We assume that $A$ is an analytic function defined on $(\Omega_0)^d\times \disk(r_\mm)\times \disk^{m-1}$, with $d=d\left(W^{(1)},\cdots,W^{(m)}\right)$ is the total dimension of the vectors. Here we have $d=n_1+\cdots+n_m$ since $W^{(\ell)}$ has $n_\ell$ coordinates.

With the above assumption, it is clear that $H$ is analytic function on $(\Omega_0)^d\times \disk(r_\mm)\times \disk^{m-1}$ except that it has poles at $w_{i}^{(\ell)}= w_{j}^{(\ell+1)}$ for $(i,j)\in I^{(\ell)}\times J^{(\ell+1)}$ and some $1\le \ell\le m-1$.

\vspace{0.3cm}

We will take the sum over discrete sets determined by a function $q(w)$. Now we introduce $q(w)$ and the discrete sets.

Assume that $q(w)$ is an analytic function of $w\in\Omega$ such that the ``level curves" of $q(w)$ in $\Omega$, the $\Gamma_r$'s defined below for $0<r<r_\mm$, are nested simple closed contours enclosing $0$. More precisely, for any $0<r<r_\mm$, 
\begin{equation}
\label{eq:Gamma_r}
\Gamma_r:=\{w\in\Omega: |q(w)| = r\}
\end{equation}
is a simple closed contour enclosing $0$, and $\Gamma_r$ encloses $\Gamma_{r'}$ if $0<r'<r<r_\mm$. We also define
\begin{equation}
\label{eq:roots_hat}
\roots_{\hat z} :=\{ w\in \Omega: q(w) = \hat z\}
\end{equation}
for any $|\hat z|<r_\mm$. It is obvious that all elements of  $\roots_{\hat z}$ lie on the contour $\Gamma_{|\hat z|}$. We remark that these assumptions imply that $q(0)=0$. Thus we  set $\Gamma_0=\{0\}$ and $\roots_{0}=\{0\}$. By using the property that $\Gamma_r$ are nested simple closed contours for $0<r<r_\mm$, we know that $q'(w)\ne 0$ for all $w\in\roots_{\hat z}$ provided $\hat z\in\disk_0(r_\mm)$.

\vspace{0.5cm}
Finally we are ready to introduce the summation. We assume $z_0\in\disk_0(r_\mm)$ and $z_\ell\in\disk_0$ for $1\le \ell\le m-1$. In other words, $0<|z_0|<r_\mm$ and $0<|z_\ell|<1$ for $1\le \ell\le m-1$. We define
\begin{equation}
\label{eq:GG}
\GG(z_0,\cdots,z_{m-1}):= \sum_{\substack{W^{(1)} \in \roots^{n_1}_{\hat z_1} }}\cdots\sum_{\substack{W^{(m)} \in \roots^{n_m}_{\hat z_m} }} \left[\prod_{\ell=1}^mJ(W^{(\ell)})\right]\cdot H(W^{(1)},\cdots,W^{(m)};z_0,\cdots,z_{m-1}),
\end{equation}
where $J$ is defined via $q$ as follows
\begin{equation}
\label{eq:J}
J(W^{(\ell)}):=\prod_{i_\ell=1}^{n_\ell} J(w_{i_\ell}^{(\ell)}) := \prod_{i_\ell=1}^{n_\ell} \frac{q(w_{i_\ell}^{(\ell)})}{q'(w_{i_\ell}^{(\ell)})},
\end{equation}
and $\hat z_\ell$'s are defined via $z_\ell$'s
\begin{equation}
\label{eq:hatz}
\hat z_\ell = z_0z_1\cdots z_{\ell-1},\quad 1\le \ell\le m.
\end{equation}
Note that our assumption on $z_\ell$'s implies $\hat z_1,\cdots,\hat z_m$ are $m$ points in $\disk_0(r_\mm)$ with decreasing norms: $0<|\hat z_m|<\cdots<|\hat z_1|<r_\mm$.

Recall the definition of the function $H$ in~\eqref{eq:H}, which is analytic for $z_\ell\in\disk$ and for coordinates of $W^{(\ell)}$'s except for the possible poles at $w_i^{(\ell)}=w_j^{(\ell+1)}$. Since $\hat z_\ell$ are distinct for all $(z_0,\cdots,z_m)\in\disk_0(r_\mm)\times\disk_0^{m-1}$, and the coordinates of $W^{(\ell)}$ are roots of $q(w)=\hat z_\ell$ which depends on $z_0,\cdots,z_{\ell-1}$ analytically, the summand in~\eqref{eq:GG} can be viewed as an analytic function for $(z_0,\cdots,z_m)\in\disk_0(r_\mm)\times\disk_0^{m-1}$. 
Thus $\GG(z_0,\cdots,z_{m-1})$ is analytic in this region as well. The main goal of this section is to investigate the behavior of $\GG$ when $z_\ell\to 0$ and see whether the analyticity of $\GG$ can be extended to the space
$\disk(r_\mm)\times\disk^{m-1}$. Note that $z_\ell=0$ implies $\hat z_{\ell+1}=\cdots=\hat z_m=0$, which is not considered in the definition $\GG$ in~\eqref{eq:GG}. To extend the function $\GG$ to $z_\ell=0$, we need to consider 
possible singularities: $A(W^{(1)},\cdots,W^{(m)};z_0,\cdots,z_{m-1})$ may have singularities at $w_{i_\ell}^{(\ell)} = 0\in\roots_0$, and the Cauchy-type factors in $H$ bring singularities at $w_{i}^{(\ell)}=w_{j}^{(\ell+1)}$. It turns out that if $q(w)$ is a function such that these singularities disappear, then $\GG$ can be analytically extended to $z_\ell=0$ for all $\ell$. More surprisingly, for such $q(w)$ functions, $\GG(0,z_1,\cdots,z_{m-1})$ is actually independent of $q$.

\vspace{0.2cm}
To explain the conditions of $q$ such that $\GG$ can be analytically extended to $\disk(r_\mm)\times\disk^{m-1}$, we introduce the following concepts.

\begin{defn}
	We call a sequence of variables $w_{i_k}^{(k)},w_{i_{k+1}}^{(k+1)},\cdots,w_{i_{k'}}^{(k')}$ \emph{a Cauchy chain} with respect to the variables $W^{(\ell)}$'s and sets $I^{(\ell)},J^{(\ell)}$'s, if
	\begin{equation*}
	(w_{i_k}^{(k)}-w_{i_{k+1}}^{(k+1)})(w_{i_{k+1}}^{(k+1)}-w_{i_{k+2}}^{(k+2)})\cdots (w_{i_{k'-1}}^{(k'-1)}-w_{i_{k'}}^{(k')})
	\end{equation*}
	appears as a factor in the denominator of $\prod_{\ell=1}^{m-1}\Ch\left(W^{(\ell)}_{I^{(\ell)}}; W^{(\ell+1)}_{J^{(\ell+1)}}\right)$. In other words,
	\begin{equation*}
	(i_k,i_{k+1})\in I^{(k)}\times J^{(k+1)}, (i_{k+1},i_{k+2})\in I^{(k+1)}\times J^{(k+2)},\cdots, (i_{k'-1},i_{k'})\in I^{(k'-1)}\times J^{(k')}.
	\end{equation*}
	We also call any single variable $w_{i_k}^{(k)}$ a Cauchy chain.
\end{defn}

We remark that one important property of Cauchy chain is that it could accumulate singularities of $A(W^{(1)},\cdots,W^{(m)},z_0,\cdots,z_{m-1})$ at $w_{i_\ell}^{(\ell)}=0$ if $w_{i_\ell}^{(\ell)}$ is on the chain by evaluating the residues from the Cauchy factors successively.

\begin{defn}
	\label{def:dominate}
	We call $q(w)$ \emph{dominates} $H(W^{(1)},\cdots,W^{(m)};z_1,\cdots,z_m)$ at $w=0$ provided that for any Cauchy chain $w_{i_k}^{(k)},w_{i_{k+1}}^{(k+1)},\cdots,w_{i_{k'}}^{(k')}$, 
	\begin{equation*}
	q(w)\cdot \left. A(W^{(1)},\cdots,W^{(m)};z_0,\cdots,z_{m-1}) \right|_{w_{i_k}^{(k)}=w_{i_{k+1}}^{(k+1)}=\cdots =w_{i_{k'}}^{(k')}=w}
	\end{equation*}
	is analytic at $w=0$, for any fixed other $w_{i_\ell}^{(\ell)}$ variables in $\Omega_0$, and fixed $(z_0,\cdots,z_{m-1})\in \disk(r_\mm)\times\disk^{m-1}$.
\end{defn}

We also remark that if $q(w)$ dominates $H$, then $q(w)\left. A(W^{(1)},\cdots,W^{(m)};z_0,\cdots,z_{m-1})\right|_{w_{i_\ell}^{(\ell)}=w}$ is analytic at $w=0$ since a single variable forms a Cauchy chain. In other words, the singularities of $A$ at each $w_{i_\ell}^{(\ell)}=0$ are dominated by the order of $q(w)$ at $w=0$. Furthermore, the total singularities of $A$ at $w_{i_k}^{(k)}=0$, $w_{i_{k+1}}^{(k+1)}=0$, $\cdots$, $w_{i_{k'}}^{(k')}=0$ along any Cauchy chain $w_{i_k}^{(k)},w_{i_{k+1}}^{(k+1)},\cdots,w_{i_{k'}}^{(k')}$  are dominated by the order of $q(w)$ at $w=0$.

\vspace{0.2cm}

Now we are ready to state the main proposition.
\begin{prop}
	\label{prop:Cauchy_sum_over_roots}
	Suppose $A$ is analytic for each $w_{i_\ell}^{(\ell)}\in \Omega_0$ and $(z_0,\cdots,z_{m-1})\in\disk(r_\mm)\times\disk^{m-1}$. Suppose $q(w)$ is analytic for $w\in\Omega$ with the nested level curve assumption described before. If $q(w)$ dominates $H$ at $w=0$ as defined above, then $\GG(z_0,\cdots,z_{m-1})$ can be analytically defined for for $(z_0,\cdots,z_{m-1})\in\disk(r_\mm)\times\disk^{m-1}$. Moreover, $\GG(0,z_1,\cdots,z_{m-1})$ is independent of $q(w)$, and equals to
	\begin{equation}
	\label{eq:def_G0}
	\prod_{\ell=2}^{m}  \prod_{{i_\ell}=1}^{n_\ell} \left[\frac{1}{1-z_{\ell-1}}\int_{\Sigma_{\ell}^\inn} \ddbarr{w_{i_\ell}^{(\ell)}}
	-\frac{z_{\ell-1}}{1-z_{\ell-1}}\int_{\Sigma_{\ell}^\out} \ddbarr{w_{i_\ell}^{(\ell)}} \right]
	\cdot \prod_{{i_1}=1}^{n_1} \int_{\Sigma_1} \ddbarr{w_{i_1}^{(1)}}H(W^{(1)},\cdots,W^{(m)};0,z_1,\cdots,z_{m-1}),
	\end{equation}
	where $\Sigma_{m}^{\out},,\cdots,\Sigma_{2}^{\out},\Sigma_1,\Sigma_{2}^\inn,\cdots,\Sigma_{m}^\inn$, from outside to inside, are arbitrary $2m-1$ nested simple closed contours in $\Omega$ each of which encloses $w=0$.	
\end{prop}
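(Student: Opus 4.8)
The plan is to rewrite the multiple sum over roots in~\eqref{eq:GG} as a multiple contour integral valid for $\boldsymbol z\in\disk_0(r_\mm)\times\disk_0^{m-1}$, and then to let all the contours collapse toward the origin as $z_0\to0$ while recording the residues that appear; the residue bookkeeping reconstructs~\eqref{eq:def_G0} and makes the independence of $q$ transparent. For the \emph{contour representation}: for a single level $\ell$, every $w\in\roots_{\hat z_\ell}$ lies on $\Gamma_{|\hat z_\ell|}$ where $q'\ne0$, hence is a simple zero of $q(w)-\hat z_\ell$, so for any $g$ analytic near that curve
\[
\sum_{w\in\roots_{\hat z_\ell}}\frac{\hat z_\ell}{q'(w)}\,g(w)=\left(\oint_{\Gamma^{+}_\ell}-\oint_{\Gamma^{-}_\ell}\right)\frac{\hat z_\ell\,g(w)}{q(w)-\hat z_\ell}\,\ddbarr{w},
\]
with $\Gamma^{\pm}_\ell=\Gamma_{r^{\pm}_\ell}$ level curves of $q$ and $r^{-}_\ell<|\hat z_\ell|<r^{+}_\ell$. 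Applying this to every coordinate, one at a time, I choose the radii so that the whole family is nested, $\Gamma^{+}_1\supset\Gamma^{-}_1\supset\Gamma^{+}_2\supset\Gamma^{-}_2\supset\cdots\supset\Gamma^{-}_m$, and so that within the annulus bounded by $\Gamma^{-}_\ell$ and $\Gamma^{+}_\ell$ the relevant integrand has no singularity other than $\roots_{\hat z_\ell}$ itself. This is possible because the Cauchy factors in $H$ produce poles only at $w^{(\ell)}_i=w^{(\ell\pm1)}_j$, which lie on the neighbouring level curves (outside, resp.\ inside, that annulus), and the only other singularity, from $A$, is at $w=0$, inside all the contours. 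The outcome is
\[
\GG(z_0,\cdots,z_{m-1})=\left[\prod_{\ell=1}^{m}\prod_{i_\ell=1}^{n_\ell}\left(\oint_{\Gamma^{+}_\ell}\ddbarr{w^{(\ell)}_{i_\ell}}-\oint_{\Gamma^{-}_\ell}\ddbarr{w^{(\ell)}_{i_\ell}}\right)\right]\left[\prod_{\ell=1}^{m}\prod_{i_\ell=1}^{n_\ell}\frac{\hat z_\ell}{q(w^{(\ell)}_{i_\ell})-\hat z_\ell}\right]H(W^{(1)},\cdots,W^{(m)};z_0,\cdots,z_{m-1}).
\]

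For the \emph{analytic extension}: $\GG$ is already analytic on $\disk_0(r_\mm)\times\disk_0^{m-1}$, so by the several-variable Riemann removable singularity theorem, applied iteratively to the hyperplanes $\{z_j=0\}$, it suffices to show $\GG$ is locally bounded near each such hyperplane. The representation above gives this once the radii $r^{\pm}_\ell$ are chosen comparably to $|\hat z_\ell|$: the factor $\hat z_\ell/(q-\hat z_\ell)$ is then bounded on every contour (it is $O(\hat z_\ell)$ on the outer ones), and the hypothesis that $q$ dominates $H$ is exactly what compensates the growth of $H$ as $w\to0$ along every Cauchy chain against the shrinking of the contours, so the integrand has a bound uniform on a full neighbourhood of $\{z_j=0\}$; this yields the analytic continuation to $\disk(r_\mm)\times\disk^{m-1}$.

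For the \emph{value at $z_0=0$}: keeping $z_1,\cdots,z_{m-1}$ fixed and letting $z_0\to0$, every $\hat z_\ell\to0$ and every $\Gamma^{\pm}_\ell$ shrinks to $0$, so before passing to the limit I deform $\Gamma^{+}_\ell$ outward to a fixed contour $\Sigma^{\out}_\ell$ and $\Gamma^{-}_\ell$ outward to a fixed contour $\Sigma^{\inn}_\ell$, with the $\Sigma$'s nested as in the statement. Pushing $\Gamma^{+}_\ell$ across the level-$(\ell-1)$ contours crosses the Cauchy poles $w^{(\ell)}_{i_\ell}=w^{(\ell-1)}_j$; there $q(w^{(\ell-1)}_j)=\hat z_{\ell-1}$, so the surviving factor $\hat z_\ell/\!\left(q(w^{(\ell)}_{i_\ell})-\hat z_\ell\right)$ evaluates to $\hat z_\ell/(\hat z_{\ell-1}-\hat z_\ell)=z_{\ell-1}/(1-z_{\ell-1})$, a function of $z_{\ell-1}$ alone, not of $q$. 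Iterating this along Cauchy chains, together with the residues produced near $w=0$ (finite by the domination hypothesis and carrying the value $\hat z_\ell/(0-\hat z_\ell)=-1$) and resumming the resulting geometric series in the $z_{\ell-1}$, assembles exactly the weights $\tfrac{1}{1-z_{\ell-1}}$ and $-\tfrac{z_{\ell-1}}{1-z_{\ell-1}}$ of $\dd\mu$ on $\Sigma^{\inn}_\ell$ and $\Sigma^{\out}_\ell$; every remaining term still carrying a factor $\hat z_\ell/(q-\hat z_\ell)$ on a fixed contour bounded away from $0$ vanishes as $z_0\to0$. Collecting what survives gives
\[
\GG(0,z_1,\cdots,z_{m-1})=\left[\prod_{\ell=2}^{m}\prod_{i_\ell=1}^{n_\ell}\left(\frac{1}{1-z_{\ell-1}}\int_{\Sigma^{\inn}_\ell}\ddbarr{w^{(\ell)}_{i_\ell}}-\frac{z_{\ell-1}}{1-z_{\ell-1}}\int_{\Sigma^{\out}_\ell}\ddbarr{w^{(\ell)}_{i_\ell}}\right)\right]\left[\prod_{i_1=1}^{n_1}\int_{\Sigma_1}\ddbarr{w^{(1)}_{i_1}}\right]H(W^{(1)},\cdots,W^{(m)};0,z_1,\cdots,z_{m-1}),
\]
and, since the limit only uses the values of $\hat z_\ell/(q-\hat z_\ell)$ at $w=0$ and at the level-$(\ell-1)$ roots --- quantities that do not see the shape of $q$ --- and since the $\Sigma_\ell$ are arbitrary nested contours around $0$, this is independent of $q$.

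The main obstacle is the last step: organizing the successive contour deformations and resumming, over Cauchy chains and over the residues at the origin, the many residue contributions into the single clean product of~\eqref{eq:def_G0}. This is essentially the algebraic identity behind the equivalence of the Fredholm and series forms of $\mathcal{D}_Y$ (Proposition~\ref{prop:equivalence_Fredholm_series}), run in reverse and one level at a time; I expect the cleanest implementation is an induction on $m$ that peels off the innermost level first, with inductive hypothesis a mildly more general statement allowing a prescribed fixed family of outer contours at the top level --- which is the role of the companion Proposition~\ref{prop:Cauchy_sum_over_roots_ext}.
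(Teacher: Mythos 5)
Your overall architecture --- converting root sums into differences of contour integrals with kernel $\frac{\hat z_\ell}{q(w)-\hat z_\ell}$ (equivalent to the paper's $\frac{q(w)}{q(w)-\hat z_\ell}$, since the two differ by $1$, which integrates to zero over the annulus), using domination to control the contribution near $w=0$, and deforming outward so that the Cauchy residues produce the weights $\tfrac{1}{1-z_{\ell-1}}$ and $-\tfrac{z_{\ell-1}}{1-z_{\ell-1}}$ --- matches the paper's. But there are two genuine gaps. First, the step you yourself flag as the ``main obstacle'' is where essentially all of the difficulty lives, and the induction you propose (on $m$, peeling the innermost level) does not close: taking a residue at $w_1^{(s)}=w_k^{(s-1)}$ does not remove a level, it removes one variable and one block of Cauchy coupling. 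The paper inducts on $\sum_{\ell}|I^{(\ell)}\times J^{(\ell+1)}|$, the total degree of the Cauchy denominators; peeling a single coordinate at the innermost coupled level splits $\GG$ into one ``deform-outward'' term and finitely many residue terms, each again of the form~\eqref{eq:GG} with strictly smaller Cauchy degree and with a new $A$ (for the residue terms, $A$ restricted to $w_1^{(s)}=w_k^{(s-1)}$), whose domination must be re-verified --- this is exactly why ``dominating'' is defined through Cauchy chains rather than single variables. Without this invariant the ``resummation of geometric series'' you invoke does not organize itself into~\eqref{eq:def_G0}.

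Second, your simultaneous all-contours representation undercuts the key evaluation you rely on. You assert that at the Cauchy pole $w^{(\ell)}_{i_\ell}=w^{(\ell-1)}_j$ one has $q(w^{(\ell-1)}_j)=\hat z_{\ell-1}$, so the kernel becomes the constant $\frac{\hat z_\ell}{\hat z_{\ell-1}-\hat z_\ell}=\frac{z_{\ell-1}}{1-z_{\ell-1}}$. That holds only while $w^{(\ell-1)}_j$ is still a genuine root of $q=\hat z_{\ell-1}$; once the level-$(\ell-1)$ sum has been replaced by integrals over $\Gamma^{\pm}_{\ell-1}$, the residue point is a generic point of a level curve with $|q|=r^{\pm}_{\ell-1}\ne|\hat z_{\ell-1}|$, and the kernel is a nonconstant function of that point. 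This is why the paper converts only one coordinate at a time while keeping its neighbours as roots: the one-at-a-time peeling is essential to the weight computation, not a bookkeeping convenience. Relatedly, your analytic-continuation step (local boundedness near $\{z_j=0\}$ plus removable singularities) is only asserted: your kernel equals $-1$ at $w=0$ and hence does not tame the pole of $A$ there, the residues it leaves at $w=0$ are of higher order and involve derivatives of the kernel (not just its value $-1$, so they are not manifestly $q$-independent), and these extra terms appear nowhere in~\eqref{eq:def_G0}. The paper's choice $\frac{q(w)}{q(w)-\hat z_\ell}$ makes the inner integral vanish identically by domination, which is precisely the exact cancellation your boundedness estimate would have to reproduce on shrinking contours.
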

Although in this proposition we only considered the case when $q(w)$ has a root at $w=0$ and $q(w)$ dominates $H$ at $w=0$, it is easy to see (by a change of variables $w\to w+a$) that the same proposition holds if we consider the case when $q(w)$ has a root at $w=a$ and $q(w)$ dominates $H$ at $w=a$.

The proof of Proposition~\ref{prop:Cauchy_sum_over_roots} is given in Section~\ref{sec:proof_Cauchy_sum}. We point out that the most challenging part of this proposition is to find the explicit expression for $\GG(0,z_1,\cdots,z_{m-1})$. We actually construct the formula~\eqref{eq:def_G0} and prove the proposition by induction. See Section~\ref{sec:proof_Cauchy_sum} for the details. Similarly to Proposition~\ref{prop:invariance_spaces}, we are able to change the nesting order of contours of integration (and the $z_\ell$ weights accordingly) in~\eqref{eq:def_G0} and obtain different formulas of $\GG(0,z_{1},\cdots,z_{m-1})$. This fact could be proved in a similar way as in the proof of Proposition~\ref{prop:invariance_spaces}, or modifying the proof of Proposition~\ref{prop:Cauchy_sum_over_roots} in Section~\ref{sec:proof_Cauchy_sum} accordingly for the different formula of $\GG(0,z_{1},\cdots,z_{m-1})$.

\vspace{0.4cm}

Proposition~\ref{prop:Cauchy_sum_over_roots} only includes the case of one region $\Omega$ and one set of nested roots (or contours) around (or enclosing, respectively) the unique root of $q(w)$ within $\Omega$. There is no difficulty to extend it to more regions and more sets of nested contours, where each set of contours enclose a different root of $q(w)$. Especially for the purpose of proving Theorem~\ref{thm:periodic_TASEP_large_period}, we need a version of two regions and two sets of nested roots enclosing two different roots $-1$ and $0$ of the Bethe polynomial $q(z)$ respectively. We state the result below for this use and prove it by using Proposition~\ref{prop:Cauchy_sum_over_roots}. 

Let $\Omega_\LL$ and $\Omega_\RR$ be two disjoint regions including $-1$ and $0$ respectively. Let $n_{\ell,\LL}$ and $n_{\ell,\RR}$, $1\le \ell\le m$, be $2m$ non-negative integers. $U^{(\ell)} = (u_{1}^{(\ell)},\cdots,u_{n_{\ell,\LL}}^{(\ell)})$ and $V^{(\ell)} =(v_{1}^{(\ell)},\cdots,v_{n_{\ell,\RR}}^{(\ell)})$ are $2m$ vectors. We use $U$, $u$ and $V$, $v$ to denote the vectors, variables associated with $\LL$ and $\RR$ respectively to avoid too many scripts. This is also consistent with the notations in the series expansions of $\mathcal{D}_Y$ in Theorem~\ref{thm:periodic_TASEP_large_period}. Similar to~\eqref{eq:IJ_ell}, we introduce $I_{\LL}^{(\ell)}, J_{\LL}^{(\ell)}$ and $I_{\RR}^{(\ell)}, J_{\RR}^{(\ell+1)}$ for each $1\le \ell \le m$. Then the analog of~\eqref{eq:H} is
\begin{equation*}
\begin{split}
&H(U^{(1)},\cdots,U^{(m)};V^{(1)},\cdots,V^{(m)};z_0,\cdots,z_{m-1})\\
&:=\left[ \prod_{\ell=1}^{m-1} \Ch\left(U^{(\ell)}_{I^{(\ell)}_\LL}; U^{(\ell+1)}_{J^{(\ell+1)}_\LL}\right) \Ch\left(V^{(\ell)}_{I^{(\ell)}_\RR}; V^{(\ell+1)}_{J^{(\ell+1)}_\RR}\right) \right] \cdot A(U^{(1)},\cdots,U^{(m)};V^{(1)},\cdots,V^{(m)};z_0,\cdots,z_{m-1}),
\end{split}
\end{equation*}
where $A$ is a function analytic for all $u_{i_\ell}^{(\ell)}$ in $\Omega_\LL\setminus\{-1\}$, all $v_{i'_{\ell'}}^{(\ell')} \in =\Omega_\RR\setminus \{0\}$, and all $(z_0,\cdots,z_{m-1})\in\disk(r_\mm)\times\disk^{m-1}$.

Let $q(w)$ be a function defined on $\Omega_\LL\cup\Omega_\RR$ such that its ``level curves'' in $\Omega_\LL$ and $\Omega_\RR$ are nested simple closed contours enclosing $-1$ and $0$ respectively. Note that we do not require $q(w)$ is well defined elsewhere. Let $\roots_{\hat z,\LL}:=\{u\in\Omega_\LL: q(u)=\hat z\}$ and $\roots_{\hat z,\RR}:=\{v\in\Omega_\RR: q(v)=\hat z\}$. We define, for $(z_0,\cdots,z_{m-1})\in\disk_0(r_\mm)\times \disk_0^{m-1}$,
\begin{equation}
\label{eq:GG1}
\begin{split}
&\GG(z_0,\cdots,z_{m-1})\\
&=\sum_{\substack{U^{(1)} \in \roots_{\hat z_1,\LL}^{n_{1,\LL}}\\ \vdots \\ U^{(m)} \in \roots_{\hat z_m,\LL}^{n_{m,\LL}}}} \sum_{\substack{V^{(1)} \in \roots_{\hat z_1,\RR}^{n_{1,\RR}}\\ \vdots \\ V^{(m)} \in \roots_{\hat z_m,\RR}^{n_{m,\RR}}}} \left[\prod_{\ell=1}^mJ(U^{(\ell)})J(V^{(\ell)})\right]\cdot H(U^{(1)},\cdots,U^{(m)};V^{(1)},\cdots,V^{(m)};z_0,\cdots,z_{m-1}),
\end{split}
\end{equation}
where $J$ is defined the same way as in~\eqref{eq:J}, and $\hat z_\ell$ as in~\eqref{eq:hatz}. 

We could similarly define the terminologies of ``Cauchy chain'' and ``dominating''. More explicitly, a Cauchy chain is either a sequence of variables $u_{i_k}^{(k)},u_{i_{k+1}}^{(k+1)},\cdots,u_{i_{k'}}^{(k')}$ such that $(u_{i_k}^{(k)}-u_{i_{k+1}}^{(k+1)})\cdots(u_{i_{k'-1}}^{(k'-1)}-u_{i_{k'}}^{(k')})$ appears in the denominator of $\prod_{\ell=1}^{m-1} \Ch\left(U^{(\ell)}_{I^{(\ell)}_\LL}; U^{(\ell+1)}_{J^{(\ell+1)}_\LL}\right)$, or  a sequence of variables $v_{i_k}^{(k)},v_{i_{k+1}}^{(k+1)},\cdots,v_{i_{k'}}^{(k')}$ such that $(v_{i_k}^{(k)}-v_{i_{k+1}}^{(k+1)})\cdots(v_{i_{k'-1}}^{(k'-1)}-v_{i_{k'}}^{(k')})$ appears in the denominator of $\prod_{\ell=1}^{m-1} \Ch\left(V^{(\ell)}_{I^{(\ell)}_\RR}; V^{(\ell+1)}_{J^{(\ell+1)}_\RR}\right)$. We still allow that a Cauchy chain could be a single variable. We say $q$ dominates $H$ at $w=-1$, if $q(w)\cdot A$ is analytic at $w=-1$ when we take the variables on any $u_{i_\ell}^{(\ell)}$-Cauchy chain to be $w$ but all other variables fixed. Similarly, $q$ dominates $H$ at $w=0$ if $q(w)\cdot A$ is analytic at $w=0$ when we take the variables on any $v_{i_\ell}^{(\ell)}$-Cauchy chain to be $w$ but all other variables fixed.

With these setting, the two-region version of Proposition~\ref{prop:Cauchy_sum_over_roots} is as follows.
\begin{prop}
	\label{prop:Cauchy_sum_over_roots_ext}
	Suppose $A$ is analytic for each $u_{i_\ell}^{(\ell)}$ in $\Omega_{\LL}\setminus\{-1\}$, each $v_{i'_{\ell'}}^{\ell'} \in \Omega_{\RR}\setminus\{0\}$, and each $(z_0,\cdots,z_{m-1})\in\disk(r_\mm)\times \disk^{m-1}$. Suppose $q(w)$ is analytic for $w\in\Omega_\LL\cup \Omega_\RR$ with the nested level curve assumption described above. If $q(w)$ dominates $H$ at $w=-1$ and $w=0$. Then $\GG(z_0,\cdots,z_{m-1})$ can be analytically extended to $\disk(r_\mm)\times \disk^{m-1}$. Moreover, $\GG(0,z_1,\cdots,z_{m-1})$ is independent of $q(w)$, and equals to
	\begin{equation}
	\label{eq:def_G0ext}
	\begin{split}
	&\prod_{\ell=2}^{m}  \prod_{{i_\ell}=1}^{n_{\ell,\LL}} \left[\frac{1}{1-z_{\ell-1}}\int_{\Sigma_{\ell,\LL}^\inn} \ddbarr{u_{i_\ell}^{(\ell)}}
	-\frac{z_{\ell-1}}{1-z_{\ell-1}}\int_{\Sigma_{\ell,\LL}^\out} \ddbarr{u_{i_\ell}^{(\ell)}} \right]
	\cdot \prod_{{i_1}=1}^{n_{1,\LL}} \int_{\Sigma_{1,\LL}} \ddbarr{u_{i_1}^{(1)}}\\
	&\prod_{\ell=2}^{m}  \prod_{{i_\ell}=1}^{n_{\ell,\RR}} \left[\frac{1}{1-z_{\ell-1}}\int_{\Sigma_{\ell,\RR}^\inn} \ddbarr{v_{i_\ell}^{(\ell)}}
	-\frac{z_{\ell-1}}{1-z_{\ell-1}}\int_{\Sigma_{\ell,\RR}^\out} \ddbarr{v_{i_\ell}^{(\ell)}} \right]
	\cdot \prod_{{i_1}=1}^{n_{1,\RR}} \int_{\Sigma_{1,\RR}} \ddbarr{v_{i_1}^{(1)}}\\
	&
	H(U^{(1)},\cdots,U^{(m)};V^{(1)},\cdots,V^{(m)};0,z_1,\cdots,z_{m-1}),
	\end{split}
	\end{equation}
	where $\Sigma_{m,\LL}^{\out},\cdots,\Sigma_{2,\LL}^{\out},\Sigma_{1,\LL},\Sigma_{2,\LL}^{\inn},\cdots,\Sigma_{m,\LL}^{\inn}$, from outside to inside, are arbitrary $2m-1$ nested simple closed contours in $\Omega_\LL$ each of which encloses $u=-1$, and $\Sigma_{m,\RR}^{\out},\cdots,\Sigma_{2,\RR}^{\out},\Sigma_{1,\RR},\Sigma_{2,\RR}^{\inn},\cdots,\Sigma_{m,\RR}^{\inn}$, from outside to inside, are arbitrary $2m-1$ nested simple closed contours in $\Omega_\RR$ each of which encloses $v=0$.
\end{prop}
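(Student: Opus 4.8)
\textbf{Proof proposal for Proposition~\ref{prop:Cauchy_sum_over_roots_ext}.}

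The plan is to reduce the two-region statement to the one-region Proposition~\ref{prop:Cauchy_sum_over_roots} by a direct ``merging'' argument. First I would observe that the summation $\GG$ in~\eqref{eq:GG1} factors cleanly across the two regions in the following sense: the Cauchy chains occurring in $H$ never mix a $u$-variable with a $v$-variable, because by construction the only Cauchy-type factors are $\Ch(U^{(\ell)}_{I^{(\ell)}_\LL};U^{(\ell+1)}_{J^{(\ell+1)}_\LL})$ and $\Ch(V^{(\ell)}_{I^{(\ell)}_\RR};V^{(\ell+1)}_{J^{(\ell+1)}_\RR})$, which couple only like-sided variables. Consequently the pole structure of $H$ in the $u$-block is completely decoupled from that in the $v$-block, and the two domination hypotheses (at $w=-1$ for the $u$-chains, at $w=0$ for the $v$-chains) are exactly the hypotheses one would need to apply Proposition~\ref{prop:Cauchy_sum_over_roots} separately to each block, holding the other block's variables fixed as ``parameters.''

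The key technical step is to set up a single region and a single analytic $q$ to which Proposition~\ref{prop:Cauchy_sum_over_roots} literally applies. Here I would use the freedom that $\Omega_\LL$ and $\Omega_\RR$ are disjoint: let $\Omega := \Omega_\LL \cup \Omega_\RR$ (not simply connected, but that is only a convenience in the original statement — what is really used is that the level curves $\Gamma_r$ are nested simple closed contours around each distinguished point, which holds here by hypothesis, with two nested families instead of one). I would then apply Proposition~\ref{prop:Cauchy_sum_over_roots} in two stages. In the first stage, fix all the $v$-variables — equivalently, freeze $V^{(1)},\dots,V^{(m)}$ as sitting on their respective contours and treat the product $\prod_\ell J(V^{(\ell)}) \cdot \prod_{\ell=1}^{m-1}\Ch(V^{(\ell)}_{I^{(\ell)}_\RR};V^{(\ell+1)}_{J^{(\ell+1)}_\RR})$ together with the $v$-dependence of $A$ as a new coefficient function $\tilde A$ of the $u$-variables alone. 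Since $q$ dominates $H$ at $w=-1$ along $u$-chains, $\tilde A$ satisfies the hypothesis of Proposition~\ref{prop:Cauchy_sum_over_roots} (with distinguished point $-1$ in place of $0$, which is immaterial after a translation $u \mapsto u+1$), so the sum over the $U^{(\ell)}$'s extends analytically to $z_\ell = 0$ and equals the iterated contour integral over the $\Sigma_{\ell,\LL}^{\out/\inn}$ with the stated $z_\ell$-weights, evaluated at $z_0 = 0$. In the second stage I would repeat the argument on the $v$-block: now the $u$-integrals are already in place, and I package them together with the $v$-dependence-free part of $A$ into another coefficient function; since $q$ dominates $H$ at $w=0$ along $v$-chains, Proposition~\ref{prop:Cauchy_sum_over_roots} again applies and produces the $\Sigma_{\ell,\RR}^{\out/\inn}$ integrals. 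Composing the two stages yields precisely~\eqref{eq:def_G0ext}, and the $q$-independence of $\GG(0,z_1,\dots,z_{m-1})$ is inherited from the $q$-independence asserted in Proposition~\ref{prop:Cauchy_sum_over_roots} at each stage.

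The main obstacle I anticipate is justifying that the two stages genuinely commute and that the intermediate objects are analytic in the variables being held fixed, so that ``freeze one block, apply the proposition, then unfreeze'' is legitimate. Concretely: after stage one the result is a contour integral whose integrand still depends on the $v$-variables, and one must check that (i) this integrand is analytic in each $v_{i_\ell}^{(\ell)} \in \Omega_\RR\setminus\{0\}$ uniformly enough that the stage-two sum/extension goes through, and (ii) the contour-deformation freedom in the first set of integrals (the ``arbitrary nested contours'' clause) does not interfere with the $v$-poles. Point (ii) is where disjointness of $\Omega_\LL$ and $\Omega_\RR$ is essential — the $\Sigma_{\ell,\LL}$ contours live entirely in $\Omega_\LL$ and hence never collide with any $v$-variable or $v$-contour — so the deformations in the two blocks are independent. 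For (i), one uses that the contour integrals in stage one are over compact contours of an integrand analytic in $(u,v)$ jointly away from the $u=-1$, $v=0$, and Cauchy-pole loci, so the resulting function of the $v$'s is analytic by Morera/uniform convergence; the domination hypothesis at $w=-1$ is exactly what guarantees the stage-one integrand has no residual singularity that could be activated when $v$-variables later collide. Once this bookkeeping is done the rest is a routine unwinding of the two applications of Proposition~\ref{prop:Cauchy_sum_over_roots}, and I would simply remark that the details mirror the one-region proof in Section~\ref{sec:proof_Cauchy_sum}.
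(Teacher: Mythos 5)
Your proposal is correct and follows essentially the same route as the paper, which also proves the two-region statement by two successive applications of Proposition~\ref{prop:Cauchy_sum_over_roots} — first summing over one block of roots with the other block's variables held as free parameters in its punctured region, then applying the one-region proposition again to the outer sum, using that the intermediate function is analytic in the frozen variables and in $(z_0,\cdots,z_{m-1})$. The only cosmetic differences are that the paper performs the $V$-sum first rather than the $U$-sum, and your preliminary remark about merging into a single region $\Omega_\LL\cup\Omega_\RR$ is unnecessary (and does not literally fit the hypotheses of Proposition~\ref{prop:Cauchy_sum_over_roots}), but your argument does not actually rely on it.
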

\begin{proof}[Proof of Proposition~\ref{prop:Cauchy_sum_over_roots_ext}]
	It follows by applying Proposition~\ref{prop:Cauchy_sum_over_roots} twice. First for any fixed $U^{(\ell)}$'s, we consider  
	\begin{equation*}
	\tilde H(U^{(1)},\cdots,U^{(m)};z_0,\cdots,z_{m-1}):=
	\sum_{\substack{V^{(1)} \in \roots_{\hat z_1,\RR}^{n_{1,\RR}}\\ \vdots \\ V^{(m)} \in \roots_{\hat z_m,\RR}^{n_{m,\RR}}}} \left[\prod_{\ell=1}^m J(V^{(\ell)})\right]\cdot H(U^{(1)},\cdots,U^{(m)};V^{(1)},\cdots,V^{(m)};z_0,\cdots,z_{m-1}).
	\end{equation*}
	This function is analytic for $(z_0,\cdots,z_{m-1})\in \disk(r_\mm)\times \disk^{m-1}$ by Proposition~\ref{prop:Cauchy_sum_over_roots}. It is also analytic for $u_{i_\ell}^{(\ell)}\in\Omega_{\LL}\setminus\{-1\}$. Thus we could apply Proposition~\ref{prop:Cauchy_sum_over_roots} again for
	\begin{equation*}
	\GG(z_0,\cdots,z_{m-1})=\sum_{\substack{U^{(1)} \in \roots_{\hat z_1,\LL}^{n_{1,\LL}}\\ \vdots \\ U^{(m)} \in \roots_{\hat z_m,\LL}^{n_{m,\LL}}}}\left[\prod_{\ell=1}^mJ(U^{(\ell)})\right]\cdot \tilde H(U^{(1)},\cdots,U^{(m)};z_0,\cdots,z_{m-1}).
	\end{equation*}
	This proves the analyticity of $\GG(z_0,\cdots,z_{m-1})$ in $\disk(r_\mm)\times \disk^{m-1}$. The formula for $\GG(0,z_1,\cdots,z_{m-1})$ follows in a similar way.
\end{proof}

\section{Proof of Theorem~\ref{thm:periodic_TASEP_large_period}}
\label{sec:proof_Theorem}

In this section, we prove Theorem~\ref{thm:periodic_TASEP_large_period}. We will first reduce the proof of the theorem to two lemmas, Lemma~\ref{lm:lemma1} and Lemma~\ref{lm:lemma2} below. Then we prove these two lemmas in Section~\ref{sec:proof_lemma1} and Section~\ref{sec:proof_lemma2} respectively.

We first assume 
\begin{equation}
\label{eq:assumption_a}
a_\ell +k_\ell \ge y_N+N,\qquad \ell=1,\cdots,m.
\end{equation}
We claim that it is sufficient to prove Theorem~\ref{thm:periodic_TASEP_large_period} with the above assumption. In fact, if there exists some $i$ such that $a_i +k_i=\min\{a_\ell+k_\ell:1\le\ell\le m\}< y_N+N$, then $a_i +k_i< y_{k_i}+k_i=x_{k_i}(0)+k_i$, and
\begin{equation*}
\prob_Y^{\period}\left(\bigcap_{\ell=1}^m\left\{ x_{k_\ell}^{\period}(t_\ell) \ge a_\ell\right\}\right) = \prob_Y^{\period}\left(\bigcap_{\substack{1\le \ell\le m\\ \ell\ne i}}\left\{ x_{k_\ell}^{\period}(t_\ell) \ge a_\ell\right\}\right)
\end{equation*}
since $\{x_{k_i}^{(L)}(t_i)\ge a_i\}$ is an event with probability $1$. On the other hand, by Proposition~\ref{cor:consistence} we have
\begin{equation*}
\begin{split}
&\oint\cdots\oint \left[\prod_{\ell=1}^{m-1}\frac{1}{1-z_\ell}\right]\mathcal{D}_Y(z_1,\cdots,z_{m-1}) \ddbar{z_1}\cdots\ddbar{z_{m-1}}\\
&=\oint\cdots\oint \left[\prod_{\substack{1\le \ell\le m-1\\ \ell\ne i}}\frac{1}{1-z_\ell}\right]\mathcal{D}_Y(z_1,\cdots,z_{i-1},z_{i+1},\cdots,z_{m-1}) \ddbar{z_1}\cdots\ddbar{z_{i-1}}\ddbar{z_{i+1}}\cdots\ddbar{z_{m-1}}.
\end{split}
\end{equation*}
Thus it is sufficient to prove the statement with the index $i$ removed. By repeating this procedure and removing all such indices $i$, we only need to prove the statement with all indices $\ell$ satisfying~\eqref{eq:assumption_a}.

From now on throughout this section, we assume~\eqref{eq:assumption_a} holds.

 It has been shown in \cite{Baik-Liu21} (and \cite{Baik-Liu19} for the case of the step initial condition) that the multi-point distribution of periodic TASEP has an explicit formula in terms of multiple contour integrals
\begin{equation*}
\prob_Y^{\period}\left(\bigcap_{\ell=1}^m\left\{ x_{k_\ell}^{\period}(t_\ell) \ge a_\ell\right\}\right)= \oint\cdots\oint \mathscr{C}_Y(\hat z_1,\cdots,\hat z_m) \mathscr{D}_Y(\hat z_1,\cdots,\hat z_m) \ddbar{\hat z_1}\cdots\ddbar{\hat z_{m}},
\end{equation*}
where the contours are nested circles centered the origin with decreasing radii $0<|\hat z_m|<\cdots<|\hat z_1|<r_\mm$ for some constant $r_\mm>0$ to be determined later. The explicit formula of $\mathscr{C}_Y$ and $\mathscr{D}_Y$ will be given in Section~\ref{sec:proof_lemma1} and Section~\ref{sec:proof_lemma2} respectively. By changing the variables
\begin{equation}
\label{eq:hat_z}
\hat z_\ell = \prod_{j=0}^{\ell-1} z_j,\quad \ell=1,\cdots,m,
\end{equation}
where $z_0,z_1,\cdots,z_{m-1}$ are new variables satisfying $|z_\ell|< 1$ for $1\le \ell\le m-1$ and $0<|z_0|<r_\mm$, we write
\begin{equation}
\label{eq:aux_002}
\prob_Y^{\period}\left(\bigcap_{\ell=1}^m\left\{ x_{k_\ell}^{\period}(t_\ell) \ge a_\ell\right\}\right)= \oint\cdots\oint \tilde{\mathscr{C}}_Y\left(z_0,\cdots,z_{m-1}\right) \tilde{\mathscr{D}}_Y\left(z_0,\cdots,z_{m-1}\right) \ddbar{z_0}\cdots\ddbar{ z_{m-1}}.
\end{equation}
Here $\tilde{\mathscr{C}}_Y\left(z_0,\cdots,z_{m-1}\right):=\mathscr{C}_Y(\hat z_1,\cdots,\hat z_m)$ and $\tilde{\mathscr{D}}_Y\left(z_0,\cdots,z_{m-1}\right):=\mathscr{D}_Y(\hat z_1,\cdots,\hat z_m) $ with $\hat z_\ell$ defined by~\eqref{eq:hat_z}. The contours of integration are circles centered at the origin with radii satisfying $0<|z_0|<r_\mm$ and $|z_\ell|<1$ for $1\le\ell\le m-1$. 

It turns out that the $z_0$ integral can be evaluated explicitly in~\eqref{eq:aux_002}. The key facts are that both functions $ \tilde{\mathscr{C}}_Y$ and $ \tilde{\mathscr{D}}_Y$ can be analytically extended to $z_0=0$ and that their values at $z_0=0$ can be explicitly evaluated. These are given in the following two lemmas. We recall that the notations $\disk(r)$ in~\eqref{eq:diskr} and  $\disk_0(r)=\disk(r)\setminus\{0\}$ in~\eqref{eq:disk0r}. When $r=1$, we simply write $\disk$ and $\disk_0$ for $\disk(1)$ and $\disk_0(1)$ respectively.
\begin{lm}
	\label{lm:lemma1}
	The function $\tilde{\mathscr{C}}_Y\left(z_0,\cdots,z_{m-1}\right)$ is analytic for $z_0\in\disk(r_\mm)$ and $z_\ell\in\disk$, $1\le \ell\le m-1$. Moreover, 
	\begin{equation*}
	\tilde{\mathscr{C}}_Y\left(0,z_1,\cdots,z_{m-1}\right)=\prod_{\ell=1}^{m-1} \frac{1}{1-z_\ell}
	\end{equation*}
	for any fixed $z_1,\cdots,z_{m-1}\in\disk$.
\end{lm}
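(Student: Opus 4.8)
The plan is to start from the explicit multiple–contour–integral formula for $\prob_Y^{\period}$ of \cite{Baik-Liu19b} that is recalled at the beginning of Section~\ref{sec:proof_lemma1}, read off the closed expression for the elementary prefactor $\mathscr{C}_Y(\hat z_1,\cdots,\hat z_m)$, and follow it through the change of variables $\hat z_\ell=z_0z_1\cdots z_{\ell-1}$. The key structural observation — which is where the real work lies — is that $\mathscr{C}_Y$ factors as
\[
\mathscr{C}_Y(\hat z_1,\cdots,\hat z_m)=\left[\prod_{\ell=1}^{m-1}\frac{\hat z_\ell}{\hat z_\ell-\hat z_{\ell+1}}\right]\cdot \mathscr{C}_Y^{\mathrm{reg}}(\hat z_1,\cdots,\hat z_m),
\]
where the bracketed factor is exactly the part of $\mathscr{C}_Y$ that is homogeneous of degree $0$ in $(\hat z_1,\cdots,\hat z_m)$ and carries the adjacent ratios $\hat z_{\ell+1}/\hat z_\ell$, and the remaining factor $\mathscr{C}_Y^{\mathrm{reg}}$ is a product of polynomials in the $\hat z_\ell$, of powers $(1-\hat z_\ell)^{c}$, and of any remaining (e.g.\ ``energy'') factor, none of which is singular or vanishing at $\hat z_1=\cdots=\hat z_m=0$; concretely, $\mathscr{C}_Y^{\mathrm{reg}}$ is analytic on a polydisc $\{\,|\hat z_\ell|<\rho:1\le\ell\le m\,\}$ for some $\rho\in(0,1)$ with $\mathscr{C}_Y^{\mathrm{reg}}(0,\cdots,0)=1$. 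Establishing this factorization by inspection of the Baik--Liu formula is the main computational step.

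Once the factorization is in hand the conclusion follows quickly. Because $\prod_{\ell=1}^{m-1}\frac{\hat z_\ell}{\hat z_\ell-\hat z_{\ell+1}}$ is homogeneous of degree $0$, the substitution $\hat z_\ell=z_0z_1\cdots z_{\ell-1}$ cancels every $z_0$ and turns it into $\prod_{\ell=1}^{m-1}\frac{1}{1-z_\ell}$, which depends only on $z_1,\cdots,z_{m-1}$ and is analytic on $\disk^{m-1}$ since the $z_\ell$–circles have radius strictly less than $1$, so $z_\ell\neq1$. For the regular part, note $\hat z_\ell=z_0\cdot(z_1\cdots z_{\ell-1})$ is a polynomial in $z_0$ with $|\hat z_\ell|\le|z_0|$ whenever $|z_\ell|\le1$; hence $\tilde{\mathscr{C}}_Y^{\mathrm{reg}}(z_0,\cdots,z_{m-1}):=\mathscr{C}_Y^{\mathrm{reg}}(z_0,z_0z_1,\cdots,z_0z_1\cdots z_{m-1})$ is analytic for $z_0\in\disk(\rho)$, $z_\ell\in\disk$, and $\tilde{\mathscr{C}}_Y^{\mathrm{reg}}(0,z_1,\cdots,z_{m-1})=\mathscr{C}_Y^{\mathrm{reg}}(0,\cdots,0)=1$. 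Taking $r_\mm:=\rho$ (any smaller value works too), we obtain
\[
\tilde{\mathscr{C}}_Y(z_0,\cdots,z_{m-1})=\left[\prod_{\ell=1}^{m-1}\frac{1}{1-z_\ell}\right]\tilde{\mathscr{C}}_Y^{\mathrm{reg}}(z_0,\cdots,z_{m-1}),
\]
which is analytic on $\disk(r_\mm)\times\disk^{m-1}$ and equals $\prod_{\ell=1}^{m-1}\frac{1}{1-z_\ell}$ at $z_0=0$. (Together with the companion statement for $\tilde{\mathscr{D}}_Y$ in Lemma~\ref{lm:lemma2}, this is what allows the $z_0$–integral in~\eqref{eq:aux_002} to be evaluated by the residue at $z_0=0$, yielding Theorem~\ref{thm:periodic_TASEP_large_period}.)

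\textbf{Main obstacle.} The hard part is the bookkeeping in the first step: pinning down, inside the Baik--Liu expression for $\mathscr{C}_Y$, exactly which factors collapse to the adjacent product $\prod_{\ell=1}^{m-1}\frac{\hat z_\ell}{\hat z_\ell-\hat z_{\ell+1}}$, and verifying that nothing is left over that would spoil the limit — in particular, that there is no residual power $\hat z_\ell^{c}$ with $c\neq0$ (which would create a zero, pole, or branch point at $z_0=0$), that any exponential/energy factor or factor carrying the period $L$ is analytic near the origin with value $1$ there, and that a single radius $r_\mm<1$ can be fixed so that all $(1-\hat z_\ell)$–type factors and any cross factors $\hat z_\ell-\hat z_{\ell'}$ stay away from their singularities on the image of the substitution. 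If the prefactor happens to contain a sum over the $\mathcal{O}(L)$ Bethe roots, I would rewrite that sum as a contour integral $\oint_{\Gamma}(\cdots)\frac{q'(w)}{q(w)-\hat z_\ell}\,dw$ over a level curve $\Gamma$ of the Bethe polynomial $q$, making its analyticity in $\hat z_\ell$ for $|\hat z_\ell|<r_\mm$ and its value at $\hat z_\ell=0$ transparent and manifestly $L$–stable.
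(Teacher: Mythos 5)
Your proposal is correct and follows essentially the same route as the paper: the definition of $\mathscr{C}_Y$ already isolates the degree-zero homogeneous prefactor $\prod_{\ell=2}^m\frac{\hat z_{\ell-1}}{\hat z_{\ell-1}-\hat z_\ell}$, which the substitution $\hat z_\ell=z_0\cdots z_{\ell-1}$ turns into $\prod_{\ell=1}^{m-1}\frac{1}{1-z_\ell}$, and the remaining factor $\mathcal{E}_Y(\hat z_1)\,\mathscr{A}_1\mathscr{A}_2\mathscr{A}_3$ is shown to extend analytically to the origin with value $1$ because the Bethe roots in $\roots_{\hat z,\LL}$ and $\roots_{\hat z,\RR}$ converge to $-1$ and $0$ as $\hat z\to 0$ (the paper's version of your "regular part" check). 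Your fallback of rewriting root sums as contour integrals is a valid alternative to the paper's power-sum expansion for $\mathcal{E}_Y$, but the content is the same.
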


\begin{lm}
	\label{lm:lemma2}
	 Assume~\eqref{eq:assumption_a} and $L \ge \max\{a_1+k_1,\cdots,a_m+k_m\} - y_N$.	Then the function $\tilde{\mathscr{D}}_Y\left(z_0,\cdots,z_{m-1}\right)$ is analytic for $z_0\in\disk(r_\mm)$ and $z_\ell\in\disk_0$, $1\le \ell\le m-1$. Moreover, 
	\begin{equation*}
	\tilde{\mathscr{D}}_Y\left(0,z_1,\cdots,z_{m-1}\right)=\mathcal{D}_Y(z_1,\cdots,z_{m-1})
	\end{equation*}
	for any fixed $z_1,\cdots,z_{m-1}\in\disk_0$. Here the function $\mathcal{D}_Y(z_1,\cdots,z_{m-1})$ is defined in terms of a Fredholm determinant in Definition~\ref{def:operators_K1Y}, or equivalently in terms of series expansion in Definition~\ref{def:D_Y_series}.
\end{lm}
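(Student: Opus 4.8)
The plan is to recognize $\tilde{\mathscr D}_Y$ as a Cauchy-type summation over the Bethe roots of periodic TASEP and then invoke Proposition~\ref{prop:Cauchy_sum_over_roots_ext}. First I would recall from \cite{Baik-Liu19b} (and \cite{Baik-Liu19} for the step case) the explicit series-expansion form of $\mathscr D_Y(\hat z_1,\dots,\hat z_m)$: it is a sum over tuples of Bethe roots, where the roots at ``level'' $\hat z$ are the solutions of $q(w)=\hat z$ for one fixed function $q$ (a normalization of the periodic Bethe polynomial, depending only on $N$ and $L$), whose level curves are nested for a suitable choice of $r_\mm$, and which vanishes to order $N$ at $0$ and to order $L-N$ at $-1$; the roots accordingly split into a group clustering near $0$ and a group clustering near $-1$. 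Performing the substitution $\hat z_\ell=\prod_{j=0}^{\ell-1}z_j$, the function $\tilde{\mathscr D}_Y(z_0,\dots,z_{m-1})$ takes exactly the shape of $\GG(z_0,\dots,z_{m-1})$ in~\eqref{eq:GG1}: the cross-ratio products of Vandermonde-type factors in the periodic summand rearrange into products of the Cauchy-type factors $\Ch(U^{(\ell)};U^{(\ell+1)})$, $\Ch(V^{(\ell)};V^{(\ell+1)})$ (with full index sets $I^{(\ell)}_\LL,J^{(\ell+1)}_\LL,I^{(\ell)}_\RR,J^{(\ell+1)}_\RR$), the Jacobian $J(w)=q(w)/q'(w)$ arises from writing the Bethe-root sums as residue sums, and the remaining analytic factor $A$ collects the $f_\ell$ products, the $z_\ell$-dependent combinatorial weights, the $\Delta(U^{(\ell)};V^{(\ell')})$-type factors (analytic since $\Omega_\LL$ and $\Omega_\RR$ are disjoint), and the periodic initial-data kernel.

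I would then check the hypotheses of Proposition~\ref{prop:Cauchy_sum_over_roots_ext}. Analyticity of $A$ in the root variables on $\Omega_\LL\setminus\{-1\}$ and $\Omega_\RR\setminus\{0\}$ is read off from the formulas. The factors $(1-1/z_\ell)^{n_{\ell+1}}$ carry genuine poles at $z_\ell=0$; since they depend only on the $z_\ell$'s and the summation dimensions, not on the root variables, I would pull them out of the Cauchy-sum structure and apply the proposition to the remaining factor, the poles being simply carried along — this is why the claimed analyticity is on $\disk(r_\mm)\times\disk_0^{m-1}$ and not on $\disk(r_\mm)\times\disk^{m-1}$. The essential point is the \emph{domination} condition: for every Cauchy chain one must show $q(w)\cdot A|_{\text{chain}=w}$ is analytic at $-1$ and at $0$. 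Along a left Cauchy chain occupying levels $k,\dots,k'$, the relevant $f_\ell$'s telescope to $F_{k'}/F_{k-1}$, whose pole at $-1$ has order $(a_{k'}+k_{k'})-(a_{k-1}+k_{k-1})$ (with $a_0+k_0:=0$); by assumption~\eqref{eq:assumption_a} this is at most $\max_\ell(a_\ell+k_\ell)-(y_N+N)$, and adding the possible pole of the periodic initial-data kernel (of order at most $\max\{0,-(y_N+N)\}$, which is compensated by an equal-order zero when $y_N+N\ge 0$) keeps the total at most $\max_\ell(a_\ell+k_\ell)-(y_N+N)$, which by the hypothesis $L\ge\max_\ell(a_\ell+k_\ell)-y_N$ is at most $L-N$, the vanishing order of $q$ at $-1$. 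A symmetric count along a right Cauchy chain gives pole order at most $\max_\ell k_\ell\le N$, the vanishing order of $q$ at $0$ (the periodic initial-data kernel contributing nothing, or a controlled amount bounded again via~\eqref{eq:assumption_a}), so domination holds at both points.

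Granting the proposition, $\tilde{\mathscr D}_Y$ extends analytically to $z_0=0$ with $\tilde{\mathscr D}_Y(0,z_1,\dots,z_{m-1})$ given by the $q$-independent — hence $L$-independent — formula~\eqref{eq:def_G0ext}. It then remains to identify~\eqref{eq:def_G0ext} with Definition~\ref{def:D_Y_series}: the $z_\ell$-weighted nested-contour operations are identical in the two formulas, so this reduces to checking that $A(\cdots;0,z_1,\dots,z_{m-1})$ is exactly the integrand of $\mathcal D_{\boldsymbol n,Y}$ — the only nontrivial identification being that the periodic initial-data kernel reduces at $z_0=0$ to $\Kess_Y$ with $\ich_{\boldsymbol\lambda(Y)}$ from Definition~\ref{def:ich}, which is the content of the analytic continuation recorded in Lemma~\ref{lm:iich_reformulation} (using~\eqref{eq:alter_ich_ext}), while the remaining $f_\ell$ and $z_\ell$ factors match by inspection. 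Applying Proposition~\ref{prop:equivalence_Fredholm_series} then converts the resulting series into the Fredholm form of Definition~\ref{def:operators_K1Y}, which completes the proof.

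I expect the main obstacle to be the domination estimate, carried out uniformly over all Cauchy chains and all multi-indices $\boldsymbol n$: it requires a precise bookkeeping of the pole orders that $A$ accumulates against the vanishing orders of the Bethe function $q$ at $-1$ and $0$, and it is exactly here that both arithmetic hypotheses — assumption~\eqref{eq:assumption_a} and $L\ge\max_\ell(a_\ell+k_\ell)-y_N$ — are genuinely used and the discreteness of the periodic formula ``disappears''. Handling the telescoping of the $f_\ell$'s along chains together with the pole of the periodic initial-data kernel, and carefully justifying the reduction of that kernel to $\Kess_Y$ at $z_0=0$, is the technical heart of the argument, as the excerpt itself anticipates.
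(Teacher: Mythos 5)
Your proposal is correct and follows essentially the same route as the paper: pull the $(1-z_\ell)^{n_\ell}(1-1/z_\ell)^{n_{\ell+1}}$ factors out of the periodic series expansion, recast the remaining Bethe-root sum as the two-region Cauchy-type summation of Proposition~\ref{prop:Cauchy_sum_over_roots_ext} with $q(w)=w^N(w+1)^{L-N}$, verify domination via the telescoping of the $f_\ell$'s along Cauchy chains against the vanishing orders $L-N$ at $-1$ and $N$ at $0$ (exactly where~\eqref{eq:assumption_a} and the bound on $L$ enter), and identify the $z_0=0$ limit with $\mathcal{D}_{\boldsymbol{n},Y}$ using $\mathfrak{h}(w;0)=1$ and Lemma~\ref{lm:iich_reformulation}. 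This matches the argument in Section~\ref{sec:proof_lemma2}.
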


The proofs of these two lemmas are given in Section~\ref{sec:proof_lemma1} and Section~\ref{sec:proof_lemma2} respectively.

By applying these two lemmas above and taking the residue at $z_0=0$ in~\eqref{eq:aux_002}, we write~\eqref{eq:aux_002} as
\begin{equation*}
\begin{split}
\prob_Y^{\period}\left(\bigcap_{\ell=1}^m\left\{ x_{k_\ell}^{\period}(t_\ell) \ge a_\ell\right\}\right)&= \oint\cdots\oint \tilde{\mathscr{C}}_Y\left(0,z_1\cdots,z_{m-1}\right) \tilde{\mathscr{D}}_Y\left(0,z_1,\cdots,z_{m-1}\right) \ddbar{z_1}\cdots\ddbar{ z_{m-1}}\\
&= \oint\cdots\oint \left[\prod_{\ell=1}^{m-1}  \frac{1}{1-z_\ell}\right] \mathcal{D}_Y(z_1,\cdots,z_{m-1}) \ddbar{z_1}\cdots\ddbar{ z_{m-1}}.
\end{split}
\end{equation*}
This proves Theorem~\ref{thm:periodic_TASEP_large_period}.

\vspace{0.3cm}

In Sections~\ref{sec:preliminaries},~\ref{sec:proof_lemma1} and~\ref{sec:proof_lemma2} below, we will introduce the functions  $ \mathscr{C}_Y(\hat z_1,\cdots,\hat z_m)$, $ \mathscr{D}_Y(\hat z_1,\cdots,\hat z_m)$ and prove Lemma~\ref{lm:lemma1} and Lemma~\ref{lm:lemma2}. We would like to emphasize that although most of the functions are already defined in \cite{Baik-Liu21}, there are some modifications due to the different settings of two papers. One could match our definitions in this paper with their analogs in \cite{Baik-Liu21} by doing the following changes in their paper: $k_\ell\to N+1-k_\ell$, $y_i\to y_i+1$ and $a_i\to a_i+1$. The first change is due to the different ordering of the particles, the other changes are related to a shift of all particles by $1$ in order to make our formula as simple as possible.

\subsection{Preliminaries on Bethe roots and some functions involving the initial condition $Y$}
\label{sec:preliminaries}

Before we define the functions $ \mathscr{C}_Y(\hat z_1,\cdots,\hat z_m)$, $ \mathscr{D}_Y(\hat z_1,\cdots,\hat z_m)$ and prove Lemma~\ref{lm:lemma1} and Lemma~\ref{lm:lemma2}, we introduce the concepts of Bethe roots, and some functions involving the initial condition $Y$.

\subsubsection{Bethe roots}

Let
\begin{equation*}
q(w):= w^N (w+1)^{L-N}.
\end{equation*}
The \emph{Bethe equation} associated to the periodic TASEP of period $L$ and particle numbers $N$ is defined to be
\begin{equation}
\label{eq:qz}
q_z(w) = q(w) - z = w^{N}(w+1)^{L-N} -z
\end{equation}
for any $z\in\complexC$.

We remark that this is slightly different from the function $q_z(w)$ in \cite{Baik-Liu16,Baik-Liu19,Baik-Liu21} which is defined by $w^N(w+1)^{L-N}-z^L$. The main reason the authors used $z^L$ instead of $z$ in their papers is for the purpose of asymptotic analysis in the so-called relaxation time scale: The roots of $w^N(w+1)^{L-N}-z^L$ are on level curves which only depend on two parameters, the ratio $N/L$ and the magnitude of $z$, and these two parameters are chosen to be independent of $L$ in the asymptotic analysis. However, in this paper we only consider the finite time case for periodic TASEP and we expect that the parameter $L$ will disappear in the probability distributions as we claimed in Theorem~\ref{thm:comparison} and Theorem~\ref{thm:periodic_TASEP_large_period}. Thus it is more natural to use~\eqref{eq:qz}.

We also introduce the set of \emph{Bethe roots}
\begin{equation*}
\roots_{z}:=\{w\in\complexC: q_z(w)=0\},\quad\text{ or equivalently, }\quad\roots_{z}:=\{ w\in\complexC: q(w)=z\},
\end{equation*}
and the \emph{level curves} of $q(w)$
\begin{equation*}
\Gamma_r:=\{ w\in\complexC: |q(w)| = r\}.
\end{equation*}
Note that the definitions above imply that all the roots in $\roots_z$ are on the level curve $\Gamma_{|z|}$.
\begin{center}
\begin{figure}
\centering
\includegraphics[width=8cm, height=6cm]{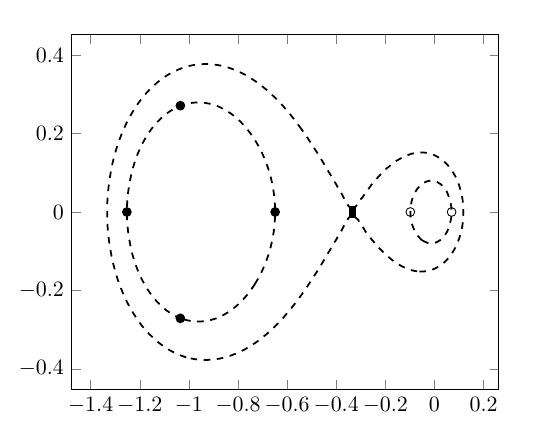}
%\begin{tikzpicture}
%	\begin{axis}
%		\addplot +[no markers,
%		raw gnuplot,
%		thick,
%		dashed,
%		black,
%		smooth,
%		empty line = jump 
%		] gnuplot{
%			set contour base;
%			set cntrparam levels discrete 0.0000001;
%			unset surface;
%			set view map;
%			set isosamples 1000;
%			set samples 1000;
%			splot (x^2 +y^2)*((x+1)^2+y^2)^2 - 0.02194787;
%			splot (x^2 +y^2)*((x+1)^2+y^2)^2 -0.0064;
%		};
%		\draw [fill=black] (-0.32,-0.01333) rectangle (-0.346666,0.01333);
%		\draw (0.06989,0) circle[radius=2pt];
%		\draw (-1.03494,-0.27121) circle[radius=2pt];
%		\fill (-1.03494,-0.27121) circle[radius=2pt];
%		\draw (-1.03494,0.27121) circle[radius=2pt];
%		\fill (-1.03494,0.27121) circle[radius=2pt];
%		\draw (-1.2527,0) circle[radius=2pt];
%		\fill (-1.2527,0) circle[radius=2pt];
%		\draw (-0.64887,0) circle[radius=2pt];
%		\fill (-0.64887,0) circle[radius=2pt];
%		\draw (-0.098419,0) circle[radius=2pt];
%	\end{axis}
%\end{tikzpicture}
\caption{An illustration of the Bethe roots and level curves for $q(w)=w^2(w+1)^4$: $\Gamma_{r=0.0064}$ consists of the two inner dashed curves. On these two curves, the four black dots represent the four points in $\mathcal{R}_{z=0.0064,\LL}$, and the two white dots represent the two points in $\mathcal{R}_{z=0.0064,\RR}$. $\Gamma_{r_c=2^4/3^6}$ consists of the two outer dashed curves separated by $w_c=-1/3$ (the black square in the middle), within which are the two domains $\Omega_\LL$ and $\Omega_\RR$ respectively.}
\label{figure:Bethe_roots}
\end{figure}
\end{center}

It was known that (see the related discussions in \cite{Baik-Liu16,Baik-Liu19,Baik-Liu21} for examples) the level curves of $q(w)$ are nested contours: $\Gamma_{r}$ encloses $\Gamma_{r'}$ if $r>r'$. Moreover, when $r>r_c$ for some $r_c$ defined by
\begin{equation*}
r_c  := \frac{N^N(L-N)^{L-N}}{L^L},
\end{equation*}
$\Gamma_r$ is a simple closed contour enclosing both $-1$ and $0$. When $r=r_c$, $\Gamma_r$ is a self-intersect contour with the intersection point
\begin{equation*}
w_c:=-N/L.
\end{equation*}
When $0<r<r_c$, $\Gamma_r$ splits into two disjoint simple closed contours, one of which encloses $-1$ but not $0$ and the other encloses $0$ but not $-1$. We denote these two contours $\Gamma_{r,\LL}$ and $\Gamma_{r,\RR}$ respectively. Moreover, $\Gamma_{r,\LL}$ and $\Gamma_{r,\RR}$ stay on two different sides of $w_c$.

For $0<r<r_c$, we denote $\Omega_{r,\LL}$ ($\Omega_{r,\RR}$, respectively) the region enclosed by the contour $\Gamma_{r,\LL}$ ($\Gamma_{r,\RR}$, respectively). Then we define
\begin{equation*}
\Omega_\LL=\cup_{0<r<r_c} \Omega_{r,\LL},\quad \text{and} \quad \Omega_\RR=\cup_{0<r<r_c} \Omega_{r,\RR}.
\end{equation*}
These are two non-intersecting open regions which are on the two sides of $w_c$ respectively. Moreover, $-1\in\Omega_\LL$ and $0\in\Omega_\RR$.

Now we return to the discussion of Bethe roots. When $0<|z|<r_c$, we denote
\begin{equation*}
\roots_{z,\LL} = \roots_z\cap \Omega_\LL,\quad \text{and}\quad \roots_{z,\RR} = \roots_z\cap \Omega_\RR.
\end{equation*}
It is easy to see that $\roots_{z,\LL}$ and $\roots_{z,\RR}$ consists of $L-N$ and $N$ elements respectively. 
These elements converge to $-1$ and $0$ respectively when $z\to 0$.

See Figure~\ref{figure:Bethe_roots} for an illustration of the Bethe roots, the level curves and the domains $\Omega_\LL$ and $\Omega_\RR$. 

We define
\begin{equation*}
q_{z,\LL}(w)=\prod_{u\in \roots_{z,\LL}}(w-u),\qquad\text{and}\quad  q_{z,\RR}(w) =\prod_{v\in\roots_{z,\RR}}(w-v).
\end{equation*}
By the discussions on $\roots_{z,\LL}$ and $\roots_{z,\RR}$ above, we know that $q_{z,\LL}(w)\to (w+1)^{L-N}$ and $q_{z,\RR}(w)\to w^N$ as $z\to 0$.  Hence we introduce the ``normalized'' version of $q_{z,\LL}$ and $q_{z,\RR}$ below
\begin{equation*}
\mathfrak{q}_{z,\LL}(w):=\frac{q_{z,\LL}(w)}{(w+1)^{L-N}},\quad\text{and}\quad 
\mathfrak{q}_{z,\RR}(w):=\frac{q_{z,\RR}(w)}{w^N}.
\end{equation*}
We further write
\begin{equation}
\label{eq:def_frakh}
\mathfrak{h}(w;z)=\begin{dcases}
\mathfrak{q}_{z,\LL}(w),& w\in\Omega_\RR\\
\mathfrak{q}_{z,\RR}(w),& w\in\Omega_\LL.
\end{dcases}
\end{equation}

It is easy to see that $\mathfrak{h}(w;z)$ is analytic for $(w,z)$ in both $\Omega_\LL\times \disk(r_c)$ and $\Omega_\RR\times \disk(r_c)$. Moreover, it is always nonzero in the above domain. Finally, $\mathfrak{h}(w;0)=1$  for all $w\in\Omega_\RR\cup\Omega_\LL$.

\subsubsection{Functions involving the initial condition $Y$}

We introduce some functions involving the initial condition $Y$. The two functions $\mathcal{E}_Y (z)$ and $\iich_{Y}(v,u;z)$ were introduced in \cite{Baik-Liu21}. We slightly modified their formulas below due to the relabeling of particles. 
One could replace $y_i$ by $y_{N+1-i} -1$ in the formulas below to recover the versions in \cite{Baik-Liu21}.

\begin{defn}
	\label{def:energy}
	Suppose $0<|z|<r_c$. Let
	\begin{equation}
	\label{eq:def_E}
	\mathcal{E}_Y (z) := \prod_{v\in\roots_{z,\RR}}(v+1)^{y_N+N}\cdot \mathcal{G}_{\boldsymbol{\lambda}(Y)} (\roots_{z,\RR}),
	\end{equation}
	where $\boldsymbol{\lambda}(Y)=(\lambda_1,\cdots,\lambda_N)$ with $\lambda_i = (y_i+i) - (y_N+N)$, and the function $\mathcal{G}_{\boldsymbol{\lambda}}$ is defined in~\eqref{eq:def_G_ftn}. It can also be expressed as
	\begin{equation*}
	\mathcal{E}_Y (z) = \frac{\det\left[v_i^{-j} (v_i+1)^{y_j+j}\right]_{i,j=1}^N}{\det\left[v_i^{-j}\right]_{i,j=1}^N},
	\end{equation*}	
	where $v_1,\cdots,v_N$ are all the elements of $\roots_{z,\RR}$. We also define $\mathcal{E}_Y(0)=1$.
\end{defn}

Since all the elements in $\roots_{z,\RR}$ go to $0$ as $z\to 0$, it is easy to see (for example, using the equations~\eqref{eq:def_E} and~\eqref{eq:expan_G1} below) that $\mathcal{E}_Y(z)$ is analytic for $z$ within $\{z: |z|<r_c\}$.

Since $\mathcal{E}_Y(0)=1$, there exists some positive constant $r_\mm$, such that $r_\mm<r_c$ and
\begin{equation}
\label{eq:r_max}
\mathcal{E}_Y(z) \ne 0,\qquad \text{for all $z$ satisfying }|z|<r_\mm.
\end{equation}
Note the this also implies $\mathcal{G}_{\boldsymbol{\lambda}(Y)} (\roots_{z,\RR}) \ne 0$ for all $z\in\disk(r_\mm)=\{z\in\complexC:|z|<r_\mm\}$. 

\vspace{0.3cm}

\begin{defn}
	\label{def:ich_periodic}
	Suppose $0<|z|<r_\mm$. For any $u\in\Omega_\LL\setminus\{-1\}$ and $v\in\roots_{z,\RR}$, we define
	\begin{equation}
	\label{eq:def_ich_periodic}
	\iich_{Y}(v,u;z) =\left(\frac{u+1}{v+1}\right)^{y_N+N} \frac{\mathcal{G}_{\boldsymbol{\lambda}(Y)} ((\roots_{z,\RR}\setminus \{v\}) \cup \{u\})}{\mathcal{G}_{\boldsymbol{\lambda}(Y)} (\roots_{z,\RR})},
	\end{equation}
	where $\boldsymbol{\lambda}(Y)=(\lambda_1,\cdots,\lambda_N)$ with $\lambda_i = (y_i+i) - (y_N+N)$.
\end{defn}

We remark that the definition above is only valid for discrete points $v\in\roots_{z,\RR}$. Below we re-express the formula such that it is well defined for all $v\in\Omega_\RR\setminus\{0\}$. More explicitly, we have

\begin{lm}
	\label{lm:iich_reformulation}
	There exists a function $\hh_Y(v,u;z)$ analytically defined on $\Omega_\RR\times\Omega_\LL\times \disk(r_\mm)$ such that
	\begin{equation}
	\label{eq:iich_ana_ext}
	\iich_Y(v,u;z) = \left(\frac{u+1}{v+1}\right)^{y_N+N}\cdot \left(\ich_{\boldsymbol{\lambda}(Y)}(v,u) + \hh_Y(v,u;z)\right)
	\end{equation}
	for all $|z|<r_\mm$ and $(v,u)\in\roots_{z,\RR}\times(\Omega_\LL\setminus\{-1\})$. Here $\ich_{\boldsymbol{\lambda}}$ is a polynomial of $v$ and $u$ defined in Definition~\ref{def:ich}, and $\boldsymbol{\lambda}(Y)=(\lambda_1,\cdots,\lambda_N)$ with $\lambda_i=(y_i+i)-(y_N+N)$ for $1\le i\le N$. The function $\hh_Y$ also satisfies
	\begin{equation*}
	\hh_Y(v,u;0)=0
	\end{equation*}
	for all $(u,v)\in\Omega_\LL\times\Omega_\RR$.
\end{lm}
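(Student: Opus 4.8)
\textbf{Proof plan for Lemma~\ref{lm:iich_reformulation}.}

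The plan is to start from the explicit ratio of $\mathcal{G}_{\boldsymbol{\lambda}(Y)}$'s in Definition~\ref{def:ich_periodic} and to separate it into a ``universal'' piece, which will become $\ich_{\boldsymbol{\lambda}(Y)}(v,u)$, and a ``correction'' piece $\hh_Y(v,u;z)$ that vanishes at $z=0$. The key structural input is the alternate formula~\eqref{eq:alter_ich} and its extension~\eqref{eq:alter_ich_ext}: for a set $W$ of $M$ points consisting of one distinguished point and $M-1$ points of the form $v\xi^j$, the symmetric function $\mathcal{G}_{\boldsymbol{\lambda}}$ collapses to $\ich_{\boldsymbol{\lambda}}$ plus a $v^M$-correction. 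The idea is to realize, or approximate, the set $(\roots_{z,\RR}\setminus\{v\})\cup\{u\}$ in the numerator and $\roots_{z,\RR}$ in the denominator so that this collapsing mechanism applies. First I would use the power-sum expansion~\eqref{eq:expan_G}, which holds once the number of variables exceeds $|\boldsymbol{\lambda}(Y)|$; here $\roots_{z,\RR}$ has exactly $N$ points, so one may need to pad with extra auxiliary variables (sent to $0$, or chosen as extra roots) to make $M$ large enough, and then observe that the coefficients $c_{\boldsymbol{\lambda},\boldsymbol{\mu}}$ do not depend on $M$.

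The main computation is then to evaluate the power sums $p_{\boldsymbol{\mu}}$ on the two sets. For the denominator $\roots_{z,\RR}$, write $p_{\mu_k}(\roots_{z,\RR}) = \sum_{v'\in\roots_{z,\RR}} (v')^{\mu_k}$; since the elements of $\roots_{z,\RR}$ are the $N$ small roots of $q(w)=z$ and tend to $0$ as $z\to 0$, each such power sum is analytic in $z$ on $\disk(r_c)$ and vanishes at $z=0$ for $\mu_k\ge 1$. For the numerator, $p_{\mu_k}\big((\roots_{z,\RR}\setminus\{v\})\cup\{u\}\big) = p_{\mu_k}(\roots_{z,\RR}) - v^{\mu_k} + u^{\mu_k}$. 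Substituting these into~\eqref{eq:expan_G} for both numerator and denominator, factoring out $\mathcal{G}_{\boldsymbol{\lambda}(Y)}(\roots_{z,\RR})$ (which is nonzero on $\disk(r_\mm)$ by~\eqref{eq:r_max}), and collecting the prefactor $\left(\tfrac{u+1}{v+1}\right)^{y_N+N}$, one obtains $\iich_Y(v,u;z)$ as $\left(\tfrac{u+1}{v+1}\right)^{y_N+N}$ times a ratio of two expressions, each of the form $1+\sum_{\boldsymbol{\mu}} c_{\boldsymbol{\lambda}(Y),\boldsymbol{\mu}}\big(\text{power sums in }z\text{ plus }u^{\mu_k}-v^{\mu_k}\big)$. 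Setting $z=0$ kills all the $z$-dependent power sums, the numerator becomes $1+\sum_{\boldsymbol{\mu}} c_{\boldsymbol{\lambda}(Y),\boldsymbol{\mu}}\prod_k(u^{\mu_k}-v^{\mu_k}) = \ich_{\boldsymbol{\lambda}(Y)}(v,u)$ by Definition~\ref{def:ich}, and the denominator becomes $1$; hence the $z=0$ specialization is exactly $\ich_{\boldsymbol{\lambda}(Y)}(v,u)$, and one defines $\hh_Y(v,u;z)$ to be the difference. Analyticity of $\hh_Y$ on $\Omega_\RR\times\Omega_\LL\times\disk(r_\mm)$ follows because the power sums of $\roots_{z,\RR}$ are analytic in $z$ there, the denominator is nonzero, and the whole expression is polynomial in $u$ and $v$; the factor $\left(\tfrac{u+1}{v+1}\right)^{y_N+N}$ is handled separately since it is pulled out of $\iich_Y$ in~\eqref{eq:iich_ana_ext}, so no pole at $v=-1$ enters $\hh_Y$. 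Finally $\hh_Y(v,u;0)=0$ is immediate from the construction.

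The main obstacle I expect is bookkeeping around the number of variables: Definition~\ref{def:ich} and the clean identity~\eqref{eq:alter_ich} require $M>|\boldsymbol{\lambda}(Y)|$, but $\roots_{z,\RR}$ has only $N$ elements and $|\boldsymbol{\lambda}(Y)| = \sum_i\big((y_i+i)-(y_N+N)\big)$ can exceed $N$. One must therefore justify using the $M$-independent power-sum expansion~\eqref{eq:expan_G}/\eqref{eq:def_ich} directly rather than the root-of-unity form, i.e.\ argue that $\mathcal{G}_{\boldsymbol{\lambda}(Y)}(\roots_{z,\RR})$ and $\mathcal{G}_{\boldsymbol{\lambda}(Y)}\big((\roots_{z,\RR}\setminus\{v\})\cup\{u\}\big)$ — each a symmetric polynomial in a fixed set of size $N$ (or $N+1$) — still admit the representation $1+\sum_{\boldsymbol{\mu}}c_{\boldsymbol{\lambda}(Y),\boldsymbol{\mu}}p_{\boldsymbol{\mu}}$ after allowing the auxiliary padding, and that this padding can be removed without changing the value because $\mathcal{G}_{\boldsymbol{\lambda}(Y)}$ is a genuine polynomial of degree $|\boldsymbol{\lambda}(Y)|$ evaluated at points that are either actual roots or $0$. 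A secondary subtlety is checking that the elements of $\roots_{z,\RR}$ and their power sums are analytic functions of $z$ on $\disk(r_\mm)$ even though individual roots are not single-valued — this is standard, since symmetric functions of the roots of $q(w)-z=0$ in the region $\Omega_\RR$ depend analytically on $z$, and that is all that is used.
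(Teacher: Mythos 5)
Your proposal is correct and shares the paper's overall strategy (expand both $\mathcal{G}_{\boldsymbol{\lambda}(Y)}$ evaluations in power sums, isolate the $z$-dependent power sums $h_j(z)=\sum_{v'\in\roots_{z,\RR}}(v')^j$, which are analytic and vanish at $z=0$, and divide by the nonvanishing denominator $\mathcal{G}_{\boldsymbol{\lambda}(Y)}(\roots_{z,\RR})$), but you handle the central technical point — that $\roots_{z,\RR}$ has only $N$ elements while the clean expansion~\eqref{eq:expan_G} requires more than $|\boldsymbol{\lambda}(Y)|$ variables — by a genuinely different device. The paper keeps the $N$-variable expansion with possibly modified coefficients $\tilde c_{\boldsymbol{\lambda},\boldsymbol{\mu}}$ (parts $\mu_1\le N$), so its $z=0$ specialization agrees with $\ich_{\boldsymbol{\lambda}(Y)}(v,u)$ only modulo a term $v^N\tilde r(v,u)$ (this is what~\eqref{eq:alter_ich_ext} and the evaluation at $\{u,v\xi,\dots,v\xi^{N-1}\}$ are for); it then eliminates that discrepancy by substituting $v^N=z/(v+1)^{L-N}$, legitimate precisely because $v\in\roots_{z,\RR}$, which turns the polynomial mismatch into an explicitly $z$-vanishing analytic function. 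You instead pad with zeros so that the $M$-independent coefficients $c_{\boldsymbol{\lambda},\boldsymbol{\mu}}$ of Definition~\ref{def:ich} apply directly to the $N$-point sets; then the $z=0$ limit of the numerator is literally $\ich_{\boldsymbol{\lambda}(Y)}(v,u)$ and neither the root-of-unity evaluation nor the Bethe-equation substitution is needed. The stability step you flag but do not prove is genuinely required, but it is true and elementary: setting one variable to $0$ in~\eqref{eq:def_G_ftn} and expanding both determinants along that row gives $\mathcal{G}_{\boldsymbol{\lambda}}(W\cup\{0\})=\mathcal{G}_{\boldsymbol{\lambda}}(W)$, and since adjoining zeros does not change $p_{\boldsymbol{\mu}}$, the expansion~\eqref{eq:expan_G} descends to $N$ variables with the same coefficients. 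With that lemma supplied your argument is complete; the trade-off is that the paper avoids proving stability at the cost of the $v^N$ substitution, while you avoid the substitution at the cost of proving stability.
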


Note that the right hand side of~\eqref{eq:iich_ana_ext} is defined on a larger domain than the left hand side $\iich_Y(v,u;z)$, but they agree on the set where $\iich_Y(v,u;z)$ is defined.
\begin{proof}[Proof of Lemma~\ref{lm:iich_reformulation}]
	The idea is to reformulate $\mathcal{G}_{\boldsymbol{\lambda}(Y)} ((\roots_{z,\RR}\setminus \{v\}) \cup \{u\})$ and analytically extend it to $\Omega_\RR\times\Omega_\LL\times \disk(r_\mm)$.
	
	First we express the symmetric function $\mathcal{G}_{\boldsymbol{\lambda}(Y)} (v_1,\cdots,v_N)$ in terms of finitely many power sum symmetric functions. More explicitly, we write
	\begin{equation}
	\label{eq:expan_G1}
	\mathcal{G}_{\boldsymbol{\lambda}(Y)} (v_1,\cdots,v_N) =1+ \sum_{\boldsymbol{\mu}=(\mu_1,\cdots)}\tilde c_{\boldsymbol{\lambda},\boldsymbol{\mu}} p_{\boldsymbol{\mu}}(v_1,\cdots,v_N),
	\end{equation}
	where $\boldsymbol{\mu}=(\mu_1,\cdots)$ satisfies $N\ge \mu_1\ge \cdots$, $|\boldsymbol{\mu}|=\mu_1+\cdots \le |\boldsymbol{\lambda}|$, and $\mu_1\ge 1$. The function
	\begin{equation*}
	p_{\boldsymbol{\mu}}(v_1,\cdots,v_N)=\prod_{\mu_k\ge 1}(v_1^{\mu_k}+\cdots+v_N^{\mu_k}).
	\end{equation*}
	We remark that the expansion~\eqref{eq:expan_G1} might be different from~\eqref{eq:expan_G} since the number of variables in~\eqref{eq:expan_G} is assumed to be larger than $|\boldsymbol{\lambda}|$. We use $\tilde c_{\boldsymbol{\lambda},\boldsymbol{\mu}}$ here to mark the possible difference. In the case when $ |\boldsymbol{\lambda}|\le N$, these coefficients are identical to  $c_{\boldsymbol{\lambda},\boldsymbol{\mu}}$'s in~\eqref{eq:expan_G}.
	
	We will take two different sets of variables in~\eqref{eq:expan_G1} and obtain an identity between $\mathcal{G}_{\boldsymbol{\lambda}(Y)} ((\roots_{z,\RR}\setminus \{v\}) \cup \{u\})$ and $\ich_{\boldsymbol{\lambda}(Y)}(v,u)$. The first set of variables is  $\{v_1,\cdots,v_N\}=(\roots_{z,\RR}\setminus \{v\}) \cup \{u\}$. This gives
	\begin{equation}
	\label{eq:extention_G1}
	\begin{split}
	\mathcal{G}_{\boldsymbol{\lambda}(Y)} ((\roots_{z,\RR}\setminus \{v\}) \cup \{u\}) 
	&=1+\sum_{\boldsymbol{\mu}=(\mu_1,\cdots)} \tilde c_{\boldsymbol{\lambda},\boldsymbol{\mu}}\prod_{\mu_k\ge 1} (h_{\mu_k}(z) +u^{\mu_k}-v^{\mu_k})\\
	&=1+\tilde h_Y(v,u;z)+ \sum_{\boldsymbol{\mu}=(\mu_1,\cdots)} \tilde c_{\boldsymbol{\lambda},\boldsymbol{\mu}}\prod_{\mu_k\ge 1} (u^{\mu_k}-v^{\mu_k}),
	\end{split}
	\end{equation}
	where the function
	$$
	h_j(z):=\sum_{v'\in\roots_{z,\RR}}(v')^j,\qquad j\ge 1
	$$
	is an analytic function of $z\in\disk(r_\mm)$ with $h_j(0)=0$, and
	\begin{equation*}
	\tilde h_Y(v,u;z) = \sum_{\boldsymbol{\mu}=(\mu_1,\cdots)} \tilde c_{\boldsymbol{\lambda},\boldsymbol{\mu}}\left(\prod_{\mu_k\ge 1} (h_{\mu_k}(z) +u^{\mu_k}-v^{\mu_k})-\prod_{\mu_k\ge 1} (u^{\mu_k}-v^{\mu_k})\right)
	\end{equation*}
	is an analytic function of $(v,u,z)\in\Omega_\RR\times\Omega_\LL\times \disk(r_\mm)$, which actually is a polynomial of $v$ and $u$, with $\tilde h_Y(v,u;0)=0$ for any pair $(v,u)$. 
	
	The other set of variables we insert in~\eqref{eq:expan_G1} is $\{v_1,\cdots,v_N\}=\{u,v\xi,v\xi^2,\cdots,v\xi^{N-1}\}$ with $\xi=e^{2\pi\ii/N}$. This formula includes the desired term $\ich_{\boldsymbol{\lambda}}(v,u)$. More explicitly, by applying~\eqref{eq:alter_ich_ext}, we have
	\begin{equation}
	\label{eq:ich01}
	\begin{split}
	\ich_{\boldsymbol{\lambda}}(v,u) &= \mathcal{G}_{\boldsymbol{\lambda}}(u,v\xi,\cdots,v\xi^{N-1})+ v^N\cdot r(v,u)\\
	&= 1+\sum_{\boldsymbol{\mu}=(\mu_1,\cdots)}\tilde c_{\boldsymbol{\lambda},\boldsymbol{\mu}} p_{\boldsymbol{\mu}}(u,v\xi,v\xi^2,\cdots,v\xi^{N-1}) + v^N\cdot r(v,u)
	\end{split}
	\end{equation}
	for some polynomial $r(v,u)$. Note that $(v\xi)^{\mu_k}+(v\xi^2)^{\mu_k}+\cdots+(v\xi^{N-1})^{\mu_k}=-v^{\mu_k}$ if $1\le \mu_k\le N-1$, or $(N-1)v^N$ if $\mu_k=N$. We have
	\begin{equation*}
	\begin{split}
	p_{\boldsymbol{\mu}}(u,v\xi,v\xi^2,\cdots,v\xi^{N-1})&= \prod_{1\le\mu_k\le N-1} (u^{\mu_k}-v^{\mu_k})\prod_{\mu_k=N}(u^{\mu_k}-v^{\mu_k} +Nv^N)\\
	&=\prod_{\mu_k\ge 1}(u^{\mu_k}-v^{\mu_k}) + v^N\cdot \text{a polynomial of $v$ and $u$}.
	\end{split}
	\end{equation*}
	By inserting this in~\eqref{eq:ich01} we immediately obtain
	\begin{equation}
	\label{eq:ich02}
	\ich_{\boldsymbol{\lambda}}(v,u)= 1 + v^N\cdot \tilde r(v,u)+ \sum_{\boldsymbol{\mu}=(\mu_1,\cdots)} \tilde c_{\boldsymbol{\lambda},\boldsymbol{\mu}}\prod_{\mu_k\ge 1} (u^{\mu_k}-v^{\mu_k}),
	\end{equation}
	where $\tilde r(v,u)$ is a polynomial of $v$ and $u$.
	
	Now we combine~\eqref{eq:extention_G1} and~\eqref{eq:ich02} and write
	\begin{equation*}
	\mathcal{G}_{\boldsymbol{\lambda}(Y)} ((\roots_{z,\RR}\setminus \{v\}) \cup \{u\}) =\ich_{\boldsymbol{\lambda}}(v,u) -v^N\cdot \tilde r(v,u) +\tilde h_Y(v,u;z).
	\end{equation*}
	We further express $v^N=\frac{z}{(v+1)^{L-N}}$ since $v\in\roots_{z,\RR}$. This gives
	\begin{equation}
	\label{eq:G_01}
	\mathcal{G}_{\boldsymbol{\lambda}(Y)} ((\roots_{z,\RR}\setminus \{v\}) \cup \{u\}) =\ich_{\boldsymbol{\lambda}}(v,u) -z\cdot \frac{\tilde r(v,u)}{(v+1)^{L-N}} +\tilde h_Y(v,u;z).
	\end{equation}
	Note that the expression on the right is analytically defined for $(v,u,z)\in\Omega_\RR\times\Omega_\LL\times\disk(r_\mm)$.
	
	Finally we prove the lemma. Note that the function
	\begin{equation*}
	\tilde g_Y(z):=\mathcal{G}_{\boldsymbol{\lambda}(Y)} (\roots_{z,\RR})
	\end{equation*}
	is an analytic function for $z\in\disk(r_\mm)$ with $\tilde g_Y(0)=1$. Moreover, it is nonzero in the disk $\disk(r_\mm)$ by the assumption of $r_\mm$ (see~\eqref{eq:r_max}). Thus by the definition of $\iich_Y$ and the equation~\eqref{eq:G_01}, we have the expression~\eqref{eq:iich_ana_ext} with
	\begin{equation*}
	h_Y(v,u;z):= \frac{\ich_{\boldsymbol{\lambda}}(v,u) -z\cdot \tilde r(v,u)\cdot (v+1)^{-L+N} +\tilde h_Y(v,u;z)}{\tilde g_Y(z)}-\ich_{\boldsymbol{\lambda}}(v,u).
	\end{equation*}
	This function is analytically defined for $(v,u,z)\in\Omega_\RR\times\Omega_\LL\times\disk(r_\mm)$ since each term is analytic and the denominator is nonzero. Moreover, we have $h_Y(v,u;0)=0$ for all $(v,u)\in\Omega_\RR\times\Omega_\LL$ by using the facts $\tilde h_Y(v,u;0)=0$ and $\tilde g_Y(0)=1$. This finishes the proof.
\end{proof}

\subsection{Function $ \mathscr{C}_Y(\hat z_1,\cdots,\hat z_m) $ and proof of Lemma~\ref{lm:lemma1}}
\label{sec:proof_lemma1}

The function $\mathscr{C}_Y(\hat z_1,\cdots,\hat z_m)$ is defined to be (see \cite[Definition 3.9 and 3.13]{Baik-Liu21})
\begin{equation*}
\begin{split}
\mathscr{C}_Y(\hat z_1,\cdots,\hat z_m)
=\left[\prod_{\ell=2}^m \frac{\hat z_{\ell-1}}{\hat z_{\ell-1} -\hat z_\ell}\right] \cdot \mathcal{E}_Y(\hat z_1) \cdot \mathscr{A}(\hat z_1,\cdots,\hat z_m),
\end{split}
\end{equation*}
where $\mathcal{E}_Y(\hat z_1)$ is defined in Definition~\ref{def:energy}, $\mathscr{A} = \mathscr{A}_{1} \cdot \mathscr{A}_{2} \cdot \mathscr{A}_{3}$ with
\begin{equation*}
\begin{split}
\mathscr{A}_{1}(\hat z_1,\cdots,\hat z_m)&:= \prod_{\ell=1}^m \left[ \prod_{u\in\roots_{\hat z_\ell,\LL}} (-u)^{k_{\ell-1} - k_\ell} \prod_{v\in\roots_{\hat z_\ell,\RR}} (v+1)^{(a_{\ell-1}+k_{\ell-1}) - (a_\ell+k_\ell)} e^{(t_\ell -t_{\ell-1})v}\right],\\
\mathscr{A}_{2}(\hat z_1,\cdots,\hat z_m)&:= \prod_{\ell=1}^m \frac{ \prod_{u\in\roots_{\hat z_\ell,\LL}} (-u)^N \prod_{v\in\roots_{\hat z_\ell,\RR}} (v+1)^{L-N}}
{\prod_{(u,v)\in\roots_{\hat z_\ell,\LL} \times \roots_{\hat z_\ell,\RR}} (v-u)},\\
\mathscr{A}_{3}(\hat z_1,\cdots,\hat z_m)&:= \prod_{\ell=2}^m \frac{\prod_{(u,v)\in\roots_{\hat z_{\ell-1},\LL} \times \roots_{\hat z_\ell,\RR}} (v-u)}{ \prod_{u\in\roots_{\hat z_{\ell-1},\LL}} (-u)^N \prod_{v\in\roots_{\hat z_\ell,\RR}} (v+1)^{L-N}}.
\end{split}
\end{equation*}
In the definition of $\mathscr{A}_{1}$ above, we set $a_0=k_0=t_0=0$.

It is obvious that $\mathscr{A}_{i}$ functions are analytic for $(\hat z_1,\cdots,\hat z_m)\in (\disk_0(r_m))^m\subset (\disk_0(r_c))^m$ since locally each Bethe root $w\in\roots_{\hat z,\RR}\cup\roots_{\hat z,\LL}$ as a function of $\hat z$ is analytic  when $\hat z\in\disk_0(r_c)$.  Moreover, recall that all Bethe roots in $\roots_{\hat z,\LL}$ go to $-1$ and all Bethe roots in $\roots_{\hat z,\RR}$ go to $0$ when $\hat z\to 0$. We know that these $\mathscr{A}_{i}$ functions could be analytically extended to $(\disk (r_m))^m$, i.e., they are all well defined if some $\hat z_\ell=0$. By replacing all $u$'s by $-1$ and all $v$'s  by $0$ in the formulas, we have
\begin{equation*}
\mathscr{A}_{1}(0,\cdots,0)=\mathscr{A}_{2}(0,\cdots,0)=\mathscr{A}_{3}(0,\cdots,0)=1.
\end{equation*}
Also recall that $\mathcal{E}_Y(\hat z_1)$ is analytic within $|\hat z_1|<r_\mm$ with $\mathcal{E}_Y(0)=1$. We conclude that $\tilde{\mathscr{C}}_Y(z_0,\cdots,z_{m-1}) = \mathscr{C}_Y(z_0,z_0z_1,\cdots,z_0\cdots z_{m-1})$ is analytic for $(z_0,\cdots,z_{m-1})\in \disk(r_\mm)\times \disk^{m-1}$. Moreover,
\begin{equation*}
\tilde{\mathscr{C}}_Y(0,z_1,\cdots,z_{m-1}) = \prod_{\ell=1}^{m-1} \frac{1}{1-z_\ell}.
\end{equation*}
This finishes the proof of Lemma~\ref{lm:lemma1}.

\subsection{Function  $\mathscr{D}_Y(\hat z_1,\cdots,\hat z_m)$ and proof of Lemma~\ref{lm:lemma2} }
\label{sec:proof_lemma2}

Similar to $\mathcal{D}_Y$, the function $\mathscr{D}_Y(\hat z_1,\cdots,\hat z_m)$ has both Fredholm determinant and series expansion representations. We only use the series expansion representation of $\mathscr{D}_Y(\hat z_1,\cdots,\hat z_m)$ to prove Lemma~\ref{lm:lemma2}.

We remind the notation conventions $\Delta(W),\Delta(W;W')$ and $f(W)$ we introduced at the beginning of Section~\ref{sec:series_expansion}. 

The following definition is the series expansion representation  (by applying Proposition~\ref{prop:equivalence_Fredholm_series}) of Definition 3.10 of \cite{Baik-Liu21}. This series expansion formula for the case of step initial condition was introduced in \cite[Lemma 4.4]{Baik-Liu19}.
\begin{defn}
	\label{def:D_Y_series_period}
	We define\begin{equation*}
	\mathscr{D}_Y(\hat z_1,\cdots, \hat z_{m}):=\sum_{\boldsymbol{n}\in(\intZ_{\ge 0})^m}\frac{1}{(\boldsymbol{n}!)^2}\mathscr{D}_{\boldsymbol{n},Y}(\hat z_1,\cdots,\hat z_{m})
	\end{equation*}
	with $\boldsymbol{n}!=n_1!\cdots n_m!$ for $\boldsymbol{n}=(n_1,\cdots,n_m)$. Here
	\begin{equation*}
	\begin{split}
	&\mathscr{D}_{\boldsymbol{n},Y}(\hat z_1,\cdots,\hat z_{m})\\
	&
	=\sum_{\substack{U^{(\ell)}=(u_1^{(\ell)},\cdots,u_{n_\ell}^{(\ell)}) \in (\roots_{\hat z_\ell,\LL})^{n_\ell} \\ 
					  V^{(\ell)}=(v_1^{(\ell)},\cdots,v_{n_\ell}^{(\ell)})\in (\roots_{\hat z_\ell,\RR})^{n_\ell} \\ 
				     \ell=1,\cdots,m	}} \left[(-1)^{n_1(n_1+1)/2}\frac{\Delta(U^{(1)};V^{(1)})}{\Delta(U^{(1)})\Delta(V^{(1)})} \det\left[\frac{\iich_{Y}(v_i^{(1)},u_j^{(1)};\hat z_1)}{v_i^{(1)}-u_j^{(1)}}\right]_{i,j=1}^{n_1}\right]\\
	&\qquad\cdot \left[\prod_{\ell=1}^m\frac{(\Delta(U^{(\ell)}))^2(\Delta(V^{(\ell)}))^2}{(\Delta(U^{(\ell)};V^{(\ell)}))^2}f_\ell(U^{(\ell)}) f_\ell(V^{(\ell)}) \cdot \left(\mathfrak{h}(U^{(\ell)},z_\ell)\right)^2\left( \mathfrak{h}(V^{(\ell)},z_\ell)\right)^2  \cdot J(U^{(\ell)}) J(V^{(\ell)}) \right]\\
	&\qquad\cdot \left[\prod_{\ell=1}^{m-1} \frac{\Delta(U^{(\ell)};V^{(\ell+1)})\Delta(V^{(\ell)};U^{(\ell+1)})}{\Delta(U^{(\ell)};U^{(\ell+1)})\Delta(V^{(\ell)};V^{(\ell+1)})}\cdot \frac{\left(1-\hat z_{\ell+1}/\hat z_\ell\right)^{n_\ell}\left(1-\hat z_\ell/\hat z_{\ell+1}\right)^{n_{\ell+1}}}{\mathfrak{h}(U^{(\ell)};\hat z_{\ell+1})\mathfrak{h}(V^{(\ell)};\hat z_{\ell+1})\mathfrak{h}(U^{(\ell+1)};\hat z_{\ell})\mathfrak{h}(V^{(\ell+1)};\hat z_{\ell})}\right].
	\end{split}
	\end{equation*}
	The function $\iich_{Y}$ is defined in~\eqref{eq:def_ich_periodic}. The function $f_\ell$ is defined by	
	\begin{equation*}
%	\label{eq:fi}
	f_\ell(w):=\begin{dcases}
	\frac{F_\ell(w)}{ F_{\ell-1}(w)}, & w\in \Omega_\LL\setminus\{-1\},\\
	\frac{F_{\ell-1}(w)}{ F_\ell(w)}, & w\in \Omega_\RR\setminus\{0\},
	\end{dcases}
	\end{equation*}
	with
	\begin{equation*}
	F_\ell(w) := \begin{dcases}
	w^{k_\ell} (w+1)^{-a_\ell-k_\ell} e^{t_\ell w}, & \ell=1,\cdots,m,\\
	1, & \ell=0.
	\end{dcases}
	\end{equation*}
	This is consistent with~\eqref{eq:fi}. The function $\mathfrak{h}$ is defined by~\eqref{eq:def_frakh}. We also clarify that the notation
	\begin{equation*}
	\mathfrak{h}(W,\hat z):=\mathfrak{h}(w_1,\hat z)\cdots \mathfrak{h}(w_n,\hat z)
	\end{equation*}
	for any vector $W=(w_1,\cdots,w_n)$ and any complex number $|\hat z|<r_\mm$.  Finally, the function
	\begin{equation*}
	J(w)=\frac{w(w+1)}{Lw+N}=\frac{q(w)}{q'(w)}
	\end{equation*}
	is consistent with~\eqref{eq:J}.
\end{defn}

Since $\roots_{\hat z_\ell,\LL}$ and $\roots_{\hat z_\ell,\RR}$  have finite sizes  $L-N$ and $N$ respectively, the factor $\Delta(U^{(\ell)})\Delta(V^{(\ell)})=0$ if $n_\ell > \min\{N,L-N\}$. Thus $\mathscr{D}_{\boldsymbol{n},Y}(\hat z_1,\cdots,\hat z_m)=0$ if $|\boldsymbol{n}|=n_1+\cdots+n_m> m\cdot \min\{N,L-N\}$. This implies the summation in the definition of $\mathscr{D}_Y(\hat z_1,\cdots,\hat z_m)$ only involves finitely many nonzero terms.

\vspace{0.3cm}

Now we proceed to prove Lemma~\ref{lm:lemma2} by using Proposition~\ref{prop:Cauchy_sum_over_roots_ext}. We need to rewrite $\mathscr{D}_{\boldsymbol{n},Y}(\hat z_1,\cdots,\hat z_m)$ in the form of $\GG(z_0,\cdots,z_{m-1})$ defined in~\eqref{eq:GG1}. Here the variables $z_0,\cdots,z_{m-1}$ were introduced before as in~\eqref{eq:hat_z}, which also match~\eqref{eq:hatz} in the setting of Proposition~\ref{prop:Cauchy_sum_over_roots_ext}. They satisfy
\begin{equation*}
\hat z_\ell =\prod_{j=0}^{\ell-1}z_j,\qquad \ell=1,\cdots,m.
\end{equation*}We also rewrite $\iich_Y(v_i^{(1)},u_j^{(1)};\hat z_1)$ in the summand of $\mathscr{D}_{\boldsymbol{n},Y}(\hat z_1,\cdots,\hat z_m)$  by its analytical extension using Lemma~\ref{lm:iich_reformulation}. We write
\begin{equation}
\label{eq:tilde_scrD}
\begin{split}
\tilde{\mathscr{D}}_{\boldsymbol{n},Y}(z_0,\cdots,z_{m-1})&:=\mathscr{D}_{\boldsymbol{n},Y}(\hat z_1,\cdots,\hat z_m)\\
&=\left[\prod_{\ell=1}^{m-1} \left(1-z_\ell\right)^{n_{\ell}} \left(1-\frac{1}{z_\ell}\right)^{n_{\ell+1}} \right]\cdot \GG_{\boldsymbol{n},Y}\left(z_0,\cdots,z_{m-1}\right)
\end{split}
\end{equation}
with
\begin{equation*}
\begin{split}
&\GG_{\boldsymbol{n},Y}(z_0,\cdots,z_{m-1})\\
&:= \sum_{\substack{U^{(\ell)} \in \left(\roots_{\hat z_\ell,\LL}\right)^{n_\ell}\\
		V^{(\ell)} \in \left(\roots_{\hat z_\ell,\RR}\right)^{n_\ell}\\
		\ell=1,\cdots,m}} 
\left[\prod_{\ell=1}^m J(U^{(\ell)}) J(V^{(\ell)})\right]
H_Y\left(U^{(1)},\cdots,U^{(m)};V^{(1)},\cdots,V^{(m)};z_0,\cdots,z_{m-1}\right).
\end{split}
\end{equation*}
The function
\begin{equation*}
\begin{split}
&H_Y\left(U^{(1)},\cdots,U^{(m)};V^{(1)},\cdots,V^{(m)};z_0,\cdots,z_{m-1}\right)\\
&:=\left[\prod_{\ell=1}^{m-1} \Ch\left(U^{(\ell)};U^{(\ell+1)}\right)\Ch\left(V^{(\ell)};V^{(\ell+1)}\right)\right]
\cdot A_Y\left(U^{(1)},\cdots,U^{(m)}; V^{(1)},\cdots,V^{(m)};z_0,\cdots,z_{m-1}\right),
\end{split}
\end{equation*}
where $\Ch(W;W')=\frac{\Delta(W)\Delta(W')}{\Delta(W;W')}$ is the Cauchy-type factor defined in~\eqref{eq:def_cauchy}, and
\begin{equation}
\label{eq:AY}
\begin{split}
&A_Y\left(U^{(1)},\cdots,U^{(m)}; V^{(1)},\cdots,V^{(m)};z_0,\cdots,z_{m-1}\right)\\
&:=
\left[(-1)^{n_1(n_1+1)/2}\Delta(U^{(1)};V^{(1)}) \det\left[\left(\frac{u_j^{(1)}+1}{v_i^{(1)}+1}\right)^{y_N+N}\cdot\frac{\ich_{\boldsymbol{\lambda}(Y)}(v_i^{(1)},u_j^{(1)}) + \hh_Y(v_i^{(1)},u_j^{(1)};\hat z_1)}{v_i^{(1)}-u_j^{(1)}}\right]_{i,j=1}^{n_1}\right]\\
&\qquad \cdot \left[\Delta(U^{(m)})\Delta(V^{(m)})\right]
\cdot \left[\prod_{\ell=1}^m\frac{f_\ell(U^{\ell}) f_\ell(V^{(\ell)})}{\left(\Delta(U^{(\ell)};V^{(\ell)})\right)^2}\cdot \left(\mathfrak{h}(U^{(\ell)};\hat z_\ell) \mathfrak{h}(V^{(\ell)};\hat z_{\ell})\right)^2 \right]\\
&\qquad
\cdot \left[\prod_{\ell=1}^{m-1}\frac{\Delta(V^{(\ell)};U^{(\ell+1)}) \Delta(U^{(\ell)};V^{(\ell+1)})}{\mathfrak{h}(U^{(\ell+1)};\hat z_\ell) \mathfrak{h}(U^{(\ell)};\hat z_{\ell+1}) \mathfrak{h}(V^{(\ell+1)};\hat z_\ell) \mathfrak{h}(V^{(\ell)};\hat z_{\ell+1})}\right].
\end{split}
\end{equation}
Recall that $\mathfrak{h}(w;\hat z)$ is analytic and nonzero for $(w,\hat z)\in (\Omega_\LL\cup\Omega_\RR)\times\disk(r_\mm)$, see the discussions after the equation~\eqref{eq:def_frakh}. The function $h_Y(v,u;\hat z)$ is analytic for $(v,u,\hat z)\in\Omega_\RR\times \Omega_\LL\times\disk(r_\mm)$ by Lemma~\ref{lm:iich_reformulation}. We also recall that $f_\ell(w)$ is analytic for $w\in(\Omega_\LL\setminus\{-1\})\cup(\Omega_\RR\setminus\{0\})$. $v-u$ is nonzero for $(v,u)\in \Omega_\RR\times\Omega_\LL$ since $\Omega_\LL\cap\Omega_\RR=\emptyset$. $\ich_{\boldsymbol{\lambda}(Y)}(v,u)$ is a polynomial by Definition~\ref{def:ich}. Moreover, $\hat z_\ell$ depends on $z_0,\cdots,z_{m-1}$ analytically.  These facts imply that $A$ is analytic for each $u_{i_\ell}^{(\ell)}\in\Omega_{\LL}\setminus\{-1\}$, each $v_{i_\ell}^{(\ell)}\in\Omega_{\LL}\setminus\{0\}$, and each $z_0\in\disk(r_\mm)$ and $z_\ell\in\disk$.

Now we assume that $q(w)$ dominates $H_{Y}$ at $w=-1$ and $w=0$. The proof of this assumption will be postponed to the end of this section. With this assumption, Proposition~\ref{prop:Cauchy_sum_over_roots_ext} is applicable here. We obtain that $\GG_{\boldsymbol{n},Y}(z_0,\cdots,z_{m-1})$ is analytic for $(z_0,\cdots,z_{m-1})\in\disk(r_\mm)\times\disk^{m-1}$, and
\begin{equation*}
\begin{split}
&\GG_{\boldsymbol{n},Y}(0,z_1,\cdots,z_{m-1})\\
&=\prod_{\ell=2}^{m}  \prod_{{i_\ell}=1}^{n_{\ell}} \left[\frac{1}{1-z_{\ell}}\int_{\Sigma_{\ell,\LL}^\inn} \ddbarr{u_{i_\ell}^{(\ell)}}
-\frac{z_{\ell}}{1-z_{\ell}}\int_{\Sigma_{\ell,\LL}^\out} \ddbarr{u_{i_\ell}^{(\ell)}} \right]
\cdot \prod_{{i_1}=1}^{n_{1}} \int_{\Sigma_{1,\LL}} \ddbarr{u_{i_1}^{(1)}}\\
&\quad\prod_{\ell=2}^{m}  \prod_{{i_\ell}=1}^{n_{\ell}} \left[\frac{1}{1-z_{\ell}}\int_{\Sigma_{\ell,\RR}^\inn} \ddbarr{v_{i_\ell}^{(\ell)}}
-\frac{z_{\ell}}{1-z_{\ell}}\int_{\Sigma_{\ell,\RR}^\out} \ddbarr{v_{i_\ell}^{(\ell)}} \right]
\cdot \prod_{{i_1}=1}^{n_{1}} \int_{\Sigma_{1,\RR}} \ddbarr{v_{i_1}^{(1)}}\\
&
\quad H_Y(U^{(1)},\cdots,U^{(m)};V^{(1)},\cdots,V^{(m)};0,z_1,\cdots,z_{m-1}).
\end{split}
\end{equation*}
Here the contours are the same as in Proposition~\ref{prop:Cauchy_sum_over_roots_ext} and Section~\ref{sec:spaces_operators}. On the other hand, by using the following facts
$\mathfrak{h}(w;0)=1$, $h_Y(v,u;0)=0$, and $\hat z_\ell=0$ for all $\ell$ if $z_0=0$, we immediately have
\begin{equation*}
\begin{split}
&A_Y\left(U^{(1)},\cdots,U^{(m)}; V^{(1)},\cdots,V^{(m)};0,z_1\cdots,z_{m-1}\right)\\
&=
\left[(-1)^{n_1(n_1+1)/2}\Delta(U^{(1)};V^{(1)}) \det\left[\left(\frac{u_j^{(1)}+1}{v_i^{(1)}+1}\right)^{y_N+N}\cdot\frac{\ich_{\boldsymbol{\lambda}(Y)}(v_i^{(1)},u_j^{(1)}) }{v_i^{(1)}-u_j^{(1)}}\right]_{i,j=1}^{n_1}\right]\\
&\quad \cdot \left[\Delta(U^{(m)})\Delta(V^{(m)})\right]
\cdot \left[\prod_{\ell=1}^m\frac{f_\ell(U^{\ell}) f_\ell(V^{(\ell)})}{\left(\Delta(U^{(\ell)};V^{(\ell)})^2\right)}  \right]
\cdot \left[\prod_{\ell=1}^{m-1}\Delta(V^{(\ell)};U^{(\ell+1)}) \Delta(U^{(\ell)};V^{(\ell+1)})\right],
\end{split}
\end{equation*}
and, by inserting $\Kess_Y$ defined in Definition~\ref{def:KY_ess},
\begin{equation*}
\begin{split}
&H_Y\left(U^{(1)},\cdots,U^{(m)};V^{(1)},\cdots,V^{(m)};0,z_1,\cdots,z_{m-1}\right)\\
&= \left[(-1)^{n_1(n_1+1)/2}\frac{\Delta(U^{(1)};V^{(1)})}{\Delta(U^{(1)})\Delta(V^{(1)})} \det\left[\Kess_Y(v_i^{(1)},u_j^{(1)})\right]_{i,j=1}^{n_1}\right]\\
&\quad\cdot \left[\prod_{\ell=1}^m\frac{(\Delta(U^{(\ell)}))^2(\Delta(V^{(\ell)}))^2}{(\Delta(U^{(\ell)};V^{(\ell)}))^2}f_\ell(U^{(\ell)}) f_\ell(V^{(\ell)})\right]\cdot \left[\prod_{\ell=1}^{m-1} \frac{\Delta(U^{(\ell)};V^{(\ell+1)})\Delta(V^{(\ell)};U^{(\ell+1)})}{\Delta(U^{(\ell)};U^{(\ell+1)})\Delta(V^{(\ell)};V^{(\ell+1)})}\right].
\end{split}
\end{equation*}

\vspace{0.2cm}
Now we come back to~\eqref{eq:tilde_scrD}. By the above results of $\mathcal{G}_{\boldsymbol{n},Y}$, we know that $\tilde{\mathscr{D}}_{\boldsymbol{n},Y}(z_0,\cdots,z_{m-1})$ is analytic in $\disk(r_\mm)\times\disk_0^{m-1}$, with
\begin{equation*}
\tilde{\mathscr{D}}_{\boldsymbol{n},Y}(0,z_1,\cdots,z_{m-1})=\mathcal{D}_{\boldsymbol{n},Y}(z_1,\cdots,z_{m-1}).
\end{equation*}
Here $\mathcal{D}_{\boldsymbol{n},Y}$ is defined in~\eqref{eq:D_nY}. Recall the definition of $\tilde{\mathscr{D}}_Y(z_0,\cdots,z_{m-1})$ after the equation~\eqref{eq:aux_002}, $$\tilde{\mathscr{D}}_Y(z_0,\cdots,z_{m-1})=\mathscr{D}_Y(\hat z_1,\cdots, \hat z_{m})=\sum_{\boldsymbol{n}\in(\intZ_{\ge 0})^m}\frac{1}{(\boldsymbol{n}!)^2}\mathscr{D}_{\boldsymbol{n},Y}(\hat z_1,\cdots,\hat z_{m})=\sum_{\boldsymbol{n}\in(\intZ_{\ge 0})^m}\frac{1}{(\boldsymbol{n}!)^2}\tilde{\mathscr{D}}_{\boldsymbol{n},Y}(z_0,\cdots,z_{m-1}).$$ We immediately obtain that $\tilde{\mathscr{D}}_Y(z_0,\cdots,z_{m-1})$ is analytic in $\disk(r_\mm)\times\disk_0^{m-1}$ with 
$$\tilde{\mathscr{D}}_Y(0,z_1\cdots,z_{m-1})=\sum_{\boldsymbol{n}\in(\intZ_{\ge 0})^m}\frac{1}{(\boldsymbol{n}!)^2}\mathcal{D}_{\boldsymbol{n},Y}(z_1,\cdots,z_{m-1})=\mathcal{D}_Y(z_1,\cdots,z_{m-1}).$$ This finishes the proof of Lemma~\ref{lm:lemma2}.

\vspace{0.4cm}

It remains to prove the assumption that $q(w)$ dominates $H_{Y}$ at $w=-1$ and $w=0$. The two cases for $w=-1$ and $w=0$ are similar. Hence we only provide the proof for $w=-1$ and omit the other case.

For $w=-1\in\Omega_\LL$, we need to verify that along any Cauchy chain $u_{j_s}^{(s)},u_{j_{s+1}}^{(s+1)},\cdots,u_{j_{s'}}^{(s')}$, 
\begin{equation}
\label{eq:check_dominating}
q(w)\left.A_Y\left(U^{(1)},\cdots,U^{(m)}; V^{(1)},\cdots,V^{(m)};z_0,\cdots,z_{m-1}\right)\right|_{u_{j_s}^{(s)}=u_{j_{s+1}}^{(s+1)}=\cdots=u_{j_{s'}}^{(s')}=w}
\end{equation}
is analytic at $w=-1$, when all other coordinates of $u_{i_\ell}^{(\ell)}$'s are fixed in $\Omega_\LL\setminus\{-1\}$, $v_{i_\ell}^{(\ell)}$'s are fixed in $\Omega_\RR\setminus\{0\}$, and $(z_0,\cdots,z_{m-1})\in\disk(r_\mm)\times \disk^{m-1}$. Here $1\le s\le s'\le m$ and $j_s,\cdots,j_{s'}$ are positive numbers less than
$n_s,\cdots,n_{s'}$ respectively.

By the formula of $A_Y$ in~\eqref{eq:AY}, we could find that all the singularities for $u_{i_\ell}^{(\ell)}=-1$ are coming from the function $f_\ell(u_{i_\ell}^{(\ell)})= (u_{i_\ell}^{(\ell)})^{k_\ell-k_{\ell-1}} (u_{i_\ell}^{(\ell)}+1)^{(a_{\ell-1}+k_{\ell-1})-(a_{\ell}+k_\ell)}e^{(t_\ell-t_{\ell-1})u_{i_\ell}^{(\ell)}}$ for $\ell\ge 1$, and a possible extra singularity from $(u_{i_\ell}^{(\ell)}+1)^{y_N+N}$ factor when $\ell=1$. On the other hand, $q(w)=w^N(w+1)^{L-N}$ has the factor $(w+1)^{L-N}$. Thus the order of $(w+1)$ in~\eqref{eq:check_dominating} is at least $(L-N) + (a_{s-1}+k_{s-1}) - (a_{s'}+k_{s'})$ for $s > 1$ and $(L-N)  - (a_{s'}+k_{s'}) + y_N+N$ for $s = 1$. Both numbers are non-negative by~\eqref{eq:assumption_a} and the assumption $L\ge \max \{a_1+k_1,\cdots,a_m+k_m\} -y_N$. Thus~\eqref{eq:check_dominating} is analytic at $w=-1$ when other coordinates are fixed.

\section{Proof of Proposition~\ref{prop:Cauchy_sum_over_roots}}
\label{sec:proof_Cauchy_sum}

In this section, we prove Proposition~\ref{prop:Cauchy_sum_over_roots} by induction on $\sum_{\ell=1}^{m-1}|I^{(\ell)}\times J^{(\ell+1)}|$, which is also the total degree of denominators in the Cauchy-type factors $\prod_{\ell=1}^{m-1}\Ch\left(W^{(\ell)}_{I^{(\ell)}}; W^{(\ell+1)}_{J^{(\ell+1)}}\right)$.

\subsection{Base step: $\sum_{\ell=1}^{m-1}|I^{(\ell)}\times J^{(\ell+1)}|=0$}
\label{sec:basic_step}

If $\sum_{\ell=1}^{m-1}|I^{(\ell)}\times J^{(\ell+1)}|=0$, then we have either $I^{(\ell)}=\emptyset$ or $J^{(\ell+1)}=\emptyset$ for each $\ell$. Thus $\Ch\left(W^{(\ell)}_{I^{(\ell)}}; W^{(\ell+1)}_{J^{(\ell+1)}}\right)=\Delta\left(W^{(\ell+1)}_{J^{(\ell+1)}}\right)$ or $\Delta\left(W^{(\ell)}_{J^{(\ell)}}\right)$, which are polynomials of the coordinates. Thus without loss of generality (up to modifying the function $A$), we only consider the case when $\Ch\left(W^{(\ell)}_{I^{(\ell)}}; W^{(\ell+1)}_{J^{(\ell+1)}}\right)=1$ for each $\ell$, and
\begin{equation*}
H(W^{(1)},\cdots,W^{(m)};z_0,\cdots,z_{m-1})=  A(W^{(1)},\cdots,W^{(m)};z_0,\cdots,z_{m-1}).
\end{equation*}

Now we reformulate $\GG(z_0,z_1,\cdots,z_m)$ in~\eqref{eq:GG}, the summation of $A \cdot \prod J(W^{(\ell)})$ over all $W^{(\ell)}\in \roots_{\hat z_\ell}^{n_\ell}$. Recall that $J(w):=q(w)/q'(w)$, and $\roots_{\hat z_\ell}$ is defined in~\eqref{eq:roots_hat} which are the roots of $q(w)=\hat z_\ell$ within $\Omega$. 

For each $\hat z_{\ell}\in\disk_0(r_\mm)$, we have the following roots summation formula
\begin{equation}
\label{eq:roots_sum_single}
\sum_{w\in\roots_{\hat z_\ell}} J(w)f(w)= \int_{\Gamma_{|\hat z_\ell|+\epsilon}} \frac{q(w)}{q(w)-\hat z_{\ell}} f(w) \ddbarr{w} -\int_{\Gamma_{|\hat z_\ell|-\epsilon}}\frac{q(w)}{q(w)-\hat z_{\ell}} f(w) \ddbarr{w}
\end{equation}
for any $f$ which is analytic within a neighborhood of $\Gamma_{|\hat z_\ell|}$, and $\epsilon>0$ is a sufficiently small positive number. Recall that $\Gamma_{r}=\{w\in\Omega: |q(w)| = r\}$ is  the contour defined in~\eqref{eq:Gamma_r}. The above formula~\eqref{eq:roots_sum_single} follows from evaluating the residues of $\frac{q(w)}{q(w)-\hat z_\ell} f(w)$ when deforming the contours from $\Gamma_{|\hat z_\ell|+\epsilon}$ to $\Gamma_{|\hat z_\ell|-\epsilon}$.

By applying~\eqref{eq:roots_sum_single} for all the coordinates of $W^{(\ell)}$'s, we obtain
\begin{equation*}
\begin{split}
&\GG(z_0,\cdots,z_{m-1})\\
&= \sum_{\substack{W^{(1)} \in \roots^{n_1}_{\hat z_1} }}\cdots\sum_{\substack{W^{(m)} \in \roots^{n_m}_{\hat z_m} }} \left[\prod_{\ell=1}^mJ(W^{(\ell)})\right]\cdot H(W^{(1)},\cdots,W^{(m)};z_0,\cdots,z_{m-1})\\
&=\prod_{\ell=1}^m\prod_{i_\ell=1}^{n_\ell}\left[ \int_{\Gamma_{|\hat z_\ell|+\epsilon}} \frac{q(w_{i_\ell}^{(\ell)})}{q(w_{i_\ell}^{(\ell)})-\hat z_{\ell}} \ddbarr{w_{i_\ell}^{(\ell)}}-\int_{\Gamma_{|\hat z_\ell|-\epsilon}} \frac{q(w_{i_\ell}^{(\ell)})}{q(w_{i_\ell}^{(\ell)})-\hat z_{\ell}} \ddbarr{w_{i_\ell}^{(\ell)}}\right] H(W^{(1)},\cdots,W^{(m)};z_0,\cdots,z_{m-1}).
\end{split}
\end{equation*}

Now we apply the assumption that $q(w)$ dominates $H$. It implies that the integrand is analytic for $w_{i_\ell}^{\ell}$ within the region bounded by $\Gamma_{|\hat z_\ell|-\epsilon}$ when all other coordinates are fixed. Thus the integral along $\Gamma_{|\hat z_\ell|-\epsilon}$ with respect to $w_{i_\ell}^{(\ell)}$ vanishes. We have
\begin{equation}
\label{eq:GG0_base}
\begin{split}
\GG(z_0,\cdots,z_{m-1})&=\prod_{\ell=1}^m\prod_{i_\ell=1}^{n_\ell}\left[ \int_{\Gamma_{|\hat z_\ell|+\epsilon}} \frac{q(w_{i_\ell}^{(\ell)})}{q(w_{i_\ell}^{(\ell)})-\hat z_{\ell}} \ddbarr{w_{i_\ell}^{(\ell)}}\right]H(W^{(1)},\cdots,W^{(m)};z_0,\cdots,z_{m-1})\\
&=\prod_{\ell=1}^m\prod_{i_\ell=1}^{n_\ell}\left[ \int_{\Gamma_{r_\mm -\epsilon'}} \frac{q(w_{i_\ell}^{(\ell)})}{q(w_{i_\ell}^{(\ell)})-\hat z_{\ell}} \ddbarr{w_{i_\ell}^{(\ell)}}\right]H(W^{(1)},\cdots,W^{(m)};z_0,\cdots,z_{m-1}),
\end{split}
\end{equation}
where we deformed the contours $\Gamma_{|\hat z_\ell|+\epsilon}$ to $\Gamma_{r_\mm -\epsilon'}$ for any sufficiently small $\epsilon'>0$ without encountering any pole. Recall that $\hat z_\ell=z_0z_1\cdots z_{\ell-1}$ and the factor $\frac{q(w_{i_\ell}^{(\ell)})}{q(w_{i_\ell}^{(\ell)})-\hat z_{\ell}}$ is analytic in $\hat z_\ell$ for $|\hat z_\ell|<r_\mm -\epsilon'$. Moreover, $H$ is analytic in $z_\ell$'s for given $w_{i_\ell}^{(\ell)}$'s on the contours of integration. Thus the formula~\eqref{eq:GG0_base} for $\GG(z_0,\cdots,z_{m-1})$ is analytic when $|z_0|<r_\mm-\epsilon'$ and $|z_1|<1,\cdots,|z_{m-1}|<1$. We could also drop this $\epsilon'$ since it could be chosen arbitrarily small. This proves that $\GG(z_0,\cdots,z_{m-1})$ is can be analytically extended to $\disk(r_\mm)\times \disk^{m-1}$.

Now we evaluate $\GG(0,z_1,\cdots,z_{m-1})$ in~\eqref{eq:GG0_base}. This gives all $\hat z_\ell=0$ by the definition of $\hat z_\ell$ in~\eqref{eq:hatz}. Hence
\begin{equation*}
\GG(0,z_1,\cdots,z_{m-1})= \prod_{\ell=1}^m \prod_{i_\ell=1}^{n_\ell}\left[ \int_{\Gamma_{r_\mm -\epsilon'}}  \ddbarr{w_{i_\ell}^{(\ell)}}\right]H(W^{(1)},\cdots,W^{(m)};0,z_1,\cdots,z_{m-1}).
\end{equation*}
Since the integrand is analytic for each $w_{i_\ell}^{(\ell)} \in \Omega\setminus\{0\}$, we could rewrite
\begin{equation*}
\int_{\Gamma_{r_\mm -\epsilon'}}  \ddbarr{w_{i_\ell}^{(\ell)}}=\frac{1}{1-z_{\ell-1}}\int_{\Sigma_{\ell}^\inn} \ddbarr{w_{i_\ell}^{(\ell)}}
-\frac{z_{\ell-1}}{1-z_{\ell-1}}\int_{\Sigma_{\ell}^\out} \ddbarr{w_{i_\ell}^{(\ell)}} 
\end{equation*}
for each $\ell=2,\cdots,m$ and 
\begin{equation*}
\int_{\Gamma_{r_\mm -\epsilon'}}  \ddbarr{w_{i_1}^{(1)}}= \int_{\Sigma_{1}} \ddbarr{w_{i_1}^{(1)}},
\end{equation*}where we omit the integrand $H$ in the above formulas, and the contours $\Sigma_{\ell}^\out$, $\Sigma_{\ell}^\inn$ for $2\le\ell\le m$, and $\Sigma_{1}$ are described in the proposition. They are simple closed contours within $\Omega$ and enclosing $0$. 

Although we did not encounter any poles from the integrand in the above integral decomposition, the idea of decomposing the integrals into two parts with inner and outer contours comes from managing the possible poles arising from the Cauchy-type factors. In the case when such poles are present, we need to keep track of the locations of the contours and treat the outer and inner contours separately.

After the above decomposition we immediately obtain the formula~\eqref{eq:def_G0} for $\GG(0,z_1,\cdots,z_{m-1})$. This finishes the base step of induction.

\subsection{Inductive step}

Now we assume the proposition holds for the cases of $\sum_{\ell=1}^{m-1}|I^{(\ell)}\times J^{(\ell+1)}|\le S-1$, and consider the case when $\sum_{\ell=1}^{m-1}|I^{(\ell)}\times J^{(\ell+1)}|=S \ge 1$.

Since $S\ge 1$, there exists a largest $s$, $2\le s\le m$, such that $I^{(s-1)}\times J^{(s)}$ is nonempty. Without loss of generality (up to relabeling the coordinates of $W^{(s-1)}, W^{(s)}$), we assume that
\begin{equation*}
I^{(s-1)}=\{1,\cdots,a\},\qquad J^{(s)}=\{1,\cdots,b\} 
\end{equation*}
for some $1\le a\le {n_{s-1}}$ and $1\le b\le n_{s}$. Later we will consider the sum over $w_1^{(s)}\in\roots_{\hat z_s}$ so it is convenient to introduce the notation $\hat W^{(s)} = (w_2^{(s)},\cdots, w_{n_s}^{(s)})$, and more generally $\hat W_U^{(s)}=W_{U\setminus\{1\}}^{(s)}$ the vector obtained by removing $w_1^{(s)}$, if it appears, from $W_U^{(s)}$ for any set $U\subseteq\{1,\cdots,n_s\}$. Thus
\begin{equation*}
\hat W_{I^{(s)}}^{(s)} = W_{I^{(s)}\setminus\{1\}}^{(s)}, \quad \hat W_{J^{(s)}}^{(s)}=W_{J^{(s)}\setminus\{1\}}^{(s)}=(w_2^{(s)},\cdots,w_b^{(s)}).
\end{equation*}

By moving all the factors involving $w_1^{(s)}$ out from the Cauchy-type product, and using the assumption of $s$ that it is the largest index satisfying $I^{(s-1)}\times J^{(s)}\ne\emptyset$, we have
\begin{equation}
\label{eq:ch_reformulation}
\begin{split}
&\prod_{\ell=1}^{m-1} \Ch\left(W^{(\ell)}_{I^{(\ell)}}; W^{(\ell+1)}_{J^{(\ell+1)}}\right)
=\frac{h(w_1^{(s)})\cdot \prod_{j=2}^b (w_j^{(s)} -w_1^{(s)})}{\prod_{i\in I^{(s-1)}} (w_i^{(s-1)} -w_{1}^{(s)})}\\
&\qquad\cdot \Ch\left(W^{(1)}_{I^{(1)}}; W^{(2)}_{J^{(2)}} \right) \cdots \Ch\left(W^{(s-1)}_{I^{(s-1)}}; \hat W^{(s)}_{J^{(s)}} \right)\Ch\left(\hat W^{(s)}_{I^{(s)}}; W^{(s+1)}_{J^{(s+1)}} \right)\cdots \Ch\left(W^{(m-1)}_{I^{(m-1)}}; W^{(m)}_{J^{(m)}} \right),
\end{split}
\end{equation}
where $h$ is a polynomial defined by
\begin{equation*}
h(w_1^{(s)})=\begin{dcases}
\prod_{i\in I^{(s)}\setminus\{1\}} (w_i^{(s)}-w_1^{(s)}), &\text{ if } 1\in I^{(s)},\\
1,&\text{if $1$ is not in }I^{(s)}.
\end{dcases}
\end{equation*}
We remark that there is no denominator factor coming from $\Ch\left(W^{(s)}_{I^{(s)}};  W^{(s+1)}_{J^{(s+1)}} \right)$ since $I^{(s)}\times J^{(s+1)}=\emptyset$ by our choice of $s$. This implies $\Ch\left(W^{(s)}_{I^{(s)}}; W^{(s+1)}_{J^{(s+1)}} \right)=h(w_1^{(s)})\cdot \Ch\left(\hat W^{(s)}_{I^{(s)}};  W^{(s+1)}_{J^{(s+1)}} \right)$ and further~\eqref{eq:ch_reformulation}. We also remark that~\eqref{eq:ch_reformulation} does not contain any pole of $w_1^{(s)}$ within the contour $\Gamma_{|\hat z_s|}=\{w: |q(w)|=|\hat z_s|\}$ since all the points $w_i^{(s-1)}$ are outside this contour by the assumption that $|\hat z_{s-1}|>|\hat z_s|$.

These notations above and formula~\eqref{eq:ch_reformulation} will be used later in this section.

\subsubsection{Reformulating $\GG$}

We first need to reformulate $\GG$ such that the resulting formula is suitable for induction hypothesis. This could be done by evaluating the summation of $w_{1}^{(s)} \in \roots_{\hat z_s}$. Recall that
\begin{equation*}
\GG(z_0,\cdots,z_{m-1})=\sum_{\substack{w_{i_\ell}^{(\ell)} \in \roots_{\hat z_\ell}\\
		1\le i_\ell \le n_\ell\\
		 1\le \ell \le m   
}} \left[\prod_{\substack{1\le i_\ell \le n_\ell\\ 1\le \ell \le m
	  }}J(w_{i_\ell}^{(\ell)})\right]\cdot  H(W^{(1)},\cdots,W^{(m)};z_0,\cdots,z_{m-1})
\end{equation*}
with
\begin{equation*}
H(W^{(1)},\cdots,W^{(m)};z_0,\cdots,z_{m-1})=\left[\prod_{\ell=1}^{m-1} \Ch\left(W^{(\ell)}_{I^{(\ell)}}; W^{(\ell+1)}_{J^{(\ell+1)}}\right)\right]\cdot   A(W^{(1)},\cdots,W^{(m)};z_0,\cdots,z_{m-1})
\end{equation*}
for some function $A$ which is analytic for each $w_{i_\ell}^{(\ell)}\in\Omega\setminus\{0\}$ and $(z_0,\cdots,z_{m-1})\in\disk(r_\mm)\times\disk^{m-1}$. This assumption, together with the fact that $\prod_{\ell=1}^{m-1}\Ch\left(W^{(\ell)}_{I^{(\ell)}}; W^{(\ell+1)}_{J^{(\ell+1)}}\right)$ does not have any pole for $w_1^{(s)}$ inside $\Gamma_{|\hat z_s|}$, imply that $H(W^{(1)},\cdots,W^{(m)};z_0,\cdots,z_{m-1})$ is analytic for $w_1^{(s)}$ inside the contour $\Gamma_{|\hat z_s|}$ except for the point $0$.

By applying the formula~\eqref{eq:roots_sum_single} for $w_{1}^{(s)} \in \roots_{\hat z_s}$, we have
\begin{equation*}
\begin{split}
&\sum_{w_{1}^{(s)}\in\roots_{\hat z_s}}J(w_1^{(s)})H\left(W^{(1)},\cdots,W^{(m)};z_0,\cdots,z_{m-1}\right)\\
&=\int_{\Gamma_{|\hat z_s|+\epsilon}} \frac{q(w_{1}^{(s)})}{q(w_{1}^{(s)})-\hat z_{s}} H\left(W^{(1)},\cdots,W^{(m)};z_0,\cdots,z_{m-1}\right)\ddbarr{w_{1}^{(s)}}\\
&+\int_{\Gamma_{|\hat z_s|-\epsilon}} \frac{-q(w_{1}^{(s)})}{q(w_{1}^{(s)})-\hat z_{s}} H\left(W^{(1)},\cdots,W^{(m)};z_0,\cdots,z_{m-1}\right)\ddbarr{w_{1}^{(s)}}
\end{split}
\end{equation*}
for some sufficiently small $\epsilon>0$. By the discussions above, we could deform the second contour sufficiently close to $0$. Note that we assume $q(w_1^{(s)})$ dominates $H$ at $w_1^{(s)}=0$ in the proposition setting. Therefore the second contour integral vanishes and only the first one survives. We could further deform the first contour to be sufficiently close to $\Gamma_{r_\mm}$. Such a contour deformation gives the residues of $w_1^{(s)}=w_{i}^{(s-1)}$ for $i=1,\cdots,a$. Therefore we have
\begin{equation}
\label{eq:H_split}
\begin{split}
&\sum_{w_{1}^{(s)}\in\roots_{\hat z_s}}J(w_1^{(s)})H\left(W^{(1)},\cdots,W^{(m)};z_0,\cdots,z_{m-1}\right)\\
&= H_1\left(W^{(1)},\cdots,\hat W^{(s)},\cdots,W^{(m)};z_0,\cdots,z_{m-1}\right)\\
&\quad+\sum_{k=1}^aH_{2,k}\left(W^{(1)},\cdots,\hat W^{(s)},\cdots,W^{(m)};z_0,\cdots,z_{m-1}\right),
\end{split}
\end{equation}
where
\begin{equation}
\label{eq:hatH1}
\begin{split}
& H_1 \left(W^{(1)},\cdots,\hat W^{(s)},\cdots,W^{(m)};z_0,\cdots,z_{m-1}\right)\\
&\quad =\int_{\Gamma_{r_\mm-0_+}} \frac{q(w_{1}^{(s)})}{q(w_{1}^{(s)})-\hat z_{s}} H\left(W^{(1)},\cdots,W^{(m)};z_0,\cdots,z_{m-1}\right)\ddbarr{w_{1}^{(s)}}
\end{split}
\end{equation}
with  $r_{\mm-0+}$ denotes a number sufficiently close to $r_\mm$ from below, and
\begin{equation*}
\begin{split}
&H_{2,k}\left(W^{(1)},\cdots,\hat W^{(s)},\cdots,W^{(m)};z_0,\cdots,z_{m-1}\right)\\
&=\mathrm{Res}\left(\frac{-q(w_{1}^{(s)})}{q(w_{1}^{(s)})-\hat z_{s}} H\left(W^{(1)},\cdots,W^{(m)};z_0,\cdots,z_{m-1}\right),w_{1}^{(s)}=w_k^{(s-1)}\right)
\end{split}
\end{equation*}
for $1\le k\le a$. Recall that the notation $\hat W^{(s)} = (w_2^{(s)},\cdots,w_{a}^{(s)})$.

We could further evaluate $H_{2,k}$ more explicitly by using the formula~\eqref{eq:ch_reformulation} and write
\begin{equation}
\label{eq:H2k}
\begin{split}
&H_{2,k}\left(W^{(1)},\cdots,\hat W^{(s)},\cdots,W^{(m)};z_0,\cdots,z_{m-1}\right)\\
&=\Ch\left(W^{(1)}_{I^{(1)}}; W^{(2)}_{J^{(2)}} \right) \cdots \Ch\left(W^{(s-1)}_{I^{(s-1)}\setminus\{k\}}; \hat W^{(s)}_{J^{(s)}} \right)\Ch\left(\hat W^{(s)}_{I^{(s)}}; W^{(s+1)}_{J^{(s+1)}} \right)\cdots \Ch\left(W^{(m-1)}_{I^{(m-1)}}; W^{(m)}_{J^{(m)}} \right)\\
&\qquad \cdot (-1)^{k+b}\cdot\frac{1}{1-z_{s-1}} \cdot h\left(w_k^{(s-1)}\right)\cdot \left.A(W^{(1)},\cdots,W^{(m)};z_0,\cdots,z_{m-1})\right|_{w_{1}^{(s)}=w_k^{(s-1)}}.
\end{split}
\end{equation}
Here the factor $(-1)^{b+k}$ comes from evaluating
\begin{equation*}
\left.\frac{\prod_{j=2}^b(w_j^{(s)}-w_1^{(s)})\cdot\Delta\left(W_{I^{(s-1)}}^{(s-1)}\right)}{\prod_{i\ne k}\left(w_i^{(s-1)}-w_1^{(s)}\right)\prod_{j=2}^b\left(w_k^{(s-1)}-w_j^{(s)}\right)\Delta\left(W_{I^{(s-1)}\setminus\{k\}}^{(s-1)}\right)}\right|_{w_{1}^{(s)}=w_k^{(s-1)}},
\end{equation*}
and $\frac{1}{1-z_{s-1}}$ comes from
\begin{equation*}
\frac{1}{1-z_{s-1}}=\frac{\hat z_{s-1}}{\hat z_{s-1}-\hat z_s}= \left. \frac{q(w_{1}^{(s)})}{q(w_{1}^{(s)})-\hat z_{s}}\right|_{w_{1}^{(s)}=w_k^{(s-1)}}.
\end{equation*}

\vspace{0.3cm}
Now we insert the formula~\eqref{eq:H_split} to the definition of $\GG$ and write
\begin{equation*}
\GG(z_0,\cdots,z_{m-1})=\GG_1(z_0,\cdots,z_m)+\sum_{k=1}^a\GG_{2,k}(z_0,\cdots,z_{m-1})
\end{equation*}
with
\begin{equation*}
%\label{eq:GG2}
\begin{split}
&\GG_1(z_0,\cdots,z_{m-1})\\
&=\sum_{\substack{w_{i_\ell}^{(\ell)} \in \roots_{\hat z_\ell}\\
		1\le i_\ell \le n_\ell, 1\le \ell \le m\\
		(i_\ell,\ell)\ne (1,s)      
}} \left[\prod_{\substack{1\le i_\ell \le n_\ell, 1\le \ell \le m\\
		(i_\ell,\ell)\ne (1,s)  }}J(w_{i_\ell}^{(\ell)})\right]\cdot  H_1(W^{(1)},\cdots,\hat W^{(s)},\cdots,W^{(m)};z_0,\cdots,z_{m-1})
\end{split}
\end{equation*}
and
\begin{equation*}
\begin{split}
&\GG_{2,k}(z_0,\cdots,z_{m-1})\\
&=\sum_{\substack{w_{i_\ell}^{(\ell)} \in \roots_{\hat z_\ell}\\
		1\le i_\ell \le n_\ell, 1\le \ell \le m\\
		(i_\ell,\ell)\ne (1,s)      
}} \left[\prod_{\substack{1\le i_\ell \le n_\ell, 1\le \ell \le m\\
		(i_\ell,\ell)\ne (1,s)  }}J(w_{i_\ell}^{(\ell)})\right]\cdot  H_{2,k}(W^{(1)},\cdots,\hat W^{(s)},\cdots,W^{(m)};z_0,\cdots,z_{m-1})
\end{split}
\end{equation*}
for $1\le k\le a$. We will show that both $\GG_1$ and $\GG_{2,k}$ are both suitable for induction hypothesis. We will verify these in Sections~\ref{sec:analyzing_G1} and~\ref{sec:analyzing_G2}.

\subsubsection{Analyzing $\GG_1$ by using induction hypothesis}
\label{sec:analyzing_G1}

 Now we claim that $\GG_1$ is suitable for induction hypothesis. We need to check all the assumptions of Proposition~\ref{prop:Cauchy_sum_over_roots} with different settings and smaller $\sum_{\ell=1}^{m-1} |I^{(\ell)}\times J^{(\ell+1)}|$. We consider the following modification of the settings in Proposition~\ref{prop:Cauchy_sum_over_roots}:
\begin{enumerate}[(1)]
	\item  $\Omega\to\tilde\Omega:=\{w\in\Omega: |q(w)|<r_\mm\}$,
	\item $n_s\to n_s-1$,
	\item $W^{(s)}\to \hat W^{(s)}=\{w_2^{(s)},\cdots,w_{n_s}^{(s)}\}$,
	\item $I^{(s)} \to I^{(s)}\setminus \{1\}$, $J^{(s)}\to J^{(s)}\setminus\{1\}$,
	\item $A(W^{(1)},\cdots,W^{(m)};z_0,\cdots,z_{m-1}) \to A_1(W^{(1)},\cdots,\hat W^{(s)},\cdots,W^{(m)};z_0,\cdots,z_{m-1})$,
	\item $H(W^{(1)},\cdots,W^{(m)};z_0,\cdots,z_{m-1}) \to H_1(W^{(1)},\cdots,\hat W^{(s)},\cdots,W^{(m)};z_0,\cdots,z_{m-1})$,
\end{enumerate}
where
\begin{equation*}
\begin{split}
&A_1(W^{(1)},\cdots,\hat W^{(s)},\cdots,W^{(m)};z_0,\cdots,z_{m-1})\\
&:=\int_{\Gamma_{r_\mm -0_+}} \frac{q(w_{1}^{(s)})}{q(w_{1}^{(s)})-\hat z_{s}}\cdot \frac{h(w_1^{(s)})\cdot \prod_{j=2}^b (w_j^{(s)} -w_1^{(s)})}{\prod_{i\in I^{(s-1)}} (w_i^{(s-1)}-w_1^{(s)} )} \cdot A(W^{(1)},\cdots,W^{(m)};z_0,\cdots,z_{m-1}) \ddbarr{w_1^{(s)}}.
\end{split}
\end{equation*}
Note that by using the formulas~\eqref{eq:ch_reformulation},~\eqref{eq:hatH1} and the definition of $H$ function, we have
\begin{equation*}
\begin{split}
H_1 
&=\Ch\left(W^{(1)}_{I^{(1)}}; W^{(2)}_{J^{(2)}} \right) \cdots \Ch\left(W^{(s-1)}_{I^{(s-1)}}; \hat W^{(s)}_{J^{(s)}} \right)\Ch\left(\hat W^{(s)}_{I^{(s)}}; W^{(s+1)}_{J^{(s+1)}} \right)\cdots \Ch\left(W^{(m-1)}_{I^{(m-1)}}; W^{(m)}_{J^{(m)}} \right)\\
&\quad \cdot A_1(W^{(1)},\cdots,\hat W^{(s)},\cdots,W^{(m)};z_0,\cdots,z_{m-1}).
\end{split}
\end{equation*}
Thus $H_1$ has the same form of~\eqref{eq:H}. Considering the facts that $|\hat z_s|=|z_0\cdots z_{s-1}|<r_\mm$ and that $\hat z_\ell$ depends on $z_0,\cdots,z_{m-1}$ analytically, and using the assumption that $A$ is analytic for each $z_i$, we know that both $A_1$ and $ H_1$ are analytic for all $(z_0,\cdots,z_m)\in\disk(r_\mm)\times \disk^{m-1}$ by their formulas above. Moreover, by using the assumption that $A$ is analytic for each $w_{i_\ell}^{(\ell)}\in\Omega_0$, we know  $A_1$ is also analytic for each $w_{i_\ell}^{(\ell)}\in\tilde\Omega_0:=\tilde\Omega\setminus\{0\}$.

Moreover, we still have $q(w)$ dominates $H_1$ at $w=0$ by using the facts that any Cauchy chain in $H_1$ is a Cauchy chain in $H$ and that $A_1$ has the same singularities as in $A$ for any coordinates $w_{i_\ell}^{(\ell)}$  within $\tilde\Omega$. Here $(i_\ell,\ell)\ne (1,s)$.

Finally, since we reduced $|I^{(s)}\times J^{(s+1)}|$ by $|J^{(s+1)}|=b\ge 1$, we could apply the induction hypothesis on the above new setting.

\vspace{0.2cm}
By applying the induction hypothesis, we know that $\GG_1$ is analytic for $(z_0,\cdots,z_m)\in\disk(r_\mm)\times \disk^{m-1}$. Moreover, we have
\begin{equation*}
\begin{split}
&G_1(0,z_1,\cdots,z_{m-1})\\
&=	\prod_{\substack{1\le i_\ell\le n_\ell\\ 2\le \ell\le m\\ (i_\ell,\ell)\ne (1,s)}} \left[\frac{1}{1-z_{\ell-1}}\int_{\Sigma_{\ell}^\inn} \ddbarr{w_{i_\ell}^{(\ell)}}
-\frac{z_{\ell-1}}{1-z_{\ell-1}}\int_{\Sigma_{\ell}^\out} \ddbarr{w_{i_\ell}^{(\ell)}} \right]
\cdot \prod_{{i_1}=1}^{n_1} \int_{\Sigma_1} \ddbarr{w_{i_1}^{(1)}}\\
&\qquad H_1(W^{(1)},\cdots,\hat W^{(s)},\cdots,W^{(m)};0,z_1,\cdots,z_{m-1})\\
&=	\prod_{\substack{1\le i_\ell\le n_\ell\\ 2\le \ell\le m\\ (i_\ell,\ell)\ne (1,s)}} \left[\frac{1}{1-z_{\ell-1}}\int_{\Sigma_{\ell}^\inn} \ddbarr{w_{i_\ell}^{(\ell)}}
-\frac{z_{\ell-1}}{1-z_{\ell-1}}\int_{\Sigma_{\ell}^\out} \ddbarr{w_{i_\ell}^{(\ell)}} \right]
\cdot \prod_{{i_1}=1}^{n_1} \int_{\Sigma_1} \ddbarr{w_{i_1}^{(1)}}\\
&\qquad \int_{\Gamma_{r_\mm-0_+}} \ddbarr{w_1^{(s)}}H\left(W^{(1)},\cdots,W^{(m)};0,z_1,\cdots,z_{m-1}\right).
\end{split}
\end{equation*}
Here we remark that the above contours of integration are restricted in $\tilde\Omega$ since we applied the induction hypothesis for $\tilde\Omega$. Now we deform the contour of $w_1^{(s)}$ to the contour $\Sigma_{s}^{\out}$ (such deformation will not pass any poles of $w_1^{(s)}$ since the outermost poles of $w_1^{(s)}$ are on the contours $\Sigma_{s-1}^{\out}\cup\Sigma_{s-1}^{\inn}$ which is inside $\Sigma_{s}^{\out}$), we obtain
\begin{equation}
\label{eq:G1}
\begin{split}
&G_1(0,z_1,\cdots,z_{m-1})\\
&=	\prod_{\substack{1\le i_\ell\le n_\ell\\ 2\le \ell\le m\\ (i_\ell,\ell)\ne (1,s)}} \left[\frac{1}{1-z_{\ell-1}}\int_{\Sigma_{\ell}^\inn} \ddbarr{w_{i_\ell}^{(\ell)}}
-\frac{z_{\ell-1}}{1-z_{\ell-1}}\int_{\Sigma_{\ell}^\out} \ddbarr{w_{i_\ell}^{(\ell)}} \right]
\cdot \prod_{{i_1}=1}^{n_1} \int_{\Sigma_1} \ddbarr{w_{i_1}^{(1)}}\cdot \int_{\Sigma_{s}^\out} \ddbarr{w_1^{(s)}}\\
&\qquad H\left(W^{(1)},\cdots,W^{(m)};0,z_1,\cdots,z_{m-1}\right).
\end{split}
\end{equation}

\subsubsection{Analyzing $\GG_{2,k}$ by using induction hypothesis}
\label{sec:analyzing_G2}

We claim that $G_{2,k}$ is suitable for induction hypothesis. Similar to the case of $\GG_1$, we need to make a few modifications in Proposition~\ref{prop:Cauchy_sum_over_roots}. These changes are:
\begin{enumerate}[(1)]
	\item $n_s\to n_s-1$,
	\item $W^{(s)}\to \hat W^{(s)}=\{w_2^{(s)},\cdots,w_{n_s}^{(s)}\}$,
	\item $I^{(s)} \to I^{(s)}\setminus \{1\}$, $J^{(s)}\to J^{(s)}\setminus\{1\}$,
	\item $I^{(s-1)} \to I^{(s-1)}\setminus\{k\}$,
	\item $A(W^{(1)},\cdots,W^{(m)};z_0,\cdots,z_{m-1}) \to A_{2,k}(W^{(1)},\cdots,\hat W^{(s)},\cdots,W^{(m)};z_0,\cdots,z_{m-1})$,
	\item $H(W^{(1)},\cdots,W^{(m)};z_0,\cdots,z_{m-1}) \to H_{2,k}(W^{(1)},\cdots,\hat W^{(s)},\cdots,W^{(m)};z_0,\cdots,z_{m-1})$,
\end{enumerate}
where
\begin{equation*}
\begin{split}
&A_{2,k}(W^{(1)},\cdots,\hat W^{(s)},\cdots,W^{(m)};z_0,\cdots,z_{m-1}) \\
&= (-1)^{k+b}\cdot\frac{1}{1-z_{s-1}}\cdot h\left(w_k^{(s-1)}\right)\cdot \left.A(W^{(1)},\cdots,W^{(m)};z_0,\cdots,z_{m-1})\right|_{w_{1}^{(s)}=w_k^{(s-1)}}.
\end{split}
\end{equation*}
All the other assumptions in Proposition~\ref{prop:Cauchy_sum_over_roots} with the above setting are easy to check, except the assumption that $q(w)$ dominates $H_{2,k}$ at $w=0$, which we verify below.

Consider any Cauchy chain $w_{i_\ell}^{(\ell)},w_{i_{\ell+1}}^{(\ell+1)},\cdots,w_{i_{\ell'}}^{(\ell')}$ in the above setting. We need to verify that 
\begin{equation}
\label{eq:aux01}
q(w)\cdot \left. A_{2,k}(W^{(1)},\cdots,\hat W^{(s)},\cdots,W^{(m)};z_0,\cdots,z_{m-1}) \right|_{w_{i_\ell}^{(\ell)}=w_{i_{\ell+1}}^{(\ell+1)}=\cdots=w_{i_{\ell'}}^{(\ell')}=w}
\end{equation}
is analytic at $w=0$, for any fixed other $w$-variables in $\Omega_0$, and fixed $(z_0,\cdots,z_{m-1})\in \disk(r_\mm)\times\disk^{m-1}$. If $w_{k}^{(s-1)}$ does not appear in this Cauchy chain, then the analyticity of~\eqref{eq:aux01} follows from the fact that
$$
q(w)\cdot \left. A (W^{(1)},\cdots, W^{(m)};z_0,\cdots,z_{m-1}) \right|_{w_{i_\ell}^{(\ell)}=w_{i_{\ell+1}}^{(\ell+1)}=\cdots=w_{i_{\ell'}}^{(\ell')}=w}
$$
is analytic at $w=0$ by the proposition assumption. If $w_{k}^{(s-1)}$ appears in this Cauchy chain, it must be the last variable in the path since $k$ does not appear in $I^{(s-1)}\setminus\{k\}$. Then
\begin{equation}
\label{eq:aux02}
\eqref{eq:aux01} = (-1)^{k+b}\cdot\frac{1}{1-z_{s-1}}\cdot h_1\left(w\right) \cdot q(w)\cdot \left.A(W^{(1)},\cdots,W^{(m)};z_0,\cdots,z_{m-1})\right|_{w_{i_\ell}^{(\ell)}= \cdots =w_k^{(s-1)}=w_{1}^{(s)}=w}.
\end{equation}
On the other hand, $(k,1)\in I^{(s-1)}\times J^{(s)}$. Thus $w_{i_\ell}^{(\ell)}, \cdots, w_k^{(s-1)}, w_{1}^{(s)}$ is a Cauchy chain in the original proposition setting. By the assumption of the proposition,~\eqref{eq:aux02} is analytic at $w=0$ since $q(w)$ dominates $A$ at $w=0$. This finishes the verification of the analyticity of~\eqref{eq:aux01}.

\vspace{0.2cm}

We also note that  $|I^{(s-1)}\times J^{(s)}|$ after modification becomes $|(I^{(s-1)}\setminus\{k\})\times (J^{(s)}\setminus \{1\})|$ which is smaller. Thus we could apply the induction hypothesis for each $k$. These imply  that $G_{2,k}(z_0,\cdots,z_{m-1})$ is analytic for $(z_0,\cdots,z_{m-1})\in\disk(r_\mm)\times \disk^{m-1}$ and
\begin{equation}
\label{eq:G2k}
\begin{split}
&G_{2,k}(0,z_1,\cdots,z_{m-1})\\
&=	\prod_{\substack{1\le i_\ell\le n_\ell\\ 2\le \ell\le m\\ (i_\ell,\ell)\ne (1,s)}} \left[\frac{1}{1-z_{\ell-1}}\int_{\Sigma_{\ell}^\inn} \ddbarr{w_{i_\ell}^{(\ell)}}
-\frac{z_{\ell-1}}{1-z_{\ell-1}}\int_{\Sigma_{\ell}^\out} \ddbarr{w_{i_\ell}^{(\ell)}} \right]
\cdot \prod_{{i_1}=1}^{n_1} \int_{\Sigma_1} \ddbarr{w_{i_1}^{(1)}}\\
&\qquad H_{2,k}(W^{(1)},\cdots,\hat W^{(s)},\cdots,W^{(m)};0,z_1,\cdots,z_{m-1}).
\end{split}
\end{equation}
Thus their sum $\sum_k\GG_{2,k}$ is also analytic for  $(z_0,\cdots,z_{m-1})\in\disk(r_\mm)\times \disk^{m-1}$. Moreover, by inserting the formula~\eqref{eq:H2k}, it is direct to show
\begin{equation*}
\begin{split}
&H_{2,k}(W^{(1)},\cdots,\hat W^{(s)},\cdots,W^{(m)};0,z_1,\cdots,z_{m-1})\\
&=-\frac{1}{1-z_{s-1}}\mathrm{Res}\left( H\left(W^{(1)},\cdots,W^{(m)};0,z_1\cdots,z_{m-1}\right),w_{1}^{(s)}=w_k^{(s-1)}\right).
\end{split}
\end{equation*}
Note that
\begin{equation*}
\begin{split}
&-\frac{1}{1-z_{s-1}}\sum_{k=1}^a\mathrm{Res}\left( H\left(W^{(1)},\cdots,W^{(m)};0,z_1\cdots,z_{m-1}\right),w_{1}^{(s)}=w_k^{(s-1)}\right)\\
&=\left[\frac{1}{1-z_{s-1}}\int_{\Sigma_{s}^{\inn}}\ddbarr{w_1^{(s)}}-\frac{1}{1-z_{s-1}}\int_{\Sigma_{s}^{\out}}\ddbarr{w_1^{(s)}}\right]H\left(W^{(1)},\cdots,W^{(m)};0,z_1\cdots,z_{m-1}\right)
\end{split}
\end{equation*}
provided all $w_i^{(s-1)}$ variables are on the contours $\Sigma_{s-1}^\inn\cup\Sigma_{s-1}^\out$ since these two contours lie between $\Sigma_{s}^\out$ and $\Sigma_s^\inn$.
By plugging the above calculations in the formula~\eqref{eq:G2k}, we have
\begin{equation}
\label{eq:G2}
\begin{split}
&\sum_{k=1}^a\GG_{2,k}(0,z_1,\cdots,z_{m-1})\\
&=\prod_{\substack{1\le i_\ell\le n_\ell\\ 2\le \ell\le m\\ (i_\ell,\ell)\ne (1,s)}} \left[\frac{1}{1-z_{\ell-1}}\int_{\Sigma_{\ell}^\inn} \ddbarr{w_{i_\ell}^{(\ell)}}
-\frac{z_{\ell-1}}{1-z_{\ell-1}}\int_{\Sigma_{\ell}^\out} \ddbarr{w_{i_\ell}^{(\ell)}} \right]
\cdot \prod_{{i_1}=1}^{n_1} \int_{\Sigma_1} \ddbarr{w_{i_1}^{(1)}}\\
&\quad\left[\frac{1}{1-z_{s-1}}\int_{\Sigma_{s}^{\inn}}\ddbarr{w_1^{(s)}}-\frac{1}{1-z_{s-1}}\int_{\Sigma_{s}^{\out}}\ddbarr{w_1^{(s)}}\right]H\left(W^{(1)},\cdots,W^{(m)};0,z_1\cdots,z_{m-1}\right).
\end{split}
\end{equation}

\subsubsection{Finishing the inductive step}

Now we combine the results in Sections~\ref{sec:analyzing_G1} and~\ref{sec:analyzing_G2}. We know that $\GG(z_0,\cdots,z_{m-1})=\GG_1(z_0,\cdots,z_{m-1})+\sum_{k=1}^a\GG_{2,k}(z_0,\cdots,z_{m-1})$ is analytic for  $(z_0,\cdots,z_{m-1})\in\disk(r_\mm)\times \disk^{m-1}$. Moreover, by the formulas~\eqref{eq:G1} and~\eqref{eq:G2} we have $\GG(0,z_1,\cdots,z_{m-1})$ equals to
\begin{equation*}
\prod_{\substack{1\le i_\ell\le n_\ell\\ 2\le \ell\le m}} \left[\frac{1}{1-z_{\ell-1}}\int_{\Sigma_{\ell}^\inn} \ddbarr{w_{i_\ell}^{(\ell)}}
-\frac{z_{\ell-1}}{1-z_{\ell-1}}\int_{\Sigma_{\ell}^\out} \ddbarr{w_{i_\ell}^{(\ell)}} \right]
\cdot \prod_{{i_1}=1}^{n_1} \int_{\Sigma_1} \ddbarr{w_{i_1}^{(1)}}H\left(W^{(1)},\cdots,W^{(m)};0,z_1\cdots,z_{m-1}\right)
\end{equation*}
for any nested simple closed contours $\Sigma_{m}^{\out},\cdots,\Sigma_{2}^{\out},\Sigma_1,\Sigma_{2}^\inn,\cdots,\Sigma_{m}^\inn$  in $\tilde\Omega$ enclosing $0$. By using the analyticity of $H$ for $w_{i_\ell}^{(\ell)}$ in $\Omega_0$, we could deform these contours freely to $\Omega_0$ without changing their orders. This finishes the induction.

\section{Proof of Theorems~\ref{thm:limit_step} and~\ref{thm:limit_flat}}
\label{sec:asymptotics}

We first translate the height function of TASEP into the language of particle locations. It is known that they have the following equivalence relation\footnote{There is a freedom to decide the particle or empty site corresponding to $H(0,0)$, hence the equivalence relation may have different formulations upon a translation. More explicitly, for any fixed integer $C$ and $C'$, we could formulate the equivalence relation as $H(n,T)\ge a \Longleftrightarrow \mathbbm{x}_{\frac{a-n}{2}+C}(T)\ge n+C'$ by simply translating all the particle locations by $C'$ and their labels by $C$ from the beginning, as long as the initial height function matches the particle locations $H(n,0)\ge a \Longleftrightarrow \mathbbm{x}_{\frac{a-n}{2}+C}(0)\ge n+C'$.}
\begin{equation}
\label{eq:bijection}
H(n,T)\ge a\Longleftrightarrow \mathbbm{x}_{\frac{a-n}{2}}(T) \ge n
\end{equation}
for any integers $a$ and $n$ with the same parity, provided the initial height function is defined such that
\begin{equation}
\label{eq:aux09}
H(n,0)\ge a\Longleftrightarrow \mathbbm{x}_{\frac{a-n}{2}}(0) \ge n.
\end{equation}
The proof of this equivalence relation can be found in, for examples, \cite{Baik-Liu16b,Baik-Liu19}. Here in order to avoid confusion we use $\mathbbm{x}_k(t)$, instead of $x_k(t)$, to denote the location of the particle with label $k$ at time $t$.

We first assume $\tau_1<\cdots<\tau_m$. In this case we only prove Theorem~\ref{thm:limit_step}, the proof of Theorem~\ref{thm:limit_flat} is similar. The only difference is that we need to use Proposition~\ref{prop:infinite_flat} for the flat case instead of Theorem~\ref{thm:main1} for the step case.

We consider the step initial condition defined by~\eqref{eq:step_IC}. This corresponds to, by using~\eqref{eq:aux09},
\begin{equation*}
y_i=\mathbbm{x}_i(0)= -i,\qquad i=1,2,\cdots.
\end{equation*}

Note that the desired probability, by using the relation~\eqref{eq:bijection},
\begin{equation*}
\prob_{\mathrm{step}}\left( \bigcap_{\ell=1}^m \left\{ \frac{H\left( 2x_\ell T^{2/3}, 2\tau_\ell T \right) - \tau_\ell T} {-T^{1/3}}  \le h_\ell   \right\} \right)=\prob_{\mathrm{step}}\left(\bigcap_{\ell=1}^m \left\{	\mathbbm{x}_{k_\ell}(t_\ell) \ge a_\ell \right\}\right)
\end{equation*}
with\footnote{To be precise, we need to assume that all the numbers $k_\ell$ and $a_\ell$ are integers or use their integer parts $[k_\ell]$ and $[a_\ell]$ in the argument. However, in the asymptotics an $O(1)$ perturbation on the $a_\ell$ or $k_\ell$ does not change the desired limit. Hence we just use $k_\ell$ and $a_\ell$ with the formula~\eqref{eq:scaling} in the argument without assuming that they are integers.}
\begin{equation}
\label{eq:scaling}
a_\ell = 2x_\ell T^{2/3}, \quad k_\ell = \frac{1}{2}\tau_\ell T - x_\ell T^{2/3} -\frac{1}{2}h_\ell T^{1/3}, \quad t_\ell = 2\tau_\ell T.
\end{equation}

Now we take $N=\max\{k_\ell:\ell=1,\cdots,m\}$. The above probability only depends on the initial locations of the particles with labels less than or equal to $N$. Thus
\begin{equation*}
\prob_{\mathrm{step}}\left(\bigcap_{\ell=1}^m \left\{	\mathbbm{x}_{k_\ell}(t_\ell) \ge a_\ell \right\}\right) = \prob_{Y_{\mathrm{step}}}\left(\bigcap_{\ell=1}^m \left\{	\mathbbm{x}_{k_\ell}(t_\ell) \ge a_\ell \right\}\right)
\end{equation*}
with
\begin{equation*}
Y_{\mathrm{step}}=(y_1,\cdots,y_N)=(-1,-2,\cdots,-N) \in \conf_N.
\end{equation*}

By applying Theorem~\ref{thm:main1}, it is sufficient to show that
\begin{equation}
\label{eq:aux15}
\begin{split}
\lim_{T\to\infty}\oint\cdots\oint \left[\prod_{\ell=1}^{m-1}\frac{1}{1-z_\ell}\right]\det\left(I-\mathcal{K}_1\mathcal{K}_{Y_\mathrm{step}}\right) \ddbar{z_1}\cdots\ddbar{z_{m-1}}\\
=
\oint\cdots\oint \left[\prod_{\ell=1}^{m-1}\frac{1}{1-z_\ell}\right] \det\left(I-\mathrm{K}_1\mathrm{K}_{\mathrm{step}}\right)\ddbar{z_1}\cdots\ddbar{z_{m-1}},
\end{split}
\end{equation}
where we used the Fredholm determinant representation for $\mathcal{D}_{Y_{\mathrm{step}}}(z_1,\cdots,z_{m-1})$ in Section~\ref{sec:special_IC}.

Recall that $f_i(w)$ is defined in terms of $F_i(w)$ in~\eqref{eq:fi}, and by Proposition~\ref{prop:invariance1} the Fredholm determinant $\det\left(I-\mathcal{K}_1\mathcal{K}_{Y_{\mathrm{step}}}\right)$ is unchanged if we replace $F_i(w)$ by
\begin{equation*}
\tilde F_i(w):=\frac{F_i(w)}{F_i(-1/2)}.
\end{equation*}
Hence we could replace $f_i(w)$ by
\begin{equation}
\label{eq:tilde_fi}
\tilde{f}_i(w):=\begin{dcases}
\frac{\tilde F_i(w)}{\tilde F_{i-1}(w)},&w\in\Omega_\LL\setminus\{-1\},\\
\frac{\tilde F_{i-1}(w)}{\tilde F_i(w)},&w\in\Omega_\RR\setminus\{0\}
\end{dcases}
\end{equation}
without changing the Fredholm determinant. Then we apply a conjugation for the kernels and reduce~\eqref{eq:aux15} to a new equation
\begin{equation}
\label{eq:aux10}
\begin{split}
\lim_{T\to\infty}\oint\cdots\oint \left[\prod_{\ell=1}^{m-1}\frac{1}{1-z_\ell}\right]\det\left(I-\tilde{\mathcal{K}}_1\tilde{\mathcal{K}}_{Y_{\mathrm{step}}}\right) \ddbar{z_1}\cdots\ddbar{z_{m-1}}\\
=
\oint\cdots\oint \left[\prod_{\ell=1}^{m-1}\frac{1}{1-z_\ell}\right] \det\left(I-\tilde{\mathrm{K}}_1\tilde{\mathrm{K}}_{\mathrm{step}}\right)\ddbar{z_1}\cdots\ddbar{z_{m-1}},
\end{split}
\end{equation}
where the new kernels
\begin{equation*}
\begin{split}
\tilde{\mathcal{K}}_{Y_{\mathrm{step}}}(w',w)&=\left(\delta_j (i) + \delta_j(i - (-1)^j)\right) \frac{ \sqrt{\tilde f_j(w')}\sqrt{\tilde f_i(w)} }{w'-w} Q_2(i),\\
\tilde{\mathcal{K}}_1(w,w')&=\left(\delta_i(j) + \delta_i( j+ (-1)^i)\right) \frac{ \sqrt{\tilde f_j(w')}\sqrt{\tilde f_i(w)} }{w-w'} Q_1(j),
\end{split}
\end{equation*}
for all $w\in (\Sigma_{i,\LL}\cup\Sigma_{i,\RR})\cap\mathcal{S}_1$ and $w'\in (\Sigma_{j,\LL}\cup\Sigma_{j,\RR})\cap \mathcal{S}_2$, and
\begin{equation*}
\begin{split}
\tilde{\mathrm{K}}_{\mathrm{step}}(\zeta',\zeta)&=\left(\delta_j (i) + \delta_j(i - (-1)^j)\right) \frac{ \sqrt{\mathrm f_j(\zeta')}\sqrt{\mathrm f_i(\zeta)} }{-\zeta'+\zeta} Q_2(i),\\
\tilde{\mathrm{K}}_1(\zeta,\zeta')&=\left(\delta_i(j) + \delta_i( j+ (-1)^i)\right) \frac{ \sqrt{\mathrm f_j(\zeta')}\sqrt{\mathrm f_i(\zeta)} }{\zeta-\zeta'} Q_1(j),
\end{split}
\end{equation*}
for all $\zeta\in (\mathrm{C}_{i,\LL}\cup\mathrm{C}_{i,\RR})\cap\mathrm{S}_1$ and $\zeta'\in (\mathrm{C}_{j,\LL}\cup\mathrm{C}_{j,\RR})\cap \mathrm{S}_2$. The reason we do these conjugations is to ensure the kernels decay sufficiently fast on each variable. We also remark that the choice of the branch cut of the square root does not affect the product of two kernels since each square root term will appear twice when one evaluates the Fredholm determinant.

The proof of~\eqref{eq:aux10} follows from the two lemmas below.
\begin{lm}
	\label{lm:01}
	Assume the scaling~\eqref{eq:scaling}. For each $n$ and fixed $z_1,\cdots,z_{m-1}\in\disk=\{z\in\complexC:|z|<1\}$, we have
	\begin{equation*}
	\lim_{T\to\infty} \mathrm{Tr}\left(\tilde{\mathcal{K}}_1\tilde{\mathcal{K}}_{{Y_{\mathrm{step}}}}\right)^n = \mathrm{Tr}\left(\tilde{\mathrm{K}}_1\tilde{\mathrm{K}}_{\mathrm{step}}\right)^n.
	\end{equation*}
\end{lm}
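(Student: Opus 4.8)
\textbf{Plan of proof for Lemma~\ref{lm:01}.}
The plan is to perform a steepest-descent analysis of the finite-$T$ kernels $\tilde{\mathcal{K}}_1$ and $\tilde{\mathcal{K}}_{Y_{\mathrm{step}}}$ and show they converge, entry by entry, to the limiting kernels $\tilde{\mathrm{K}}_1$ and $\tilde{\mathrm{K}}_{\mathrm{step}}$ after an appropriate local change of variables, and then upgrade this pointwise convergence to convergence of the traces $\mathrm{Tr}(\tilde{\mathcal{K}}_1\tilde{\mathcal{K}}_{Y_{\mathrm{step}}})^n$. First I would introduce the critical point: for the step initial condition the relevant exponent in each block is essentially $T$ times $g_i(w) := -\tfrac13\tau_i \log$-type function built from $F_i(w)=w^{k_i}(w+1)^{-a_i-k_i}e^{t_i w}$ with the scaling~\eqref{eq:scaling} plugged in, and the double critical point sits at $w=-1/2$ on the left and at $w=0$... more precisely, after the normalization $\tilde F_i(w)=F_i(w)/F_i(-1/2)$ the function $T^{-1}\log \tilde F_i(w)$ has a critical point of order two near $w=-1/2$. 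I would set $w = -\tfrac12 + \zeta T^{-1/3}$ for the variable running over $\Sigma_{i,\LL}$ and a corresponding rescaling (around $-1/2$, reflected) for $\Sigma_{i,\RR}$, so that the Taylor expansion of $T\log\tilde F_i(-\tfrac12+\zeta T^{-1/3})$ converges to $-\tfrac13\tau_i\zeta^3 + x_i\zeta^2 + h_i\zeta = \log \mathrm{F}_i(\zeta)$ as in~\eqref{eq:Fi}, uniformly on compact sets; the Jacobian $T^{-1/3}$ from $\ddbarr{w}$ combines with the $(w-w')^{-1} \sim T^{1/3}(\zeta-\zeta')^{-1}$ from the Cauchy kernel so that the measure $\dd\mu$ and the kernel structure pass to the limit intact, and the $Q_1,Q_2$ factors and the $z_\ell$-weights are untouched by the scaling.

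The key steps, in order, would be: (1) write $\mathrm{Tr}(\tilde{\mathcal{K}}_1\tilde{\mathcal{K}}_{Y_{\mathrm{step}}})^n$ as a $2n$-fold contour integral over the $\Sigma$-contours, alternating between $\mathcal{S}_1$ and $\mathcal{S}_2$ pieces, with integrand a product of $2n$ Cauchy factors times $2n$ square-root-of-$\tilde f$ factors times the $Q$'s; (2) deform each $\Sigma_{i,\LL}$, $\Sigma_{i,\RR}$ contour so that, outside a $T^{-1/3}$-neighborhood of $-1/2$ (resp. of $0$ for the right contours, but note for the step case the right contours also localize near $-1/2$ under the reflection in $f_i$), the real part of the exponent is bounded away from its critical value, giving exponential decay there; (3) on the local $T^{-1/3}$-scale, change variables to $\zeta$, check the pointwise (and locally uniform) convergence of the rescaled integrand to the integrand of $\mathrm{Tr}(\tilde{\mathrm{K}}_1\tilde{\mathrm{K}}_{\mathrm{step}})^n$, using the Taylor expansion of $\log\tilde F_i$ and the fact that the Cauchy factors are scale-covariant; (4) produce a $T$-independent integrable dominating function — here the super-exponential decay of $e^{-\frac13\tau_i\zeta^3}$ along the rays $\infty e^{\pm 2\pi i/3}$ (left) and $\infty e^{\pm\pi i/3}$ (right), together with the contour choices in Definition~\ref{def:Dstep}, is what makes the integrals absolutely convergent — and invoke dominated convergence to pass the limit inside the $2n$-fold integral.

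The main obstacle I expect is step (2) and (4) together: choosing the finite-$T$ contours $\Sigma_{i,\LL},\Sigma_{i,\RR}$ so that simultaneously (a) they remain nested in the prescribed order (so that $\dd\mu$ and the Cauchy-factor pole structure are the ones in Definition~\ref{def:operators_K1Y}), (b) along them $\Re\big(T\log\tilde F_i(w) - T\log\tilde F_{i-1}(w)\big)$ is controlled — this is delicate because the $n$-fold trace couples different indices $i$ through the Cauchy chains $\Delta(U^{(\ell)};U^{(\ell+1)})^{-1}$ etc., so one needs the exponents of consecutive indices to be jointly well-behaved on the same contour — and (c) the global (non-local) part of every contour integral is $O(e^{-cT^{1/3}})$. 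Establishing a uniform-in-$T$ integrable bound on the rescaled integrand on the unbounded limiting contours, handling the mild singularities of the Cauchy factors at coincident points $\zeta = \zeta'$ (which are integrable and stable under the limit), is the technically heaviest part; once that is in place, dominated convergence finishes the proof, and the outer $\oint\cdots\oint$ over the $z_\ell$ together with the Fredholm expansion $\det(I-AB)=\sum_n \frac{(-1)^n}{n!}\mathrm{Tr}(AB)^n$ — whose term-by-term convergence is exactly Lemma~\ref{lm:01} — is handled in the companion lemma (a uniform trace-norm bound allowing the sum over $n$ and the $z$-integration to be exchanged with the limit).
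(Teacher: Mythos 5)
Your plan is essentially the paper's own argument: the paper likewise only sketches this ``standard'' steepest-descent proof, locating the double critical point at $w=-1/2$, substituting $w=-\tfrac12+\zeta/(2T^{1/3})$, deforming \emph{both} families of contours (left ones still enclosing $-1$, right ones still enclosing $0$) to pass near $-1/2$, and combining local convergence of the rescaled kernels with super-exponential decay of $\tilde f_i$ away from the critical point to justify the limit of the $2n$-fold trace integral. The only details worth flagging are the factor of $2$ in the local variable (the paper uses $\zeta/(2T^{1/3})$, which is needed to land exactly on $\mathrm{F}_i(\zeta)$ with the constants of~\eqref{eq:scaling}) and the orientation reversal of the rescaled outer contours, which is what accounts for the sign discrepancy between $\frac{1}{w'-w}$ in $\tilde{\mathcal{K}}_{Y_{\mathrm{step}}}$ and $\frac{1}{-\zeta'+\zeta}$ in $\tilde{\mathrm{K}}_{\mathrm{step}}$.
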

\begin{lm}
	\label{lm:02} Assume the scaling~\eqref{eq:scaling}. There exists a constant $C$ which does not depend on $T$ and $n$ such that
\begin{equation*}
\left|\int_{\mathcal{S}_1}\dd\mu(w_1)\cdots\int_{\mathcal{S}_1}\dd\mu(w_n)\det\left[\left(\tilde{\mathcal{K}}_1\tilde{\mathcal{K}}_{Y_{\mathrm{step}}}\right)(w_i,w_j) \right]_{i,j=1}^n\right|<C^n.
\end{equation*}
\end{lm}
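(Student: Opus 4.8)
\textbf{Proof proposal for Lemma~\ref{lm:02}.}

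The plan is to extract, from the series-expansion formula for $\mathcal{D}_{Y_{\mathrm{step}}}$, a bound on the $n$-th term that is exponential in $n$ with a base independent of $T$ and of $n$, uniformly over the small circles $|z_\ell|<1$. The determinant $\det\left[\left(\tilde{\mathcal{K}}_1\tilde{\mathcal{K}}_{Y_{\mathrm{step}}}\right)(w_i,w_j)\right]_{i,j=1}^n$ expands, through the block structure of $\tilde{\mathcal{K}}_1$ and $\tilde{\mathcal{K}}_{Y_{\mathrm{step}}}$ recorded in~\eqref{eq:K1} and~\eqref{eq:KY}, into the Cauchy-type combination appearing in~\eqref{eq:D_nY} (with $\Kess_{Y_{\mathrm{step}}}(v,u)=1/(v-u)$), so that the product of the two kernels along a cycle yields the factor
\[
\frac{(\Delta(U^{(\ell)}))^2(\Delta(V^{(\ell)}))^2}{(\Delta(U^{(\ell)};V^{(\ell)}))^2}\quad\text{and}\quad \frac{\Delta(U^{(\ell)};V^{(\ell+1)})\Delta(V^{(\ell)};U^{(\ell+1)})}{\Delta(U^{(\ell)};U^{(\ell+1)})\Delta(V^{(\ell)};V^{(\ell+1)})}
\]
together with the analytic weights $\sqrt{\tilde f_\ell}$ on each node. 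The standard device is Hadamard's inequality applied after recognizing the Cauchy structure: $\Delta(U^{(\ell)};V^{(\ell)})^{-1}\Delta(U^{(\ell)})\Delta(V^{(\ell)})$ is (up to sign) a Cauchy determinant, so its square is a product of two Cauchy determinants, each of which is bounded by $\prod_i \|(w_i-\cdot)^{-1}\|$-type quantities on the fixed, mutually-separated contours $\mathcal{S}_1,\mathcal{S}_2$. Since the contours are fixed compact curves at a fixed positive distance from one another and from $\{0,-1\}$, every such reciprocal-distance factor is bounded by an absolute constant, and the number of these factors is $O(n)$; likewise the cross terms $\Delta(U^{(\ell)};V^{(\ell+1)})$ etc. in the numerator are bounded by $(\mathrm{diam})^{O(n)}$, contributing another constant to the $n$-th power.

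The one genuinely $T$-dependent ingredient is the product $\prod_\ell \tilde f_\ell(U^{(\ell)})\tilde f_\ell(V^{(\ell)})$. Here I would use exactly the normalization already introduced: $\tilde F_i(w)=F_i(w)/F_i(-1/2)$, so $\tilde f_i$ is a ratio of consecutive $\tilde F$'s, and on the fixed contours $\Sigma_{\ell,\LL},\Sigma_{\ell,\RR}$ the function $|\tilde f_i|$ is bounded by a constant independent of $T$ --- indeed, after the scaling~\eqref{eq:scaling}, $\tilde F_i(w) = w^{k_i}(w+1)^{-a_i-k_i}e^{t_i w}/\tilde F_i(-1/2)$, and one checks (this is the content to be spelled out, but it is the same real-part estimate used in the proof of Lemma~\ref{lm:01}) that $\Re\big(\log \tilde F_i(w)-\log\tilde F_i(-1/2)\big)$ is bounded above on the relevant pieces of the contour because the contours are chosen on the correct side of the critical point $w=-1/2$; the telescoping in $\tilde f_i = \tilde F_i/\tilde F_{i-1}$ then gives a $T$-uniform bound $|\tilde f_i(w)|\le C_0$ there. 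Each node carries one $\sqrt{\tilde f}$, so this contributes at most $C_0^{n}$ overall.

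Assembling these pieces: the integrand of~\eqref{eq:D_nY} is bounded pointwise on the product of contours by $C_1^{n}$ for a constant $C_1$ depending only on the fixed contours and on the (fixed) limiting parameters, uniformly in $T$ and in $z_1,\dots,z_{m-1}$ on the unit circles; integrating over the fixed-length contours multiplies this by $(\mathrm{length})^{n}\le C_2^{n}$, and the combinatorial sum-over-block-assignments hidden in the expansion of the $n\times n$ determinant contributes at most $n!\cdot 2^{n}$, which is cancelled when this estimate is later used inside the Fredholm series against the $1/(\boldsymbol n!)^2$ weights --- for the present lemma we only need the bare bound $C^{n}$ on $\left|\int\cdots\int \det[\cdots]\right|$, absorbing $n!$ into nothing since the determinant itself (not divided by $n!$) is what is estimated by Hadamard, giving $n^{n/2}\cdot(\text{entrywise bound})^{n}$; but the entrywise bound already includes the $\Delta$-in-the-denominator singularities, so I will instead bound the \emph{whole} determinant directly via the Cauchy-determinant identity rather than entrywise, which removes the $n^{n/2}$ and leaves precisely $C^{n}$. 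The main obstacle is this last point: one must be careful to bound the determinant \emph{as a whole} using the Cauchy structure rather than by naive Hadamard on individual entries (which would produce a spurious $n^{n/2}$ and also would not control the $1/(w_i-w_j)$ singularities when two $w$'s on $\mathcal{S}_1$ lie on the same component), so the real work is organizing the expansion so that every $1/(\text{difference})$ factor is paired inside a genuine Cauchy determinant supported on two \emph{disjoint} contour-families, whose norm is then controlled by the fixed geometry alone.
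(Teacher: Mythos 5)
Your overall route --- conjugate by $\sqrt{\tilde f}$, expand the determinant through the block structure into the Cauchy-type terms of~\eqref{eq:D_nY}, and control $\tilde f_i$ by a real-part estimate at the critical point $w=-1/2$ --- is the same one the paper gestures at, but the quantitative core of the lemma is not established, and two of your intermediate claims are false as stated. First, the factor $\Ch(U^{(\ell)};V^{(\ell)})=\Delta(U^{(\ell)})\Delta(V^{(\ell)})/\Delta(U^{(\ell)};V^{(\ell)})$ contains $n_\ell^2$ reciprocal differences in its denominator and $n_\ell(n_\ell-1)$ differences in its numerator, so the number of such factors is $O(n^2)$, not $O(n)$; bounding each one by a constant larger than $1$ yields $C^{n^2}$, which is useless. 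Second, the contours cannot be ``fixed compact curves at a fixed positive distance from one another'' uniformly in $T$: to make $|\tilde f_i|$ bounded (indeed decaying) one must deform them through the critical point $-1/2$, where the nested families are separated only by $O(T^{-1/3})$, so individual factors $1/(w-w')$ are as large as $O(T^{1/3})$ and are compensated only after pairing with the measure $|\dd w|=O(T^{-1/3})|\dd\zeta|$ near the saddle. Your closing paragraph correctly diagnoses that the determinant must be bounded as a whole, but it stops exactly where the proof has to begin.

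To close the gap you need one of two standard mechanisms. Either apply Hadamard's inequality to each Cauchy determinant $\det[1/(u_i-v_j)]$ viewed as a determinant (row by row), which gives $n_\ell^{n_\ell/2}\cdot(\text{const})^{n_\ell}$ rather than $C^{n_\ell}$; assembled over the blocks this produces an overall bound of the form $n^{n/2}C^n$, which is enough for the dominated convergence of the Fredholm series in~\eqref{eq:aux10} (since $\sum_n n^{n/2}C^n/n!<\infty$) but does not literally prove the stated bound $C^n$. Or, to obtain $C^n$ exactly, bypass the Cauchy combinatorics altogether via the trace-norm inequality: the left-hand side of the lemma equals $n!\,|e_n|$ where $e_n$ is the $n$-th elementary symmetric function of the eigenvalues of $\tilde{\mathcal{K}}_1\tilde{\mathcal{K}}_{Y_{\mathrm{step}}}$, and $n!\,|e_n|\le \|\tilde{\mathcal{K}}_1\tilde{\mathcal{K}}_{Y_{\mathrm{step}}}\|_{1}^{\,n}\le \|\tilde{\mathcal{K}}_1\|_{2}^{\,n}\,\|\tilde{\mathcal{K}}_{Y_{\mathrm{step}}}\|_{2}^{\,n}$, where $\|\cdot\|_2$ denotes the Hilbert--Schmidt norm on $L^2(|\dd\mu|)$ after absorbing the phase of the complex measure into the kernels. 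The whole point of the conjugation by $\sqrt{\tilde f}$ is precisely that these Hilbert--Schmidt norms are double integrals of the squared modulus of the kernels, which remain bounded uniformly in $T$ by the super-exponential decay along the steepest-descent contours together with the $T^{\pm 1/3}$ cancellation between kernel and measure near $-1/2$. Either way, the missing step is a bound on the determinant that scales correctly in $n$; the pointwise product bound you propose does not.
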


The proof of both lemmas are standard. Below we just provide the main ideas and necessary calculations, and omit most of the details.

We analyze the function $\tilde f_i(w)$. Recall~\eqref{eq:tilde_fi}, $\tilde f_i(w)$ is defined by $\tilde F_i(w)$ functions with
\begin{equation*}
\tilde F_i(w)=\frac{w^{k_i}(w+1)^{-a_i-k_i} e^{t_i w}}{(-1/2)^{k_i}(1/2)^{-a_i-k_i}e^{-t_i/2}}.
\end{equation*}
By inserting~\eqref{eq:scaling} we have
\begin{equation*}
\begin{split}
&\tilde F_i(w) \\
&= \exp\left( \left(\frac{1}{2}\tau_i T - x_i T^{2/3} -\frac{1}{2}h_i T^{1/3}\right)\log (-2w)-\left(\frac{1}{2}\tau_i T + x_i T^{2/3} -\frac{1}{2}h_i T^{1/3}\right)\log (2w+2) +2\tau_i T(w+1/2)\right).
\end{split}
\end{equation*}
A direct calculation shows that the critical point of $\tilde F_i(w)$ is $w=-\frac12$. Moreover, by using Taylor expansion, we have
\begin{equation*}
\tilde F_i\left(-\frac12+\frac{\zeta}{2T^{1/3}}\right)\approx  \mathrm{F}_i(\zeta)=  \exp\left(-\frac{1}{3}\tau_i \zeta^3 + x_i \zeta^2 + h_i \zeta\right).
\end{equation*}
Here the function $\mathrm{F}_i(\zeta)$ is defined in~\eqref{eq:Fi}.
Now we deform the contours $\Sigma_{m,\LL}^{\out},\cdots,\Sigma_{2,\LL}^{\out}$, $\Sigma_{1,\LL}$, $\Sigma_{2,\LL}^{\inn}$,$\cdots$,$\Sigma_{m,\LL}^\inn$ to be sufficiently close to $-1/2$ (and still enclosing $-1$), such that near the point $-1/2$ after the change of variable  $w=-\frac12+\frac{\zeta}{2T^{1/3}}$ these contours locally converge to $\mathrm{C}_{m,\LL}^{\out}$,  $\cdots$, $\mathrm{C}_{2,\LL}^{\out}$, $\mathrm{C}_{1,\LL}$, $\mathrm{C}_{2,\LL}^{\inn}$, $\cdots$, $\mathrm{C}_{m,\LL}^{\inn}$ respectively. We similarly deform the contours
$\Sigma_{m,\RR}^{\out},\cdots,\Sigma_{2,\RR}^{\out}$, $\Sigma_{1,\RR}$, $\Sigma_{2,\RR}^{\inn},\cdots,\Sigma_{m,\RR}^\inn$ to be sufficiently close to $-1/2$ such that near $-1/2$ they locally converge to $\mathrm{C}_{m,\RR}^{\out}$,   $\cdots$, $\mathrm{C}_{2,\RR}^{\out}$, $\mathrm{C}_{1,\RR}$, $\mathrm{C}_{2,\RR}^{\inn}$, $\cdots$, $\mathrm{C}_{m,\RR}^{\inn}$ respectively. Note that the orientations of $\mathrm{C}_{\ell,\RR}^\star$ contours are reversed compared to $\Sigma_{\ell,\RR}^\star$ contours. Here $\star$ represents the superscript $\out$ or $\inn$ or empty superscript. This reversed orientation will contribute to the different signs between the kernels $\tilde{\mathcal{K}}_{Y_{\mathrm{step}}}$ and $\tilde{\mathrm{K}}_{\mathrm{step}}$.

With the above deformations, it is easy to check that $\tilde f_i(w)\approx \mathrm{f_i}(\zeta)$ for $\zeta\in\mathrm{S}_1\cup\mathrm{S}_2$. Thus locally we have $\tilde{\mathcal{K}}_{Y_{\mathrm{step}}}(w',w)\approx -2T^{1/3} \tilde{\mathrm{K}}_{\mathrm{step}}(\zeta',\zeta)$ and $\tilde{\mathcal{K}}_1(w,w')\approx 2T^{1/3} \tilde{\mathrm{K}}_{1}(\zeta,\zeta')$ for $w=-\frac12+\frac{\zeta}{2T^{1/3}}$ and $w'=-\frac12+\frac{\zeta'}{2T^{1/3}}$. On the other hand, it is direct to see that the kernels $\tilde{\mathcal{K}}_{Y_{\mathrm{step}}}$ and $\tilde{\mathcal{K}}_1$ decay super-exponentially fast when $w,w'$ is away from $-1/2$ along the contours in $\mathcal{S}_1$ and $\mathcal{S}_2$. Hence the main contribution of $\mathrm{Tr}(\tilde{\mathcal{K}}_1\tilde{\mathcal{K}}_{Y_{\mathrm{step}}})^n$ comes from a small neighborhood of $-1/2$. This  heuristically implies $\mathrm{Tr}(\tilde{\mathcal{K}}_1\tilde{\mathcal{K}}_{Y_{\mathrm{step}}})^n\approx \mathrm{Tr}\left(\tilde{\mathrm{K}}_1\tilde{\mathrm{K}}_{\mathrm{step}}\right)^n$. The same argument heuristically implies the boundness $\mathrm{Tr}(\tilde{\mathcal{K}}_1\tilde{\mathcal{K}}_{Y_{\mathrm{step}}})^n$. By using these facts, it is standard to prove both lemmas we list above. This proves Theorem~\ref{thm:limit_step}.

\bigskip

Finally we address the general case for $\tau_1\le\cdots\le \tau_m$ with $x_i<x_{i+1}$ when $\tau_i=\tau_{i+1}$. Note that the limiting field $\lim_{T\to\infty}\frac{H(2xT^{2/3},2\tau T)}{-T^{1/3}}=\mathrm{H}(x,\tau)$, the KPZ fixed point \cite{Matetski-Quastel-Remenik17}, is continuous in both $x$ and $\tau$. On the other hand, we have shown $\mathrm{D}_{\mathrm{step}}$ and $\mathrm{D}_{\mathrm{flat}}$ are continuous on the parameters $h_\ell,x_\ell,\tau_\ell$, $\ell=1,\cdots,m$ in the domain $\tau_\ell\le \tau_{\ell+1}$ or all $\ell$ and $x_\ell<x_{\ell+1}$ for $\ell$ satisfying $\tau_\ell=\tau_{\ell+1}$. Hence $F_{\mathrm{step}}$ and $F_{\mathrm{flat}}$ as we defined in Theorems~\ref{thm:limit_step} and~\ref{thm:limit_flat} are both continuous on these parameters in this domain. By the continuity of the KPZ fixed point, we immediately obtain the limit theorems for the general case.

It is also possible to prove the general case directly. Below we provide the ideas of the proof but ignore the technical details.

For the step case, there is no change in the proof if we use the contours $\Sigma_{\ell,\RR}^{\inn},\Sigma_{\ell,\RR}^{\out}$, $2\le \ell\le m$, and $\Sigma_{1,\RR}$ with the new angles $e^{\pm\pi\ii/5}\infty$ since the functions $\tilde f_i$ still decay super-exponentially fast along these new contours. For the flat case, we need to rewrite the trace or determinant in Lemmas~\ref{lm:01} and~\ref{lm:02} as an expansion of a combination of integrals as in $\mathcal{D}_{\boldsymbol{n},Y_{\mathrm{flat}}}$ in~\eqref{eq:aux_100}. Then we do the same rewriting as in~\eqref{eq:aux_101} and~\eqref{eq:aux_102}. We end with $2^{n_2+\cdots+n_m}$ possible integrals of the structure
\begin{equation}
\label{eq:aux_103}
\sum \prod_{\substack{\text{some }i_\ell\\ \ell\ge 2}}\int_{\Sigma_{\ell,\LL}^\out} \ddbarr{u_{i_\ell}^{(\ell)}}\prod_{{i_1}=1}^{n_{1}} \int_{\Sigma_{1,\LL}} \ddbarr{u_{i_1}^{(1)}}\prod_{\ell=2}^{m}  \prod_{{i_\ell}=1}^{n_{\ell}} \left[\frac{1}{1-z_{\ell-1}}\int_{\Sigma_{\ell,\RR}^\inn} \ddbarr{v_{i_\ell}^{(\ell)}}
-\frac{z_{\ell-1}}{1-z_{\ell-1}}\int_{\Sigma_{\ell,\RR}^\out} \ddbarr{v_{i_\ell}^{(\ell)}} \right]
\cdot \prod_{{i_1}=1}^{n_{1}} \int_{\Sigma_{1,\RR}} \ddbarr{v_{i_1}^{(1)}}.
\end{equation}
Here we ignored the integrand similarly as in~\eqref{eq:aux_102}.
With the assumptions on the contours, we have~\eqref{eq:aux_103} converges to~\eqref{eq:aux_102}.  Adding these combinations gives Lemma~\ref{lm:01}. For Lemma~\ref{lm:02} it follows from the fact that the integrand decays super-exponentially fast along the contours for each possible expression~\eqref{eq:aux_103}. So we have the a bound of $C^n\cdot 2^{n_2+\cdots+n_m}$. On the other hand, the expansion of $\det\left[\left(\tilde{\mathcal{K}}_1\tilde{\mathcal{K}}_{Y_{\mathrm{step}}}\right)(w_i,w_j) \right]_{i,j=1}^n$ only involves terms $\mathcal{D}_{\boldsymbol{n},Y_{\mathrm{flat}}}$ satisfying $n_1+\cdots+n_m=n$. Thus $C^n\cdot 2^{n_2+\cdots+n_m}\le (2C)^n$ and Lemma~\ref{lm:02} follows immediately.

\section{Proof of propositions}
\label{sec:proofs}

Before proving the propositions in Section~\ref{sec:main_results}, we introduce one lemma.
\begin{lm}
		\label{lm:lemma3} 
	Suppose $m\ge 1$ is an integer, and $n_1,\cdots,n_m\ge 0$ are $m$ non-negative integers. For each $1\le \ell\le m$, $W^{(\ell)}=(w_1^{(\ell)},\cdots,w_{n_\ell}^{(\ell)})\in\complexC^{n_\ell}$ is a vector of $n_\ell$ complex variables. Assume $\Omega$ is a simply connected domain in $\complexC$ and $a\in\Omega$ is a point in $\Omega$.  Suppose $F(W^{(1)},\cdots,W^{(m)})$ is a function analytic for each variable $w_{i_\ell}^{(\ell)}\in\Omega\setminus\{a\}$, $1\le i_\ell\le n_\ell, 1\le\ell\le m$. Suppose $t$ and $j_t$ are two fixed numbers such that $1\le t\le m$, $n_t\ge 1$, and $1\le j_t\le n_t$. Assume $F$ satisfies the following analyticity property: For any chain of variables starting or ending at $w_{j_t}^{(t)}$: $w_{j_s}^{(s)},w_{j_{s+1}}^{(s+1)},\cdots,w_{j_{s'}}^{(s')}$ with $t=s\le s'$ or $s\le s'=t$, $j_t$ being fixed but each $j_{\ell}$ ($\ell\ne t$) could be an arbitrary number such that $1\le j_\ell\le n_\ell$, the function
	\begin{equation*}
	\left.F(W^{(1)},\cdots,W^{(m)})\right|_{w_{j_s}^{(s)}=w_{j_{s+1}}^{(s+1)}=\cdots=w_{j_{s'}}^{(s')}=w}
	\end{equation*} 
	is analytic at $w=a$ when all other variables in $\Omega\setminus\{a\}$ are fixed. Then
	\begin{equation*}
	\oint\ddbarr{w_{1}^{(1)}}\cdots\oint\ddbarr{w_{n_m}^{(m)}} \left[\prod_{\ell=1}^{m-1}\Ch(W^{(\ell)};W^{(\ell+1)})\right]\cdot F(W^{(1)},\cdots,W^{(m)})=0,
	\end{equation*}
	where the contours of integration could be any order of nested contours enclosing $a$ in $\Omega$. The function $\Ch$ is the Cauchy-type product defined in~\eqref{eq:def_cauchy}.
\end{lm}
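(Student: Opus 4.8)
\textbf{Proof proposal for Lemma~\ref{lm:lemma3}.}

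The plan is to prove the vanishing by an induction on the total degree of the denominators $D:=\sum_{\ell=1}^{m-1}|I^{(\ell)}\times J^{(\ell+1)}|$ of the Cauchy-type product, in the same spirit as the proof of Proposition~\ref{prop:Cauchy_sum_over_roots} in Section~\ref{sec:proof_Cauchy_sum}. (Here all the $\Ch(W^{(\ell)};W^{(\ell+1)})$ use the full index sets, so $D=\sum_{\ell=1}^{m-1}n_\ell n_{\ell+1}$ in the statement as written; but it is cleaner to carry the induction in the more general form where arbitrary subsets $I^{(\ell)},J^{(\ell+1)}$ are allowed, since the residue evaluations below naturally produce such restricted products.) The base case is $D=0$: then every $\Ch(W^{(\ell)};W^{(\ell+1)})$ is a polynomial (a Vandermonde factor in one family), so the whole integrand is, up to a polynomial prefactor absorbed into $F$, just an analytic function of $w_{j_t}^{(t)}$ in $\Omega\setminus\{a\}$ which by the hypothesis (applied to the one-element chain $w_{j_t}^{(t)}$) extends analytically across $w=a$. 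Hence the $w_{j_t}^{(t)}$-integral over a closed contour enclosing $a$ vanishes, and with it the whole multiple integral.

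For the inductive step, suppose $D\ge 1$ and the lemma holds for all smaller values of $D$. Pick an index $\ell_0$ with $I^{(\ell_0)}\times J^{(\ell_0+1)}\ne\emptyset$; I would choose $\ell_0$ to be the largest such index, exactly as in Section~\ref{sec:proof_Cauchy_sum}, so that $W^{(\ell_0+1)}$ does not appear in the denominator of any later Cauchy factor. Single out one variable, say $w_1^{(\ell_0+1)}$, whose index lies in $J^{(\ell_0+1)}$, and evaluate the integral in that variable first. Its integrand has poles only at $w_1^{(\ell_0+1)}=w_i^{(\ell_0)}$ for $i\in I^{(\ell_0)}$ (coming from $\Ch(W^{(\ell_0)};W^{(\ell_0+1)})$) together with the possible bad point $w_1^{(\ell_0+1)}=a$. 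Deforming the $w_1^{(\ell_0+1)}$-contour outward past the $w_i^{(\ell_0)}$ contours (which by the nesting are enclosed by it, or can be arranged to be) picks up a sum of residues at $w_1^{(\ell_0+1)}=w_i^{(\ell_0)}$; on the outer contour the $w_1^{(\ell_0+1)}$-integrand is analytic in that variable everywhere inside except at $a$, and again extends across $a$ by the hypothesis applied to the chain consisting of the single variable $w_1^{(\ell_0+1)}$ — so the outer-contour term is zero. Each residue term, after the substitution $w_1^{(\ell_0+1)}=w_i^{(\ell_0)}$, is again an integral of the form in the lemma, but now with: $n_{\ell_0+1}$ decreased by $1$; the index sets adjusted by deleting the used labels (which strictly decreases $D$, since at least the factor $n_{\ell_0}n_{\ell_0+1}$-worth of denominators involving $w_1^{(\ell_0+1)}$ is removed, or more precisely $|I^{(\ell_0)}|\ge 1$ denominator factors are removed for free); and a new function $F'$ obtained by multiplying $F$ by the finitely many analytic prefactors produced by the residue and substituting $w_1^{(\ell_0+1)}=w_i^{(\ell_0)}$. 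The crucial point is that $F'$ still satisfies the chain-analyticity hypothesis with respect to $w_{j_t}^{(t)}$: a chain through $w_{j_t}^{(t)}$ in the reduced configuration, once the identification $w_1^{(\ell_0+1)}=w_i^{(\ell_0)}$ is undone, becomes a chain through $w_{j_t}^{(t)}$ in the original configuration (possibly longer, passing through both $w_i^{(\ell_0)}$ and $w_1^{(\ell_0+1)}$), and the hypothesis on $F$ covers exactly these. Thus the induction hypothesis applies to each residue term and gives $0$; summing, the whole integral is $0$.

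The main obstacle I anticipate is the bookkeeping in this last step: verifying precisely that the chain-analyticity property is inherited by $F'$ under the residue substitution, and that the nested-contour condition survives the deformation (one must be careful that after identifying two variables the ``arbitrary nesting order'' of the remaining contours is still a legitimate nesting and that no pole of the reduced integrand other than at $a$ and at the surviving Cauchy poles has been created). This is the same subtlety handled in Sections~\ref{sec:analyzing_G1} and~\ref{sec:analyzing_G2} for $\GG_{2,k}$, so I would model the argument on those paragraphs; everything else (the residue computations, the analytic prefactors, the contour deformations away from $a$) is routine. One should also note at the outset that the value of the integral does not depend on the chosen nesting order of the contours — this follows since moving one contour past another only picks up residues at $w_i^{(\ell)}=w_j^{(\ell')}$ coming from the Cauchy factors, and the resulting lower-$D$ integrals vanish by the induction hypothesis — so we are free to arrange the contours conveniently (e.g.\ with the $w_{j_t}^{(t)}$-contour innermost, or with the $w_i^{(\ell_0)}$-contours inside the $w_1^{(\ell_0+1)}$-contour) for each step of the argument.
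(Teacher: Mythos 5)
Your proposal transplants the induction-on-$D$ scheme from the proof of Proposition~\ref{prop:Cauchy_sum_over_roots}, but that scheme relies on an analyticity hypothesis which Lemma~\ref{lm:lemma3} does not provide, and the transplant breaks at a specific step. In Proposition~\ref{prop:Cauchy_sum_over_roots} the function $A$ is controlled along \emph{every} Cauchy chain (in particular along every single variable), which is what lets one integrate out a variable chosen purely by the combinatorics of the Cauchy product (your $w_1^{(\ell_0+1)}$ with $\ell_0$ maximal) and discard the non-residue part of that integral. In Lemma~\ref{lm:lemma3} the hypothesis controls only chains anchored at the one distinguished variable $w_{j_t}^{(t)}$; for any other variable, $F$ may genuinely have a pole at $a$. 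Consequently your claim that the outer-contour term vanishes ``by the hypothesis applied to the chain consisting of the single variable $w_1^{(\ell_0+1)}$'' is unjustified unless $(\ell_0+1,1)=(t,j_t)$: the $w_1^{(\ell_0+1)}$-integral also produces a residue at $w_1^{(\ell_0+1)}=a$, about which the hypothesis says nothing. Worse, $\mathrm{Res}_{w_1^{(\ell_0+1)}=a}F$ need not inherit the chain-analyticity in the remaining variables (the hypothesis controls restrictions of $F$ to diagonals through $w_{j_t}^{(t)}$, not the Laurent coefficients of $F$ at $a$ in an unrelated variable), so this extra term cannot be fed back into the induction.

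The fix is to reverse the roles: the integration must \emph{start} at $w_{j_t}^{(t)}$, the unique variable for which the one-element chain hypothesis guarantees analyticity at $a$, and then follow the residues outward. This is exactly the paper's proof: integrating $w_{j_t}^{(t)}$ first leaves only the Cauchy residues at $w_{j_t}^{(t)}=w_{j_{t\pm1}}^{(t\pm 1)}$; each such substitution lengthens the chain by one step, the chain hypothesis again removes any contribution from $a$ in the next variable, and after finitely many steps the chain can no longer be extended, at which point the last integrand has no poles and the integral vanishes. No induction on $D$ and no order-independence lemma are needed. Your bookkeeping concerns about the reduced $F'$ and the contour nesting are real but secondary; the essential missing idea is that the weak, $w_{j_t}^{(t)}$-anchored hypothesis dictates the order in which the variables must be integrated.
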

\begin{proof}
	The proof follows from a simple calculation. We first integrate the function $\left[\prod_{\ell=1}^{m-1}\Ch(W^{(\ell)};W^{(\ell+1)})\right]\cdot F(W^{(1)},\cdots,W^{(m)})$ with respect to $w_{j_t}^{(t)}$. Since this integrand as a function of $w_{j_t}^{(t)}$ is analytic at $a$ except for possible poles from the Cauchy-type factors, after this integral only the (possible) residues survive. Now we evaluate the residue at $w_{j_t}^{(t)}=w_{j_{t+1}}^{(t+1)}$ (if the $w_{j_{t+1}}^{(t+1)}$ contour is inside the $w_{j_t}^{(t)}$ contour). By the assumption, if we integrate this residue with respect to $w_{j_{t+1}}^{(t+1)}$, it is zero again except that some residues at $w_{j_{t+1}}^{(t+1)}=w_{j_{t+2}}^{(t+2)}$ may survive\footnote{Here we remind that there are no residues of type $w_{j_{t+1}}^{(t+1)}=w_{j'_t}^{(t)}$ since $w_{j_{t+1}}^{(t+1)}-w_{j'_t}^{(t)}$ does not appear in the Cauchy-type factor after our previous evaluation of residue at $w_{j_t}^{(t)}=w_{j_{t+1}}^{(t+1)}$.}. We repeat this procedure and integrate the residue with respect to the variable $w_{j_{t+2}}^{(t+2)}$. After finitely many steps we stop at some point that the integrand no longer has residues. Thus after this procedure we end at zero. Similarly, the evaluation of the possible residue at $w_{j_t}^{(t)}=w_{j_{t-1}}^{(t-1)}$ gives zero as well. This proves the lemma.
\end{proof}

\subsection{Proof of Proposition~\ref{prop:consistency}}
\label{sec:proof_consistency}
The proof of this proposition depends on Theorem~\ref{thm:main1} and Proposition~\ref{lm:lemma4}.

We prove it by using induction on $|I|$.

When $|I|=0$, it is Theorem~\ref{thm:main1}.

Suppose the statement holds for smaller $|I|$. We consider the case of $|I|\ge 1$. Let $s$ be an element in $I$. It satisfies $1\le s\le m-1$. We consider the following three objects
\begin{equation*}
\begin{split}
P_1&=\prob_Y \left( 
\left(\bigcap_{j\in J} \left\{ x_{k_j} (t_j) \ge a_j 
\right\}
\right)\bigcap
\left(\bigcap_{i\in I\setminus\{s\}} \left\{ x_{k_i} (t_i) < a_i 
\right\}\right)
\right),\\
P_2&=\prob_Y \left( 
\left(\bigcap_{j\in J\cup\{s\}} \left\{ x_{k_j} (t_j) \ge a_j 
\right\}
\right)\bigcap
\left(\bigcap_{\substack{i\in I\setminus\{s\}}} \left\{ x_{k_i} (t_i) < a_i 
\right\}\right)
\right), \\
P_3&=\prob_Y \left( 
\left(\bigcap_{j\in J } \left\{ x_{k_j} (t_j) \ge a_j 
\right\}
\right)\bigcap
\left(\bigcap_{\substack{i\in I}} \left\{ x_{k_i} (t_i) < a_i 
\right\}\right)
\right).
\end{split}
\end{equation*}
Note the event considered in $P_1$ is a union of the two disjoint events considered in $P_2$ and $P_3$. Therefore we have $P_1=P_2+P_3$.

On the other hand, since $|I\setminus\{s\}|<|I|$ we could apply the induction hypothesis to $I$ and $II$. We have
\begin{equation*}
\begin{split}
P_1&=(-1)^{|I|-1}\oint  \ddbar{z_1}\cdots\oint\ddbar{z_{s-1}}\oint\ddbar{z_{s+1}}\cdots\oint\ddbar{z_{m-1}}\\
&\quad\left[\prod_{\substack{1\le \ell\le m-1\\ \ell\ne s}}\frac{1}{1-z_\ell}\right] \mathcal{D}_Y(z_1,\cdots,z_{s-1},z_{s+1},\cdots,z_{m-1})
\end{split}
\end{equation*} 
and
\begin{equation*}
\begin{split}
P_2&=(-1)^{|I|-1}\oint  \ddbar{z_1}\cdots\oint\ddbar{z_{s-1}}\oint\ddbar{z_{s}}\oint\ddbar{z_{s+1}}\cdots\oint\ddbar{z_{m-1}}\\
&\quad\left[\prod_{\substack{1\le \ell\le m-1}}\frac{1}{1-z_\ell}\right] \mathcal{D}_Y(z_1,\cdots,z_{s-1},z_s,z_{s+1},\cdots,z_{m-1}),
\end{split}
\end{equation*} 
where the contours of integration are circles centered at the origin. The radius of $z_i$ contour is larger than $1$ in both $P_1$ and $P_2$ if $i\in I\setminus\{s\}$, otherwise it is smaller than $1$. We remind that in the term $\mathcal{D}_Y(z_1,\cdots,z_{s-1},z_{s+1},\cdots,z_{m-1})$ of $P_1$, the parameters are $a_\ell,k_\ell,t_\ell$ for $1\le \ell\le m$ but $\ell\ne s$. This is also consistent with Proposition~\ref{lm:lemma4}. 

Now we apply Proposition~\ref{lm:lemma4} and obtain
\begin{equation*}
\begin{split}
P_1-P_2=(-1)^{|I|}\oint  \ddbar{z_1}\cdots\oint\ddbar{z_{m-1}}
\left[\prod_{\substack{1\le \ell\le m-1}}\frac{1}{1-z_\ell}\right] \mathcal{D}_Y(z_1,\cdots,z_{m-1}),
\end{split}
\end{equation*} 
where the contours are circles centered at the origin. The radius of $z_i$ is larger than $1$ if $i\in I$, otherwise it is smaller than $1$. This equals to $P_3$ by our argument at the beginning of the proof. This finishes the induction.

\subsection{Proof of Proposition~\ref{prop:invariance_spaces}}
\label{sec:proof_invariance_spaces}

We will prove the proposition by using the following lemma.
\begin{lm}
\label{lm:switching_contours}
Suppose $\Sigma^\out,\Sigma,\Sigma^\inn$ are three nested simple closed contours in $\complexC$. Let $\Omega$ be an open region containing  these three contours and all the points between them. Assume $U^{(1)}=(u_1^{(1)},\cdots,u_{n_1}^{(1)})$ and $U^{(2)}=(u^{(2)}_1,\cdots,u_{n_2}^{(2)})$ are two vectors of variables. Here $n_1,n_2\ge 0$. We also assume that $F(U^{(1)},U^{(2)})$ is an analytic function on $\Omega^{n_1+n_2}$. Then for each $z\ne 1$, we have
\begin{equation}
\label{eq:aux18}
\begin{split}
&\prod_{i_1=1}^{n_1}\left[\frac{1}{1-z }\int_{\Sigma^\out} \ddbarr{u_{i_1}^{(1)}}
-\frac{z}{1-z }\int_{\Sigma ^\inn} \ddbarr{u_{i_1}^{(1)}} \right]
\cdot \prod_{{i_2}=1}^{n_{2}} \int_{\Sigma } \ddbarr{u_{i_2}^{(2)}} \Ch(U^{(1)};U^{(2)})F(U^{(1)},U^{(2)})\\
&=\prod_{i_2=1}^{n_2}\left[\frac{1}{1-z }\int_{ \Sigma^\inn} \ddbarr{u_{i_2}^{(2)}}
-\frac{z}{1-z }\int_{ \Sigma ^\out} \ddbarr{u_{i_2}^{(2)}} \right]
\cdot \prod_{{i_1}=1}^{n_{1}} \int_{ \Sigma } \ddbarr{u_{i_1}^{(1)}} \Ch(U^{(1)};U^{(2)})F(U^{(1)},U^{(2)}),
\end{split}
\end{equation}
where $\Ch(W;W')$ is the Cauchy-type factor defined in~\eqref{eq:def_cauchy}.
\end{lm}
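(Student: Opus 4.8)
\textbf{Proof plan for Lemma~\ref{lm:switching_contours}.}

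The plan is to reduce the identity to a statement about residues collected when one deforms the $\Sigma^{\out}$ and $\Sigma^{\inn}$ contours onto $\Sigma$. Concretely, I would first rewrite each bracketed combination on the left-hand side in the ``split'' form
\begin{equation*}
\frac{1}{1-z}\int_{\Sigma^{\out}}\ddbarr{u} - \frac{z}{1-z}\int_{\Sigma^{\inn}}\ddbarr{u}
= \int_{\Sigma}\ddbarr{u} + \frac{1}{1-z}\left(\int_{\Sigma^{\out}}\ddbarr{u} - \int_{\Sigma}\ddbarr{u}\right) - \frac{z}{1-z}\left(\int_{\Sigma^{\inn}}\ddbarr{u} - \int_{\Sigma}\ddbarr{u}\right),
\end{equation*}
and similarly for the right-hand side with the roles of $U^{(1)}$ and $U^{(2)}$ interchanged. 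Because $F$ is analytic on $\Omega^{n_1+n_2}$, the only obstruction to moving any $u_{i_1}^{(1)}$ contour freely between $\Sigma^{\out}$, $\Sigma$, $\Sigma^{\inn}$ comes from the poles of $\Ch(U^{(1)};U^{(2)})=\Delta(U^{(1)})\Delta(U^{(2)})/\Delta(U^{(1)};U^{(2)})$, i.e. from the factors $1/(u_{i_1}^{(1)}-u_{i_2}^{(2)})$. So the difference between the two sides is a sum of residue integrals, each indexed by a partial matching between the $u^{(1)}$-variables and the $u^{(2)}$-variables.

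The key computational step is to show that these residue contributions on the two sides agree term by term. I would expand both sides fully and organize the resulting terms by which subset $S^{(1)}\subseteq\{1,\dots,n_1\}$ of $u^{(1)}$-variables gets ``pushed outward/inward'' (hence potentially contributes a residue) and, dually on the right side, which subset $S^{(2)}\subseteq\{1,\dots,n_2\}$ does. The crucial algebraic input is that when one evaluates a residue of $\Ch(U^{(1)};U^{(2)})F$ at $u_{i_1}^{(1)}=u_{i_2}^{(2)}$, the Cauchy-type factor for the remaining variables is again a Cauchy-type factor of the same shape (with those two indices removed and merged), and the numerator $\Delta$-factors transform by a controlled sign and by the ``residual'' Vandermonde contributions $\prod_{i_1'\ne i_1}(u_{i_1'}^{(1)}-u_{i_2}^{(2)})$, $\prod_{i_2'\ne i_2}(u_{i_1}^{(1)}-u_{i_2'}^{(2)})$ — and these are exactly symmetric under swapping the two labels. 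Combined with the elementary weight identity $\frac{1}{1-z}\cdot(\text{coefficient when pushed out on the left})$ matching $\frac{-z}{1-z}$ or $1$ appropriately on the right, the bookkeeping of weights $\tfrac{1}{1-z}$ and $\tfrac{-z}{1-z}$ closes up. In effect this is the $m=1$ (single pair of blocks) instance of the same contour-flipping mechanism already used implicitly in the setup of $\dd\mu$ versus $\dd\tilde\mu$ in Proposition~\ref{prop:invariance_spaces}, and indeed the lemma is precisely the engine that will prove that proposition.

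The main obstacle I anticipate is purely combinatorial rather than analytic: keeping track of the signs arising from the Vandermonde determinants $\Delta(U^{(1)})$, $\Delta(U^{(2)})$ and the bilinear $\Delta(U^{(1)};U^{(2)})$ when several residues are taken simultaneously, and verifying that a residue taken at $u_{i_1}^{(1)}=u_{i_2}^{(2)}$ followed by another at $u_{i_1'}^{(1)}=u_{i_2'}^{(2)}$ produces no spurious higher-order pole (so that the process of iterated residues is well-defined and order-independent). To handle this cleanly I would set up an induction on $n_1+n_2$: isolate the outermost variable, say $u_1^{(1)}$, move its contour from $\Sigma^{\out}$ to $\Sigma$, pick up residues at $u_1^{(1)}=u_{i_2}^{(2)}$ for those $u_{i_2}^{(2)}$ still on $\Sigma$, and in each residue term recognize a smaller instance of the same identity (with $n_1-1$ and $n_2-1$ variables) to which the inductive hypothesis applies; the ``no residue'' term is handled by a parallel move of $u_1^{(2)}$-type on the right side. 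The base case $n_1=0$ or $n_2=0$ is immediate since then $\Ch$ is a polynomial and every contour can be collapsed to $\Sigma$ without crossing singularities, making both sides equal to the same integral over $\Sigma^{n_1+n_2}$ against $F$.
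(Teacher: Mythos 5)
Your plan is correct and follows essentially the same route as the paper's proof: split the bracketed combination, isolate one $u^{(1)}$-variable, deform its contour and collect the residues at $u^{(1)}_{i_1}=u^{(2)}_{i_2}$ coming from the Cauchy factor, recognize each residue term as a smaller instance of the identity, and use the base case where one family is empty so that $\Ch$ is polynomial and all contours collapse to $\Sigma$. The only cosmetic differences are that the paper inducts on $n_1$ alone (treating the distinguished $u^{(2)}_j$ in each residue term as a spectator pushed to $\Sigma^{\out}$) rather than on $n_1+n_2$, and uses a slightly different splitting of the weight $\frac{1}{1-z}\int_{\Sigma^{\out}}-\frac{z}{1-z}\int_{\Sigma^{\inn}}$.
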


We will first use Lemma~\ref{lm:switching_contours} to prove Proposition~\ref{prop:invariance_spaces}, then prove Lemma~\ref{lm:switching_contours}.

Consider Proposition~\ref{prop:invariance_spaces}. By using the series expansion formula, it is sufficient to show that for any $\boldsymbol{n}=(n_1,\cdots,n_m)\in(\intZ_{\ge 0})^{m}$, we have
\begin{equation}
\label{eq:aux05}
\begin{split}
&\prod_{\ell=2}^{m}  \prod_{{i_\ell}=1}^{n_{\ell}} \left[\frac{1}{1-z_{\ell-1}}\int_{\Sigma_{\ell,\LL}^\inn} \ddbarr{u_{i_\ell}^{(\ell)}}
-\frac{z_{\ell-1}}{1-z_{\ell-1}}\int_{\Sigma_{\ell,\LL}^\out} \ddbarr{u_{i_\ell}^{(\ell)}} \right]
\cdot \prod_{{i_1}=1}^{n_{1}} \int_{\Sigma_{1,\LL}} \ddbarr{u_{i_1}^{(1)}}\\
&\qquad
\left[\prod_{\ell=1}^{m-1}\Ch(U^{(\ell)};U^{(\ell+1)})\right]\cdot F(U^{(1)},\cdots,U^{(m)})\\
&=\prod_{\ell=1}^{m-1}  \prod_{{i_\ell}=1}^{n_{\ell}} \left[\frac{1}{1-z_{\ell}}\int_{\tilde\Sigma_{\ell,\LL}^\out} \ddbarr{u_{i_\ell}^{(\ell)}}
-\frac{z_{\ell}}{1-z_{\ell}}\int_{\tilde\Sigma_{\ell,\LL}^\inn} \ddbarr{u_{i_\ell}^{(\ell)}} \right]
\cdot \prod_{{i_m}=1}^{n_{m}} \int_{\tilde\Sigma_{m,\LL}} \ddbarr{u_{i_m}^{(m)}}\\
&\qquad
\left[\prod_{\ell=1}^{m-1}\Ch(U^{(\ell)};U^{(\ell+1)})\right]\cdot F(U^{(1)},\cdots,U^{(m)}),
\end{split}
\end{equation}
where  $F$ is any function analytic for each variable $u_{i_\ell}^{(\ell)}$ in $\Omega_\LL\setminus\{-1\}$. The vector $U^{(\ell)}=(u_1^{(\ell)},\cdots,u_{n_\ell}^{(\ell)})$ for $\ell=1,\cdots,m$. Recall that $\Sigma_{m,\LL}^\out$,$\cdots$,$\Sigma_{2,\LL}^\out$,$\Sigma_{1,\LL}$,$\Sigma_{2,\inn}$,$\cdots$,$\Sigma_{m,\LL}^\inn$ are nested, and $\tilde \Sigma_{1,\LL}^\out,\cdots,\tilde \Sigma_{m-1,\LL}^\out,\tilde\Sigma_{m,\LL},\tilde\Sigma_{m-1,\LL}^\inn,\cdots,\tilde\Sigma_{1,\LL}^\inn$ are also nested. 

We prove~\eqref{eq:aux05} by induction.

If $m=2$, we need to show that
\begin{equation*}
\begin{split}
&\prod_{i_1=1}^{n_1}\left[\frac{1}{1-z_{1}}\int_{\tilde\Sigma_{1,\LL}^\out} \ddbarr{u_{i_1}^{(1)}}
-\frac{z_{1}}{1-z_{1}}\int_{\tilde\Sigma_{1,\LL}^\inn} \ddbarr{u_{i_1}^{(1)}} \right]
\cdot \prod_{{i_2}=1}^{n_{2}} \int_{\tilde\Sigma_{2,\LL}} \ddbarr{u_{i_2}^{(2)}} \Ch(U^{(1)};U^{(2)})F(U^{(1)},U^{(2)})\\
&=\prod_{i_2=1}^{n_2}\left[\frac{-z_1}{1-z_{1}}\int_{\Sigma_{2,\LL}^\out} \ddbarr{u_{i_2}^{(2)}}
+\frac{1}{1-z_{1}}\int_{\Sigma_{2,\LL}^\inn} \ddbarr{u_{i_2}^{(2)}} \right]
\cdot \prod_{{i_1}=1}^{n_{1}} \int_{\Sigma_{1,\LL}} \ddbarr{u_{i_1}^{(1)}} \Ch(U^{(1)};U^{(2)})F(U^{(1)},U^{(2)}).
\end{split}
\end{equation*}
This follows from Lemma~\ref{lm:switching_contours} by deforming the contours appropriately.

Suppose~\eqref{eq:aux05} holds for $m-1$ with $m\ge 3$. We want to show that it holds for $m$. Without loss of generality, we assume that $\Sigma_{m,\LL}^\out$ is outside of all the contours $\tilde\Sigma_{\ell,\LL}^\out$, and $\Sigma_{m,\LL}^\inn$ is inside all the contours $\tilde\Sigma_{\ell,\LL}^\inn$. We first fix all other contours but just apply Lemma~\ref{lm:switching_contours} case for the variables $U^{(m)}$, $U^{(m-1)}$ and the contours $\tilde\Sigma_{m-1,\LL}^\out,\tilde\Sigma_{m,\LL},\tilde\Sigma_{m-1,\LL}^\inn$. This gives
\begin{equation}
\label{eq:aux12}
\begin{split}
&\prod_{\ell=1}^{m-1} \prod_{{i_\ell}=1}^{n_{\ell}} \left[\frac{1}{1-z_{\ell}}\int_{\tilde\Sigma_{\ell,\LL}^\out} \ddbarr{u_{i_\ell}^{(\ell)}}
-\frac{z_{\ell}}{1-z_{\ell}}\int_{\tilde\Sigma_{\ell,\LL}^\inn} \ddbarr{u_{i_\ell}^{(\ell)}} \right]
\cdot \prod_{{i_m}=1}^{n_{m}} \int_{\tilde\Sigma_{m,\LL}} \ddbarr{u_{i_m}^{(m)}}\\
&\qquad
\left[\prod_{\ell=1}^{m-1}\Ch(U^{(\ell)};U^{(\ell+1)})\right]\cdot F(U^{(1)},\cdots,U^{(m)})\\
&=\prod_{\ell=1}^{m-2}  \prod_{{i_\ell}=1}^{n_{\ell}} \left[\frac{1}{1-z_{\ell}}\int_{\tilde\Sigma_{\ell,\LL}^\out} \ddbarr{u_{i_\ell}^{(\ell)}}
-\frac{z_{\ell}}{1-z_{\ell}}\int_{\tilde\Sigma_{\ell,\LL}^\inn} \ddbarr{u_{i_\ell}^{(\ell)}} \right]
\cdot \prod_{{i_{m-1}}=1}^{n_{m-1}} \int_{\tilde\Sigma_{m,\LL}} \ddbarr{u_{i_{m-1}}^{(m-1)}}\\
&\qquad \prod_{i_m=1}^{n_m}\left[\frac{1}{1-z_{m-1}}\int_{\tilde\Sigma_{m-1,\LL}^\inn}\ddbarr{u_{i_m}^{(m)}}-\frac{z_{m-1}}{1-z_{m-1}}\int_{\tilde\Sigma_{m-1,\LL}^\out}\ddbarr{u_{i_m}^{(m)}}\right]
\left[\prod_{\ell=1}^{m-1}\Ch(U^{(\ell)};U^{(\ell+1)})\right]\cdot F(U^{(1)},\cdots,U^{(m)}).
\end{split}
\end{equation}
Then we deform the $\tilde\Sigma_{m-1,\LL}^\out$ to   $\Sigma_{m,\LL}^\out$ and $\tilde\Sigma_{m-1,\LL}^\inn$ to $\Sigma_{m,\LL}^\inn$. The integrand does not encounter any poles during the deformation since the variables of $U^{(m-1)}$ is on $\tilde\Sigma_{m,\LL}$ which lies between $\tilde\Sigma_{m-1,\LL}^\out$ and $\tilde\Sigma_{m-1,\LL}^\inn$. Then we apply the induction hypothesis for all other variables in $U^{(\ell)}$ for $\ell\ne m$ and all other contours $\tilde\Sigma_{1,\LL}^\out$,$\cdots$,$\tilde\Sigma_{m-2,\LL}^\out$,$\tilde\Sigma_{m,\LL}$,$\tilde\Sigma_{m-2,\LL}^\inn$,$\cdots$, $\tilde\Sigma_{1,\LL}^\inn$, and obtain
\begin{equation*}
\begin{split}
&\prod_{\ell=1}^{m-2}  \prod_{{i_\ell}=1}^{n_{\ell}} \left[\frac{1}{1-z_{\ell}}\int_{\tilde\Sigma_{\ell,\LL}^\out} \ddbarr{u_{i_\ell}^{(\ell)}}
-\frac{z_{\ell}}{1-z_{\ell}}\int_{\tilde\Sigma_{\ell,\LL}^\inn} \ddbarr{u_{i_\ell}^{(\ell)}} \right]
\cdot \prod_{{i_{m-1}}=1}^{n_{m-1}} \int_{\tilde\Sigma_{m,\LL}} \ddbarr{u_{i_{m-1}}^{(m-1)}}\\
&\qquad\left[\prod_{\ell=1}^{m-1}\Ch(U^{(\ell)};U^{(\ell+1)})\right]\cdot F(U^{(1)},\cdots,U^{(m)})\\
&=\prod_{\ell=2}^{m-1}  \prod_{{i_\ell}=1}^{n_{\ell}} \left[\frac{1}{1-z_{\ell-1}}\int_{\Sigma_{\ell,\LL}^\inn} \ddbarr{u_{i_\ell}^{(\ell)}}
-\frac{z_{\ell-1}}{1-z_{\ell-1}}\int_{\Sigma_{\ell,\LL}^\out} \ddbarr{u_{i_\ell}^{(\ell)}} \right]
\cdot \prod_{{i_1}=1}^{n_{1}} \int_{\Sigma_{1,\LL}} \ddbarr{u_{i_1}^{(1)}}\\
&\qquad\left[\prod_{\ell=1}^{m-1}\Ch(U^{(\ell)};U^{(\ell+1)})\right]\cdot F(U^{(1)},\cdots,U^{(m)})
\end{split}
\end{equation*}
for any fixed $U^{(m)}$ on $(\Sigma_{m,\LL}^\out\cup\Sigma_{m,\LL}^\inn)^{n_m}$. Together with~\eqref{eq:aux12} and the discussions above, we immediately obtain~\eqref{eq:aux05}. This finishes the induction.

\vspace{0.4cm}

Below we prove Lemma~\ref{lm:switching_contours}. We use induction on $n_1$. If $n_1=0$, the equation becomes trivial: It follows by writing (here we omit the integrand)
$$
\int_\Sigma\ddbarr{u_{i_2}^{(2)}} =\frac{1}{1-z}\int_{\Sigma^\inn}\ddbarr{u_{i_2}^{(2)}} -\frac{z}{1-z}\int_{\Sigma^\out}\ddbarr{u_{i_2}^{(2)}}
$$
since the integrand is an analytic function of $u_{i_2}^{(2)}\in\Omega$. We remark that we used such a decomposition before in Section~\ref{sec:basic_step}. See the last two equations in page 40 and the following discussions.

 Suppose the lemma holds for $n_1-1$ for some $n_1\ge 1$, we want to prove the case for $n_1$.

Consider the integral over $u_{n_1}^{(1)}$. We write
\begin{equation*}
\frac{1}{1-z }\int_{\Sigma^\out} \ddbarr{u_{n_1}^{(1)}}
-\frac{z}{1-z }\int_{\Sigma ^\inn} \ddbarr{u_{n_1}^{(1)}}=\int_{\Sigma^\out} \ddbarr{u_{n_1}^{(1)}}+ \frac{z}{1-z }\left[\int_{\Sigma ^\out} \ddbarr{u_{n_1}^{(1)}}-\int_{\Sigma ^\inn} \ddbarr{u_{n_1}^{(1)}}
\right].
\end{equation*}
Then
\begin{equation*}
\begin{split}
&\left[\frac{1}{1-z }\int_{\Sigma^\out} \ddbarr{u_{n_1}^{(1)}}
-\frac{z}{1-z }\int_{\Sigma ^\inn} \ddbarr{u_{n_1}^{(1)}}\right]\Ch(U^{(1)};U^{(2)})F(U^{(1)},U^{(2)})\\
&=\int_{\Sigma^\out} \ddbarr{u_{n_1}^{(1)}}\Ch(U^{(1)};U^{(2)})F(U^{(1)},U^{(2)})
+
\frac{z}{1-z}\sum_{j=1}^{n_2}\mathrm{Res}\left(\Ch(U^{(1)};U^{(2)})F(U^{(1)},U^{(2)}),{u_{n_1}^{(1)}=u_{j}^{(2)}}\right).
%\\
%&\qquad+\sum_{j=1}^{n_2}\left.\Ch(U^{(1)};U^{(2)})F(U^{(1)},U^{(2)})\right|_{u_{n_1}^{(1)}=u_{j}^{(2)}}.+(-1)^{n_1+n_2-1}\frac{z}{1-z}\sum_{j=1}^{n_2}(-1)^j\Ch(\hat U^{(1)};\hat U^{(2)}_{j^c})\left.F(U^{(1)},U^{(2)})\right|_{u_{n_1}^{(1)}=u_{j}^{(2)}}.
\end{split}
\end{equation*}

By plugging the above equation into the left hand side of~\eqref{eq:aux18}, we obtain
\begin{equation}
\label{eq:aux19}
\text{LHS of~\eqref{eq:aux18}}=S_1+\frac{z}{1-z}\sum_{j=1}^{n_2}S_{2,j},
\end{equation}
where
\begin{equation*}
S_1=\prod_{i_1=1}^{n_1-1}\left[\frac{1}{1-z }\int_{\Sigma^\out} \ddbarr{u_{i_1}^{(1)}}
-\frac{z}{1-z }\int_{\Sigma ^\inn} \ddbarr{u_{i_1}^{(1)}} \right]
\cdot \prod_{{i_2}=1}^{n_{2}} \int_{\Sigma } \ddbarr{u_{i_2}^{(2)}} \cdot \int_{\Sigma^\out} \ddbarr{u_{n_1}^{(1)}}\Ch(U^{(1)};U^{(2)})F(U^{(1)},U^{(2)})
\end{equation*}
and
\begin{equation*}
S_{2,j}=\prod_{i_1=1}^{n_1-1}\left[\frac{1}{1-z }\int_{\Sigma^\out} \ddbarr{u_{i_1}^{(1)}}
-\frac{z}{1-z }\int_{\Sigma ^\inn} \ddbarr{u_{i_1}^{(1)}} \right]
\cdot \prod_{{i_2}=1}^{n_{2}} \int_{\Sigma } \ddbarr{u_{i_2}^{(2)}}\mathrm{Res}\left(\Ch(U^{(1)};U^{(2)})F(U^{(1)},U^{(2)}),{u_{n_1}^{(1)}=u_{j}^{(2)}}\right).
\end{equation*}
Below we consider $S_1$ and $S_{2,j}$ separately.

For $S_1$, we first deform the contour of $u_{n_1}^{(1)}$ to some larger contour $\Sigma^{\out}_+$ in $\Omega$ which encloses $\Sigma^\out$. Then we apply induction hypothesis for other contours and obtain
\begin{equation*}
\begin{split}
S_1=\prod_{i_2=1}^{n_2}\left[\frac{1}{1-z }\int_{\Sigma^\inn} \ddbarr{u_{i_2}^{(2)}}
-\frac{z}{1-z }\int_{\Sigma ^\out} \ddbarr{u_{i_2}^{(2)}} \right]
\cdot \prod_{{i_1}=1}^{n_{1}-1} \int_{\Sigma } \ddbarr{u_{i_1}^{(1)}} \cdot \int_{\Sigma^\out_+} \ddbarr{u_{n_1}^{(1)}}\Ch(U^{(1)};U^{(2)})F(U^{(1)},U^{(2)}).
\end{split}
\end{equation*}
By deforming the contour of $u_{n_1}^{(1)}$ to $\Sigma$, we have
\begin{equation}
\label{eq:aux20}
S_1=\text{ RHS of~\eqref{eq:aux18}}-\frac{z}{1-z}\sum_{j=1}^{n_2}T_{j}
\end{equation}
with
\begin{equation*}
\begin{split}
T_j&=\prod_{\substack{1\le i_2\le n_2\\ i_2\ne j}}\left[\frac{1}{1-z }\int_{\Sigma^\inn} \ddbarr{u_{i_2}^{(2)}}
-\frac{z}{1-z }\int_{\Sigma ^\out} \ddbarr{u_{i_2}^{(2)}} \right]
\cdot \int_{\Sigma^\out}\ddbarr{u_{j}^{(2)}}\\
&\qquad\cdot \prod_{{i_1}=1}^{n_{1}-1} \int_{\Sigma } \ddbarr{u_{i_1}^{(1)}} \mathrm{Res}\left(\Ch(U^{(1)};U^{(2)})F(U^{(1)},U^{(2)}),{u_{n_1}^{(1)}=u_{j}^{(2)}}\right).
\end{split}
\end{equation*}

For $S_{2,j}$, it is easy to verify that the function
\begin{equation*}
\mathrm{Res}\left(\Ch(U^{(1)};U^{(2)})F(U^{(1)},U^{(2)}),{u_{n_1}^{(1)}=u_{j}^{(2)}}\right)=(-1)^{n_1+n_2+j-1}\Ch(\hat U^{(1)};\hat U^{(2)}_{j^c})\left.F(U^{(1)},U^{(2)})\right|_{u_{n_1}^{(1)}=u_{j}^{(2)}}.
\end{equation*}
Here the notation $\hat U^{(1)}:=(u_1^{(1)},\cdots,u_{n_1-1}^{(1)})$ is obtained by dropping the variable $u_{n_1}^{(1)}$ from the vector $U^{(1)}$, and $\hat U^{(2)}_{j^c}=(u_1^{(2)},\cdots,u_{j-1}^{(2)},u_{j+1}^{(2)},\cdots,u_{n_2}^{(2)})$ is obtained by dropping the variable $u_j^{(2)}$ from $U^{(2)}$. The above expression implies that we could deform contour of $u_j^{(2)}$ to  $\Sigma^\out$, and apply the induction hypothesis for other contours in $S_{2,j}$. This gives $S_{2,j}=T_j$. Together with~\eqref{eq:aux19} and~\eqref{eq:aux20}, we obtain~\eqref{eq:aux18}. This finishes the induction. We finish the proof of the lemma.

\subsection{Proof of Proposition~\ref{prop:invariance_distribution}}
\label{sec:proof_invariance_distribution}

We only prove the proposition with condition (1). The case for the other condition is similar.

	It is sufficient to show $\mathcal{D}_{\boldsymbol{n},Y}(z_1,\cdots,z_{m-1})$ does not change if we replace $\Kess_Y(v,u)$ by $\Kess_Y(v,u) + \Knull(v,u)$ in~\eqref{eq:D_nY}. This further reduces to prove
	\begin{equation}
	\label{eq:aux13}
	\begin{split}
	0=&\left[\prod_{\ell=2}^{m}  \prod_{{i_\ell}=1}^{n_{\ell}} \int_{\Sigma_{\ell,\LL}^\star} \ddbarr{u_{i_\ell}^{(\ell)}}\right]
	\prod_{{i_1}=1}^{n_{1}} \int_{\Sigma_{1,\LL}} \ddbarr{u_{i_1}^{(1)}}\left[\prod_{\ell=2}^{m}  \prod_{{i_\ell}=1}^{n_{\ell}}  \int_{\Sigma_{\ell,\RR}^\star} \ddbarr{v_{i_\ell}^{(\ell)}}\right]
	\cdot \prod_{{i_1}=1}^{n_{1}} \int_{\Sigma_{1}} \ddbarr{v_{i_1}^{(1)}}\\
	&\quad \Knull(v_i^{(1)},u_j^{(1)})\left[\prod_{\ell=1}^{m-1}\Ch(U^{(\ell)};U^{(\ell+1)})\Ch(V^{(\ell)};V^{(\ell+1)})\right]  F(U^{(1)},\cdots,V^{(m)})
	\end{split}
	\end{equation}
	for any $1\le i,j\le n_1$. Here the function $\Ch(W;W')$ represents the Cauchy-type factor defined in~\eqref{eq:def_cauchy}. The function $F(U^{(1)},\cdots,V^{(m)})=\tilde F(U^{(1)},\cdots,V^{(m)})\cdot \prod_{\ell=1}^m f_\ell(V^{(\ell)})$ for some function $\tilde F$ which is analytic for each  $v_{i_\ell}^{(\ell)}\in\Omega_\RR$ when $\ell\ge 1$. The symbol $\star$ represents any choice of ``$\out$'' or ``$\inn$'' in the contours of integration $\Sigma_{\ell,\LL}^\star$ and $\Sigma_{\ell,\RR}^\star$.
	
	The proof of~\eqref{eq:aux13} is a slight modification of that for Lemma~\ref{lm:lemma3}. We provide the details below for the completeness.
	
	We consider the double integral with respect to $v_i^{(1)}$ and $u_j^{(1)}$. Recall the formulas of $f_i$ defined in~\eqref{eq:fi}. We have  $f_1(v^{(1)}_i)=(v^{(1)}_i)^{-k_1}(v^{(1)}_i+1)^{a_1+k_1}e^{-t_1v^{(1)}_i}$. By applying the condition (1) of $\Knull_Y$, we know that the double integral with respect to $v_i^{(1)}$ and $u_j^{(1)}$ equals to zero if the contour of $v_{i}^{(1)}$ could be deformed sufficiently small to $0$. Thus the original double integral with respect to $v_i^{(1)}$ and $u_j^{(1)}$ only gives the possible residues at $v_i^{(1)}=v_{i'}^{(2)}$. By evaluating this residue, we obtain a new integrand $\Knull(v_{i'}^{(2)},u_{j}^{(1)})  \left[f_1(v_{i'}^{(2)})f_2(v_{i'}^{(2)})\right]$ multiplied by some other factors. Note that $f_1(v_{i'}^{(2)})f_2(v_{i'}^{(2)})= (v^{(2)}_{i'})^{-k_2}(v^{(2)}_{i'}+1)^{a_2+k_2}e^{-t_2v^{(2)}_{i'}}$. Thus the double integral with respect to $v_{i'}^{(2)}$ and $u_j^{(1)}$ equals to zero if the contour  for $v_{i'}^{(2)}$ could be deformed to sufficiently close to  $0$. We only need to evaluate the possible residues for $v_{i'}^{(2)} = v_{i''}^{(3)}$. After finitely many steps, there are no poles of this type within the contours and the last double integral becomes $0$.

\subsection{Proof of Proposition~\ref{prop:orthogonality}}
\label{sec:proof_orthogonality}

	By inserting the definition of $\Kess_Y(v,u)$, it is equivalent to prove
	\begin{equation*}
	\oint_0 v^{-i} (v+1)^{\lambda_i} \cdot \frac{1}{v-u}\cdot \ich_{\boldsymbol{\lambda}}(v,u) \ddbarr{v} =-u^{-i}(u+1)^{\lambda_i},
	\end{equation*}
	where $\lambda_1\ge\cdots\ge\lambda_N=\lambda_{N+1}=\cdots=0$. Now we fix $1\le i\le N$ and assume the integral contour is small enough such that $|v|<|u|$. It is sufficient to show, by approximating the integral using summation,
	\begin{equation}
	\label{eq:proof_Kess_int00}
	\lim_{M\to\infty}\frac{1}{M}\sum_{j=1}^{M} (v\xi ^j)^{-i+1} (v\xi^j+1)^{\lambda_i}\cdot \frac{1}{v\xi^j-u} \ich_{\boldsymbol{\lambda}} (v\xi^j,u) =-u^{-i}(u+1)^{\lambda_i},
	\end{equation}
	where $\xi=e^{\frac{2\pi\ii}{M}}$.

	We first reformulate the factor $1/(v\xi^j-u)$. By applying the Vandermonde determinant formula, we obtain
	\begin{equation*}
	\frac{\det\left[ (v\xi^{\alpha})^{-\beta}1_{\alpha\ne j}+ u^{-\beta}1_{\alpha=j}\right]_{\alpha,\beta=1}^M}{\det\left[ (v\xi^{\alpha})^{-\beta}\right]_{\alpha,\beta=1}^M}
	=\frac{v^M}{u^M}\cdot \frac{\prod_{\alpha\ne j}(u-v\xi^\alpha)}{\prod_{\alpha\ne j}(v\xi^j-v\xi^\alpha)}.
	\end{equation*}
	Moreover, by using the property of $\xi$, it is easy to see
	\begin{equation*}
	\prod_{\alpha\ne j}(u-v\xi^\alpha)=\frac{u^M-v^M}{u-v\xi^j},\quad 
	\prod_{\alpha\ne j}(v\xi^j-v\xi^\alpha) = M (v\xi^j)^{M-1}=\frac{Mv^M}{v\xi^j}.
	\end{equation*}
	Thus we have
	\begin{equation}
	\label{eq:proof_Kess_int01}
	\frac{1}{v\xi^j - u} =-\frac{M}{v\xi^j} \cdot \frac{u^M}{u^M-v^M}\cdot \frac{\det\left[ (v\xi^{\alpha})^{-\beta}1_{\alpha\ne j}+ u^{-\beta}1_{\alpha=j}\right]_{\alpha,\beta=1}^M}{\det\left[ (v\xi^{\alpha})^{-\beta}\right]_{\alpha,\beta=1}^M}.
	\end{equation}

	On the other hand, by applying the Cramer's rule, we have
	\begin{equation*}
	\sum_{j=1}^M (v\xi^j)^{-i}(v\xi^j+1)^{\lambda_i}\frac{\det\left[ (v\xi^{\alpha})^{-\beta}(v\xi^{\alpha}+1)^{\lambda_\beta}1_{\alpha\ne j}+ u^{-\beta}(u+1)^{\lambda_\beta}1_{\alpha=j}\right]_{\alpha,\beta=1}^M}{\det\left[ (v\xi^{\alpha})^{-\beta}(v\xi^\alpha+1)^{\lambda_\beta}\right]_{\alpha,\beta=1}^M}=u^{-i}(u+1)^{\lambda_i}.
	\end{equation*}
	Thus if $M\ge |\boldsymbol{\lambda}|$, by using the formula of $\ich_{\boldsymbol{\lambda}}(v\xi^j,u)$ in~\eqref{eq:alter_ich} we obtain
	\begin{equation}
	\label{eq:proof_Kess_int02}
	\sum_{j=1}^M (v\xi^j)^{-i}(v\xi^j+1)^{\lambda_i}\cdot \ich_{\boldsymbol{\lambda}}(v\xi^j,u) \cdot \frac{\det\left[ (v\xi^{\alpha})^{-\beta}1_{\alpha\ne j}+ u^{-\beta}1_{\alpha=j}\right]_{\alpha,\beta=1}^M}{\det\left[ (v\xi^{\alpha})^{-\beta}(v\xi^\alpha+1)^{\lambda_\beta}\right]_{\alpha,\beta=1}^M}=u^{-i}(u+1)^{\lambda_i}.
	\end{equation}
	
	Now we combine~\eqref{eq:proof_Kess_int01} and~\eqref{eq:proof_Kess_int02} and get
	\begin{equation*}
	\begin{split}
	\sum_{j=1}^M(v\xi ^j)^{-i+1} (v\xi^j+1)^{\lambda_i}\cdot \frac{1}{v\xi^j-u} \ich_{\boldsymbol{\lambda}} (v\xi^j,u)&=-u^{-i}(u+1)^{\lambda_i}\cdot \frac{Mu^M}{u^M-v^M}\cdot \frac{\det\left[ (v\xi^{\alpha})^{-\beta}(v\xi^\alpha+1)^{\lambda_\beta}\right]_{\alpha,\beta=1}^M}{\det\left[ (v\xi^{\alpha})^{-\beta}\right]_{\alpha,\beta=1}^M}\\
	&=-u^{-i}(u+1)^{\lambda_i}\cdot \frac{Mu^M}{u^M-v^M}
	\end{split}
	\end{equation*}
	for $M\ge |\boldsymbol{\lambda}|$. Here we used the fact that $\frac{\det\left[ (v\xi^{\alpha})^{-\beta}(v\xi^\alpha+1)^{\lambda_\beta}\right]_{\alpha,\beta=1}^M}{\det\left[ (v\xi^{\alpha})^{-\beta}\right]_{\alpha,\beta=1}^M}=\ich_{\boldsymbol{\lambda}}(v,v)=1$. Together with the fact that $v^M/u^M\to0$, we obtain~\eqref{eq:proof_Kess_int00} immediately.	

%Hence, $\Kess_Y(v,u)$ equals to a special solution plus any function orthogonal to $v^{-i}(v+1)^{y_i+i}$, $1\le i\le N$. 

\subsection{Proof of Proposition~\ref{prop:Kess_flat}}
\label{sec:proof_Kess_flat}

In order to evaluate $\Kess_{Y_{\mathrm{pf}}}$, we need to consider the function $\ich_{\boldsymbol{\lambda}(Y_{\mathrm{pf}})}(v,u)$. Recall the formula~\eqref{eq:alter_ich_ext}, we write
\begin{equation*}
\ich_{\boldsymbol{\lambda}(Y_{\mathrm{pf}})}(v,u)=\mathcal{G}_{\boldsymbol{\lambda}(Y_{\mathrm{pf}})}(u,v\xi,\cdots,v\xi^{N-1}) + v^{N}\cdot r_1(v,u),
\end{equation*}
where $\xi=e^{2\pi\ii/N}$, and $r_1(v,u)$ is some polynomial. It turn out that
\begin{equation}
\label{eq:aux24}
\mathcal{G}_{\boldsymbol{\lambda}(Y_{\mathrm{pf}})}(u,v\xi,\cdots,v\xi^{N-1})=\frac{2v+1}{u+v+1}\cdot\left(\frac{u+1}{v+1}\right)^N+ v^N\cdot r_2(v,u)
\end{equation} 
for some function $r_2(v,u)$ which is analytic for $(v,u)$ satisfying $|v|<\{1/2,|u+1|\}$. By combing the above two equations and using the definition of $\Kess_{Y_{\mathrm{pf}}}$ we prove the proposition immediately.

\vspace{0.3cm}
It remains to prove~\eqref{eq:aux24}. We show it below.

Note that the function $\boldsymbol{\lambda}(Y_{\mathrm{pf}})=(\lambda_1,\cdots,\lambda_N)$ with $\lambda_i=(y_i+i)-(y_N+N)=N-i$. Thus
\begin{equation*}
\mathcal{G}_{\boldsymbol{\lambda}(Y_{\mathrm{pf}})}(w_1,\cdots,w_N)=\frac{\det\left[w_i^{-j}(1+w_i)^{N-j}\right]_{i,j=1}^N}{\det\left[w_i^{-j}\right]_{i,j=1}^N}.
\end{equation*} 
By applying the Vandermonde determinant formula, we have
\begin{equation*}
\mathcal{G}_{\boldsymbol{\lambda}(Y_{\mathrm{pf}})}(w_1,\cdots,w_N)=\prod_{i<j}\frac{w_j(w_j+1)-w_i(w_i+1)}{w_j-w_i}=\prod_{1\le i<j\le N}(w_i+w_j+1).
\end{equation*}
As a result, $\mathcal{G}_{\boldsymbol{\lambda}(Y)}(u,v\xi,\cdots,v\xi^{N-1})$ can be expressed as
\begin{equation}
\label{eq:aux21}
\mathcal{G}_{\boldsymbol{\lambda}(Y)}(u,v\xi,\cdots,v\xi^{N-1})=\prod_{1\le j\le N-1}\frac{u+1+v\xi^j}{v+1+v\xi^j} \cdot \prod_{0\le i<j\le N-1}(v\xi^i+v\xi^j+1).
\end{equation}
We remark that our assumption of $|v|<1/2$ guarantees $v+1+v\xi^j\ne 0$ for each $j$. Note that the last product is invariant under $v\to v\xi^j$ for any $j$, therefore
\begin{equation}
\label{eq:aux22}
\prod_{0\le i<j\le N-1}(v\xi^i+v\xi^j+1)=1+v^N\cdot  r_3(v^N) 
\end{equation}
for some polynomial $r_3$. Moreover, the following two identities hold since $\xi$ is the root of unity
\begin{equation*}
\prod_{j=0}^{N-1}(u+1+v\xi^j) = (u+1)^N -(-v)^N,\quad \prod_{j=0}^{N-1}(v+1+v\xi^j) =(v+1)^N -(-v)^N.
\end{equation*}
Thus 
\begin{equation}
\label{eq:aux08}
\prod_{1\le j\le N-1}\frac{u+1+v\xi^j}{v+1+v\xi^j}=\frac{2v+1}{u+v+1}\cdot \frac{ (u+1)^N -(-v)^N}{(v+1)^N -(-v)^N} =\frac{2v+1}{u+v+1}\cdot\left(\frac{u+1}{v+1}\right)^N\cdot (1+v^Nr_4(v,u)),
\end{equation}
where
\begin{equation*}
r_4(v,u)=\frac{(-1)^N}{(u+1)^N}\cdot\frac{(u+1)^N-(v+1)^N}{(v+1)^N-(-v)^N}
\end{equation*}
is also analytic in $|v|<\min\{1/2,|u+1|\}$.~\eqref{eq:aux24} follows from combing~\eqref{eq:aux21},~\eqref{eq:aux22} and~\eqref{eq:aux08}.

\subsection{Proof of Proposition~\ref{prop:D_flat}}
\label{sec:proof_D_flat}

In this section we prove Proposition~\ref{prop:D_flat}. The proof is based on a Cauchy chain argument similar to that of Lemma~\ref{lm:lemma3} and a deformation of contour.

As we discussed before the proposition, we could combine Propositions~\ref{prop:Kess_flat} and~\ref{prop:invariance_distribution} and replace the original kernel $\Kess_{Y_{\mathrm{pf}}}(v,u)$ by the following kernel
\begin{equation*}
\mathcal{K}_{Y_{\mathrm{pf}}}^{(\mathrm{ess},1)}(v,u)=\frac{2v+1}{(v-u)(u+v+1)}
\end{equation*}
if we choose the contours described below. The contours $\hat\Sigma_{m,\RR}^\out,\cdots,\hat\Sigma_{2,\RR}^\out,\hat\Sigma_{1,\RR},\hat\Sigma_{2,\RR}^\inn,\cdots,\hat\Sigma_{m,\RR}^\inn$ are nested contours within the region $\disk(1/2)=\{v: |v|<1/2\}$, and the contours $\hat\Sigma_{m,\RR}^\out,\cdots,\hat\Sigma_{2,\RR}^\out,\hat\Sigma_{1,\RR},\hat\Sigma_{2,\RR}^\inn,\cdots,\hat\Sigma_{m,\RR}^\inn$ are nested contours around $-1$ satisfying $\hat\Sigma_{1,\LL}$ is outside of $-1-\hat\Sigma_{1,\RR}=\{-1-v: v\in\hat\Sigma_{1,\RR}\}$ and $\hat\Sigma_{2,\LL}^\inn$ is inside $-1-\hat\Sigma_{1,\RR}$.

Now we evaluate $\mathcal{D}_{\boldsymbol{n},Y_{\mathrm{pf}}}$ below
\begin{equation*}
\begin{split}
\mathcal{D}_{\boldsymbol{n},Y_{\mathrm{pf}}}(z_1,\cdots,z_{m-1})&=\prod_{\ell=1}^m\prod_{i_\ell=1}^{n_\ell} \int_{\Sigma_{\ell,\LL}}\mathrm{d}\mu_{\boldsymbol{z}}(u_{i_\ell}^{(\ell)})\int_{\Sigma_{\ell,\RR}}\mathrm{d}\mu_{\boldsymbol{z}}(v_{i_\ell}^{(\ell)})\\
&\quad \left[(-1)^{n_1(n_1+1)/2}\frac{\Delta(U^{(1)};V^{(1)})}{\Delta(U^{(1)})\Delta(V^{(1)})} \det\left[\mathcal{K}_{Y_{\mathrm{pf}}}^{(\mathrm{ess},1)}(v_i^{(1)},u_j^{(1)})\right]_{i,j=1}^{n_1}\right]\\
&\quad\cdot \left[\prod_{\ell=1}^m\frac{(\Delta(U^{(\ell)}))^2(\Delta(V^{(\ell)}))^2}{(\Delta(U^{(\ell)};V^{(\ell)}))^2}f_\ell(U^{(\ell)}) f_\ell(V^{(\ell)})\right]\\
&\quad\cdot \left[\prod_{\ell=1}^{m-1} \frac{\Delta(U^{(\ell)};V^{(\ell+1)})\Delta(V^{(\ell)};U^{(\ell+1)})}{\Delta(U^{(\ell)};U^{(\ell+1)})\Delta(V^{(\ell)};V^{(\ell+1)})}\left(1-z_\ell\right)^{n_\ell}\left(1-\frac{1}{z_\ell}\right)^{n_{\ell+1}}\right].
\end{split}
\end{equation*}
Recall that we assume $a_1+k_1\le 0$ in this proposition. Therefore the function $f_1(u )=u^{k_1}(u +1)^{-a_1-k_1}e^{t_1u}$ is analytic at $u =-1$. After we integrate  $u_{i_1}^{(1)}$ along $\hat\Sigma_{1,\LL}$, only two types of residues survive: $u_{i_1}^{(1)}=-v_{j_1}^{(1)}-1$ for some $1\le j_1\le n_1$, or $u_{i_1}^{(1)}=u_{i_2}^{(2)}\in \hat\Sigma_{2,\LL}^\inn$ for some $1\le i_2\le n_2$. These two types of residues come from the term $\mathcal{K}_{Y_{\mathrm{pf}}}^{(\mathrm{ess},1)}(v_{j_1}^{(1)},u_{i_1}^{(1)})$ and $\frac{1}{u_{i_1}^{(1)}-u_{i_2}^{(2)}}$ respectively. We claim that the second type of residues contribute a zero. In fact, after evaluating the residue at $u_{i_1}^{(1)}=u_{i_2}^{(2)}$, the integrand has the form $\mathcal{K}_{Y_{\mathrm{pf}}}^{(\mathrm{ess},1)}(v_{j_1}^{(1)},u_{i_2}^{(2)})\cdot f_1(u_{i_2}^{(2)})f_2(u_{i_2}^{(2)})\cdot \frac{1}{\Delta(U^{(2)};U^{(3)})}$ times some function analytic for $u_{i_2}^{(2)}$ inside the region bounded by the contour $\hat\Sigma_{2,\LL}^\inn$. This integrand again is analytic at $u_{i_2}^{(2)}=-1$ since $f_1(u)f_2(u)=u^{k_2}(u +1)^{-a_2-k_2}e^{t_2u}$ and $a_2+k_2\le 0$ by our assumption. Hence we only need to evaluate the residues of $u_{i_2}^{(2)}$. Now due to the assumption that $\Sigma_{2,\LL}^\inn$ is inside $-1-\Sigma_{1,\RR}$, there is only one type of residues $u_{i_2}^{(2)}=u_{i_3}^{(3)}$ for some $i_3$. We repeat this procedure and finally will stop at some step when no residues are inside the contour. This procedure ends with no nonzero contribution. Thus the claim is true.

Now the above argument implies that the integral with respect to $u_{i_1}^{(1)}$ only gives the residues at $u_{i_1}^{(1)}=-v_{j_1}^{(1)}-1$ from $\mathcal{K}_{Y_{\mathrm{pf}}}^{(\mathrm{ess},1)}(v_{j_1}^{(1)},u_{i_1}^{(1)})$. Therefore this integral is the same as an integral along $-1-\Sigma_{1,\RR}$ with the kernel $\mathcal{K}_{Y_{\mathrm{pf}}}^{(\mathrm{ess},1)}(v_{j_1}^{(1)},u_{i_1}^{(1)})$ replaced by $\delta(-v_{j_1}^{(1)}-1,u_{i_1}^{(1)})$. Therefore $\mathcal{D}_{\boldsymbol{n},Y_{\mathrm{pf}}}$ does not change if we replace the contour $\hat\Sigma_{1,\LL}$ by $-1-\hat\Sigma_{1,\RR}$ and the kernel $\mathcal{K}_{Y_{\mathrm{pf}}}^{(\mathrm{ess},1)}(v,u)$ by $\delta(-v-1,u)$. These replacements also do not change $\mathcal{D}_{Y_{\mathrm{pf}}}$. With this new kernel, we are free to deform the contours $\hat \Sigma_{1,\RR},\hat{\Sigma}_{\ell,\RR}^\inn$, $\hat\Sigma_{\ell,\RR}^\out$ and $\hat{\Sigma}_{\ell,\LL}^\inn$, $\hat\Sigma_{\ell,\LL}^\out$, $2\le\ell\le m$, to  $ \Sigma_{1,\RR},{\Sigma}_{\ell,\RR}^\inn$, $\Sigma_{\ell,\RR}^\out$ and ${\Sigma}_{\ell,\LL}^\inn$, $\Sigma_{\ell,\LL}^\out$, $2\le\ell\le m$, respectively. This finishes the proof.

\subsection{Proof of Proposition~\ref{lm:lemma4}}
\label{sec:proof_lemma4}

	By using the series expansion of $\mathcal{D}_Y$ in Definition~\ref{def:D_Y_series}, we only need to show
	\begin{equation}
	\label{eq:consistency}
	\begin{split}
	&\sum_{n_{s+1}\ge 0}\frac{1}{((n_{s+1})!)^2}\left[\oint_{|z_s|<1}\ddbar{z_s} -\oint_{|z_s|>1}\ddbar{z_s} \right]\frac{1}{1-z_s}\mathcal{D}_{\boldsymbol{n},Y}(z_1,\cdots,z_{m-1})\\
	&=\mathcal{D}_{\hat{\boldsymbol{n}},Y}(z_1,\cdots,z_{s-1},z_{s+1},\cdots,z_{m-1};(a_1,k_1,t_1),\cdots,(a_{s-1},k_{s-1},t_{s-1}),(a_{s+1},k_{s+1},t_{s+1}),\cdots,(a_m,k_m,t_m)).
	\end{split}
	\end{equation}
	Here the vector $\hat{\boldsymbol{n}}:=(n_1,\cdots,n_{s},n_{s+2},\cdots,n_m)$ is the vector obtained by removing $n_{s+1}$ from $\boldsymbol{n}$. We also list the parameters $(a_\ell,k_\ell,t_\ell)$ ($\ell\ne s$) to avoid possible confusion.
	
	By dropping common factors in the series expansion formula of both sides of~\eqref{eq:consistency}, it is sufficient to show
	\begin{equation}
	\label{eq:aux03}
	\begin{split}
	&\sum_{n_{s+1}\ge 0}\frac{1}{((n_{s+1})!)^2}\left[\oint_{|z_s|<1}\ddbar{z_s} -\oint_{|z_s|>1}\ddbar{z_s} \right]	\\
	&
	\prod_{{i}=1}^{n_{s+1}} \left[\int_{\Sigma_{s+1,\LL}^\inn} \ddbarr{u_{i}^{(s+1)}}
	-z_s\int_{\Sigma_{s+1,\LL}^\out} \ddbarr{u_{i}^{(s+1)}} \right]
	\prod_{{i}=1}^{n_{s+1}} \left[\int_{\Sigma_{s+1,\RR}^\inn} \ddbarr{v_{i}^{(s+1)}}
	-z_s\int_{\Sigma_{s+1,\RR}^\out} \ddbarr{v_{i}^{(s+1)}} \right]\\
	&{(1-z_s)^{n_{s}-n_{s+1}-1}}{z_s^{-n_{s+1}}} (1-z_{s+1})^{n_{s+1}}\Ch(U^{(s)};U^{(s+1)}) \Ch(V^{(s)};V^{(s+1)})B(U^{(s)},U^{(s+1)};V^{(s)},V^{(s+1)}) \\
	&= (1-z_{s+1})^{n_s}B(U^{(s)},U^{(s)};V^{(s)},V^{(s)})
	\end{split}
	\end{equation}
	for any function $B(U^{(s)},U^{(s+1)};V^{(s)},V^{(s+1)})$ which satisfies (a) it is analytic for $u_i^{(s+1)}$ between the contours $\Sigma_{s+1,\LL}^\out$ and $\Sigma_{s+1,\LL}^\inn$, and $v_i^{(s+1)}$ between the contours $\Sigma_{s+1,\RR}^\out$ and $\Sigma_{s+1,\RR}^\inn$, $1\le i\le n_{s+1}$, and (b) it is anti-symmetric for $u_1^{(s+1)},\cdots,u_{n_{s+1}}^{(s+1)}$, and anti-symmetric for $v_1^{(s+1)},\cdots,v_{n_{s+1}}^{(s+1)}$. In other words, exchanging two variables $u_i^{(s+1)}$ and $u_j^{(s+1)}$ in $B$ only gives a sign change, and so is the exchanging of $v_i^{(s+1)}$ and $v_j^{(s+1)}$.	
	The function $\Ch(W;W')$ is the Cauchy-type factor defined in~\eqref{eq:def_cauchy}.
	%\begin{equation*}
	%\Ch(W;W')=\frac{\Delta(W)\Delta(W')}{\Delta(W;W')}.
	%\end{equation*}
	
	We write the summand on the left hand side of~\eqref{eq:aux03} as $(1-z_{s+1})^{n_{s+1}}\cdot B_{n_{s+1}}$. The equation~\eqref{eq:aux03} follows from the following identity
	\begin{equation}
	\label{eq:aux04}
	B_{n_{s+1}}=\begin{dcases}
	B(U^{(s)},U^{(s)};V^{(s)},V^{(s)}),& n_{s+1}=n_{s},\\
	0,&\text{otherwise}.
	\end{dcases}
	\end{equation}
	It remains to prove~\eqref{eq:aux04}. We prove it by considering all the three cases below.
	
	\vspace{0.3cm}
	\textbf{Case (1)}. $n_{s+1} < n_{s}$.
	
	This case is trivial. The $z_s$ integral is zero since the  integrand is analytic at $z_s=1$: there is no pole between the contours $|z_s|<1$ and $|z_s|>1$.
	
	\textbf{Case (2)}. $n_{s+1} > n_{s}$.
	
	$z_s=1$ is a pole of order $n_{s+1}-n_{s}+1$. Thus the integral of $z_s$ gives 
	\begin{equation*}
	\begin{split}
	&B_{n_{s+1}}	\\
	&=c\cdot
	\left.\frac{\dd^{n_{s+1}-n_{s}}}{\dd z_s^{n_{s+1}-n_{s}}}\right|_{z_s=1}\left({z_s^{-n_{s+1}}}\prod_{{i}=1}^{n_{s+1}} \left[\int_{\Sigma_{s+1,\LL}^\inn} 
	-z_s\int_{\Sigma_{s+1,\LL}^\out}  \right]
	\prod_{{i}=1}^{n_{s+1}} \left[\int_{\Sigma_{s+1,\RR}^\inn} 
	-z_s\int_{\Sigma_{s+1,\RR}^\out} \right]\right)\\
	&\quad (-1)^{n_{s}-n_{s+1}} \Ch(U^{(s)};U^{(s+1)}) \Ch(V^{(s)};V^{(s+1)})B(U^{(s)},U^{(s+1)};V^{(s)},V^{(s+1)})
	\end{split}
	\end{equation*}
	for some constant $c=\frac{1}{((n_{s+1})!)^2 (n_{s+1}-n_s)!}$. Here for the sake of saving space, we omit the integral symbols $\ddbarr{u_{i}^{(s+1)}}$ in the integrals $\int_{\Sigma_{s+1,\LL}^\inn} $ and $\int_{\Sigma_{s+1,\LL}^\out}$, and $\ddbarr{v_{i}^{(s+1)}}$ in $\int_{\Sigma_{s+1,\RR}^\inn} $ and $\int_{\Sigma_{s+1,\RR}^\out}$. Note that there are $2n_{s+1}$ integrals of the form $\int_{\Sigma_{s+1,\Delta}^\out} -z_s\int_{\Sigma_{s+1,\Delta}^\inn}$ for $\Delta\in\{\LL,\RR\}$. After the $n_{s+1}-n_{s}$ times of differentiation with respect to $z_s$, there are still at least $2n_{s+1}-(n_{s+1}-n_{s})=n_s+n_{s+1}$ integrals of the form $\int_{\Sigma_{s+1,\Delta}^\out} -\int_{\Sigma_{s+1,\Delta}^\inn}$ survive (with $z_s=1$). On the other hand, each integral $\int_{\Sigma_{s,\Delta}^\out} -\int_{\Sigma_{s,\Delta}^\inn}$ is either zero or equals to some residue at $u_i^{(s+1)}=u_{i'}^{(s)}$ or $v_i^{(s+1)}=v_{i'}^{(s)}$. It is easy to count the maximal possible numbers of these residues from $\Ch(U^{(s)};U^{(s+1)})$ and $\Ch(V^{(s)};V^{(s+1)})$ are both $\min\{n_s,n_{s+1}\}$. With our assumption, $n_s+n_{s+1}>2\min\{n_s,n_{s+1}\}$. Thus there exists at least one integral $\int_{\Sigma_{s+1,\Delta}^\out} -\int_{\Sigma_{s+1,\Delta}^\inn}$, which survives from the $z_s$ differentiation, does not contribute any residue from the Cauchy-type factors. This integral is zero. Thus $B_{n_{s+1}}=0$.
	
	\textbf{Case (3)}. $n_{s+1} = n_{s}$. 
	Similar to the Case (2), we have
	\begin{equation*}
	\begin{split}
	B_{n_{s+1}}	&=\frac{1}{((n_{s+1})!)^2}\prod_{{i}=1}^{n_{s+1}} \left[\int_{\Sigma_{s+1,\LL}^\inn} \ddbarr{u_{i}^{(s+1)}}
	-\int_{\Sigma_{s+1,\LL}^\out} \ddbarr{u_{i}^{(s+1)}} \right]
	\prod_{{i}=1}^{n_{s+1}} \left[\int_{\Sigma_{s+1,\RR}^\inn} \ddbarr{v_{i}^{(s+1)}}
	-\int_{\Sigma_{s+1,\RR}^\out} \ddbarr{v_{i}^{(s+1)}} \right]\\
	&\quad \Ch(U^{(s)};U^{(s+1)}) \Ch(V^{(s)};V^{(s+1)})B(U^{(s)},U^{(s+1)};V^{(s)},V^{(s+1)}). 
	\end{split}
	\end{equation*}
	The nonzero contributions come from the residues of
	\begin{equation}
	\label{eq:residue}
	\mathrm{Res}\left(\Ch(U^{(s)};U^{(s+1)}) \Ch(V^{(s)};V^{(s+1)})B(U^{(s)},U^{(s+1)};V^{(s)},V^{(s+1)}), U^{(s+1)}=\sigma(U^{(s)}),V^{(s+1)}=\sigma'(V^{(s)})\right)
	\end{equation}
	for some permutations $\sigma,\sigma'\in S_{n_{s+1}}$, where $\sigma(W)$ denotes the permuted vector $W$ by $\sigma$. More precisely, if $W=(w_1,\cdots,w_n)$ and $\sigma\in S_n$, then $\sigma(W):=(w_{\sigma(1)},\cdots,w_{\sigma(n)})$. Moreover, we used a more general notation of the residue. It could be understood as a composition of taking residues one by one. For example, $\mathrm{Res}(f(w_1,w_2),w_1=c_1,w_2=c_2)$ means $\mathrm{Res}(\mathrm{Res}(f(w_1,w_2),w_1=c_1),w_2=c_2)$.
	
	Since $B(U^{(s)},U^{(s+1)};V^{(s)},V^{(s+1)})$ is anti-symmetric on the coordinates of $U^{(s+1)}$, and on the coordinates of $V^{(s+1)}$, it is a direct to verify that the residue~\eqref{eq:residue} is independent of the choices of $\sigma$ and $\sigma'$. There are $((n_{s+1})!)^2$ choices of $\sigma$ and $\sigma'$. Thus 
	\begin{equation*}
	\begin{split}
	&B_{n_{s+1}}\\
	&=(-1)^{2n_{s+1}}\mathrm{Res}\left(\Ch(U^{(s)};U^{(s+1)}) \Ch(V^{(s)};V^{(s+1)})B(U^{(s)},U^{(s+1)};V^{(s)},V^{(s+1)}), U^{(s+1)}=U^{(s)},V^{(s+1)}=V^{(s)}\right)\\
	&=B(U^{(s)},U^{(s)};V^{(s)},V^{(s)}).
	\end{split}
	\end{equation*}
	This finishes the proof.

\subsection{Proof of Proposition~\ref{cor:consistence}}
\label{sec:proof_consistence_cor}

	When $s=m$, note that 
	\begin{equation*}
	(1-z_{m-1})^{n_{m-1}}\mathcal{D}_{\tilde{\boldsymbol{n}},Y}(z_1,\cdots,z_{m-2})=\mathcal{D}_{\boldsymbol{n},Y}(z_1,\cdots,z_{m-1})
	\end{equation*}
	with $\boldsymbol{n}=(n_1,\cdots,n_{m-1},0)$ and $\tilde{\boldsymbol{n}}=(n_1,\cdots,n_{m-1})$. Thus we just need to prove that if $n_m\ge 1$
	\begin{equation}
	\label{eq:aux14}
\begin{split}
0=&\left[\prod_{\ell=2}^{m}  \prod_{{i_\ell}=1}^{n_{\ell}} \int_{\Sigma_{\ell,\LL}^\star} \ddbarr{u_{i_\ell}^{(\ell)}}\right]
\prod_{{i_1}=1}^{n_{1}} \int_{\Sigma_{1,\LL}} \ddbarr{u_{i_1}^{(1)}}\left[\prod_{\ell=2}^{m}  \prod_{{i_\ell}=1}^{n_{\ell}}  \int_{\Sigma_{\ell,\RR}^\star} \ddbarr{v_{i_\ell}^{(\ell)}}\right]
\cdot \prod_{{i_1}=1}^{n_{1}} \int_{\Sigma_{1,\RR}} \ddbarr{v_{i_1}^{(1)}}\\
&\quad \left[\prod_{\ell=1}^{m-1}\Ch(U^{(\ell)};U^{(\ell+1)})\Ch(V^{(\ell)};V^{(\ell+1)})\right]  F(U^{(1)},\cdots,V^{(m)}).
\end{split}
\end{equation}
Here the function $\Ch(W;W')$ represents the Cauchy-type factor defined in~\eqref{eq:def_cauchy}. The function 
$$F(U^{(1)},\cdots,V^{(m)})=\tilde F(U^{(1)},\cdots,V^{(m)})\cdot \left(\prod_{i_1=1}^{n_1}\frac{u_{i_1}^{(1)}+1}{v_{i_1}^{(1)}+1}\right)^{y_N+N}\cdot \prod_{\ell=1}^m f_\ell(U^{(\ell)})f_\ell(V^{(\ell)})
$$ for some function $\tilde F$ which is analytic for each $u_{i_\ell}^{(\ell)}\in\Omega_\LL$  and each $v_{i_\ell}^{(\ell)}\in\Omega_\RR$, $\ell=1,\cdots,m$. The symbol $\star$ represents any choice of ``$\out$'' or ``$\inn$'' in each integral contour $\Sigma_{\ell,\LL}^\star$ or $\Sigma_{\ell,\RR}^\star$. By the definition of $f_\ell$ and the assumption that $a_m+k_m=\min\{a_\ell+k_\ell: 1\le\ell\le m\}<y_N+N$, we know that $F$ is analytic at $-1$ along any chain of variable $u_{j_s}^{(s)},u_{j_{s+1}}^{(s+1)},\cdots,u_{j_m}^{(m)}$ with $j_m=1$ and any $j_\ell$ satisfying $1\le j_\ell\le n_\ell$ for $s\le \ell<m$. More explicitly,
\begin{equation*}
\left.F(U^{(1)},\cdots,V^{(m)})\right|_{u_{j_s}^{(s)}=u_{j_{s+1}}^{(s+1)}=\cdots=u_{j_m}^{(m)}=u}
\end{equation*} 
is analytic at $u=-1$ when all other variables are fixed. Thus we could apply Lemma~\ref{lm:lemma3}.~\eqref{eq:aux14} follows.

	When $s<m$, after applying Proposition~\ref{lm:lemma4}, we only need to prove
	\begin{equation*}
	\oint\cdots\oint \left[\prod_{\ell=1}^{m-1}\frac{1}{1-z_\ell}\right] \mathcal{D}_Y(z_1,\cdots,z_{m-1}) \ddbar{z_1}\cdots\ddbar{z_{m-1}}=0
	\end{equation*}
	if the radius of $z_s$ contour is greater than $1$.
	
	By using the series expansion formula of $\mathcal{D}_Y$, it is sufficient to prove
	\begin{equation}
	\label{eq:aux23}
	\oint_{|z_s|>1}\frac{1}{1-z_s}\mathcal{D}_{\boldsymbol{n},Y}(z_1,\cdots,z_{m-1})\ddbar{z_s}=0
	\end{equation}
	for any $\boldsymbol{n}=(n_1,\cdots,n_m)\in(\intZ_{\ge 0})^m$. By using the formula~\eqref{eq:D_nY}, we write
\begin{equation*}
\begin{split}
&\mathcal{D}_{\boldsymbol{n},Y}(z_1,\cdots,z_{m-1})\\
&=\prod_{\ell=2}^{m}  \prod_{{i_\ell}=1}^{n_{\ell}} \left[\frac{1}{1-z_{\ell-1}}\int_{\Sigma_{\ell,\LL}^\inn} \ddbarr{u_{i_\ell}^{(\ell)}}
-\frac{z_{\ell-1}}{1-z_{\ell-1}}\int_{\Sigma_{\ell,\LL}^\out} \ddbarr{u_{i_\ell}^{(\ell)}} \right]
\cdot \prod_{{i_1}=1}^{n_{1}} \int_{\Sigma_{1,\LL}} \ddbarr{u_{i_1}^{(1)}}\\
&\quad \prod_{\ell=2}^{m}  \prod_{{i_\ell}=1}^{n_{\ell}} \left[\frac{1}{1-z_{\ell-1}}\int_{\Sigma_{\ell,\RR}^\inn} \ddbarr{v_{i_\ell}^{(\ell)}}
-\frac{z_{\ell-1}}{1-z_{\ell-1}}\int_{\Sigma_{\ell,\RR}^\out} \ddbarr{v_{i_\ell}^{(\ell)}} \right]
\cdot \prod_{{i_1}=1}^{n_{1}} \int_{\Sigma_{1,\RR}} \ddbarr{v_{i_1}^{(1)}}\\
&\quad\cdot \left[\prod_{i_1=1}^{n_1}(u_{i_1}^{(1)}+1)^{y_N+N}\right]\left[\prod_{\ell=1}^mf_\ell(U^{(\ell)}) \right]\cdot \left[\prod_{\ell=1}^{m-1} \Ch(U^{(\ell)};U^{(\ell+1)})\right]\\
&\quad \cdot \left(1-z_s\right)^{n_s}\left(1-\frac{1}{z_s}\right)^{n_{s+1}}\cdot F(U^{(1)},\cdots,V^{(m)},z_1,\cdots,z_{s-1},z_{s+1},\cdots,z_{m-1}),
\end{split}
\end{equation*}
where the function $F$ is analytic for each $u_{i_\ell}^{(\ell)}\in\Omega_\LL$.

Below we will use an argument similar to Lemma~\ref{lm:lemma3}. We evaluate the integral with respect to each $u_{i_s}^{(s)}$, $1\le i_s\le n_{s}$. Note that the function $f_s(u_{i_s}^{(s)})$ is analytic at $u_{i_s}^{(s)}=-1$ by the assumption that $a_s+k_s=\min\{a_\ell+k_\ell:1\le \ell\le m\}$. Therefore only the residues at $u_{i_s}^{(s)}=u_{i_{s+1}}^{(s+1)}$ for some $u_{i_{s+1}}^{(s+1)}\in\Sigma_{s+1,\LL}^\inn$, and, if $u_{i_s}^{(s)}\in\Sigma_{s,\LL}^\out$, the residues at  $u_{i_s}^{(s)}=u_{i_{s-1}}^{(s-1)}$ for some $u_{i_{s-1}}^{(s-1)}\in \Sigma_{s-1,\LL}^\inn\cup \Sigma_{s-1,\LL}^\out$ survive. Here we used the nesting order of the contours. We claim that the second type of residues does not contribute after we integrate over $u_{i_{s-1}}^{(s-1)}$. In fact, considering the fact that $f_s(u_{i_{s-1}}^{(s-1)})f_{s-1}(u_{i_{s-1}}^{(s-1)})$ is still analytic at $u_{i_{s-1}}^{(s-1)}=-1$ by our assumption that $a_{s-2}+k_{s-2}\ge a_s+k_s$, the integral with respect to $u_{i_{s-1}}^{(s-1)}$ only leaves a further level of residues $u_{i_{s-1}}^{(s-1)}=u_{i_{s-2}}^{(s-2)}$. This procedure will end at $u_{i_{s}}^{(s)}=u_{i_{s-1}}^{(s-1)}=u_{i_{s-2}}^{(s-2)}=\cdots=u_{i_1}^{(1)}$. At the last step, the integral is $0$ since $(u_{i_1}^{(1)}+1)^{y_N+N}\prod_{\ell=1}^{s}f_\ell(u_{i_1}^{(1)})$ is analytic at $u_{i_1}^{(1)}=-1$ due to the assumption that $y_N+N\ge a_s+k_s$. This proves the claim. Therefore, only the first type of residues survive for each $u_{i_s}^{(s)}$ integral. Note that there are $n_s$ such integrals, therefore  $\mathcal{D}_{\boldsymbol{n},Y}=0$ if $n_s>n_{s+1}$. When $n_{s+1}\ge n_s$, we only need to consider the case when there are at least $n_s$ variables  $u_{i_{s+1}}^{(s+1)}$  chosen from $\Sigma_{s+1,\LL}^{\inn}$. 

Note that every time we have a variable $u_{i_{s+1}}^{(s+1)}\in\Sigma_{s+1,\LL}^\inn$ in the expansion of the integrals, we get a factor $\frac{1}{1-z_s}$. We also have a factor $(1-z_s)^{n_s}$ in $\mathcal{D}_{\boldsymbol{n},Y}$. Thus the surviving terms in $\mathcal{D}_{\boldsymbol{n},Y}$ are of order $O(z_s^{-n_s+n_s})=O(1)$ when $z_s$ is large. We immediately obtain~\eqref{eq:aux23} by deforming the $z_s$ contour to infinity.

%\bibliographystyle{alpha}
%\bibliography{bibliography}
\def\cydot{\leavevmode\raise.4ex\hbox{.}}

\end{document}